\theoremstyle{plain}
\newtheorem{theorem}{Theorem}[section]
\newtheorem{lemma}[theorem]{Lemma}
\newtheorem{proposition}[theorem]{Proposition}
\newtheorem{corollary}[theorem]{Corollary}
\DeclareFontShape{JY1}{mc}{m}{it}{<5> <6> <7> <8> <9> <10> sgen*min
	<10.95><12><14.4><17.28><20.74><24.88> min10 <-> min10}{}
\DeclareFontShape{JT1}{mc}{m}{it}{<5> <6> <7> <8> <9> <10> sgen*tmin
	<10.95><12><14.4><17.28><20.74><24.88> tmin10 <-> tmin10}{}
\DeclareFontShape{JY1}{mc}{m}{sc}{<5> <6> <7> <8> <9> <10> sgen*min
	<10.95><12><14.4><17.28><20.74><24.88> min10 <-> min10}{}
\DeclareFontShape{JY1}{mc}{bx}{sc}{<5> <6> <7> <8> <9> <10> sgen*min
	<10.95><12><14.4><17.28><20.74><24.88> min10 <-> min10}{}
\DeclareFontShape{JT1}{mc}{m}{sc}{<5> <6> <7> <8> <9> <10> sgen*tmin
	<10.95><12><14.4><17.28><20.74><24.88> tmin10 <-> tmin10}{}
\DeclareFontShape{JT1}{mc}{bx}{sc}{<5> <6> <7> <8> <9> <10> sgen*tmin
	<10.95><12><14.4><17.28><20.74><24.88> tmin10 <-> tmin10}{}
\DeclareFontShape{JY2}{mc}{m}{it}{<5> <6> <7> <8> <9> <10> sgen*min
	<10.95><12><14.4><17.28><20.74><24.88> min10 <-> min10}{}
\DeclareFontShape{JT2}{mc}{m}{it}{<5> <6> <7> <8> <9> <10> sgen*tmin
	<10.95><12><14.4><17.28><20.74><24.88> tmin10 <-> tmin10}{}
\DeclareFontShape{JY2}{mc}{m}{sc}{<5> <6> <7> <8> <9> <10> sgen*min
	<10.95><12><14.4><17.28><20.74><24.88> min10 <-> min10}{}
\DeclareFontShape{JT2}{mc}{m}{sc}{<5> <6> <7> <8> <9> <10> sgen*tmin
	<10.95><12><14.4><17.28><20.74><24.88> tmin10 <-> tmin10}{}
\newcommand{\relmiddle}[1]{\mathrel{}\middle#1\mathrel{}}
\def\thefootnote{\ifnum\c@footnote>\z@\leavevmode\lower.5ex%
      \hbox{$^{\@arabic\c@footnote)}$}\fi}
\def\section{\@startsection{section}{1}%
	\z@{.7\linespacing\@plus\linespacing}{.5\linespacing}%
	{\normalfont\bfseries}}
\def\subsection{\@startsection{subsection}{2}%
	\z@{.5\linespacing\@plus.7\linespacing}{.3\linespacing}%
	{\normalfont\bfseries}}
\theoremstyle{definition}
\theoremstyle{remark}
\def\Aut{\mbox{\rm {Aut}}}
\def\det{\mbox{\rm {det}}}
\def\diag{\mbox{\rm {diag}}}
\def\Der{\mbox{\rm {Der}}}
\def\ad{\mbox{\rm {ad}}}
\def\Hom{\mbox{\rm {Hom}}}
\def\Iso{\mbox{\rm {Iso}}}
\def\Ker{\mbox{\rm {Ker}}}
\def\tr{\mbox{\rm {tr}}}
\def\Re{\mbox{\rm {Re}}}
\def\ov{\overline}
\def\ti{\tilde}
\def\dsum{\displaystyle \sum}
\def\dfrac#1#2{\displaystyle \frac{#1}{#2}}
\def\C{\mbox{\boldmath $C$}}
\def\H{\mbox{\boldmath $H$}}
\def\R{\mbox{\boldmath $R$}}
\def\Z{\mbox{\boldmath $Z$}}
\def\sR{\mbox{\boldmath $\scriptstyle{R}$}}
\def\sC{\mbox{\boldmath $\scriptstyle{C}$}}
\def\sH{\mbox{\boldmath $\scriptstyle{H}$}}
\def\0{\mbox{\boldmath {0}}}
\def\1{\mbox{\boldmath {1}}}
\def\2{\mbox{\boldmath {2}}}
\def\3{\mbox{\boldmath {3}}}
\def\4{\mbox{\boldmath {4}}}
\def\5{\mbox{\boldmath {5}}}
\def\6{\mbox{\boldmath {6}}}
\def\7{\mbox{\boldmath {7}}}
\def\8{\mbox{\boldmath {8}}}
\def\9{\mbox{\boldmath {9}}}
\def\a{\mbox{\boldmath $a$}}
\def\m{\mbox{\boldmath $m$}}
\def\tr{\mbox{\rm {tr}}}
\def\ad{\mbox{\rm {ad}}}
\def\det{\mbox{\rm {det}}}
\def\diag{\mbox{\rm {diag}}}
\def\Iso{\mbox{\rm {Iso}}}
\def\Hom{\mbox{\rm {Hom}}}
\def\Ker{\mbox{\rm {Ker}}}
\def\ov{\overline}
\def\dsum{\displaystyle \sum}
\def\sR{\mbox{\boldmath $\scriptstyle{R}$}}
\def\sC{\mbox{\boldmath $\scriptstyle{C}$}}
\def\sH{\mbox{\boldmath $\scriptstyle{H}$}}
\def\dfrac#1#2{\displaystyle \frac{#1}{#2}}
\def\C{\mbox{\boldmath $C$}}
\def\H{\mbox{\boldmath $H$}}
\def\R{\mbox{\boldmath $R$}}
\def\Z{\mbox{\boldmath $Z$}}
\def\sR{\mbox{\boldmath $\scriptstyle{R}$}}
\def\0{\mbox{\boldmath {0}}}
\def\1{\mbox{\boldmath {1}}}
\def\2{\mbox{\boldmath {2}}}
\def\3{\mbox{\boldmath {3}}}
\def\4{\mbox{\boldmath {4}}}
\def\5{\mbox{\boldmath {5}}}
\def\6{\mbox{\boldmath {6}}}
\def\7{\mbox{\boldmath {7}}}
\def\8{\mbox{\boldmath {8}}}
\def\9{\mbox{\boldmath {9}}}
\def\a{\mbox{\boldmath $a$}}
\def\m{\mbox{\boldmath $m$}}
\def\v{\mbox{\boldmath $v$}}
\DeclareFontShape{JY1}{mc}{m}{it}{<5> <6> <7> <8> <9> <10> sgen*min
	<10.95><12><14.4><17.28><20.74><24.88> min10 <-> min10}{}
\DeclareFontShape{JT1}{mc}{m}{it}{<5> <6> <7> <8> <9> <10> sgen*tmin
	<10.95><12><14.4><17.28><20.74><24.88> tmin10 <-> tmin10}{}
\DeclareFontShape{JY1}{mc}{m}{sc}{<5> <6> <7> <8> <9> <10> sgen*min
	<10.95><12><14.4><17.28><20.74><24.88> min10 <-> min10}{}
\DeclareFontShape{JY1}{mc}{bx}{sc}{<5> <6> <7> <8> <9> <10> sgen*min
	<10.95><12><14.4><17.28><20.74><24.88> min10 <-> min10}{}
\DeclareFontShape{JT1}{mc}{m}{sc}{<5> <6> <7> <8> <9> <10> sgen*tmin
	<10.95><12><14.4><17.28><20.74><24.88> tmin10 <-> tmin10}{}
\DeclareFontShape{JT1}{mc}{bx}{sc}{<5> <6> <7> <8> <9> <10> sgen*tmin
	<10.95><12><14.4><17.28><20.74><24.88> tmin10 <-> tmin10}{}
\DeclareFontFamily{U}{rsfs}{}
\DeclareFontShape{U}{rsfs}{}{}{%
    <5> <6> rsfs5
    <7> rsfs7
    <8> <9> <10> <10.95><12><14.4><17.28><20.74><24.88> rsfs10
    }{}
\DeclareFontShape{JY2}{mc}{m}{it}{<5> <6> <7> <8> <9> <10> sgen*min
	<10.95><12><14.4><17.28><20.74><24.88> min10 <-> min10}{}
\DeclareFontShape{JT2}{mc}{m}{it}{<5> <6> <7> <8> <9> <10> sgen*tmin
	<10.95><12><14.4><17.28><20.74><24.88> tmin10 <-> tmin10}{}
\DeclareFontShape{JY2}{mc}{m}{sc}{<5> <6> <7> <8> <9> <10> sgen*min
	<10.95><12><14.4><17.28><20.74><24.88> min10 <-> min10}{}
\DeclareFontShape{JT2}{mc}{m}{sc}{<5> <6> <7> <8> <9> <10> sgen*tmin
	<10.95><12><14.4><17.28><20.74><24.88> tmin10 <-> tmin10}{}
\begin{document}

\title[On realizations of the complex Lie groups $ (F_{4,\sR})^C, (E_{6,\sR})^C, (E_{7,\sR})^C ,(E_{8,\sR})^C $  ]
{On realizations of the complex Lie groups $ (F_{4,\sR})^C, (E_{6,\sR})^C, (E_{7,\sR})^C ,(E_{8,\sR})^C$ \\
and those compact real forms $ F_{4,\sR},E_{6,\sR},E_{7,\sR},E_{8,\sR} $}

\author[Toshikazu Miyashita]{Toshikazu Miyashita}


\begin{abstract}
In order to define the complex exceptional Lie groups $ {F_4}^C, {E_6}^C, {E_7}^C, {E_8}^C $ and these compact real forms $ F_4,E_6,E_7,E_8 $, we usually use the Cayley algebra $ \mathfrak{C} $. In the present article, we consider replacing the Cayley algebra $ \mathfrak{C} $ with the field of real numbers $ \R $ in the definition of the groups above, and these groups are denoted as in title above. Our aim is to determine the structure of these groups. We call realization to determine the structure of the groups.
\end{abstract}

\subjclass[2010]{ 53C30, 53C35, 17B40.}

\keywords{exceptional Lie groups}

\address{1365-3 Bessho onsen      \endgraf
	     Ueda City                \endgraf
	     Nagano Pref. 386-1431    \endgraf
	     Japan}
\email{anarchybin@gmail.com}

\maketitle

\setcounter{section}{0}

\section{Introduction}
The realizations of exceptional simple Lie groups of the type $ E_6, E_7 $ and $ E_8 $ had been completed by Ichiro Yokota and the members of his school thirty-five years ago. Until now, their works has yielded the valuable and useful results, and those are summarized in \cite{iy0} and \cite{iy11}.

In order to define the complex exceptional Lie groups $ {F_4}^C, {E_6}^C, {E_7}^C, {E_8}^C $ and these compact real forms $ F_4,E_6,E_7,E_8 $, we usually use the Cayley algebra $ \mathfrak{C} $. We consider replacing $ \mathfrak{C} $ with the field of real numbers $ \R $ in the definition of the  groups above. These groups are denoted by $ (F_{4,\sR})^C, (E_{6,\sR})^C, (E_{7,\sR})^C ,(E_{8,\sR})^C$ as complex Lie groups
and $ F_{4,\sR},E_{6,\sR},E_{7,\sR},E_{8,\sR} $ as these compact real forms. In the present article, our aim is to determine the structure of those groups and to investigate the connectedness of them using the results obtained and the ideas of several proofs in the exceptional Lie groups.

Here, we will sketch out the contents of this article. In Section $ 2 $ (Preliminaries), we describe the definitions of the exceptional Jordan algebras $ \mathfrak{J}(3,\mathfrak{C}), \mathfrak{J}(3,\mathfrak{C}^C) $, the Freudenthal $ C $-vector space $ \mathfrak{P}^C $, the $ 248 $-dimensional $ C $-vector space $ {\mathfrak{e}_8}^C $ and the exceptional Lie groups mentioned above, and moreover the properties related to algebras, vector spaces or the results obtained from the groups. In Section $ 3 $, we define the groups $ (F_{4,\sR})^C $ and $ F_{4,\sR} $ as follows:
\begin{align*}
(F_{4,\sR})^C&=\left\lbrace \alpha \in \Iso_{C}((\mathfrak{J}_{\sR})^C)\relmiddle{|} \alpha(X \circ Y)=\alpha X \circ \alpha Y\right\rbrace,
\\[1mm]
F_{4,\sR}&=\left\lbrace \alpha \in \Iso_{\sR}(\mathfrak{J}_{\sR})\relmiddle{|} \alpha(X \circ Y)=\alpha X \circ \alpha Y\right\rbrace,
\end{align*}
where $ \mathfrak{J}_{\sR} $ is the algebra replaced $ \mathfrak{C} $ with $ \R $ in $ \mathfrak{J}(3,\mathfrak{C}) $, and $ (\mathfrak{J}_{\sR})^C $ is the complexification of $ \mathfrak{J}_{\sR} $. In order to determine the structure of the group $ (F_{4,\sR})^C $, we concretely construct the isomorphism $ SO(3,C) \to (F_{4,\sR})^C $, and using this isomorphism, we determine the structure of the real form $ F_{4,\sR} $ of $ (F_{4,\sR})^C $. Moreover we also describe the uniqueness of the  expression $ \delta=\tilde{A}_1(c_1)+\tilde{A}_2(c_2)+\tilde{A}_3(c_3),c_i \in C $ for any element $ \delta \in (\mathfrak{f}_{4,\sR})^C $ on the Lie algebra $ (\mathfrak{f}_{4,\sR})^C $ of the group $ (F_{4,\sR})^C $.
In addition, in order to determine the type of the group $ (E_{8,\sR})^C $ as Lie algebras, we will determine the root system and the Dynkin diagram of the complex Lie algebras $ (\mathfrak{f}_{4,\sR})^C, (\mathfrak{e}_{6,\sR})^C $ and $ (\mathfrak{e}_{7,\sR})^C $ of the group $ (F_{4,\sR})^C, (E_{6,\sR})^C $ and $ (E_{7,\sR})^C $ through Sections $ 3,4$ and $ 5 $.
Well, in Section $ 4 $, the structure of contents is almost the same as that in Section $ 3 $. Besides, we prove the connectedness of the group $ (E_{6,\sR})^C $ by constructing a homogeneous space, and its result
plays the important role in order to determine to the structure of its group.
In Section $ 5 $, we can not concretely construct a mapping between the group $ (E_{7,\sR})^C $(resp. $ E_{7,\sR} $) and the group $ Sp(3,\H^C) $ (resp. $ Sp(3) $). However, by constructing a mapping between the Lie algebras $ (\mathfrak{e}_{7,\sR})^C $ (resp. $ \mathfrak{e}_{7,\sR} $) and $ \mathfrak{sp}(3,\H^C) $ (resp. $ \mathfrak{sp}(3) $), concretely,
together with the connectedness of the group $ (E_{7,\sR})^C $ (resp. $ E_{7,\sR} $) proved later, the structure of these groups are determined.
In the final section (Section $ 6 $), we can not even construct a mapping between the Lie algebras $ (\mathfrak{e}_{8,\sR})^C $ and $ {\mathfrak{f}_4}^C $. Using the results related to Lie algebras in the previous sections, we can draw the Dynkin Diagram of the Lie algebra $(\mathfrak{e}_{8,\sR})^C  $ of the group $ (E_{8,\sR})^C $, so that we obtain that the group $(E_{8,\sR})^C $ is the type $ F_4 $ as Lie algebras. Moreover, since the connectedness of $(E_{8,\sR})^C $ is proved, the structure of its group is determined. Since the group $ E_{8,\sR} $ is the compact real form of the group $ (E_{8,\sR})^C $,
we can determine the structure of the group $ E_{8,\sR} $.

Our results are given as follows.
\begin{center}
\begin{tabular}{ll}
    $(F_{4,\sR})^C \cong SO(3,C)$, & $ F_{4,\sR} \cong SO(3), $
    \\[3mm]
    $(E_{6,\sR})^C \cong SL(3,C)$, & $E_{6,\sR} \cong SU(3), $
    \\[3mm]
    $(E_{7,\sR})^C \cong Sp(3,\H^C)$, & $ E_{7,\sR} \cong Sp(3), $
    \\[3mm]
    $ (E_{8,\sR})^C \cong {F_4}^C $, & $E_{8,\sR} \cong F_4$.
\end{tabular}
\end{center}
\vspace{2mm}

It is trivial that $ (G_{2,\sR})^C=G_{2,\sR}=\{1\} $, so it is omitted in the present article. As a vision for our future, we will consider replacing $ \mathfrak{C} $ with the fields of complex numbers $ \C $ or quaternion  numbers $ \H $ as mentioned above. To our knowledge, the structures of the groups $  E_{7,\sH}, E_{8, \sC}, E_{8,\sH} $ and these complexification have not been determined. As for the groups which have been already resolved, see \cite{iy11}, \cite{iy0} or \cite{iy9}.

We use the same notations as that in \cite{iy7}, \cite{iy2}, \cite{iy1} or \cite{iy0}. Finally, the author would like to say that this article is to give elementary proofs of the isomorphism and connectedness of the groups.

\section{Preliminaries}

In this section, we will focus on the definitions of the exceptional complex and compact Lie groups of type $ F_4,E_6, E_7 $ and $ E_8 $, the structure of those Lie algebras and several results related to them. Since the proofs are omitted, if necessary, refer to \cite{iy7}, \cite{iy2}, \cite{iy1}, \cite{iy3} for the complex Lie groups and \cite{iy0} for the compact Lie groups.

\if0
\subsection{The complex Lie group $ {G_2}^C $ and the inner automorphism $ \tilde{w}_3 $ of order 3 }

Let $\mathfrak{C}^C=\{e_0 =1, e_1, e_2, e_3, e_4, e_5, e_6, e_7 \}_{C}$ be the division complex  Cayley algebra. In $\mathfrak{C}^C$, since the multiplication and the inner product are well known, these are omitted.
\vspace{1mm}

The simply connected complex Lie group $ {G_2}^C $ is given by
$$
{G_2}^C =\{\alpha \in \Iso_{C}(\mathfrak{C}^C)\,|\, \alpha(xy)=(\alpha x) (\alpha y) \}.
$$

We define $ C $-linear transformation $ w_3 $ of $ \mathfrak{C}^C $ by
\begin{align*}
  w_3(m_0+\m)=m_0+\omega_3\m, \,\, m_0+\m \in \C^C \oplus (\C^3)^C=\mathfrak{C}^C,
\end{align*}
where $ \omega_3=(-1/2)+(\sqrt{3}/2)e_1 \in \C \subset \C^C$. Then we have $ w_3 \in {G_2}^C $ and $ (w_3)^3=1 $. Hence $ w_3 $ induces the inner automorphism $ \tilde{w}_3 $ of order 3 on $ {G_2}^C $: $ \tilde{w}_3(\alpha)=w_3\alpha{w_3}^{-1}, \alpha \in {G_2}^C $.

We have the following theorem.

\begin{theorem}[{\cite[Theorem 1.2 (3)]{iy3}}]\label{3.1}
 The group $ ({G_2}^C)^{w_3} $ is isomorphic to the group $ SU(3,\C^C) ${\rm :} \allowbreak $  ({G_2}^C)^{w_3} \cong SU(3,\C^C) $.
\end{theorem}
\fi
\subsection{The Lie groups $ {F_4}^C $ and $ F_4 $}

In $\mathfrak{J}(3,\mathfrak{C}^C )$ (resp.\;$ \mathfrak{J}(3,\mathfrak{C}) $), the Jordan multiplication $X \circ Y$, the
inner product $(X,Y)$ and the cross multiplication $X \times Y$, called the Freudenthal multiplication, are defined by
$$
\begin{array}{c}
X \circ Y = \dfrac{1}{2}(XY + YX), \quad (X,Y) = \tr(X \circ Y),
\vspace{1mm}\\
X \times Y = \dfrac{1}{2}(2X \circ Y-\tr(X)Y - \tr(Y)X + (\tr(X)\tr(Y)
- (X, Y))E),
\end{array}
$$
respectively, where $E$ is the $3 \times 3$ unit matrix. Moreover, we define the trilinear form $(X, Y, Z)$, the determinant $\det \,X$ by
$$
(X, Y, Z)=(X, Y \times Z),\quad \det \,X=\dfrac{1}{3}(X, X, X),
$$
respectively, and briefly denote $\mathfrak{J}(3, \mathfrak{C}^C)$ (resp. $ \mathfrak{J}(3,\mathfrak{C}) $)
by $\mathfrak{J}^C$ (resp. $ \mathfrak{J} $).

In $\mathfrak{J}^C$ (resp. $ \mathfrak{J} $), we often use the following nations:
\begin{align*}
E_1 &= \begin{pmatrix}
1 & 0 & 0 \\
0 & 0 & 0 \\
0 & 0 & 0
\end{pmatrix},  \,\,\,\,\,\,\,\,
E_2 = \begin{pmatrix}
0 & 0 & 0 \\
0 & 1 & 0 \\
0 & 0 & 0
\end{pmatrix},  \,\,\,\,\,\,\,\,\,\,
E_3 =\begin{pmatrix}
0 & 0 & 0 \\
0 & 0 & 0 \\
0 & 0 & 1
\end{pmatrix},
\\[2mm]
F_1 (x) &= \begin{pmatrix}
0 &      0 & 0 \\
0 &      0 & x \\
0 & \ov{x} & 0
\end{pmatrix},  \,\,
F_2(x) = \begin{pmatrix}
0 & 0 & \ov{x} \\
0 & 0 & 0 \\
x & 0 & 0
\end{pmatrix},  \,\,
F_3 (x) = \begin{pmatrix}
0 & x & 0 \\
\ov{x} & 0 & 0 \\
0 & 0 & 0
\end{pmatrix}.
\end{align*}

The simply connected complex Lie group ${F_4}^C$ and its compact real form $ F_4 $ are defined by
\begin{align*}
{F_4}^C
&= \{\alpha \in \Iso_C(\mathfrak{J}^C) \, | \, \alpha(X \circ Y) = \alpha X \circ \alpha Y \},
\\[1mm]
F_4
&= \{\alpha \in \Iso_{\sR}(\mathfrak{J}) \, | \, \alpha(X \circ Y) = \alpha X \circ \alpha Y \}.
 \end{align*}

The Lie algebra ${\mathfrak{f}_4}^C$
of the group ${F_4}^C$
is given by
\begin{align*}
{\mathfrak{f}_4}^C=\{\delta \in \mathrm{Hom}_{C}({\mathfrak{J}}^C)\,|\, \delta(X \circ Y)=\delta X \circ Y + X \circ \delta Y  \},
\end{align*}
and any element $\delta$ of the Lie algebras ${\mathfrak{f}_4}^C$ can be uniquely expressed by
\begin{align*}
\delta =(D_1,D_2,D_3) +\ti{A}_1(a_1) +\ti{A}_2(a_2) +\ti{A}_3(a_3), D_i \in \mathfrak{so}(8,C), a_k \in \mathfrak{C}^C,
\end{align*}
where $D_2, D_3 \in \mathfrak{so}(8,C)$ are uniquely determined by the Principle of triality on $ \mathfrak{so}(8,C) $: $(D_1 x)y+x(D_2 y)=\overline{D_3 (\overline{xy})},\, x,y \in \mathfrak{C}^C$ for a given
$D_1 \in \mathfrak{so}(8,C)$ and $\ti{A}_k(a_k)$ is the $C$-linear mapping of $\mathfrak{J}^C$.

\subsection{The Lie groups $ {E_6}^C $ and $ E_6 $}

The simply connected complex Lie group ${E_6}^C$ and its compact real form $ E_6 $ are defined by
\begin{align*}
{E_6}^C &= \{\alpha \in \Iso_C(\mathfrak{J}^C) \, | \, \det \, \alpha X = \det \, X \},
\\[1mm]
E_6 &= \{\alpha \in \Iso_C(\mathfrak{J}^C) \, | \, \det \, \alpha X = \det \, X, \langle \alpha X, \alpha Y \rangle=\langle X, Y \rangle \},
\end{align*}
where the Hermite inner product $ \langle X, Y \rangle $ is defined by $ (\tau X,Y) $: $ \langle X, Y \rangle=(\tau X,Y) $, here $ \tau $ is the complex conjugation in $ \mathfrak{J}^C $.

Then we have naturally the inclusion ${F_4}^C \hookrightarrow {E_6}^C$ and $ ({E_6}^C)_E \cong  {F_4}^C$ (resp. $ F_4 \hookrightarrow E_6$ and $ (E_6)_E \cong F_4 $).

The Lie algebra ${\mathfrak{e}_6}^C$ of the group ${E_6}^C$
is given by
\begin{align*}
{\mathfrak{e}_6}^C=\{\phi \in \mathrm{Hom}_{C}({\mathfrak{J}}^C)\,|\, (\phi X,X,X)=0 \},
\end{align*}
and any element $ \phi $ of Lie algebras $ {\mathfrak{e}_6}^C$ can be uniquely expressed by
\begin{align*}
\phi=\delta+\tilde{T},\,\,\delta \in {\mathfrak{f}_4}^C, \, T \in (\mathfrak{J}^C)_0,
\end{align*}
where $(\mathfrak{J}^C)_0=\{X \in \mathfrak{J}^C \,|\,\tr(X)=0 \}$ and
the $C$-linear mapping $\tilde{T}$ of $\mathfrak{J}^C$ is defined by $\tilde{T}X=T \circ X, X \in \mathfrak{J}^C$.

\subsection{The Lie groups $ {E_7}^C $ and $ E_7 $}

Let $\mathfrak{P}^C$ be the $56$-dimensional Freudenthal $C$-vector space
\begin{align*}
\mathfrak{P}^C = \mathfrak{J}^C \oplus \mathfrak{J}^C \oplus C \oplus C
\end{align*}
with the inner product $ (P,Q) $, the Hermite inner product $ \langle P,Q \rangle $ and the skew-symmetric inner product $ \{ P,Q \} $ which are defined by
\begin{align*}
(P,Q)&=(X,Z)+(Y,W)+\xi\zeta+\eta\omega
\\
\langle P,Q \rangle&=\langle X,Z \rangle+\langle Y,W \rangle+(\tau \xi)+(\tau \eta)\omega,
\\
\{P, Q\}&=(X,W)-(Y,Z)+\xi\omega-\eta\zeta,
\end{align*}
respectively, where $ P=(X,Y,\xi,\eta), Q=(Z,W,\zeta,\omega) \in \mathfrak{P}^C $.

In $ \mathfrak{P}^C $, we often use the following notations:
\begin{align*}
\dot{X}=(X,0,0,0),\,\,\text{\d{$ Y $}}=(0,Y,0,0),\,\dot{1}=(0,0,1,0),\,\,\text{\d{$ 1 $}}=(0,0,0,1).
\end{align*}

For $\phi \in {\mathfrak{e}_6}^C$, $A, B \in \mathfrak{J}^C$ and $\nu \in C$, we define a $C$-linear mapping  $\varPhi(\phi, A, B, \nu) : \mathfrak{P}^C \to \mathfrak{P}^C$ by
\begin{align*}
\varPhi(\phi, A, B, \nu) \begin{pmatrix}X \vspace{1.5mm}\\
Y \vspace{1.5mm}\\
\xi \vspace{.5mm}\\
\eta
\end{pmatrix}
=  \begin{pmatrix}\phi X - \dfrac{1}{3}\nu X + 2B
\times Y + \eta A \vspace{-0.5mm}\\
2A \times X - {}^t\phi Y + \dfrac{1}{3}\nu Y +
\xi B \vspace{1mm}\\
(A, Y) + \nu\xi \vspace{1.5mm}\\
(B, X) - \nu\eta
\end{pmatrix},
\end{align*}
where ${}^t\phi \in {\mathfrak{e}_6}^C$ is the transpose of $\phi$ with respect to the
inner product $(X, Y)$: $({}^t\phi X, Y) = (X, \phi Y)$.

For $ P = (X, Y, \xi, \eta), Q = (Z, W, \zeta, \omega) \in \mathfrak{P}^C $, we define a $C$-linear
mapping $P \times Q: \mathfrak{P}^C \to \mathfrak{P}^C$, called the Freudenthal cross operation, by
\begin{align*}
P \times Q = \varPhi(\phi, A, B, \nu), \quad
\left \{ \begin{array}{l}
\vspace{1mm}
\phi = - \dfrac{1}{2}(X \vee W + Z \vee Y) \\
\vspace{1mm}
A = - \dfrac{1}{4}(2Y \times W - \xi Z - \zeta X) \\
\vspace{1mm}
B =  \dfrac{1}{4}(2X \times Z - \eta W - \omega Y) \\
\vspace{1mm}
\nu = \dfrac{1}{8}((X, W) + (Z, Y) - 3(\xi\omega + \zeta\eta)),
\end{array} \right.
\end{align*}
where $X \vee W \in {\mathfrak{e}_6}^C$ is defined by
\begin{align*}
X \vee W = [\tilde{X}, \tilde{W}] + (X \circ W - \dfrac{1}{3}(X, W)E)^{\sim},
\end{align*}
here the $C$-linear mappings $\tilde{X}, \tilde{W}$ of $\mathfrak{J}^C$ are the same ones as that in ${E_6}^C$.
\vspace{1mm}

The simply connected complex Lie group ${E_7}^C$ and its compact real form $ E_7 $ are defined by
\begin{align*}
{E_7}^C &=  \{\alpha \in \Iso_C(\mathfrak{P}^C) \, | \, \alpha(P \times Q)\alpha^{-1}=\alpha P \times \alpha Q\},
\\[1mm]
E_7 &=  \{\alpha \in \Iso_C(\mathfrak{P}^C) \, | \, \alpha(P \times Q)\alpha^{-1}=\alpha P \times \alpha Q, \langle \alpha P. \alpha Q \rangle=\langle P,Q \rangle \}.
\end{align*}

For $\alpha \in {E_6}^C$, the mapping $\ti{\alpha}: \mathfrak{P}^C \to \mathfrak{P}^C$ is defined by
$$
\ti{\alpha}(X, Y, \xi, \eta)=(\alpha X, {}^t \alpha^{-1}Y, \xi, \eta),
$$
then we have $\ti{\alpha} \in {E_7}^C$, and so $\alpha$ and $\ti{\alpha}$ will be identified. The group ${E_7}^C$ contains ${E_6}^C$ as a subgroup by
$$
{E_6}^C=({E_7}^C)_{\dot{1}, \underset{\dot{}}{1}}(=\{ \alpha \in {E_7}^C \,|\, \alpha \dot{1}=\dot{1}, \alpha \text{\d{$ 1 $}}=\text{\d{$ 1 $}} \}\,).
$$
Hence we have the inclusion $ {F_4}^C \hookrightarrow {E_6}^C \hookrightarrow {E_7}^C$.
For $ \alpha \in E_6 $, suppose $ \alpha \underset{\dot{}}{1}=\underset{\dot{}}{1} $, we have $ \alpha \dot{1}=\dot{1} $, so $ E_7 $ contains $ E_6 $ as a subgroup by
\begin{align*}
  E_6=(E_7)_{\underset{\dot{}}{1}}(:=\{ \alpha \in E_7 \,|\, \alpha \dot{1}=\dot{1} \}\,).
\end{align*}
Hence we have the inclusion $ F_4 \hookrightarrow E_6 \hookrightarrow E_7 $.
Note that $ \alpha \in {E_7}^C $ leaves the skew-symmetric inner product invariant: $ \{\alpha P, \alpha Q \}=\{P,Q \}, P,Q, \in \mathfrak{P}^C $.

The Lie algebra ${\mathfrak{e}_7}^C$ of the group ${E_7}^C$ and the Lie algebra $ \mathfrak{e}_7 $ of the group $ E_7 $ are given by
\begin{align*}
{\mathfrak{e}_7}^C &= \{\varPhi(\phi, A, B, \nu) \in \Hom_C(\mathfrak{P}^C) \, | \, \phi \in {\mathfrak{e}_6}^C, A, B \in \mathfrak{J}^C, \nu \in C\},
\\[1mm]
\mathfrak{e}_7 &= \{\varPhi(\phi, A, -\tau A, \nu) \in \Hom_C(\mathfrak{P}^C) \, | \, \phi \in \mathfrak{e}_6, A \in \mathfrak{J}^C, \nu \in i\R \}.
\end{align*}

\subsection{The Lie groups $ {E_8}^C $ and $ E_8 $ }

Let ${\mathfrak{e}_8}^C$ be the $248$-dimensional $C$-vector space
$$
{\mathfrak{e}_8}^C = {\mathfrak{e}_7}^C \oplus \mathfrak{P}^C \oplus \mathfrak{P}^C \oplus C \oplus C \oplus C.
$$
We define a Lie bracket $[R_1, R_2], R_1\!=\!(\varPhi_1, P_1, Q_1, r_1, s_1, t_1), R_2\!=\!(\varPhi_2, P_2, Q_2, r_2, s_2, t_2)$, by
\begin{align*}
[R_1, R_2]:= (\varPhi, P, Q, r, s, t),\,\,\left\{\begin{array}{l}
\varPhi = {[}\varPhi_1, \varPhi_2] + P_1 \times Q_2 - P_2 \times Q_1
\vspace{1mm} \\
P = \varPhi_1P_2 - \varPhi_2P_1 + r_1P_2 - r_2P_1 + s_1Q_2 - s_2Q_1
\vspace{1mm} \\
Q = \varPhi_1Q_2 - \varPhi_2Q_1 - r_1Q_2 + r_2Q_1 + t_1P_2 - t_2P_1
\vspace{1mm} \\
r = - \dfrac{1}{8}\{P_1, Q_2\} + \dfrac{1}{8}\{P_2, Q_1\} + s_1t_2 - s_2t_1\vspace{1mm} \\
s = \,\,\, \dfrac{1}{4}\{P_1, P_2\} + 2r_1s_2 - 2r_2s_1
\vspace{1mm} \\
t = - \dfrac{1}{4}\{Q_1, Q_2\} - 2r_1t_2 + 2r_2t_1.
\end{array} \right.
\end{align*}
Then the $C$-vector space ${\mathfrak{e}_8}^C$ becomes a complex simple Lie algebra of type $E_8$.

In ${\mathfrak{e}_8}^C$, we often use the following notations:
\begin{align*}
\begin{array}{c}
\varPhi = (\varPhi, 0, 0, 0, 0, 0), \quad P^- = (0, P, 0, 0, 0, 0), \quad Q_- = (0, 0, Q, 0, 0, 0),
\vspace{1mm}\\
\tilde{r} = (0, 0, 0, r, 0, 0), \quad s^- = (0, 0, 0, 0, s, 0), \quad t_- = (0, 0, 0, 0, 0, t).
\end{array}
\end{align*}

We define a $C$-linear transformation $\lambda_\omega$ of ${\mathfrak{e}_8}^C$
by
\begin{align*}
\lambda_\omega(\varPhi, P, Q, r, s, t) = (\lambda\varPhi\lambda^{-1}, \lambda Q, - \lambda P, -r, -t, -s),
\end{align*}
where a $C$-linear transformation $\lambda$ of $\mathfrak{P}^C$ on the right-hand side is defined by
$ \lambda(X, Y, \xi, \eta)$ $ = (Y, - X, \eta, -\xi)$.
As in $\mathfrak{J}^C$, the complex conjugation in ${\mathfrak{e}_8}^C$ is denoted by $\tau$:
\begin{align*}
\tau(\varPhi, P, Q, r, s, t) = (\tau\varPhi\tau, \tau P, \tau Q, \tau r, \tau s, \tau t).
\end{align*}

The simply connected complex Lie group ${E_8}^C$ and it compact real form $E_8$ are defined by
\begin{align*}
{E_8}^C &=\Aut({\mathfrak{e}_8}^C)= \{\alpha \in \Iso_C({\mathfrak{e}_8}^C) \,|\, \alpha[R, R'] = [\alpha R, \alpha R']\},
\\[1mm]
E_8 &= \{\alpha \in {E_8}^C \, | \, \langle \alpha R_1, \alpha R_2 \rangle=\langle R_1,R_2 \rangle \} =({E_8}^C)^{\tau\lambda_\omega },
\end{align*}
where the Hermite inner product $ \langle R_1,R_2 \rangle $ is defined by $ (-1/15)B_8(\tau\lambda_\omega R_1,R_2) $ using the Killing form $ B_8 $ of $ {\mathfrak{e}_8}^C $: $ \langle R_1,R_2 \rangle=(-1/15)B_8(\tau\lambda_\omega R_1,R_2) $.

For $\alpha \in {E_7}^C$, the mapping $\tilde{\alpha} : {\mathfrak{e}_8}^C \to {\mathfrak{e}_8}^C$ is defined by
\begin{align*}
\tilde{\alpha}(\varPhi, P, Q, r, s, t) = (\alpha\varPhi\alpha^{-1}, \alpha P, \alpha Q, r, s, t),
\end{align*}
then we have $\tilde{\alpha} \in {E_8}^C$, and so $\alpha$ and $\tilde{\alpha}$ will be identified. The group ${E_8}^C$ contains the group ${E_7}^C$ as a subgroup defined by
\begin{align*}
{E_7}^C
=({E_8}^C)_{\tilde{1},1^-,1_-}(=\{ \alpha \in ({E_8}^C)\,|\,\alpha \tilde{1}=\tilde{1},\alpha 1^-=1^-,\alpha 1_-=1_- \}\,).
\end{align*}
Hence we have the inclusion ${F_4}^C \hookrightarrow {E_6}^C \hookrightarrow {E_7}^C \hookrightarrow {E_8}^C$. For $ \alpha \in E_8 $, suppose $ \alpha 1_-=1_- $, we have $ \alpha \tilde{1}=\tilde{1} $ and $ \alpha 1^-=1^- $, so $ E_8 $ contains $ E_7 $ as a subgroup defined by
\begin{align*}
E_7=(E_8)_{1_-}(=\left\lbrace \alpha \in E_8 \relmiddle{|}\alpha 1_-=1_-\right\rbrace ).
\end{align*}
Hence we have the inclusion $F_4 \hookrightarrow E_6 \hookrightarrow E_7 \hookrightarrow E_8$.

The Lie algebra $ {\mathfrak{e}_8}^C $ of the group $ {E_8}^C $ and the Lie algebra $\mathfrak{e}_8$ of the group $E_8$ are given by
\begin{align*}
{\mathfrak{e}_8}^C&=\{(\varPhi, P, Q, r, s, t)\,|\,\varPhi \in {\mathfrak{e}_7}^C, P,Q \in \mathfrak{J}^C, r,s,t \in C  \},
\\[1mm]
\mathfrak{e}_8&=\{(\varPhi, P, -\tau\lambda P, r, s, -\tau s)\,|\,\varPhi \in \mathfrak{e}_7, P \in \mathfrak{J}^C, r \in i\R, s \in C  \}.
\end{align*}

\subsection{Useful lemma}

\begin{lemma}{\rm \cite[Proposition 8.7]{iy10}}\label{lemma 2.1}
	For Lie groups $G, G' $,  let a mapping $\varphi : G \to G'$ be a homomorphism of Lie groups. When $G'$ is connected, if $\Ker\,\varphi$ is discrete and $\dim(\mathfrak{g})=\dim(\mathfrak{g}')$, $\varphi$ is surjective.
\end{lemma}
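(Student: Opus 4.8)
The plan is to reduce the global surjectivity of $\varphi$ to an infinitesimal statement about the differential $d\varphi : \mathfrak{g} \to \mathfrak{g}'$ at the identity, and then to a topological argument that uses the connectedness of $G'$. Throughout, the essential mechanism is that a Lie group homomorphism is governed near the identity by its induced Lie algebra homomorphism, while the passage from ``image is large near $e'$'' to ``image is everything'' is supplied by the standard open-subgroup argument.

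First I would differentiate $\varphi$ at the identity to obtain the induced Lie algebra homomorphism $d\varphi : \mathfrak{g} \to \mathfrak{g}'$, compatible with the exponential maps via $\varphi(\exp(tX)) = \exp(t\,d\varphi(X))$. The key infinitesimal fact is that the Lie algebra of $\Ker\,\varphi$ coincides with $\Ker(d\varphi)$: a one-parameter subgroup $t \mapsto \exp(tX)$ lies in $\Ker\,\varphi$ exactly when $\exp(t\,d\varphi(X))$ is trivial for all $t$, which forces $d\varphi(X)=0$, and conversely. Since $\Ker\,\varphi$ is discrete, it is a zero-dimensional Lie subgroup, so its Lie algebra is $\{0\}$; hence $d\varphi$ is injective.

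Next, because $\dim(\mathfrak{g}) = \dim(\mathfrak{g}')$ and $d\varphi$ is an injective linear map between vector spaces of equal dimension, $d\varphi$ is a linear isomorphism, in particular surjective. Thus $\varphi$ is a submersion at the identity, and by the implicit function theorem its image $\varphi(G)$ contains an open neighborhood $U$ of the identity $e'$ in $G'$. I would then invoke the standard subgroup argument: $\varphi(G)$ is a subgroup of $G'$ containing the open set $U$, and translating $U$ by elements of $\varphi(G)$ shows that $\varphi(G)$ is open. An open subgroup of a topological group is automatically closed, since its complement is a union of (open) cosets. As $G'$ is connected and $\varphi(G)$ is a nonempty open-and-closed subset, we conclude $\varphi(G)=G'$, i.e.\ $\varphi$ is surjective.

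The only step requiring genuine care is the identification of the Lie algebra of the discrete kernel with $\Ker(d\varphi)$, together with the observation that discreteness of $\Ker\,\varphi$ forces this Lie algebra to vanish; once injectivity of $d\varphi$ is secured, the equality of dimensions upgrades it to an isomorphism for free, and the remaining passage to global surjectivity is purely the open-subgroup-is-closed argument combined with the connectedness hypothesis on $G'$. I expect no substantial obstacle beyond this infinitesimal bookkeeping.
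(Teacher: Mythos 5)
Your proof is correct. Note that the paper itself gives no proof of this lemma at all---it is quoted verbatim from Yokota's textbook (Proposition 8.7 of the cited reference)---and your argument (discreteness of $\Ker\,\varphi$ forces $d\varphi$ to be injective, equality of dimensions upgrades it to an isomorphism, hence $\varphi$ is a local diffeomorphism at the identity, and the open-subgroup-is-closed argument together with connectedness of $G'$ yields surjectivity) is exactly the standard proof of this fact, so there is nothing to compare beyond confirming its validity.
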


\begin{lemma}{\rm \cite[Lemma 0.7]{iy7}[E. Cartan-Ra\v{s}evskii]}\label{lemma 2.2}
	Let $G$ be a simply connected Lie group with a finite order automorphism $\sigma$
	of $G$. Then $G^\sigma$ is connected.
\end{lemma}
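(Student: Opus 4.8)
The plan is to reduce the statement to the classical compact case by an equivariant deformation retraction, and then to settle that case by analysing logarithms in a $\sigma$-stable maximal torus. First I record that $G^\sigma=\{g\in G\mid\sigma(g)=g\}$ is a closed subgroup, hence an embedded Lie subgroup of $G$, whose Lie algebra is the fixed subalgebra $\mathfrak{g}^\sigma=\{X\in\mathfrak{g}\mid(d\sigma)_eX=X\}$; here $(d\sigma)_e$ is semisimple, since $\sigma^n=\mathrm{id}$ forces $((d\sigma)_e)^n=\mathrm{id}$. Thus $G^\sigma$ is a submanifold, and it suffices to prove that it has a single connected component.

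Next I would reduce to a compact group. Averaging an arbitrary inner product on $\mathfrak{g}$ over the finite group $\langle(d\sigma)_e\rangle$ and extending it by left translations yields a left-invariant metric which, using the relation $\sigma\circ L_g=L_{\sigma(g)}\circ\sigma$, a routine computation shows to be $\sigma$-invariant (so that $\sigma$ is an isometry) and complete. Because $\langle\sigma\rangle$ is a finite group of automorphisms, one may then select a $\sigma$-invariant maximal compact subgroup $U\subset G$; and since $G$ is simply connected it deformation retracts onto $U$ (Cartan--Iwasawa--Mal'cev), so $U$ is a \emph{simply connected} compact connected Lie group. Choosing the retraction $r_t\colon G\to U$ to be $\sigma$-equivariant, the relations $\sigma\circ r_t=r_t\circ\sigma$ and $r_1(G)=U$ show that $r_t$ restricts to a deformation retraction of $G^\sigma$ onto $U\cap G^\sigma=U^\sigma$. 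Hence $G^\sigma$ is connected if and only if $U^\sigma$ is, and the theorem is reduced to the case in which $G$ is compact and simply connected. (Equivalently one may run the homotopy exact sequence of the bundle $G^\sigma\hookrightarrow G\to G/G^\sigma$, with $\pi_1(G)=\pi_0(G)=0$, reducing instead to the simple connectivity of $G/G^\sigma$.)

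Finally I would treat the compact simply connected case, which I expect to contain the real difficulty. Equip $U$ with a bi-invariant metric, so that $\sigma$ is an isometry, $\exp\colon\mathfrak{u}\to U$ is surjective, and the geodesics through $e$ are the one-parameter subgroups. Given $u\in U^\sigma$ I would choose a $\sigma$-stable maximal torus $T\ni u$ (possible because $u$ is $\sigma$-fixed) and seek a logarithm $X\in\mathfrak{t}$ with $\exp X=u$ and $(d\sigma)_eX=X$; such an $X$ lies in $\mathfrak{u}^\sigma$, and then $t\mapsto\exp(tX)$ is a path inside $U^\sigma$ joining $e$ to $u$, which proves connectedness. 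Writing $\Lambda=\Ker(\exp|_{\mathfrak{t}})$ for the integral lattice and fixing one preimage $X_0$ of $u$, the set of preimages is $X_0+\Lambda$, which $(d\sigma)_e$ preserves; a fixed logarithm exists precisely when the equation $(1-(d\sigma)_e)\mu=(d\sigma)_eX_0-X_0$ can be solved for $\mu\in\Lambda$. This integrality (indeed cohomological) condition is the crux, and it is exactly here that the simple connectivity of $U$ is used, since it forces $\Lambda$ to be the coroot lattice and renders the equation solvable. Establishing this last point is the substantive classical content---the connectedness theorem of E. Cartan for involutive $\sigma$ and of Ra\v{s}evskii for general finite order---which I would invoke rather than reprove, flagging it as the main obstacle of the argument.
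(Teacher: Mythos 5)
The paper itself gives no proof of this lemma: it is imported verbatim from \cite[Lemma 0.7]{iy7} and used as a black box, so your argument has to stand entirely on its own. It does not, because at the decisive moment you write that solving the lattice equation ``is the substantive classical content --- the connectedness theorem of E.~Cartan for involutive $\sigma$ and of Ra\v{s}evskii for general finite order --- which I would invoke rather than reprove.'' In the compact simply connected situation to which you have reduced everything, that theorem \emph{is} the statement being proved; the preceding reductions (the $\sigma$-invariant maximal compact $U$, the $\sigma$-equivariant retraction, the passage to a $\sigma$-stable maximal torus, each of which is itself a nontrivial result quoted without reference) only repackage the problem. Invoking the result at the end makes the argument circular, so there is a genuine gap exactly where you flag ``the main obstacle.''

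Moreover, the one concrete claim you make about that crux is false as stated. Take $U=SU(2)$, $\sigma=\mathrm{Ad}(\epsilon)$ with $\epsilon=\bigl(\begin{smallmatrix}0&1\\-1&0\end{smallmatrix}\bigr)$ (an automorphism of order $2$, since $\epsilon^2=-E$ is central), $u=-E\in U^\sigma$, and $T$ the diagonal maximal torus, which is $\sigma$-stable and contains $u$; here $(d\sigma)_e=-1$ on $\mathfrak{t}$. The logarithms of $u$ in $\mathfrak{t}$ are $X_k=\pi i(2k+1)\,\mathrm{diag}(1,-1)$, none of which is $\sigma$-fixed, and your equation $(1-(d\sigma)_e)\mu=(d\sigma)_eX_0-X_0$ forces $\mu=-\pi i\,\mathrm{diag}(1,-1)$, which is one half of a generator of $\Lambda$ and hence not in $\Lambda$ --- even though $U$ is simply connected and $\Lambda$ is the coroot lattice. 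The lemma is of course true in this example: $U^\sigma=Z_{SU(2)}(\epsilon)$ is the circle through $\epsilon$, which is connected and contains $-E$; but to exhibit $u$ on a one-parameter subgroup of $U^\sigma$ one must use \emph{that} circle, on which $\sigma$ acts trivially, not the diagonal torus. So ``simple connectivity renders the equation solvable'' fails for an arbitrary $\sigma$-stable torus through $u$; the whole difficulty is the adapted choice of torus (say, one containing a maximal torus of $(U^\sigma)^0$), and that is precisely what is missing. Given that the paper handles this lemma by citation, the coherent options are to do the same, or to supply the torus-selection argument in full; the proposal as written is neither.
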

After this, using these lemmas without permission each times, we often prove lemma, proposition or theorem.

\if0
\noindent After this, using these lemmas without permission each times, we often prove lemma, proposition or theorem.

Next, we shall state proposition and theorem with respect to  the Principle of triality on $O(8,C)$, however since we can prove the proof of these in the way similar to the proof of the Principle of triality on $SO(8)$, the proof of these are omitted (as for the Principle of triality on $SO(8)$, see \cite[Subsection 1.14]{/////}).

\begin{theorem}[Principle of triality on $SO(8,C)$]\label{theorem 2.3}
	For any $\alpha_3 \in SO(8,C)$, there exist $\alpha_1, \alpha_2 \in SO(8,C)$ such that
	$$
	(\alpha_1 x)(\alpha_2 y) = \alpha_3 (xy), \,\, x,y \in \mathfrak{C}^C.
	$$
	Moreover, $\alpha_1,\alpha_2$ are determined uniquely up to the sign for $\alpha_3$, that is, for $\alpha_3$, these  $\alpha_1,\alpha_2$ have to be $\alpha_1,\alpha_2$ or $- \alpha_1, - \alpha_2$.
\end{theorem}

\begin{proposition}\label{proposition 2.4}
	If $\alpha_1,\alpha_2,\alpha_3 \in O(8,C)$ satisfy the relational formula
	$$
	(\alpha_1x)(\alpha_2y) = \alpha_3(xy), \,\, x,y \in \mathfrak{C}^C,
	$$
	then $\alpha_1,\alpha_2,\alpha_3 \in SO(8,C)$.
\end{proposition}

\fi

\section{The groups $ (F_{4,\sR})^C, F_{4,\sR }$ and the root system, the Dynkin diagram of the Lie algebra $ (\mathfrak{f}_{4,\sR})^C $ }

Let $ \mathfrak{J}(3,\mathfrak{C}^C), \mathfrak{J}(3,\mathfrak{C})  $ be the exceptional Jordan algebras.
We consider the algebras $ \mathfrak{J}(3,\R^C),\allowbreak \mathfrak{J}(3,\R) $ which are defined by replacing $ \mathfrak{C} $ with $ \R $ in $ \mathfrak{J}(3,\mathfrak{C}^C), \mathfrak{J}(3,\mathfrak{C}) $, and we briefly denote $ \mathfrak{J}(3,\R^C),\mathfrak{J}(3,\R) $ by $ (\mathfrak{J}_{\sR})^C, \mathfrak{J}_{\sR} $, respectively.

We consider the following groups $ (F_{4,\sR})^C $ and $F_{4,\sR} $ which are  respectively defined by replacing $ \mathfrak{C} $ with $ \R $ in the groups $ {F_4}^C $ and $ F_4 $:
\begin{align*}
(F_{4,\sR})^C&=\left\lbrace \alpha \in \Iso_{C}((\mathfrak{J}_{\sR})^C)\relmiddle{|} \alpha(X \circ Y)=\alpha X \circ \alpha Y\right\rbrace,
\\[1mm]
F_{4,\sR}&=\left\lbrace \alpha \in \Iso_{\sR}(\mathfrak{J}_{\sR})\relmiddle{|} \alpha(X \circ Y)=\alpha X \circ \alpha Y\right\rbrace.
\end{align*}
The group $ F_{4,\sR} $ is a compact group as a closed subgroup of the orthogonal group $ O(6)=O(\mathfrak{J}_{\sR})=\{\alpha \in \Iso_{\sR}(\mathfrak{J}_{\sR}) \,|\,(\alpha X,\alpha Y)=(X,Y)\} $ and the group $ (F_{4,\sR})^C $ is the complexification of $ F_{4,\sR} $.

We will prove lemma needed in the proof of Theorem \ref{theorem 3.2} below.

\begin{lemma}\label{lemma 3.1}
Any element $ X \in (\mathfrak{J}_{\sR})^C $ such that $ X^2=X, \tr(X)=1 $ can be transformed to any $ E_i,\allowbreak i=1,2,3 $ by some $ B \in O(3,C) ${\rm :} $ {}^t BXB=E_i $.
\end{lemma}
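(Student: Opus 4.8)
The plan is to use the concrete description of $ (\mathfrak{J}_{\sR})^C $ as the algebra of symmetric $ 3 \times 3 $ complex matrices: once $ \mathfrak{C} $ is replaced by $ \R $ the off-diagonal entries of a Hermitian matrix become ordinary numbers and the bar operation becomes trivial, so after complexification $ (\mathfrak{J}_{\sR})^C=\{X\in M(3,C)\mid {}^tX=X\} $, with $ X\circ Y=\frac{1}{2}(XY+YX) $, $ E $ the unit matrix and $ \tr $ the matrix trace. The first thing I would record is that for $ B\in O(3,C) $ one has $ {}^tB=B^{-1} $, so the congruence $ {}^tBXB $ agrees with the similarity $ B^{-1}XB $; thus the statement really asks that $ X $ be orthogonally diagonalizable to $ E_i $. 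The hypotheses translate cleanly: $ X^2=X $ says $ X $ is idempotent, and for an idempotent the trace equals the rank, so $ \tr(X)=1 $ forces $ X $ to be a symmetric projection of rank $ 1 $, with eigenvalue $ 1 $ of multiplicity one and eigenvalue $ 0 $ of multiplicity two.

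Next I would use symmetry to split $ C^3 $ orthogonally. Writing $ (x,y)={}^txy $ for the standard (non-degenerate, $ C $-bilinear) form, symmetry of $ X $ gives $ (Xx,y)=(x,Xy) $, so the image and the kernel of $ X $ are mutually orthogonal; being complementary inside a non-degenerate space, each is itself non-degenerate. In particular the one-dimensional image contains a vector $ a $ with $ Xa=a $ and $ (a,a)\neq 0 $, and dividing by a square root of $ (a,a) $ (available in $ C $) produces $ u $ with $ Xu=u $ and $ {}^tu\,u=1 $. Since $ X $ fixes $ u $ and annihilates $ u^{\perp} $ (which equals the kernel), and $ u\,{}^tu $ does exactly the same, I obtain the outer-product form $ X=u\,{}^tu $ with $ {}^tu\,u=1 $.

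It then suffices to produce $ B\in O(3,C) $ whose first column is $ u $, i.e. $ Be_1=u $ for the first standard basis vector $ e_1 $: then $ {}^tBXB=(B^{-1}u)\,{}^t(B^{-1}u)=e_1\,{}^te_1=E_1 $, and the cases $ E_2,E_3 $ follow by replacing $ B $ with $ BP $ for the permutation matrix $ P\in O(3)\subset O(3,C) $ carrying $ E_1 $ to $ E_i $. Producing $ B $ amounts to extending the non-isotropic unit vector $ u $ to a $ C $-orthonormal basis, and this is the one genuinely delicate point, since over $ C $ the form is isotropic and a careless Gram--Schmidt can stall on an isotropic vector. I would avoid this by working inside the two-dimensional non-degenerate space $ u^{\perp} $: a non-degenerate symmetric form on a space of positive dimension always has a non-isotropic vector (otherwise the bilinear form would vanish identically by polarization), so I may choose a unit vector $ u_2\in u^{\perp} $ and then a unit vector $ u_3 $ spanning the remaining non-degenerate line orthogonal to both $ u $ and $ u_2 $. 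The matrix $ B=(u\;\,u_2\;\,u_3) $ lies in $ O(3,C) $ and realizes $ {}^tBXB=E_1 $, which completes the argument.
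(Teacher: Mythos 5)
Your proof is correct, and it takes a genuinely different route from the paper's. The paper argues computationally: it first uses the real spectral theorem to diagonalize the real part of $X$ by some element of $O(3)\subset O(3,C)$, then compares entries in the identity $X^2=X$ to force the diagonal entries to be real and to obtain the constraints $x_ix_j=0$, $\xi_ix_i=0$ on the remaining purely imaginary off-diagonal data, and finally runs a case analysis (all $x_i=0$; $x_1\not=0$; etc.), writing down explicit matrices $B_1,B_2\in O(3,C)$ and permutation matrices that carry $X$ to $E_i$ in each case. You instead argue structurally: a symmetric idempotent of trace $1$ has rank $1$, its image and kernel are orthogonal complements with respect to the standard $C$-bilinear form, so the image is a non-degenerate line, giving $X=u\,{}^tu$ with ${}^tu\,u=1$; extending $u$ to a $C$-orthonormal basis then produces the required $B\in O(3,C)$ at once. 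The one point where a complex (as opposed to real) argument could fail --- isotropic vectors stalling the Gram--Schmidt process --- is exactly the point you address, and your resolution is sound: the image and the successive orthogonal complements are non-degenerate because they arise as orthogonal direct summands, and a non-degenerate symmetric form always admits a non-isotropic vector by polarization. What each approach buys: the paper's proof is explicit and constructive (it exhibits the transforming matrices, in the style of the rest of the paper's computations), while yours is shorter, conceptually transparent, and generalizes immediately to symmetric idempotents of any rank in $M(n,C)$, showing that the ``trace $=1$'' hypothesis is just the rank-$1$ case of orthogonal diagonalizability of symmetric idempotents over $C$.
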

\begin{proof}
It is well-known that any element $ X \in \mathfrak{J}_{\sR} $ can be transformed to diagonal form by a certain $ B \in O(3) $. Hence, since $ O(3,C) $ contains the subgroup $ O(3) $, we may assume $ X \in (\mathfrak{J}_{\sR})^C $ as
\begin{align*}
X=\begin{pmatrix}
\xi_1 & ix_3 & ix_2 \\
ix_3 & \xi_2 & ix_1  \\
ix_2 & ix_1 & \xi_3
\end{pmatrix},\;\;\begin{array}{l}
\xi_i \in C, \xi_1+\xi_2+\xi_3=1,\\
x_i \in \R.
\end{array}
\end{align*}
Subsequently, the computation of $ X^2 $ is obtained as follows:
\begin{align*}
X^2=\begin{pmatrix}
{\xi_1}^2-{x_2}^2-{x_3}^2 & -x_2x_1+i(\xi_1+\xi_2)x_3 &  * \\
* & {\xi_2}^2-{x_3}^2-{x_1}^2 & -x_3x_2+i(\xi_2+\xi_3)x_1 \\
-x_1x_3+i(\xi_3+\xi_1)x_2 &  *  & {\xi_3}^2-{x_1}^2-{x_2}^2
\end{pmatrix} \in (\mathfrak{J}_{\sR})^C.
\end{align*}
Then we compare the diagonals of both of $ X^2=X $, so that we have that each $ \xi_i $ is real numbers. Indeed, for instance, since $ {\xi_1}^2-{x_2}^2-{x_3}^2=\xi_1 $, that is, $  {\xi_1}^2-\xi_1-{x_2}^2-{x_3}^2=0 $, we have $ \xi_1=(1/2)(1\pm\sqrt{1+({x_2}^2+{x_3}^2)}) \in \R$, so are $ \xi_2,\xi_3 $. Hence by comparing $ F_k(x_k),k=1,2,3 $-part, we have
\begin{align*}
x_1x_2=x_2x_3=x_3x_1=0, \;\; \xi_1x_1=\xi_2x_2=\xi_3x_3=0 \;\cdots \,(*).
\end{align*}

In the case where $ x_1=x_2=x_3=0 $. Since $ X $ is diagonal form, we have $ \xi_i=0 $ or $ \xi_i=1 $ from $ X^2=X $, and together with $ \tr(X)=1 $, we see $ X=E_1, X=E_2 $ or $ X=E_3 $. Set $ C_2:=\begin{pmatrix}
0 & 1 & 0 \\
1 & 0 & 0 \\
0 & 0 & 1
\end{pmatrix}, C_3:=\begin{pmatrix}
1 & 0 & 0 \\
0  & 0 & 1 \\
0  & 1 & 0
\end{pmatrix} $, so we easily see $ C_k \in O(3) \subset O(3,C),\vspace{1mm} k=2,3 $. Then we have
\begin{align*}
{}^t \! C_2E_1C_2=E_2,\;\; {}^t \! C_3E_2C_3=E_3, \;\; {}^t\!(C_3C_2)^{-1}E_3(C_3C_2)^{-1}=E_1.
\end{align*}

In the case where $ x_1\not=0 $. Then we have $ x_2=x_3=0, \xi_1=0, \xi_2+\xi_3=1 $ from the formulas $ (*) $ above. Hence $ X $ is of the form $ \begin{pmatrix}
0 & 0 & 0 \\
0 & \xi_2 & ix_1 \\
0 & ix_1 & \xi_3
\end{pmatrix}$ with $ {x_1}^2=-\xi_2\xi_3 $. Here, note that $ \xi_2\xi_3 <0 $, if $ \xi_2 >0 $ and $ \xi_3 <0 $, $ X $ can be transformed to $ E_2 $ by $ B_1:=\begin{pmatrix}
1 & 0 & 0 \\
0 & x_1/\sqrt{-\xi_3} & -ix_1/\sqrt{\xi_2} \\
0 & i\sqrt{-\xi_3} & \sqrt{\xi_2}
\end{pmatrix} \in O(3,C)$. Indeed, first it follows  from
\begin{align*}
B_1{}^tB_1=\begin{pmatrix}
1 & 0 & 0 \\
0 & x_1/\sqrt{-\xi_3} & -ix_1/\sqrt{\xi_2} \\
0 & i\sqrt{-\xi_3} & \sqrt{\xi_2}
\end{pmatrix}
\begin{pmatrix}
1 & 0 & 0 \\
0 & x_1/\sqrt{-\xi_3} & i\sqrt{-\xi_3} \\
0 & -ix_1/\sqrt{\xi_2} & \sqrt{\xi_2}
\end{pmatrix}
=E
\end{align*}
that $ B_1 \in O(3,C) $. By straightforward computation, we have $ {}^tB_1XB_1=E_2 $.
If $ \xi_2 <0 $ and $ \xi_3 >0 $, $ X $ can be also transformed to $ E_2 $ by $ B_2:=\begin{pmatrix}
1 & 0 & 0 \\
0 & i\sqrt{-\xi_2} & \sqrt{\xi_3} \\
0 & x_1/\sqrt{-\xi_2} & -ix_1/\sqrt{\xi_3}
\end{pmatrix} \in U(3,\C^C) $. Indeed, it follows from
\begin{align*}
B_2{}^tB_2=\begin{pmatrix}
1 & 0 & 0 \\
0 & i\sqrt{-\xi_2} & \sqrt{\xi_3} \\
0 & x_1/\sqrt{-\xi_2} & -ix_1/\sqrt{\xi_3}
\end{pmatrix}\begin{pmatrix}
1 & 0 & 0 \\
0 & i\sqrt{-\xi_2} & x_1/\sqrt{-\xi_2} \\
0 & \sqrt{\xi_3} & -ix_1/\sqrt{\xi_3}
\end{pmatrix}=E
\end{align*}
that $ B_2 \in O(3,C) $. As in the case above, we have $ {}^tB_2XB_2=E_2 $. 
Hence, as in the case where $ x_1=x_2=x_3=0 $, by operating $ C_2,C_3 $ or $ C_2C_3 $ on the both sides, $ X $ can be transformed to any $ E_i, i=1,2,3 $.

 In the case where $ x_2\not=0 $. As in the case where $ x_1\not=0 $, $ X $ is of the form $ \begin{pmatrix}
\xi_1 & 0 & i\ov{x}_2 \\
0 & 0 & 0 \\
ix_2 & 0 & \xi_3
\end{pmatrix} $ with $ {x_2}^2=-\xi_3\xi_1, \xi_3+\xi_1=1 $. Let $ C_2 $. Then we have
\begin{align*}
C_2X{}^t\!C_2=\begin{pmatrix}
0 & 0 & 0  \\
0 & \xi_1 & ix_2 \\
0 & ix_2 & \xi_3
\end{pmatrix}.
\end{align*}
Hence this case is reduced to the case where $ x_1\not=0 $.

In the case where $ x_3\not=0 $. As in the case where $ x_1\not=0 $, $ X $ is of the form $ \begin{pmatrix}
\xi_1 & ix_3 & 0 \\
ix_3 & \xi_2 & 0 \\
0 & 0 & 0
\end{pmatrix} $ with $ {x_3}^2=-\xi_1\xi_2, \xi_1+\xi_2=1 $. Let $ C_3 $ Then we have
\begin{align*}
C_3X{}^t\!C_3=\begin{pmatrix}
\xi_1 & 0 & ix_3  \\
0 & 0 & 0 \\
ix_3 & 0 & \xi_2
\end{pmatrix}.
\end{align*}
Hence this case is also reduced to the case where $ x_2\not=0 $.

 With above, the proof of this lemma is completed.
\end{proof}

We will determine the structure of the group $ (F_{4,\sR})^C $.

\begin{theorem}\label{theorem 3.2}
  The group $ (F_{4,\sR})^C $ is isomorphic to the group $ SO(3,C) $ {\rm :} $ (F_{4,\sR})^C \cong SO(3,C) $.
\end{theorem}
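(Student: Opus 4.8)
The plan is to construct an explicit homomorphism $\varphi\colon SO(3,C)\to (F_{4,\sR})^C$ and show it is bijective. For $B\in SO(3,C)$ define the $C$-linear map $\varphi(B)\colon (\mathfrak{J}_{\sR})^C\to(\mathfrak{J}_{\sR})^C$ by $\varphi(B)X=BX\,{}^tB$. Since ${}^t(BX\,{}^tB)=BX\,{}^tB$, this preserves the space of symmetric matrices $(\mathfrak{J}_{\sR})^C$, and because ${}^tB\,B=E$ one gets $\varphi(B)X\circ\varphi(B)Y=\tfrac12(BX\,{}^tB\,BY\,{}^tB+BY\,{}^tB\,BX\,{}^tB)=B(X\circ Y)\,{}^tB=\varphi(B)(X\circ Y)$; thus $\varphi(B)\in(F_{4,\sR})^C$. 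From ${}^t(B_1B_2)={}^tB_2\,{}^tB_1$ one reads off $\varphi(B_1B_2)=\varphi(B_1)\varphi(B_2)$, so $\varphi$ is a smooth homomorphism of Lie groups, with $\varphi(B)^{-1}=\varphi({}^tB)$.

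First I would check injectivity. If $\varphi(B)=\mathrm{id}$ then $BX\,{}^tB=X$, i.e. $BXB^{-1}=X$, for every symmetric $X$; letting $X$ run over the $E_i$ forces $B$ to be diagonal, and over the $F_k(1)$ forces its diagonal entries equal, so $B=cE$. Then ${}^tB\,B=c^2E=E$ and $\det B=c^3=1$ give $c=1$. Hence $\Ker\varphi=\{E\}$ and $\varphi$ is injective.

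The bulk of the work is surjectivity. Let $\alpha\in(F_{4,\sR})^C$. Because $E$ is the Jordan unit, $\alpha E=E$; because $\alpha$ preserves $\circ$, each $\alpha E_i$ is an idempotent, $\alpha$ carries primitive idempotents to primitive ones, and $\alpha(E_i\circ E_j)=\alpha E_i\circ\alpha E_j$ shows they are mutually orthogonal. Thus $\alpha E_1,\alpha E_2,\alpha E_3$ are mutually orthogonal rank-one (hence trace-one) symmetric idempotents summing to $E$. Writing each rank-one symmetric idempotent as $v_i\,{}^tv_i$ with ${}^tv_iv_j=\delta_{ij}$, the matrix $B=(v_1\,v_2\,v_3)$ lies in $O(3,C)$ and satisfies $\alpha E_i=BE_i\,{}^tB=\varphi(B)E_i$; equivalently one transports the frame $\{\alpha E_i\}$ back to $\{E_i\}$ by repeated use of Lemma \ref{lemma 3.1}. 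Since $-E$ acts trivially we may take $B\in SO(3,C)$, and then $\beta:=\varphi(B)^{-1}\alpha\in(F_{4,\sR})^C$ fixes $E_1,E_2,E_3$.

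It remains to identify such a $\beta$. Fixing all $E_i$, $\beta$ preserves each one-dimensional Peirce component $F_k(C)$, so $\beta F_k(1)=F_k(t_k)$ for scalars $t_k\in C$. Feeding $\beta$ through the multiplication relations $F_1(1)\circ F_2(1)=\tfrac12F_3(1)$ and its cyclic permutations yields $t_1t_2=t_3,\ t_2t_3=t_1,\ t_3t_1=t_2$, whence $t_k=\pm1$ and $t_1t_2t_3=1$. These are exactly the values realized by $\varphi$ on the diagonal subgroup $\{\diag(\varepsilon_1,\varepsilon_2,\varepsilon_3)\in SO(3,C)\}$, so $\beta=\varphi(D)$ for such a $D$ and therefore $\alpha=\varphi(BD)\in\varphi(SO(3,C))$. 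Thus $\varphi$ is onto, and being a smooth bijective homomorphism it is an isomorphism $SO(3,C)\cong(F_{4,\sR})^C$; in particular $(F_{4,\sR})^C$ is connected. \textbf{The main obstacle} is surjectivity, and within it the transport of the idempotent frame $\{\alpha E_i\}$ onto $\{E_i\}$ by an orthogonal matrix---this is where Lemma \ref{lemma 3.1} (transitivity of $O(3,C)$ on trace-one idempotents) together with the trace- and primitivity-preservation of $\alpha$ are essential; the residual computation pinning $\beta$ to the diagonal signs is then routine. Alternatively, once injectivity gives $\Ker\varphi$ discrete and one knows $\dim(\mathfrak{f}_{4,\sR})^C=3=\dim\mathfrak{so}(3,C)$, Lemma \ref{lemma 2.1} furnishes surjectivity as soon as $(F_{4,\sR})^C$ is known to be connected.
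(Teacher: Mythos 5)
Your proposal is correct and follows essentially the same route as the paper: the same homomorphism $B \mapsto (X \mapsto BX\,{}^t\!B)$, surjectivity by transporting the orthogonal idempotent frame $\{\alpha E_1,\alpha E_2,\alpha E_3\}$ back to $\{E_1,E_2,E_3\}$ via an element of $O(3,C)$ (your rank-one decomposition $\alpha E_i=v_i\,{}^t\!v_i$ is exactly what the paper extracts from Lemma \ref{lemma 3.1} by taking the $i$-th columns $\a_i$ of the matrices $A_i$), a sign adjustment to land in $SO(3,C)$, and the same pinning of the residual automorphism fixing all $E_i$ to a diagonal sign matrix via the Peirce components $F_k(C)$. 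The only cosmetic differences are the order (you do injectivity before surjectivity) and that you derive $t_k=\pm 1$ from the cyclic relations $F_k(1)\circ F_{k+1}(1)=\tfrac12 F_{k+2}(1)$ rather than from $F_k(x)\circ F_k(y)=(x,y)(E_{k+1}+E_{k+2})$; both yield the same four sign patterns.
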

\begin{proof}
 Let the group $ SO(3,C)=\{ A \in M(3,C) \,|\,A\,{}^t\! A=E,\det \,A=1\} $. Then we define a mapping $ f_{4,C}: SO(3,C) \to (F_{4,\sR})^C $ by
 \begin{align*}
 f_{4,C}(A)X=AX{}^t\!A,\,\, X \in (\mathfrak{J}_{\sR})^C.
 \end{align*}

 First, we will prove that $ f_{4,C} $ is well-defined. It is clear $ f_{4,C}(A) \in \Iso_{C}((\mathfrak{J}_{\sR})^C)$, and we can easy show $ f_{4,C}(A)X \circ f_{4,C}(A)Y=f_{4,C}(A)(X \circ Y) $. Indeed, it follows that
 \begin{align*}
 f_{4,C}(A)X \circ f_{4,C}(A)Y
 &=(AX{}^t\!A)\circ (AY{}^t\!A)
 \\
 &=(1/2)\left((AX{}^t\!A)(AY{}^t\!A) +(AY{}^t\!A)(AX{}^t\!A) \right)
 \\
 &=(1/2)\left((AXY{}^t\!A) +(AYX{}^t\!A) \right)
 \\
 &=\left(A(1/2)(XY+YX){}^t\!A \right)
 \\
 &=A(X \circ Y){}^t\!A
 \\
 &=f_{4,C}(A)(X \circ Y).
 \end{align*}
 Hence $ f_{4,C} $ is well-defined. Subsequently, we will prove that $ f_{4,C} $ is a homomorphism. For $ A,B \in SO(3,C) $, it follows that
\begin{align*}
f_{4,C}(AB)X=(AB)X{}^t(AB)=A(BX{}^t\!B){}^t\!A=f_{4,C}(A)f_{4,C}(B)X,\,\,X \in (\mathfrak{J}_{\sR})^C,
\end{align*}
that is, $ f_{4,C}(AB)=f_{4,C}(A)f_{4,C}(B) $.

Next, we will prove that $ f_{4,C} $ is surjective. Let $ \alpha \in (F_{4,\sR})^C $. Then, since $ E_i \circ E_i=E_i $ and $ \tr(\alpha E_i)=\tr(E_i),i=1,2,3 $, we have $ (\alpha E_i)^2=\alpha E_i $ and $ \tr(\alpha E_i)=1 $.  Hence there exists $ A_i \in O(3,C) $ such that $ E_i={}^t \! A_i(\alpha E_i)A_i, i=1,2,3 $ from Lemma \ref{lemma 3.1}, that is, $ \alpha E_i=A_iE_i{}^t \! A_i $.

Here, set $ \a_i:=\begin{pmatrix}
a_{1i} \\
a_{2i} \\
a_{3i}
\end{pmatrix} $ be the $ i $-th column vector of $ A_i $, then we have
\begin{align*}
\alpha E_i=A_iE_i{}^t\!A_i=
\begin{pmatrix}
{a_{1i}}^2 & a_{1i}a_{2i} & a_{1i}a_{3i} \\
a_{2i}a_{1i} & {a_{2i}}^2 & a_{2i}a_{3i} \\
a_{3i}a_{1i} & a_{3i}a_{2i} & {a_{3i}}^2
\end{pmatrix},\,\, i=1,2,3.
\end{align*}
We construct a matrix $ A:=(\a_1,\a_2,\a_3) $, then we have
\begin{align*}
\alpha E_i=AE_i{}^t\!A,\,\,i=1,2,3.
\end{align*}
Hence we have $ A \in O(3,C) $. Indeed, it follows that
\begin{align*}
A{}^t\!A&=AE{}^t\!A=A(E_1+E_2+E_3){}^t\!A=AE_1{}^t\!A+AE_2{}^t\!A+AE_3{}^t\!A
\\
&=\alpha E_1+\alpha E_2+\alpha E_3=\alpha(E_1+E_2+E_3)=\alpha E
\\
&=E.
\end{align*}
Note that we may assume $ A \in SO(3,C) $, if necessary, replace $ \a_1 $ with $ \a_1/\det A $ for $ A $ constructed above, so hereafter, we regard $ A \in SO(3,C) $.

Set $ \beta:={f_{4,C}(A)}^{-1}\alpha \in (F_{4,\sR})^C $, then $ \beta $ satisfies
\begin{align*}
\beta E_i=E_i, \,\,i=1,2,3.
\end{align*}
Indeed, it follows that
\begin{align*}
\beta E_i={f_{4,C}(A)}^{-1}\alpha E_i=f_{4,C}(A^{-1})(AE_i{}^t\!A)=A^{-1}(AE_i{}^t\!A){}^t\!(A^{-1})=E_i.
\end{align*}

We consider a space $ \mathfrak{J}_i:=\{X \in (\mathfrak{J}_{\sR})^C \,|\, 2E_{i+1}\circ X=2E_{i+2} \circ X=X \}=\{F_i(x)\,|\, x \in C\},i=1,2,3 $. Then, since $ \beta $ satisfies $ \beta X \in \mathfrak{J}_i $ for $ X \in \mathfrak{J}_i $, $ \beta $ induces a $ C $-linear transformation $ \beta_i:C \to C $ such that
\begin{align*}
\beta F_i(x)=F_i(\beta_i x),\,\,i=1,2,3.
\end{align*}
Moreover, apply $ \beta $ on the both sides of $ F_i(x) \circ F_i(y)=(x,y)(E_{i+1}+E_{i+2}) $, then since it follows that
\begin{align*}
\beta(F_i(x) \circ F_i(y))&=\beta F_i(x) \circ \beta F_i(y)=F_i(\beta_ix) \circ F_i(\beta_i y)=(\beta_i x,\beta_i y)(E_{i+1}+E_{i+2}),
\\
\beta(x,y)(E_{i+1}+E_{i+2})&=(x,y)(E_{i+1}+E_{i+2}),
\end{align*}
we have $ (\beta_i x,\beta_i y)=(x,y) $, that is, $ (\beta_i x)(\beta_i y)=xy $. Hence, since $ \beta_i $ is the $ C $-linear transformation of $ C $, we have $ (\beta_i 1)^2=1 $, that is, $ \beta_i 1=1 $ or $ \beta_i 1=-1 $.

\noindent In addition, apply $ \beta $ on the both sides of $ 2F_1(x) \circ F_2(y)=F_3(xy) $, then it follows that
\begin{align*}
(\beta_1 x)(\beta_2 y)=\beta_3(xy).
\end{align*}
Hence, together with $ \beta_i=1 $ or $ \beta_i=-1 $, we have the following
\begin{align*}
\left\lbrace
\begin{array}{l}
\beta_1=1 \\
\beta_2=1 \\
\beta_3=1,
\end{array}
\right.\,\,
\left\lbrace
\begin{array}{l}
\beta_1=1 \\
\beta_2=-1 \\
\beta_3=-1,
\end{array}
\right. \,\,
\left\lbrace
\begin{array}{l}
\beta_1=-1 \\
\beta_2=1 \\
\beta_3=-1,
\end{array}
\right. \,\,
\left\lbrace
\begin{array}{l}
\beta_1=-1 \\
\beta_2=-1 \\
\beta_3=1.
\end{array}
\right.
\end{align*}
Here, we construct a matrix $ B \in SO(3) \subset SO(3,C) $ corresponding to each cases above as follows:
\begin{align*}
B=\begin{pmatrix}
1 & & \\
& 1 & \\
& & 1
\end{pmatrix},\,\,
\begin{pmatrix}
1 & & \\
& -1 & \\
& & -1
\end{pmatrix},\,\,
\begin{pmatrix}
-1 & & \\
& 1 & \\
& & -1
\end{pmatrix},\,\,
\begin{pmatrix}
-1 & & \\
& -1 & \\
& & 1
\end{pmatrix}.
\end{align*}
Hence, from $ \beta E_i=E_i, \beta F_i(x)=F_i(\beta_i x) $, we have
\begin{align*}
\beta X=BX{}^t\!B,\,\,X \in (\mathfrak{J}_{\sR})^C.
\end{align*}
Thus there exists $ B \in SO(3) \subset SO(3,C) $ such that $ \beta=f_{4,C}(B) $.

\noindent With above,  we obtain
\begin{align*}
\alpha=f_{4,C}(A)\beta=f_{4,C}(A)f_{4,C}(B)=f_{4,C}(AB), AB \in SO(3,C).
\end{align*}
This shows that $ f_{4,C} $ is surjective.

Finally, we will determine $ \Ker\,f_{4,C} $. From the definition of kernel,
we have the following
\begin{align*}
\Ker\,f_{4,C}&=\left\lbrace A \in SO(3,C) \relmiddle{|} f_{4,C}(A)=E \right\rbrace
\\
&=\left\lbrace A \in SO(3,C) \relmiddle{|} AX{}^t\!A=X \,\,{\text{for all}}\,\,X \in (\mathfrak{J}_{\sR})^C \right\rbrace
\\
&=\left\lbrace A \in SO(3,C) \relmiddle{|} AX=XA, \,\,{\text{for all}}\,\,X \in (\mathfrak{J}_{\sR})^C \right\rbrace
\\
&=\left\lbrace E \right\rbrace.
\end{align*}
Indeed, for $ X=E_i, i=1,2,3 $, we have that $ A $ is of form $ \diag(a_{11},a_{22},a_{33}), {a_{ii}}^2=1 $. In addition, for $ X=F_i(1),i=1,2 $, we have that $ A $ is of form $ \diag(a,a,a),a \in C, a^2=1 $: $ A=\diag(a,a,a) $. Hence, from $ \det\,A=1 $, we obtain $ \Ker\,f_{4,C}=\{E\} $.

Therefore we have the required isomorphism
\begin{align*}
(F_{4,\sR})^C \cong SO(3,C).
\end{align*}
\end{proof}

We consider the following subgroup $ ((F_{4,\sR})^C)^\tau $ of $ (F_{4,\sR})^C $:
\begin{align*}
((F_{4,\sR})^C)^\tau=\left\lbrace \alpha \in (F_{4,\sR})^C \relmiddle{|}\tau\alpha=\alpha\tau \right\rbrace,
\end{align*}
where $ \tau $ is the complex conjugation of $ (\mathfrak{J}_{\sR})^C $.

Then we will prove the following proposition which is needed to determine the structure of the group $ F_{4,\sR} $.

\begin{proposition}\label{proposition 3.3}
  The group $ F_{4,\sR} $ coincides with the group $ ((F_{4,\sR})^C)^\tau ${\rm :} $ F_{4,\sR}=((F_{4,\sR})^C)^\tau $.
\end{proposition}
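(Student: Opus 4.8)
The plan is to exploit the standard correspondence between $\R$-linear maps of the real form $\mathfrak{J}_{\sR}$ and $\tau$-stable $C$-linear maps of its complexification $(\mathfrak{J}_{\sR})^C$. The two facts I would use throughout are that $\mathfrak{J}_{\sR}$ is exactly the $\tau$-fixed set $\{X \in (\mathfrak{J}_{\sR})^C \mid \tau X = X\}$, and that every $Z \in (\mathfrak{J}_{\sR})^C$ decomposes uniquely as $Z = X + iY$ with $X, Y \in \mathfrak{J}_{\sR}$. Since the paper already regards $(F_{4,\sR})^C$ as the complexification of $F_{4,\sR}$, the whole statement amounts to checking that, under $C$-linear extension, the image of $F_{4,\sR}$ is precisely the $\tau$-fixed subgroup.

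First I would treat the inclusion $F_{4,\sR} \subset ((F_{4,\sR})^C)^\tau$. Given $\beta \in F_{4,\sR}$, I extend it $C$-linearly to $\beta^C$ on $(\mathfrak{J}_{\sR})^C$ by $\beta^C(X + iY) = \beta X + i\beta Y$. Because the Jordan multiplication $\circ$ on $(\mathfrak{J}_{\sR})^C$ is the $C$-bilinear extension of the $\R$-bilinear multiplication on $\mathfrak{J}_{\sR}$, the identity $\beta(X \circ Y) = \beta X \circ \beta Y$ extends from $\mathfrak{J}_{\sR}$ to all of $(\mathfrak{J}_{\sR})^C$, so $\beta^C \in (F_{4,\sR})^C$. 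Moreover $\beta^C$ commutes with $\tau$: from $\tau(X + iY) = X - iY$ one gets $\beta^C\tau(X+iY) = \beta X - i\beta Y = \tau\beta^C(X+iY)$. Hence $\beta^C \in ((F_{4,\sR})^C)^\tau$.

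Next the reverse inclusion. Given $\alpha \in ((F_{4,\sR})^C)^\tau$, I use $\tau\alpha = \alpha\tau$ to show $\alpha$ preserves the real form: for $X \in \mathfrak{J}_{\sR}$ we have $\tau(\alpha X) = \alpha(\tau X) = \alpha X$, so $\alpha X \in \mathfrak{J}_{\sR}$. Applying the same argument to $\alpha^{-1}$, which also commutes with $\tau$ since $\tau\alpha = \alpha\tau$ forces $\tau\alpha^{-1} = \alpha^{-1}\tau$, shows $\alpha^{-1}$ preserves $\mathfrak{J}_{\sR}$ as well; therefore the restriction $\alpha|_{\mathfrak{J}_{\sR}}$ is an $\R$-linear bijection of $\mathfrak{J}_{\sR}$. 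It preserves $\circ$ because $\alpha$ does, so $\alpha|_{\mathfrak{J}_{\sR}} \in F_{4,\sR}$, and its $C$-linear extension is again $\alpha$.

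Finally I would observe that the two assignments $\beta \mapsto \beta^C$ and $\alpha \mapsto \alpha|_{\mathfrak{J}_{\sR}}$ are mutually inverse and multiplicative, so they identify $F_{4,\sR}$ with the subgroup $((F_{4,\sR})^C)^\tau$, which is the claimed equality. I do not expect a genuine obstacle here; the only point needing a little care is the bijectivity of the restriction $\alpha|_{\mathfrak{J}_{\sR}}$, which is settled by noting that $\tau$-equivariance passes to the inverse, together with the consistency of identifying $F_{4,\sR}$ with its image in $(F_{4,\sR})^C$ under $C$-linear extension.
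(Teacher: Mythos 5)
Your proof is correct and follows essentially the same route as the paper: identify $F_{4,\sR}$ inside $(F_{4,\sR})^C$ via $C$-linear extension, then use $\tau\alpha=\alpha\tau$ to show a $\tau$-fixed element preserves $\mathfrak{J}_{\sR}$ and restricts to an element of $F_{4,\sR}$. In fact you are slightly more careful than the paper, which omits the explicit check that $\beta^C$ commutes with $\tau$ and merely asserts that the restriction is an $\R$-linear isomorphism, whereas you settle the latter by passing $\tau$-equivariance to $\alpha^{-1}$.
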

\begin{proof}
Let $ \alpha \in F_{4,\sR} $. Then its complexification mapping $ \alpha^C: (\mathfrak{J}_{\sR})^C \to (\mathfrak{J}_{\sR})^C, \alpha^C(X_1+iX_2)=\alpha X_1 \allowbreak +i\alpha X_2, X_i \in \mathfrak{J}_{\sR} $ , belongs to $ (F_{4,\sR})^C $: $ \alpha^C \in (F_{4,\sR})^C $. Hence we regard $ F_{4,\sR} $ as a subgroup $ (F_{4,\sR})^C $ identifying $ \alpha $ with $ \alpha^C $: $ F_{4,\sR} \subset (F_{4,\sR})^C $.

Conversely, let $ \beta \in ((F_{4,\sR})^C)^\tau $. For $ X \in \mathfrak{J}_{\sR} $, we have $ \tau(\beta X)=\beta(\tau X)=\alpha X $ from $ \tau\beta=\beta\tau $, that is, $ \alpha X \in \mathfrak{J}_{\sR} $. Hence, since $ \alpha $ induces $ \R $-linear isomorphism of $ \mathfrak{J}_{\sR} $, we have $ \beta \in F_{4,\sR} $, that is, $ ((F_{4,\sR})^C)^\tau \subset F_{4,\sR} $.

   With above, the proof of this proposition is completed.
\end{proof}

Now, we will determine the structure of the group $ F_{4,\sR} $.

\begin{theorem}\label{theorem 3.4}
  The group $ F_{4,\sR} $ is isomorphic to the group $ SO(3) ${\rm :} $ F_{4,\sR} \cong SO(3) $.
\end{theorem}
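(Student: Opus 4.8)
The plan is to exploit the isomorphism $(F_{4,\sR})^C \cong SO(3,C)$ just established in Theorem \ref{theorem 3.2}, together with Proposition \ref{proposition 3.3}, which identifies $F_{4,\sR}$ with the fixed-point subgroup $((F_{4,\sR})^C)^\tau$. The strategy is to transport the complex conjugation $\tau$ on $(\mathfrak{J}_{\sR})^C$ across the explicit isomorphism $f_{4,C}: SO(3,C) \to (F_{4,\sR})^C$ and identify which subgroup of $SO(3,C)$ corresponds to the $\tau$-fixed elements of $(F_{4,\sR})^C$.

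First I would examine how $\tau$ acts through $f_{4,C}$. Since $f_{4,C}(A)X = AX\,{}^t\!A$ and $\tau$ acts entrywise on $(\mathfrak{J}_{\sR})^C$, for any $X \in (\mathfrak{J}_{\sR})^C$ we have $(\tau f_{4,C}(A)\tau)X = \tau(A(\tau X)\,{}^t\!A) = (\tau A)X\,{}^t(\tau A) = f_{4,C}(\tau A)X$, where $\tau A$ denotes the entrywise complex conjugate of the matrix $A$. Hence $\tau f_{4,C}(A)\tau = f_{4,C}(\tau A)$. Therefore $f_{4,C}(A) \in ((F_{4,\sR})^C)^\tau$ if and only if $f_{4,C}(\tau A) = f_{4,C}(A)$, and since $f_{4,C}$ is injective (its kernel is trivial by Theorem \ref{theorem 3.2}), this holds precisely when $\tau A = A$, i.e. when $A$ has real entries. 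The subgroup of $SO(3,C)$ consisting of real matrices is exactly $SO(3)$.

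Combining these observations, $f_{4,C}$ restricts to an isomorphism from $SO(3) = \{A \in SO(3,C) \mid \tau A = A\}$ onto $((F_{4,\sR})^C)^\tau$, and by Proposition \ref{proposition 3.3} the latter group equals $F_{4,\sR}$. This yields the desired isomorphism $F_{4,\sR} \cong SO(3)$. I would also note, as a consistency check, that this is compatible with compactness: $F_{4,\sR}$ was already shown to be a compact group as a closed subgroup of $O(6)$, and $SO(3)$ is indeed compact.

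The main obstacle, though a mild one, is establishing the conjugation formula $\tau f_{4,C}(A)\tau = f_{4,C}(\tau A)$ cleanly and pinning down the precise fixed-point condition $\tau A = A$; one must be careful that $\tau$ here is the complex conjugation on the ambient $(\mathfrak{J}_{\sR})^C$ and that it induces entrywise conjugation on the $3\times 3$ matrices, so that the fixed set is genuinely the real orthogonal group $SO(3)$ rather than some other real form. Once this identification is in place, the rest is immediate from Theorem \ref{theorem 3.2} and Proposition \ref{proposition 3.3}.
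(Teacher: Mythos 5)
Your proposal is correct and follows essentially the same route as the paper: both identify $F_{4,\sR}$ with $((F_{4,\sR})^C)^\tau$ via Proposition \ref{proposition 3.3}, establish the conjugation formula $\tau f_{4,C}(A)\tau = f_{4,C}(\tau A)$, and use the surjectivity and triviality of the kernel of $f_{4,C}$ from Theorem \ref{theorem 3.2} to conclude that the $\tau$-fixed subgroup corresponds exactly to the real matrices $SO(3) \subset SO(3,C)$. Your explicit appeal to injectivity of $f_{4,C}$ when deducing $\tau A = A$ from $f_{4,C}(\tau A) = f_{4,C}(A)$ is a point the paper uses implicitly, so if anything your write-up is slightly more careful on that step.
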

\begin{proof}
   Let the group $ F_{4,\sR} $ as the group $ ((F_{4,\sR})^C)^\tau $ (Proposition \ref{proposition 3.3}). We define a mapping $ f_4: SO(3) \to ((F_{4,\sR})^C)^\tau $ by the restriction of the mapping $ f_{4,C} $: $ f_4(P)=f_{4,C}(P) $.

First, we will prove that $ f_4 $ is well-defined. However, since $ f_4 $ is the restriction of the mapping $ f_{4,C} $, we easily see $ f_4(P) \in (F_{4,\sR})^C $. Moreover, it follows that
\begin{align*}
\tau f_4(P)\tau X=\tau f_{4,C}(P)\tau X=\tau(P(\tau X){}^t\!P)
=(\tau P)X(\tau{}^t\!P)=PX{}^t\!P=f_4(P)X,\,\,X \in (\mathfrak{J}_{\sR}),
\end{align*}
that is, $ \tau f_4(P)=f_4(P)\tau $. Hence we have $ f_4(P) \in ((F_{4,\sR})^C)^\tau $. With above, $ f_4 $ is well-defined. Subsequently, we will prove that $ f_4 $ is a homomorphism. However, since $ f_4 $ is the restriction of the mapping $ f_{4,C} $, it is clear.

Next, we will prove that $ f_4 $ is surjective. Let $ \alpha \in ((F_{4,\sR})^C)^\tau  \subset (F_{4,\sR})^C$. Then there exists $ A \in SO(3,C) $ such that $ \alpha=f_{4,C}(A) $ (Theorem \ref{theorem 3.2}). Moreover, since $ \alpha $ satisfies the condition $ \tau\alpha=\alpha\tau $, it follows from
\begin{align*}
\tau\alpha\tau=\tau f_{4,C}(A)\tau=f_{4,C}(\tau A)
\end{align*}
that $ \tau A=A $, that is, $ A \in (SO(3,C))^\tau=\{A \in SO(3,C)\,|\, \tau A=A \}=SO(3) $. Hence there exists $ P \in SO(3) $ such that $ \alpha=f_{4,C}(P)=f_4(P) $, so that $ f_4 $ is surjective.

Finally, we will determine $ \Ker\,f_4 $. Using the result of $ \Ker\,f_{4,C} $, we easily see $ \Ker\,f_4=\{E\} $.

Therefore, from $ F_{4,\sR}=((F_{4,\sR})^C)^\tau $(Proposition \ref{proposition 3.3}), we have the required isomorphism
\begin{align*}
  F_{4,\sR} \cong SO(3).
\end{align*}
\end{proof}

We will study the Lie algebra $ (\mathfrak{f}_{4,\sR})^C $ of the group $ (F_{4,\sR})^C $.
\vspace{1mm}

The Lie algebra $ (\mathfrak{f}_{4,\sR})^C $ of the group $ (F_{4,\sR})^C $ is given by
\begin{align*}
(\mathfrak{f}_{4,\sR})^C=\left\lbrace \delta \in \Hom_{C}((\mathfrak{J}_{\sR})^C)\relmiddle{|} \delta (X \circ Y)=\delta X \circ Y +X \circ \delta Y \right\rbrace.
\end{align*}

For $ c \in C $, we adopt the following notations:
\begin{align*}
A_1(c)
=\begin{pmatrix}
0 & 0 & 0 \\
0 & 0 & c \\
0 & -c & 0
\end{pmatrix},\quad
A_2(c)
=\begin{pmatrix}
0 & 0 & -c \\
0 & 0 & 0 \\
c & 0 & 0
\end{pmatrix},\quad
A_3(a)
=\begin{pmatrix}
0 & c & 0 \\
-c & 0 & 0 \\
0 & 0 & 0
\end{pmatrix},
\end{align*}
so using $ A_i(c),i=1,2,3 $ above, we define a $ C $-linear transformation $ \tilde{A}_i(c) $ of $ (\mathfrak{J}_{\sR})^C $ by
\begin{align*}
\tilde{A}_i(c)X=\dfrac{1}{2}\left( A_i(c)X-XA_i(c)\right) ,\,\,X \in (\mathfrak{J}_{\sR})^C.
\end{align*}
Then we have $ \tilde{A}_i(c) \in (\mathfrak{f}_{4,\sR})^C $ with the following properties
\begin{align*}
(\ast) \,\,\left\{\begin{array}{l}
\tilde{A}_i(c)E_{i} = 0
\vspace{1mm}\\
\tilde{A}_i(c)E_{i+1} = - \dfrac{1}{2}F_i(c)
\vspace{1mm}\\
\tilde{A}_i(c)E_{i+2} = \dfrac{1}{2}F_i(c),
\end{array} \right.
\quad
\left\{\begin{array}{l}
\tilde{A}_i(c)F_i(x) = (cx)(E_{i+1} - E_{i+2})
\vspace{1mm}\\
\tilde{A}_i(c)F_{i+1}(x) = \dfrac{1}{2}F_{i+2}(cx)
\vspace{1mm}\\
\tilde{A}_i(c)F_{i+2}(x) = - \dfrac{1}{2}F_{i+1}(cx),
\end{array} \right.
\end{align*}
where the indexes are considered as mod $ 3 $.

The differential mapping $ f_{4,C_*}:\mathfrak{so}(3,C) \to (\mathfrak{f}_{4,\sR})^C$ of the mapping $ f_{4,C} $ is given by
\begin{align*}
f_{4,C_*}(D)X=DX+X{}^t\!D,\,\,X \in (\mathfrak{J}_{\sR})^C.
\end{align*}

Then, using the differential mapping $ f_{4,C_*} $, we have the following theorem.

\begin{theorem}\label{theorem 3.5}
 Any element $ \delta \in (\mathfrak{f}_{4,\sR})^C $ is uniquely expressed by
\begin{align*}
 \delta=\tilde{A}_1(c_1)+\tilde{A}_2(c_2)+\tilde{A}_3(c_3),\,\,c_i \in C.
\end{align*}

In particular, we have $ \dim_C((\mathfrak{f}_{4,\sR})^C )=3 $.
\end{theorem}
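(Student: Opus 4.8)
The plan is to push the group isomorphism $f_{4,C}\colon SO(3,C)\to (F_{4,\sR})^C$ of Theorem \ref{theorem 3.2} down to the Lie algebra level. Since $f_{4,C}$ is an isomorphism of Lie groups, its differential $f_{4,C_*}\colon \mathfrak{so}(3,C)\to(\mathfrak{f}_{4,\sR})^C$ is an isomorphism of Lie algebras; in particular $\dim_C((\mathfrak{f}_{4,\sR})^C)=\dim_C\mathfrak{so}(3,C)=3$, which already settles the last assertion. It then suffices to exhibit the span of the $\tilde{A}_i(c)$ as the image of $f_{4,C_*}$, and to record uniqueness of the expansion.

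First I would locate the $\tilde{A}_i(c)$ inside the image of $f_{4,C_*}$. Each $A_i(c)$ is antisymmetric, i.e. ${}^t\!A_i(c)=-A_i(c)$, so $A_i(c)\in\mathfrak{so}(3,C)$. Feeding this into the explicit formula $f_{4,C_*}(D)X=DX+X\,{}^t\!D$ gives
\begin{align*}
f_{4,C_*}(A_i(c))X = A_i(c)X - X A_i(c) = 2\tilde{A}_i(c)X,\quad X\in(\mathfrak{J}_{\sR})^C,
\end{align*}
so that $\tilde{A}_i(c)=\tfrac{1}{2}f_{4,C_*}(A_i(c))$ for all $c\in C$ and $i=1,2,3$.

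Next I would invoke that $\{A_1(1),A_2(1),A_3(1)\}$ is the standard basis of $\mathfrak{so}(3,C)$: every $D\in\mathfrak{so}(3,C)$ is uniquely $D=A_1(c_1)+A_2(c_2)+A_3(c_3)$ with $c_i\in C$, and $c\mapsto A_i(c)$ is $C$-linear. Applying the linear bijection $f_{4,C_*}$ and using the displayed identity, the image $(\mathfrak{f}_{4,\sR})^C=f_{4,C_*}(\mathfrak{so}(3,C))$ consists exactly of the elements $\tilde{A}_1(c_1)+\tilde{A}_2(c_2)+\tilde{A}_3(c_3)$ (the factor $\tfrac{1}{2}$ is absorbed by the substitution $c_i\mapsto 2c_i$, which is a bijection of $C$). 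Uniqueness of the expansion for $\delta$ is then immediate from the injectivity of $f_{4,C_*}$ together with the uniqueness of the decomposition in $\mathfrak{so}(3,C)$.

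There is no serious obstacle: the content is essentially transported from Theorem \ref{theorem 3.2}, and the only computation is the trivial skew-symmetry check ${}^t\!A_i(c)=-A_i(c)$. If one prefers an argument intrinsic to $(\mathfrak{f}_{4,\sR})^C$, the single step needing care is the $C$-linear independence of $\tilde{A}_1(1),\tilde{A}_2(1),\tilde{A}_3(1)$: evaluating $\tilde{A}_1(c_1)+\tilde{A}_2(c_2)+\tilde{A}_3(c_3)$ on $E_1,E_2,E_3$ and reading off the $F_i$-components through the relations $(\ast)$ forces $c_1=c_2=c_3=0$; combined with $\dim_C((\mathfrak{f}_{4,\sR})^C)=3$ this again shows the three elements form a basis.
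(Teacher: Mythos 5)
Your proof is correct and takes essentially the same route as the paper: both arguments push the isomorphism $f_{4,C}$ of Theorem \ref{theorem 3.2} down to its differential $f_{4,C_*}\colon\mathfrak{so}(3,C)\to(\mathfrak{f}_{4,\sR})^C$ and identify $f_{4,C_*}(A_i(c))=2\tilde{A}_i(c)$, from which the expression, its uniqueness, and $\dim_C((\mathfrak{f}_{4,\sR})^C)=3$ all follow. The only cosmetic difference is that the paper verifies this identity by expanding $f_{4,C_*}(D)X$ for a general $D\in\mathfrak{so}(3,C)$ and proves uniqueness intrinsically by evaluating at $E_1$ via the relations $(\ast)$ (an argument you also sketch as an alternative), whereas you obtain both from the skew-symmetry of $A_i(c)$ and the injectivity of $f_{4,C_*}$.
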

\begin{proof}
First, we see that the differential mapping $ f_{4,C_*} $ induces the isomorphism $ (\mathfrak{f}_{4,\sR})^C \cong \mathfrak{so}(3,C) $ from Theorem \ref{theorem 3.2}.

Set $ D:=
    \begin{pmatrix}
    0 & d_3 & d_2  \\
    -d_3 & 0 & d_1 \\
    -d_2 & -d_1 & 0
    \end{pmatrix} \in \mathfrak{so}(3,C)$, then by doing straightforward computation of $ f_{4,C_*}(D)X $ as $ X=
    \begin{pmatrix}
    \xi_1 & x_3 & x_2 \\
    x_3 & \xi_2 & x_1 \\
    x_2 & x_1 & \xi_3
    \end{pmatrix} \in (\mathfrak{J}_{\sR})^C$, we have
    \begin{align*}
    f_{4,C_*}(D)X=2(\tilde{A}_1(d_1)+\tilde{A}_2(d_2)+\tilde{A}_3(d_3))X.
    \end{align*}
    Hence $ \delta_C $ can be expressed
    $ \delta_C=\tilde{A}_1(c_1)+\tilde{A}_2(c_2)+\tilde{A}_3(c_3), c_i \in C $. In order to prove the uniqueness of the expression, it is sufficient to show that
    \begin{align*}
    \tilde{A}_1(c_1)+\tilde{A}_2(c_2)+\tilde{A}_3(c_3)=0, \,\,{\text{then}}\,\,A_1(c_1)=A_2(c_2)=A_3(c_3)=0.
    \end{align*}
    Indeed, if we apply it on $ E_1 $, then we have $ A_2(c_2)=A_3(c_3)=0 $, and so $ A_1(c_1)=0 $. Hence we easily see $ \dim_C((\mathfrak{f}_{4,\sR})^C )=3 $.
\end{proof}

We move the determination of the root system and Dynkin diagram of the Lie algebra $ (\mathfrak{f}_{4,\sR})^C $.

We define a Lie subalgebra $ \mathfrak{h}_4 $ of $ (\mathfrak{f}_{4,\sR})^C $ by
\begin{align*}
\mathfrak{h}_4=\left\lbrace \delta_4=\tilde{A}_1(-ia) \relmiddle{|} a \in C \right\rbrace.
\end{align*}
Note that the Lie brackets $ [\tilde{A}_i(a),\tilde{A}_i(b)]=0,[\tilde{A}_i(a),\tilde{A}_{i+1}(b)]=-(1/2)\tilde{A}_{i+2}(ab), a,b \in C $ hold, where the indexes $ i $ are considered as mod $ 3 $. Then $ \mathfrak{h}_4 $ is a Cartan subalgebra of $ (\mathfrak{f}_{4,\sR})^C $.

\begin{theorem}\label{theorem 3.6}
The roots $ \varDelta $ of $ (\mathfrak{f}_{4,\sR})^C $ relative to $ \mathfrak{h}_4 $ are given by
\begin{align*}
\varDelta=\left\lbrace \pm \dfrac{1}{2}a \right\rbrace.
\end{align*}
\end{theorem}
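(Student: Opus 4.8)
The plan is to exploit the rank-one structure already visible in Theorem \ref{theorem 3.5}: since $(\mathfrak{f}_{4,\sR})^C \cong \mathfrak{so}(3,C)$ is three-dimensional over $C$ with $C$-basis $\tilde{A}_1(1), \tilde{A}_2(1), \tilde{A}_3(1)$, and $\mathfrak{h}_4 = C\,\tilde{A}_1(-i)$ is one-dimensional, the determination of the root system reduces to diagonalizing $\ad(\delta_4)$ on the two-dimensional complement spanned by $\tilde{A}_2(1)$ and $\tilde{A}_3(1)$. First I would fix a general Cartan element $\delta_4 = \tilde{A}_1(-ia)$, $a \in C$, and observe that it suffices to compute the brackets of $\delta_4$ against the two basis vectors $\tilde{A}_2(1)$ and $\tilde{A}_3(1)$, since $C$-linearity in the argument of $\tilde{A}_k$ then fixes the action of $\ad(\delta_4)$ on the whole complement.

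Next I would apply the bracket relations recorded just before the statement, namely $[\tilde{A}_i(a),\tilde{A}_{i+1}(b)] = -\tfrac{1}{2}\tilde{A}_{i+2}(ab)$ with indices read mod $3$. Direct substitution gives
\begin{align*}
[\delta_4, \tilde{A}_2(1)] &= -\tfrac{1}{2}\tilde{A}_3(-i a) = \tfrac{ia}{2}\,\tilde{A}_3(1), \\
[\delta_4, \tilde{A}_3(1)] &= \tfrac{1}{2}\tilde{A}_2(-ia) = -\tfrac{ia}{2}\,\tilde{A}_2(1),
\end{align*}
so that in the ordered basis $(\tilde{A}_2(1), \tilde{A}_3(1))$ the operator $\ad(\delta_4)$ is represented by $\begin{pmatrix} 0 & -ia/2 \\ ia/2 & 0 \end{pmatrix}$. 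Its characteristic polynomial is $\lambda^2 - a^2/4$, whence the eigenvalues are $\pm a/2$, with root vectors $\tilde{A}_2(\mp i) + \tilde{A}_3(1)$.

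Finally I would read off the conclusion: the nonzero eigenvalues of $\ad(\delta_4)$, regarded as the linear functionals $\delta_4 = \tilde{A}_1(-ia) \mapsto \pm a/2$ on $\mathfrak{h}_4$, constitute the root system, so $\varDelta = \{\pm \tfrac{1}{2}a\}$, in agreement with the type $A_1$ picture expected from $\mathfrak{so}(3,C) \cong \mathfrak{sl}(2,C)$. The computation is entirely routine; the only point demanding care is the bookkeeping of the factor $-i$ built into the Cartan generator together with the sign in the bracket relation, since it is exactly this interplay that yields the eigenvalues $\pm a/2$ rather than a purely imaginary pair. (That $\mathfrak{h}_4$ is genuinely a Cartan subalgebra is already recorded in the remark preceding the statement, and is in any case confirmed by the vanishing of the kernel of $\ad(\delta_4)$ on the complement for generic $a$.)
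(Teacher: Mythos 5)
Your proposal is correct and follows essentially the same route as the paper: both restrict the adjoint action of a general Cartan element $\delta_4=\tilde{A}_1(-ia)$ to the complement spanned by $\tilde{A}_2(1)$ and $\tilde{A}_3(1)$, use the same bracket relations $[\tilde{A}_i(a),\tilde{A}_{i+1}(b)]=-\tfrac{1}{2}\tilde{A}_{i+2}(ab)$, and conclude completeness from the fact that these elements together with $\mathfrak{h}_4$ span the three-dimensional algebra $(\mathfrak{f}_{4,\sR})^C$. The only cosmetic difference is that the paper exhibits the root vectors $\tilde{A}_2(i)-\tilde{A}_3(1)$ and $\tilde{A}_2(1)-\tilde{A}_3(i)$ and verifies the eigenvalue equations directly, whereas you recover the same eigenvalues $\pm\tfrac{1}{2}a$ (with proportional eigenvectors) from the characteristic polynomial of the $2\times 2$ matrix of $\ad(\delta_4)$.
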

\begin{proof}
Let $ \delta_4=\tilde{A}_1(-ia) \in \mathfrak{h}_4 $ and $ \tilde{A}_2(i)-\tilde{A}_3(1) \in (\mathfrak{f}_{4,\sR})^C $. Then it follows that
\begin{align*}
[\delta_4,\tilde{A}_2(i)-\tilde{A}_3(1)]&=[\tilde{A}_1(-ia),\tilde{A}_2(i)-\tilde{A}_3(1) ]=[-\dfrac{1}{2}\tilde{A}_3(a)-\dfrac{1}{2}\tilde{A}_2(-ia)]
\\
&=\dfrac{1}{2}a[\tilde{A}_2(i)-\tilde{A}_3(1)],
\end{align*}
that is, $ [\delta_4,\tilde{A}_2(i)-\tilde{A}_3(1)]=(1/2)a[\tilde{A}_2(i)-\tilde{A}_3(1)] $. Hence we see that $ (1/2)a $ is a root of $ (\mathfrak{f}_{4,\sR})^C $ and $ \tilde{A}_2(i)-\tilde{A}_3(1) $ is a root vector associated with its root. Similarly, let $ \tilde{A}_2(1)-\tilde{A}_3(i) \in (\mathfrak{f}_{4,\sR})^C $. Then it follows that
\begin{align*}
[\delta_4,\tilde{A}_2(1)-\tilde{A}_3(i)]&=[\tilde{A}_1(-ia),\tilde{A}_2(1)-\tilde{A}_3(i) ]=[-\dfrac{1}{2}\tilde{A}_3(-ia)-\dfrac{1}{2}\tilde{A}_2(a)]
\\
&=-\dfrac{1}{2}a[\tilde{A}_2(1)-\tilde{A}_3(i)],
\end{align*}
that is, $ [\delta_4,\tilde{A}_2(1)-\tilde{A}_3(i)]=(-1/2)a[\tilde{A}_2(1)-\tilde{A}_3(i)] $. Hence we see that $ (-1/2)a $ is another root of $ (\mathfrak{f}_{4,\sR})^C $ and $ \tilde{A}_2(1)-\tilde{A}_3(i) $ is a root vector associated with its root.

Thus we have $ (\mathfrak{f}_{4,\sR})^C=\{\tilde{A}_1(-ia),\tilde{A}_2(i)-\tilde{A}_3(1),\tilde{A}_2(1)-\tilde{A}_3(i)  \}_{span} $ over $ C $, so that these roots are all.
\end{proof}

\if0
We define a Lie subalgebra $ \mathfrak{h}_4 $ of $ (\mathfrak{f}_{4,\sR})^C $ by
\begin{align*}
\mathfrak{h}_4=\left\lbrace \delta_4=\tilde{A}_1(-ic)+\tilde{A}_2(-ic)+\tilde{A}_2(-ic)\relmiddle{|} c \in C \right\rbrace.
\end{align*}
Note that the Lie brackets $ [\tilde{A}_i(a),\tilde{A}_i(b)]=0,[\tilde{A}_i(a),\tilde{A}_{i+1}(b)]=-(1/2)\tilde{A}_{i+2}(ab), a,b \in C $ hold, where the indexes $ i $ are considered as mod $ 3 $.
Then $ \mathfrak{h}_4 $ is the Cartan subalgebra of $ (\mathfrak{f}_{4,\sR})^C $. Indeed, using the Lie brackets above, it is easy to verify that $ \mathfrak{h}_4 $ is an abelian. Moreover, let any element $ \delta_4:=\tilde{A}_1(-ic)+\tilde{A}_2(-ic)+\tilde{A}_3(-ic) \in \mathfrak{h}_4 $, then for $ \delta:=\tilde{A}_1(c_1)+\tilde{A}_2(c_2)+\tilde{A}_3(c_3) \in (\mathfrak{f}_{4,\sR})^C $, since it follows that
\begin{align*}
[\delta,\delta_4]&=[\tilde{A}_1(c_1)+\tilde{A}_2(c_2)+\tilde{A}_3(c_3),\tilde{A}_1(-ic)+\tilde{A}_2(-ic)+\tilde{A}_3(-ic) ]
\\
&=\dfrac{1}{2}(-\tilde{A}_3(-icc_1)+\tilde{A}_2(-icc_1))
+\dfrac{1}{2}(\tilde{A}_3(-icc_2)-\tilde{A}_1(-icc_2))
+\dfrac{1}{2}(-\tilde{A}_2(-icc_3)+\tilde{A}_1(-icc_3))
\\
&=\dfrac{1}{2}(\tilde{A}_1(-i(c_3-c_2)c)+\tilde{A}_2(-i(c_1-c_3)c)+\tilde{A}_3(-i(c_2-c_1)c)),
\end{align*}
$ [\delta, \delta_4] \in \mathfrak{h}_4 $ implies $ \delta \in \mathfrak{h}_4 $. With above, we see that $ \mathfrak{h}_4 $ is the Cartan subalgebra of $ (\mathfrak{f}_{4,\sR})^C $.

\begin{theorem}\label{theorem 4.0}
The roots $ \varDelta $ of $ (\mathfrak{f}_{4,\sR})^C $ relative to $ \mathfrak{h}_4 $ are given by
\begin{align*}
\varDelta=\left\lbrace \pm \dfrac{\sqrt{3}}{2}c \right\rbrace.
\end{align*}
\end{theorem}
\begin{proof}
Let $ \delta_4=\tilde{A}_1(-ic)+\tilde{A}_2(-ic)+\tilde{A}_3(-ic) \in \mathfrak{h}_4 $ and $  \tilde{A}_1(1)+\tilde{A}_2(\omega)+\tilde{A}_3(\omega^2) \in (\mathfrak{f}_{4,\sR})^C $, where $ \omega:=(-1/2)+(\sqrt{3}/2)i $. Then it follows that
\begin{align*}
[\delta_4,\tilde{A}_1(1)+\tilde{A}_2(\omega)+\tilde{A}_3(\omega^2)]
&=[\tilde{A}_1(-ic)+\tilde{A}_2(-ic)+\tilde{A}_3(-ic),\tilde{A}_1(1)+\tilde{A}_2(\omega)+\tilde{A}_3(\omega^2)]
\\
&=-ic[\tilde{A}_1(1)+\tilde{A}_2(1)+\tilde{A}_3(1),\tilde{A}_1(1)+\tilde{A}_2(\omega)+\tilde{A}_3(\omega^2)]
\\
&=\dfrac{-ic}{2}((-\tilde{A}_3(\omega)+\tilde{A}_2(\omega^2))+(\tilde{A}_3(1)-\tilde{A}_1(\omega^2))+(-\tilde{A}_2(1)+\tilde{A}_1(\omega)))
\\
&=\dfrac{-ic}{2}(\tilde{A}_1(-\omega^2+\omega)+\tilde{A}_2(\omega^2-1)+\tilde{A}_3(-\omega+1))
\\
&=\dfrac{-ic}{2}\sqrt{3}i(\tilde{A}_1(1)+\tilde{A}_2(\omega)+\tilde{A}_3(\omega^2))
\\
&=\dfrac{\sqrt{3}}{2}c(\tilde{A}_1(1)+\tilde{A}_2(\omega)+\tilde{A}_3(\omega^2)),
\end{align*}
that is, $ [\delta_4,\tilde{A}_1(1)+\tilde{A}_2(\omega)+\tilde{A}_3(\omega^2)]=(\sqrt{3}/2)c(\tilde{A}_1(1)+\tilde{A}_2(\omega)+\tilde{A}_3(\omega^2)) $. Hence we see that $ (\sqrt{3}/2)c $ is a root of $ (\mathfrak{f}_{4,\sR})^C $ and $ \tilde{A}_1(1)+\tilde{A}_2(\omega)+\tilde{A}_3(\omega^2) $ is a root vector associated with its root. Similarly, we have that $ -(\sqrt{3}/2)c $ is a root of $ (\mathfrak{f}_{4,\sR})^C $ and $ \tilde{A}_1(1)+\tilde{A}_2(\omega^2)+\tilde{A}_3(\omega) $ is a root vector associated with its root. Set $ \delta_{v_1}:=\tilde{A}_1(1)+\tilde{A}_2(\omega)+\tilde{A}_3(\omega^2) $ and $ \delta_{v_2}:=\tilde{A}_1(1)+\tilde{A}_2(\omega^2)+\tilde{A}_3(\omega) $, then for $ \tilde{A}_1(c_1)+\tilde{A}_2(c_2)+\tilde{A}_3(c_3) \in (\mathfrak{f}_{4,\sR})^C $, we can confirm that there exist $ \lambda_i \in C $ such that $ \tilde{A}_1(c_1)+\tilde{A}_2(c_2)+\tilde{A}_3(c_3)=\lambda_1\delta_4+\lambda_2\delta_{v_1}+\lambda_3\delta_{v_2} $.
Indeed, it follows from
\begin{align*}
&\quad \lambda_1\delta_4+\lambda_2\delta_{v_1}+\lambda_3\delta_{v_2}
\\
&=\lambda_1(\tilde{A}_1(-ic)+\tilde{A}_2(-ic)+\tilde{A}_3(-ic))+\lambda_2(\tilde{A}_1(1)+\tilde{A}_2(\omega)+\tilde{A}_3(\omega^2))+\lambda_3(\tilde{A}_1(1)+\tilde{A}_2(\omega^2)+\tilde{A}_3(\omega))
\\
&=\tilde{A}_1(-ic\lambda_1+\lambda_2+\lambda_3)+\tilde{A}_2(-ic\lambda_1+\lambda_2\omega+\lambda_3\omega^2)+\tilde{A}_3(-ic\lambda_1+\lambda_2\omega^2+\lambda_3\omega)
\end{align*}
that
\begin{align*}
\left\lbrace
\begin{array}{l}
-ic\lambda_1+\lambda_2+\lambda_3=c_1
\\
-ic\lambda_1+\lambda_2\omega+\lambda_3\omega^2=c_2
\\
-ic\lambda_1+\lambda_2\omega^2+\lambda_3\omega=c_3.
\end{array}
\right.
\end{align*}
Here, we consider the following matrix $ A $
\begin{align*}
A=\begin{pmatrix}
-ic & 1 & 1 \\
-ic & \omega & \omega^2 \\
-ic & \omega^2 & \omega
\end{pmatrix}.
\end{align*}
Then we have $ \det\,A=-3ic(-2\omega-1)\not=0 $ because we may assume as $ c\not=0 $, so that the solutions of the simultaneous equations above exist uniquely with respect to $ \lambda_i,i=1,2,3 $.
Thus, since $ (\mathfrak{f}_{4,\sR})^C $ is spanned by $ \mathfrak{h}_4 $ and the root vectors $ \delta_{v_1},\delta_{v_2} $, the roots obtained are all.
\end{proof}
\fi

In $ (\mathfrak{f}_{4,\sR})^C $, we define an inner product $ (\delta_1, \delta_2)_4 $ by
\begin{align*}
(\delta_1, \delta_2)_4=(\delta_1,\displaystyle{\sum_{i=1}^{3}(-2)[\tilde{E}_i,\tilde{F}_{i+1}(c_{i+1})]})_4:=(-2)\displaystyle{\sum_{i=1}^{3}}(\delta_1E_i,F_{i+1}(c_{i+1})).
\end{align*}
We do the supplementary explanation of the inner product above.
In general, as in $ {\mathfrak{f}_4}^C $, we have that  $ [\tilde{A},\tilde{B}] \in (\mathfrak{f}_{4,\sR})^C $ for $ A, B \in (\mathfrak{J}_{\sR})^C $, where the $ C $-linear transformation $ \tilde{A}:(\mathfrak{J}_{\sR})^C \to (\mathfrak{J}_{\sR})^C $ is defined by $ \tilde{A}X=(1/2)[A,X] $(cf. \cite[Proposition 2.4.1]{iy0}).
Here, since $ \delta_C $ is expressed as $ \tilde{A}_1(c_1)+\tilde{A}_2(c_2)+\tilde{A}_3(c_3) $ (Theorem \ref{theorem 3.5}),
we have $ \delta_2= \displaystyle{\sum_{i=1}^{3}(-2)[\tilde{E}_i,\tilde{F}_{i+1}(c_{i+1})]} $
using $ \tilde{A}_i(c_i)=-2[\tilde{E}_i,\tilde{F}_{i+1}(c_{i+1})] $ (the indexes are considered as mod $ 3 $).
\vspace{1mm}

Here, we will determine the Killing form of $ (\mathfrak{f}_{4,\sR})^C $ needed later.

\begin{theorem}\label{theorem 3.7}
    The Killing form $ B_{4,\sR} $ of $(\mathfrak{f}_{4,\sR})^C $ is given by
    \begin{align*}
    B_{4,\sR}(\delta_1, \delta_2)
    =\dfrac{1}{4}(\delta_1, \delta_2)_4
    =\dfrac{1}{5}\tr(\delta_1 \delta_2).
    \end{align*}
\end{theorem}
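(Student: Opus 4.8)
The plan is to use that $(\mathfrak{f}_{4,\sR})^C\cong\mathfrak{so}(3,C)$ (Theorem \ref{theorem 3.2}, Theorem \ref{theorem 3.5}) is a three-dimensional simple Lie algebra, so the space of invariant symmetric $C$-bilinear forms on it is one-dimensional. Both the Killing form $B_{4,\sR}$ and the trace form $(\delta_1,\delta_2)\mapsto\tr(\delta_1\delta_2)$ of the representation on $(\mathfrak{J}_{\sR})^C$ are invariant and symmetric, hence proportional, and the same will hold for $(\,\cdot\,,\,\cdot\,)_4$; it therefore suffices to evaluate the three forms on the $C$-basis $\{\tilde{A}_1(1),\tilde{A}_2(1),\tilde{A}_3(1)\}$ of Theorem \ref{theorem 3.5} and read off the constants. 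By the cyclic symmetry of the indices, only the diagonal value and one off-diagonal value, say on the pair $(\tilde{A}_1(1),\tilde{A}_2(1))$, need to be computed.

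For the Killing form I would feed the bracket relations $[\tilde{A}_i(a),\tilde{A}_{i+1}(b)]=-\frac{1}{2}\tilde{A}_{i+2}(ab)$ (recorded before Theorem \ref{theorem 3.6}) into $\ad$: in the chosen basis each $\ad\tilde{A}_i(1)$ is skew and supported on the $2\times2$ block spanned by the other two basis vectors, with off-diagonal entries $\pm\frac{1}{2}$; hence $\ad\tilde{A}_i(1)^2$ is diagonal of trace $-\frac{1}{2}$, while $\ad\tilde{A}_1(1)\,\ad\tilde{A}_2(1)$ has trace $0$. Thus $B_{4,\sR}(\tilde{A}_i(1),\tilde{A}_j(1))$ equals $-\frac{1}{2}$ for $i=j$ and $0$ otherwise.

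For the trace form I would apply $\tilde{A}_i(1)$ twice to the basis $\{E_1,E_2,E_3,F_1(1),F_2(1),F_3(1)\}$ of $(\mathfrak{J}_{\sR})^C$ via the action formulas $(\ast)$; reading off the diagonal coefficients gives $\tr(\tilde{A}_i(1)^2)=0-\frac{1}{2}-\frac{1}{2}-1-\frac{1}{4}-\frac{1}{4}=-\frac{5}{2}$ and $\tr(\tilde{A}_1(1)\tilde{A}_2(1))=0$. For the form $(\,\cdot\,,\,\cdot\,)_4$ I would unwind its definition with $\delta_2=\tilde{A}_i(1)$: only the summand pairing $\tilde{A}_i(1)E_{i+2}=\frac{1}{2}F_i(1)$ against $F_i(1)$ survives, and since $F_i(x)\circ F_i(y)=(x,y)(E_{i+1}+E_{i+2})$ gives $(F_i(1),F_i(1))=2$, one finds $(\tilde{A}_i(1),\tilde{A}_i(1))_4=(-2)\cdot\frac{1}{2}\cdot2=-2$, the off-diagonal pairing again vanishing. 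Comparing the diagonal values $-\frac{1}{2}$, $-2$, $-\frac{5}{2}$ (and the common off-diagonal zeros) yields $B_{4,\sR}=\frac{1}{4}(\,\cdot\,,\,\cdot\,)_4=\frac{1}{5}\tr(\delta_1\delta_2)$ by $C$-bilinearity.

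The main obstacle I anticipate is the careful bookkeeping for $(\,\cdot\,,\,\cdot\,)_4$: one must verify that its defining expression is symmetric and $\ad$-invariant (or, equivalently, forgo the proportionality shortcut and simply check the claimed identity on every basis pair), and one must keep the $\mathrm{mod}\,3$ cyclic indexing in $(\ast)$ aligned so that the surviving term pairs the correct $F_i$-component. Once the normalization $(F_i(1),F_i(1))=2$ and the three diagonal traces are pinned down correctly, matching the constants $\frac{1}{4}$ and $\frac{1}{5}$ is immediate.
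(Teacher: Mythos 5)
Your proposal is correct and follows essentially the same route as the paper: both use the simplicity of $(\mathfrak{f}_{4,\sR})^C \cong \mathfrak{so}(3,C)$ to reduce the identity to fixing proportionality constants, and then evaluate $B_{4,\sR}$, $(\,\cdot\,,\,\cdot\,)_4$ and the trace form on $\tilde{A}_1(1)$ via the bracket relations and the action formulas $(\ast)$, obtaining the same values $-\tfrac{1}{2}$, $-2$, $-\tfrac{5}{2}$ and hence $k=\tfrac{1}{4}$, $k'=\tfrac{1}{5}$. Your extra check of the off-diagonal pairs (and the remark that one could bypass invariance of $(\,\cdot\,,\,\cdot\,)_4$ by verifying all basis pairs) is a minor, harmless strengthening of what the paper does.
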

\begin{proof}
The Lie algebra $ (\mathfrak{f}_{4,\sR})^C $ is simple because of $ (\mathfrak{f}_{4,\sR})^C \cong \mathfrak{so}(3,C) $ as mentioned in the proof of Theorem \ref{theorem 3.5}. By the analogues argument as that in $ {\mathfrak{f}_4}^C $, we will determine the values $ k, k' \in C $
such that
\begin{align*}
     B_{4,\sR}(\delta_1, \delta_2)
    =k (\delta_1, \delta_2)_4=k' \tr(\delta_1 \delta_2).
\end{align*}
    First, in order to determine $ k $, let $ \delta_1=\delta_2=\tilde{A}_1(1) $. Then, using the properties $ (\ast) $ above, we have
    \begin{align*}
   (\delta_1, \delta_2)_4&=(\tilde{A}_1(1), \tilde{A}_1(1) )_4=-2(\tilde{A}_1(1),[\tilde{E}_3,\tilde{F}_1(1)])
   \\
   &=-2(\tilde{A}_1(1)E_3, F_1(1))=-(F_1(1),F_1(1))
   \\
   &=-2.
    \end{align*}
    On the other hand, it follows from
    \begin{align*}
    \ad(\tilde{A}_1(1))\ad(\tilde{A}_1(1))\tilde{A}_1(1)&=[\tilde{A}_1(1),[\tilde{A}_1(1),\tilde{A}_1(1)]]=0,
    \\
    \ad(\tilde{A}_1(1))\ad(\tilde{A}_1(1))\tilde{A}_2(1)&=[\tilde{A}_1(1),[\tilde{A}_1(1),\tilde{A}_2(1)]]=[\tilde{A}_1(1), \tilde{A}_3(1)]=(-1/4)\tilde{A}_2(1),
    \\
    \ad(\tilde{A}_1(1))\ad(\tilde{A}_1(1))\tilde{A}_3(1)&=[\tilde{A}_1(1),[\tilde{A}_1(1),\tilde{A}_3(1)]]=[\tilde{A}_1(1), \tilde{A}_2(1)]=(-1/4)\tilde{A}_3(1)
    \end{align*}
    that $ B_{4,\sR}(\tilde{A}_1(1),\tilde{A}_1(1))=\tr( \ad(\tilde{A}_1(1))\ad(\tilde{A}_1(1)))=(-1/4) \times 2=-1/2 $.
    Hence we have $ k=1/4 $.

    Next, we will determine $ k' $. Similarly, it follows from
    \begin{align*}
    \tilde{A}_1(1)\tilde{A}_1(1)E_1&=0,
    \\
    \tilde{A}_1(1)\tilde{A}_1(1)E_2&=\tilde{A}_1(1)(-1/2 F_1(1))=-1/2(1,1)(E_2-E_3)=-1/2(E_2-E_3),
    \\
    \tilde{A}_1(1)\tilde{A}_1(1)E_3&=\tilde{A}_1(1)(1/2 F_1(1))=1/2(1,1)(E_2-E_3)=1/2(E_2-E_3),
    \\
    \tilde{A}_1(1)\tilde{A}_1(1)F_1(1)&=\tilde{A}_1(1)(E_2-E_3)=-F_1(1),
    \\
    \tilde{A}_1(1)\tilde{A}_1(1)F_2(1)&=\tilde{A}_1(1)(1/2F_3(1))=-1/4F_2(1),
    \\
    \tilde{A}_1(1)\tilde{A}_1(1)F_3(1)&=\tilde{A}_1(1)(-1/2F_2(1))=-1/4F_3(1).
    \end{align*}
    that $ \tr(\tilde{A}_1(1)\tilde{A}_1(1))=(-1/2)\times 2+(-1)+(-1/4)\times 2=-5/2 $. Hence we have $ k'=1/5 $.
\end{proof}
Moreover, note that by analogous argument as in $ (\mathfrak{f}_{4,\sR})^C $, any element $ \delta \in \mathfrak{f}_{4,\sR} $ is uniquely expressed by
\begin{align*}
\delta=\tilde{A}_1(r_1)+\tilde{A}_2(r_2)+\tilde{A}_3(r_3),\,\,r_i \in \R.
\end{align*}

Subsequently, we will prove the following theorem.

\begin{theorem}\label{theorem 3.8}
 In the root system $ \varDelta $ of Theorem {\rm \ref{theorem 3.6}},
    \begin{align*}
    \varPi=\{\alpha\}
    \end{align*}
    is a fundamental root system of $ (\mathfrak{f}_{4,\sR})^C $, where $ \alpha=(1/2)a$. The Dynkin diagram of $ (\mathfrak{f}_{4,\sR})^C $ is given by
\vspace{1mm}

     {\setlength{\unitlength}{1mm}
        \scalebox{0.9}
        {\begin{picture}(100,13)
            \put(0,9){}
            \put(70,10){\circle{2}} \put(69,6){$\alpha$}



            \end{picture}}
    }
\end{theorem}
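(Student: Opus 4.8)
The plan is to read off the fundamental system directly from the rank-one root system $\varDelta=\{\pm(1/2)a\}$ obtained in Theorem \ref{theorem 3.6}. First I would fix a notion of positivity on the one-dimensional span of the roots by declaring $\alpha=(1/2)a$ to be positive; the two roots $\pm(1/2)a$ then split into the positive root $\alpha$ and the negative root $-\alpha$.

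Next I would verify that $\varPi=\{\alpha\}$ satisfies the defining properties of a fundamental root system. Linear independence is immediate, since $\varPi$ consists of the single nonzero vector $\alpha$. The condition that every root be an integral combination of the elements of $\varPi$ with coefficients of a consistent sign is equally immediate: each root of $\varDelta$ equals $\pm\alpha$, hence is an integer multiple of $\alpha$ whose coefficient is nonnegative for $\alpha$ itself and nonpositive for $-\alpha$. Thus $\varPi=\{\alpha\}$ is a fundamental root system.

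Finally, I would determine the Dynkin diagram. Since $\varPi$ contains exactly one simple root, the diagram has exactly one vertex; because there is no second simple root, there are no bonds to draw, so the diagram is a single isolated node. This is the Dynkin diagram of type $A_1$, in agreement with the isomorphism $(\mathfrak{f}_{4,\sR})^C\cong\mathfrak{so}(3,C)$ established in Theorem \ref{theorem 3.2} (recall $\mathfrak{so}(3,C)\cong\mathfrak{sl}(2,C)$ has rank one).

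I do not expect any genuine obstacle here: once Theorem \ref{theorem 3.6} pins down $\varDelta$ as a rank-one system, the passage to a single simple root and a one-vertex diagram is forced. The only point requiring care is the bookkeeping of choosing the positive system so that $\alpha=(1/2)a$ is the unique positive root, after which the conclusion is automatic.
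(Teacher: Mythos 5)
Your proposal is correct and matches the paper's approach, which simply declares that $\varPi=\{\alpha\}$ is clearly a fundamental root system and that the Dynkin diagram is therefore trivial (a single node). You have merely spelled out the routine details (positivity, rank one, agreement with $\mathfrak{so}(3,C)\cong\mathfrak{sl}(2,C)$) that the paper leaves implicit.
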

\begin{proof}
It is clear that $ \varPi=\{\alpha\} $ is a fundamental root system of $ (\mathfrak{f}_{4,\sR})^C $, so that the Dynkin diagram is trivial.

\if0
Since the Killing form $ B_{4,\sR} $ of $(\mathfrak{f}_{4,\sR})^C $ is given by
$B_{4,\sR}(\delta, \delta')=\dfrac{1}{5}\tr(\delta\delta')$ (Theorem \ref{theorem 3.7}), for $ \delta=\tilde{A}_1(c_1)+\tilde{A}_2(c_2)+\tilde{A}_3(c_3), \delta'=\tilde{A}_1(c'_1)+\tilde{A}_2(c'_2)+\tilde{A}_3(c'_3) \in (\mathfrak{f}_{4,\sR})^C$, it follows that
\begin{align*}
\delta\delta' E_1&=\dfrac{1}{2}(-c_2c'_2-c_3c'_3)E_1+\cdots,
\\
\delta\delta' E_2&=\dfrac{1}{2}(-c_1c'_1-c_3c'_3)E_2+\cdots,
\\
\delta\delta' E_3&=\dfrac{1}{2}(-c_1c'_1-c_2c'_2)E_3+\cdots,
\\
\delta\delta' F_1(1)&=\dfrac{1}{2}(-2c_1c'_1-\dfrac{1}{2}c_2c'_2-\dfrac{1}{2}c_3c'_3)F_1(1)+\cdots,
\\
\delta\delta' F_2(1)&=\dfrac{1}{2}(-\dfrac{1}{2}c_1c'_1-2c_2c'_2-\dfrac{1}{2}c_3c'_3)F_2(1)+\cdots,
\\
\delta\delta' F_3(1)&=\dfrac{1}{2}(-\dfrac{1}{2}c_1c'_1-\dfrac{1}{2}c_2c'_2-2c_3c'_3)F_3(1)+\cdots,
\end{align*}
so that we obtain
\begin{align*}
B_{4,\sR}(\delta, \delta')=-\dfrac{1}{2}(c_1c'_1+c_2c'_2+c_3c'_3).
\end{align*}

Let the real part $ \mathfrak{h}_{4,\sR} $ of $ \mathfrak{h}_4 $:
\begin{align*}
   \mathfrak{h}_{4,\sR}=\left\lbrace \delta_4=\tilde{A}_1(-ia)\relmiddle{|} a \in \R \right\rbrace.
\end{align*}
The Killing form $ B_{4,\sR} $ on $ \mathfrak{h}_{4,\sR} $ is given by
\begin{align*}
   B_{4,\sR}(\delta_4,{\delta'}_{\!4})=\dfrac{1}{2}aa',
\end{align*}
where $ \delta_4=\tilde{A}_1(-ia), \delta'_4=\tilde{A}_1(-ia')  $.

Now, the canonical element $ \delta_{\alpha} \in \mathfrak{h}_{4,\sR} $ corresponding to $ \alpha $ is determined as follows.
\begin{align*}
\delta_\alpha=\tilde{A}_1(-i).
\end{align*}
Indeed, let $ \delta_4=\tilde{A}_1(-ia) \in \mathfrak{h}_{4,\sR} $ and set $ \delta_\alpha:=\tilde{A}_1(-ib)$, then it follows that $ B_{4,\sR}(\delta_\alpha, \delta_4)=(1/2)ba $ .
Hence, from $ B_{4,\sR}(\delta_\alpha, \delta_4)=\alpha(\delta_4) $, we have $ b=1 $.

With above, we have
\begin{align*}
(\alpha,\alpha)=B_{4,\sR}(\delta_\alpha,\delta_\alpha)=\dfrac{1}{2}\cdot 1\cdot1=\dfrac{1}{2}.
\end{align*}
and  we can draw the Dynkin diagram.
\fi
\end{proof}

\section{The groups $ (E_{6,\sR})^C, E_{6,\sR} $ and the root system, the Dynkin diagram of the Lie algebra $ (\mathfrak{e}_{6,\sR})^C $ of the group $ (E_{6,\sR})^C $}

As in the previous section, we consider the following groups $ (E_{6,\sR})^C $ and $ E_{6,\sR} $ which are respectively defined by replacing $ \mathfrak{C} $ with $ \R $ in the groups $ {E_6}^C, E_6 $:
\begin{align*}
(E_{6,\sR})^C&=\left\lbrace \alpha \in \Iso_{C}((\mathfrak{J}_{\sR})^C)\relmiddle{|} \det\,\alpha X=\det\,X \right\rbrace
\\
&=\left\lbrace \alpha \in \Iso_{C}((\mathfrak{J}_{\sR})^C)\relmiddle{|} \alpha X \times \alpha Y={}^t\!\alpha^{-1}(X \times Y) \right\rbrace,
\\[1mm]
E_{6,\sR}&=\left\lbrace \alpha \in \Iso_{C}((\mathfrak{J}_{\sR})^C)\relmiddle{|}  \det\,\alpha X=\det\,X, \langle \alpha X,\alpha Y \rangle=\langle X,Y \rangle \right\rbrace
\\
&=\left\lbrace \alpha \in \Iso_{C}((\mathfrak{J}_{\sR})^C)\relmiddle{|}  \alpha X \times \alpha Y={}^t\!\alpha^{-1}(X \times Y), \langle \alpha X,\alpha Y \rangle=\langle X,Y \rangle \right\rbrace.
\end{align*}
The group $ E_{6,\sR} $ is a compact group as the closed subgroup of the unitary $ U(6)=U((\mathfrak{J}_{\sR})^C)=\{\alpha \in \Iso_C((\mathfrak{J}_{\sR})^C)\,|\,\langle \alpha X,\alpha Y\rangle=\langle X, Y \rangle\} $ and the group $ (E_{6,\sR})^C $ is the complexification of $ E_{6,\sR} $.

First, we will study the structure of the group $ (E_{6,\sR})^C $. Before that, we will make some preparations.

\noindent The Lie algebra $ (\mathfrak{e}_{6,\sR})^C $ of the group $ (E_{6,\sR})^C $ is given by
\begin{align*}
(\mathfrak{e}_{6,\sR})^C=\left\lbrace \phi \in \Hom_{C}((\mathfrak{J}_{\sR})^C) \relmiddle{|} (\phi X,X,X)=0 \right\rbrace.
\end{align*}

\if0
Then we have the following theorem.

\begin{theorem}\label{theorem 5.0.1}
 The Lie algebra  $ (\mathfrak{e}_{6,\sR})^C $ is isomorphic to the group $ \mathfrak{sl}(3,C) ${\rm :} $ (\mathfrak{e}_{6,\sR})^C \cong \mathfrak{sl}(3,C) $.

 In particular, we have $ \dim_C((\mathfrak{e}_{6,\sR})^C)=8 $.
\end{theorem}
\begin{proof}
    We define a mapping $ f_{6,C_*}: \mathfrak{sl}(3,C) \to (\mathfrak{e}_{6,\sR})^C $ by
    \begin{align*}
    f_{6,C_*}(B)X=BX+X{}^t\!B, \,\, X \in (\mathfrak{J}_{\sR})^C.
    \end{align*}

   First, we will prove that $ f_{6,C_*} $ is well-defined. Since it is clear $ f_{6,C_*}(B) \in \Hom_{C}((\mathfrak{J}_{\sR})^C) $,
   we will show $ (f_{6,C_*}(B)X,X,X)=0 $. We do its straightforward computation as $ B:=
   \begin{pmatrix}
   b_{11} & b_{12} & b_{13} \\
   b_{21} & b_{22} & b_{23} \\
   b_{31} & b_{32} & b_{33}
   \end{pmatrix}, b_{11}+b_{22}+b_{33}=0 $ and
   $ X:=
   \begin{pmatrix}
   \xi_1 & x_3 & x_2 \\
   x_3 & \xi_2 & x_1 \\
   x_2 & x_1 & \xi_3
   \end{pmatrix} $. Certainly, it follows from
   \begin{align*}
   &f_{6,C_*}(B)X=BX+X{}^t\!B
   \\
   &=\scalebox{0.7}{$
  \begin{pmatrix}2 {b_{11}} {{\xi}_1}+2 {b_{12}} {x_3}+2 {b_{13}} {x_2} & {b_{12}} {{\xi}_2}+{b_{21}} {{\xi}_1}+{b_{22}} {x_3}+{b_{11}} {x_3}+{b_{23}} {x_2}+{b_{13}} {x_1} & {b_{13}} {{\xi}_3}+{b_{31}} {{\xi}_1}+{b_{32}} {x_3}+{b_{33}} {x_2}+{b_{11}} {x_2}+{b_{12}} {x_1}\\
  {b_{12}} {{\xi}_2}+{b_{21}} {{\xi}_1}+{b_{22}} {x_3}+{b_{11}} {x_3}+{b_{23}} {x_2}+{b_{13}} {x_1} & 2 {b_{22}} {{\xi}_2}+2 {b_{21}} {x_3}+2 {b_{23}} {x_1} & {b_{23}} {{\xi}_3}+{b_{32}} {{\xi}_2}+{b_{31}} {x_3}+{b_{21}} {x_2}+{b_{33}} {x_1}+{b_{22}} {x_1}\\
  {b_{13}} {{\xi}_3}+{b_{31}} {{\xi}_1}+{b_{32}} {x_3}+{b_{33}} {x_2}+{b_{11}} {x_2}+{b_{12}} {x_1} & {b_{23}} {{\xi}_3}+{b_{32}} {{\xi}_2}+{b_{31}} {x_3}+{b_{21}} {x_2}+{b_{33}} {x_1}+{b_{22}} {x_1} & 2 {b_{33}} {{\xi}_3}+2 {b_{31}} {x_2}+2 {b_{32}} {x_1}
  \end{pmatrix}$},
  \\[1mm]
  &X \times X=
  \begin{pmatrix}
  \xi_2\xi_3-{x_1}^2 & x_1x_2-\xi_3x_3 & x_3x_1-\xi_2x_1 \\
  x_1x_2-\xi_3x_3 & \xi_3\xi_1-{x_2}^2 & x_2x_3-\xi_1x_1 \\
  x_3x_1-\xi_2x_1 & x_2x_3-\xi_1x_1 & \xi_1\xi_2-{x_3}^2
  \end{pmatrix}
   \end{align*}
   that
   \begin{align*}
   ( f_{6,C_*}(B)X,X,X)&=(BX+X{}^t\!B, X \times X)=0.
   \end{align*}
 \end{proof}
\fi

Then we will prove lemmas and theorem needed in the proof of Theorem \ref{theorem 4.3} below.

\begin{lemma}\label{lemma 4.1}
  The Lie algebra $ ((\mathfrak{e}_{6,\sR})^C)_E $ is isomorphic to the Lie algebra $ (\mathfrak{f}_{4,\sR})^C ${\rm :} $ ((\mathfrak{e}_{6,\sR})^C)_E \cong (\mathfrak{f}_{4,\sR})^C $, where $ ((\mathfrak{e}_{6,\sR})^C)_E:=\{\phi \in (\mathfrak{e}_{6,\sR})^C \,|\, \phi E=0 \} $.
\end{lemma}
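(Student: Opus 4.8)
The plan is to prove the sharper statement that $((\mathfrak{e}_{6,\sR})^C)_E$ and $(\mathfrak{f}_{4,\sR})^C$ coincide as subspaces of $\Hom_C((\mathfrak{J}_{\sR})^C)$. Since both carry the commutator bracket inherited from $\Hom_C((\mathfrak{J}_{\sR})^C)$, equality of the underlying sets yields the asserted isomorphism at once; that $((\mathfrak{e}_{6,\sR})^C)_E$ is itself a subalgebra is immediate, because $\phi E=\psi E=0$ forces $[\phi,\psi]E=0$. Throughout I would use the elementary identities $E\times E=E$ and $X\times E=\frac{1}{2}(\tr(X)E-X)$, which follow directly from the definition of the Freudenthal multiplication, together with $(W,E)=\tr(W)$ and the nondegeneracy of the inner product $(\,,\,)$ on $(\mathfrak{J}_{\sR})^C$ (it is the complexification of the positive definite trace form on $\mathfrak{J}_{\sR}$).

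For the inclusion $(\mathfrak{f}_{4,\sR})^C\subseteq((\mathfrak{e}_{6,\sR})^C)_E$ I would invoke Theorem \ref{theorem 3.2}. Every $\delta\in(\mathfrak{f}_{4,\sR})^C$ has the form $\delta=f_{4,C_*}(D)$, i.e. $\delta X=DX+X{}^t\!D$ with $D\in\mathfrak{so}(3,C)$, and it is the derivative at the identity of the one-parameter group $t\mapsto f_{4,C}(\exp tD)$, whose members act by $X\mapsto AX{}^t\!A$ with $A\in SO(3,C)$. Such a map preserves $\circ$, fixes $E$ (because ${}^t\!AA=E$), and preserves the matrix trace (again by ${}^t\!AA=E$), hence preserves $(\,,\,)$, $\times$ and $\det$. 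Differentiating $f_{4,C}(\exp tD)E=E$ and $\det(f_{4,C}(\exp tD)X)=\det X$ at $t=0$ gives $\delta E=0$ and $(\delta X,X,X)=0$, so $\delta\in((\mathfrak{e}_{6,\sR})^C)_E$. (The relation $\delta E=0$ is also visible directly from $\delta E=DE+E{}^t\!D=D+{}^t\!D=0$.)

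The substantive direction is $((\mathfrak{e}_{6,\sR})^C)_E\subseteq(\mathfrak{f}_{4,\sR})^C$. Let $\phi\in(\mathfrak{e}_{6,\sR})^C$ with $\phi E=0$. Polarizing the defining cubic identity $(\phi X,X,X)=0$ gives
\[
(\phi X,Y,Z)+(X,\phi Y,Z)+(X,Y,\phi Z)=0 \qquad (\mathrm{P})
\]
for all $X,Y,Z$. Putting $Y=Z=E$ in $(\mathrm{P})$ and using $E\times E=E$ yields $\tr(\phi X)=(\phi X,E)=0$. Next, putting $Z=E$ and using $X\times E=\frac{1}{2}(\tr(X)E-X)$ together with $\tr\circ\phi=0$, the two surviving terms collapse to $-\frac{1}{2}\big((\phi X,Y)+(X,\phi Y)\big)=0$, i.e. ${}^t\phi=-\phi$. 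With skew-symmetry in hand, pairing $\phi(X\times Y)-\phi X\times Y-X\times\phi Y$ with an arbitrary $W$ turns the three resulting trilinear terms into exactly the left-hand side of $(\mathrm{P})$, so by nondegeneracy $\phi$ is a derivation of $\times$. Finally I would feed everything into $X\circ Y=X\times Y+\frac{1}{2}(\tr(X)Y+\tr(Y)X)-\frac{1}{2}(\tr(X)\tr(Y)-(X,Y))E$: applying $\phi$ and using $\phi E=0$, $\tr\circ\phi=0$, ${}^t\phi=-\phi$ and the $\times$-derivation property, both $\phi(X\circ Y)$ and $\phi X\circ Y+X\circ\phi Y$ reduce to the single expression $\phi X\times Y+X\times\phi Y+\frac{1}{2}(\tr(X)\phi Y+\tr(Y)\phi X)$, whence $\phi\in(\mathfrak{f}_{4,\sR})^C$.

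The main obstacle is this reverse inclusion, and within it the step that extracts the two scalar-level consequences $\tr\circ\phi=0$ and ${}^t\phi=-\phi$ from the bare hypotheses $\phi\in(\mathfrak{e}_{6,\sR})^C$ and $\phi E=0$; once these are available, the remainder is organized bookkeeping with the fixed dictionary among $\circ$, $\times$, $\tr$ and $(\,,\,)$. A secondary point to record is that the identities $E\times E=E$ and $X\times E=\frac{1}{2}(\tr(X)E-X)$, as well as the nondegeneracy of $(\,,\,)$, hold verbatim in the restricted algebra $(\mathfrak{J}_{\sR})^C$ because the Freudenthal multiplication is given there by the same formula. I would also remark that the same computation, run \emph{without} imposing $\phi E=0$ and instead subtracting off $\tilde{T}$ with $T=\phi E$ (which is trace-free by $(\mathrm{P})$ at $X=Y=Z=E$, and satisfies $\tilde T\in(\mathfrak{e}_{6,\sR})^C$ via the adjoint identity $X\circ(X\times X)=\det(X)E$), yields the direct-sum decomposition $(\mathfrak{e}_{6,\sR})^C=(\mathfrak{f}_{4,\sR})^C\oplus\{\tilde{T}\mid T\in((\mathfrak{J}_{\sR})^C)_0\}$, recovering $\dim_C(\mathfrak{e}_{6,\sR})^C=3+5=8$.
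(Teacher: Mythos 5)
Your proof is correct, but it takes a genuinely different route from the paper. The paper's own proof is a one-line reduction to the group level: it asserts that the isomorphism $((E_{6,\sR})^C)_E \cong (F_{4,\sR})^C$ can be established exactly as in the classical case $({E_6}^C)_E \cong {F_4}^C$ (by citation to Yokota's proofs, rerun with $\mathfrak{C}$ replaced by $\R$), and then obtains the Lie algebra statement by passing to tangent algebras. You never touch the groups at all: working inside $\Hom_C((\mathfrak{J}_{\sR})^C)$, you polarize the cubic identity $(\phi X,X,X)=0$ to extract $\tr(\phi X)=0$ (from $Y=Z=E$ and $E\times E=E$) and ${}^t\phi=-\phi$ (from $Z=E$ and $X\times E=\frac{1}{2}(\tr(X)E-X)$), upgrade $\phi$ to a derivation of the Freudenthal cross product by pairing against arbitrary $W$ and using nondegeneracy of the trace form, and then convert this into the Jordan-derivation property via the dictionary between $\circ$ and $\times$; the converse inclusion follows from Theorems~\ref{theorem 3.2} and \ref{theorem 3.5}, since $f_{4,C}(A)$ fixes $E$ and preserves $\det$, so differentiation gives $\delta E=0$ and $(\delta X,X,X)=0$. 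All the identities you invoke hold verbatim in $(\mathfrak{J}_{\sR})^C$, and the full symmetry of the trilinear form that underlies your polarization is the same fact the paper itself uses (e.g.\ in the proofs of Lemma~\ref{lemma 4.2} and Theorem~\ref{theorem 4.3}), so there is no gap. What the paper's approach buys is brevity and consistency with its citation-based style, at the price of asking the reader to re-run a group-theoretic argument from the literature in the $\R$-setting. What your approach buys is a self-contained, purely algebraic argument that proves the sharper statement of equality $((\mathfrak{e}_{6,\sR})^C)_E=(\mathfrak{f}_{4,\sR})^C$ (not merely an abstract isomorphism), and, as your closing remark shows, it simultaneously reconstructs the decomposition $\phi=\delta+\tilde{T}$, $T=\phi E \in ((\mathfrak{J}_{\sR})^C)_0$, of Theorem~\ref{theorem 4.3} together with $\dim_C((\mathfrak{e}_{6,\sR})^C)=8$, so Lemma~\ref{lemma 4.1}, Lemma~\ref{lemma 4.2} and Theorem~\ref{theorem 4.3} all come out of one computation.
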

\begin{proof}
As in the proof of $ ({E_6}^C)_E \cong {F_4}^C $ (\cite[Subsection 3.1 (p.276)]{iy4}, cf. \cite[Theorem 3.7.1]{iy0}), we can prove the isomorphism $ ((E_{6,\sR})^C)_E \cong (F_{4,\sR})^C $, where $ ((E_{6,\sR})^C)_E:=\{\alpha \in (E_{6,\sR})^C \,|\, \alpha E=E \} $.
Hence the required isomorphism follows from  the isomorphism $ ((E_{6,\sR})^C)_E \cong (F_{4,\sR})^C $.
\end{proof}

\begin{lemma}\label{lemma 4.2}
    For $ T \in (\mathfrak{J}_{\sR})^C $ such that $ \tr(T)=0 $, we have $ \tilde{T} \in  (\mathfrak{e}_{6,\sR})^C$,
    where $ \tilde{T}:(\mathfrak{J}_{\sR})^C \to (\mathfrak{J}_{\sR})^C $ is defined by $ \tilde{T}X=T \circ X $.
\end{lemma}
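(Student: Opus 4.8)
The plan is to exploit the description of the Lie algebra given just above the lemma, namely $(\mathfrak{e}_{6,\sR})^C=\{\phi\in\Hom_C((\mathfrak{J}_{\sR})^C)\mid(\phi X,X,X)=0\}$, so that the whole task reduces to verifying $(\tilde{T}X,X,X)=0$ for every $X\in(\mathfrak{J}_{\sR})^C$ whenever $\tr(T)=0$. Unwinding the definition of the trilinear form $(A,B,C)=(A,B\times C)$ and of the inner product $(A,B)=\tr(A\circ B)$, I would rewrite
\[
(\tilde{T}X,X,X)=(T\circ X,X\times X)=\tr\bigl((T\circ X)\circ(X\times X)\bigr),
\]
which turns the statement into a single trace identity for $3\times 3$ symmetric matrices over $C$, since $(\mathfrak{J}_{\sR})^C=\mathfrak{J}(3,\R^C)$ consists of such matrices with the ordinary matrix product underlying $\circ$.

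The crux of the argument, and the step I expect to be the main obstacle, is to recognize the Freudenthal cross square $X\times X$ as the classical adjugate (cofactor) matrix of $X$. From the defining formula for $\times$ one computes $X\times X=X^2-\tr(X)X+\tfrac12(\tr(X)^2-\tr(X^2))E$, and then the Cayley--Hamilton theorem for $3\times 3$ matrices over the commutative ring $C$ yields the key relation $X(X\times X)=(X\times X)X=(\det X)E$. This is exactly the matrix identity $X\cdot\mathrm{adj}(X)=(\det X)E$, now produced purely from the Jordan data. The only care needed is that Cayley--Hamilton and the elementary symmetric functions of the eigenvalues remain valid after complexification, which they do because all the relevant expressions are polynomial in the matrix entries.

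With that relation in hand the computation closes immediately. Using $(A,B)=\tr(A\circ B)=\tr(AB)$, writing $T\circ X=\tfrac12(TX+XT)$, and invoking the cyclic invariance of the trace, I would obtain
\[
(T\circ X,X\times X)=\tfrac12\tr\bigl((TX+XT)(X\times X)\bigr)=\tfrac12\bigl(\tr(T\,X(X\times X))+\tr(T\,(X\times X)X)\bigr)=(\det X)\,\tr(T).
\]
Hence $(\tilde{T}X,X,X)=(\det X)\tr(T)$, which vanishes for all $X$ precisely when $\tr(T)=0$, giving $\tilde{T}\in(\mathfrak{e}_{6,\sR})^C$. I would also note that this same identity proves the converse (taking any $X$ with $\det X\neq 0$ forces $\tr(T)=0$), which is the sharp form one wants for the later direct-sum decomposition $(\mathfrak{e}_{6,\sR})^C=(\mathfrak{f}_{4,\sR})^C\oplus\{\tilde{T}\mid\tr(T)=0\}$.
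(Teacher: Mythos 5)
Your proposal is correct and follows essentially the same route as the paper: both reduce the lemma to the single computation $(T\circ X,\,X\times X)=(\det X)\,\tr(T)$, carried out by expanding $T\circ X=\tfrac12(TX+XT)$, shuffling factors with trace associativity/cyclicity, and invoking the identity $X\circ(X\times X)=(\det X)E$. The only difference is one of packaging: the paper cites this identity and the trace-form associativity from Yokota's book, whereas you re-derive the adjugate identity $X(X\times X)=(\det X)E$ from Cayley--Hamilton, which is legitimate here since $(\mathfrak{J}_{\sR})^C$ consists of ordinary symmetric $3\times 3$ matrices over the commutative ring $C$.
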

\begin{proof}
 For $ X \in (\mathfrak{J}_{\sR})^C $, it follows from \cite[Lemma 2.3.5]{iy0} and $ X \circ (X \times X)=(\det\,X)E $ that
 \begin{align*}
 (\tilde{T}X, X, X)&=(T \circ X, X, X)=(T \circ X, X \times X)
 \\
 &=((1/2)(TX+XT),X \times X)=((1/2)TX,X \times X)+((1/2)XT,X \times X)
 \\
 &=(T,(1/2)X(X \times X))+(T,(1/2)(X \times X)X)
 \\
 &=(T, X \circ (X \times X))=(T,(\det\,X)E)
 \\
 &=(\det\,X)(T,E)=(\det\,T)\tr(T)
 \\
 &=0.
 \end{align*}
 Hence we have $ \tilde{T} \in (\mathfrak{e}_{6,\sR})^C $.
\end{proof}

\begin{theorem}\label{theorem 4.3}
Any element $ \phi \in (\mathfrak{e}_{6,\sR})^C $ is uniquely expressed by
\begin{align*}
\phi=\delta+\tilde{T},\,\,\delta \in (\mathfrak{f}_{4,\sR})^C, T \in ((\mathfrak{J}_{\sR})^C)_0,
\end{align*}
where $ ((\mathfrak{J}_{\sR})^C)_0:=\{ X \in (\mathfrak{J}_{\sR})^C \,|\, \tr(X)=0\} $.

In particular, we have $ \dim_C((\mathfrak{e}_{6,\sR})^C )=3+(6-1)=8 $.
\end{theorem}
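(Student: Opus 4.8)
The plan is to establish the vector-space splitting $(\mathfrak{e}_{6,\sR})^C = (\mathfrak{f}_{4,\sR})^C \oplus \{\tilde{T} \mid T \in ((\mathfrak{J}_{\sR})^C)_0\}$, exactly mirroring the decomposition $\phi = \delta + \tilde{T}$ of ${\mathfrak{e}_6}^C$ recalled in Section 2. The two inclusions needed are already in hand: $\tilde{T} \in (\mathfrak{e}_{6,\sR})^C$ for traceless $T$ is Lemma \ref{lemma 4.2}, and the identification of $(\mathfrak{f}_{4,\sR})^C$ with the $E$-annihilating subalgebra $((\mathfrak{e}_{6,\sR})^C)_E$ is Lemma \ref{lemma 4.1}. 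For a given $\phi \in (\mathfrak{e}_{6,\sR})^C$ I would take the candidate ``$(\mathfrak{J}_{\sR})^C$-part'' to be the single element $T := \phi E$, so that the whole argument reduces to three checks: that $\phi E$ is traceless, that $\delta := \phi - \tilde{T}$ annihilates $E$, and that uniqueness follows from applying a vanishing relation to $E$.

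The one step carrying real computational content is showing $\tr(\phi E) = 0$, which I would extract from the defining relation $(\phi X, X, X) = 0$ by polarization. Differentiating $(\phi(X + tY), X + tY, X + tY) = 0$ at $t = 0$ and using that the trilinear form $(\,\cdot\,,\,\cdot\,,\,\cdot\,)$ is totally symmetric gives
\begin{align*}
(\phi Y, X, X) + 2(\phi X, X, Y) = 0, \quad X, Y \in (\mathfrak{J}_{\sR})^C.
\end{align*}
Specializing $X = E$ and using the readily checked identities $E \times E = E$ and $E \times Y = \tfrac{1}{2}(\tr(Y)E - Y)$, so that $(\phi Y, E, E) = \tr(\phi Y)$ and $(\phi E, E, Y) = \tfrac{1}{2}(\tr(Y)\tr(\phi E) - (\phi E, Y))$, this reduces to
\begin{align*}
\tr(\phi Y) + \tr(Y)\tr(\phi E) - (\phi E, Y) = 0, \quad Y \in (\mathfrak{J}_{\sR})^C.
\end{align*}
Putting $Y = E$ and using $\tr(E) = 3$, $(\phi E, E) = \tr(\phi E)$ then yields $3\tr(\phi E) = 0$, hence $T = \phi E \in ((\mathfrak{J}_{\sR})^C)_0$.

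With $T$ traceless, Lemma \ref{lemma 4.2} gives $\tilde{T} \in (\mathfrak{e}_{6,\sR})^C$, so $\delta := \phi - \tilde{T} \in (\mathfrak{e}_{6,\sR})^C$. Since $\tilde{T}E = T \circ E = T = \phi E$, we obtain $\delta E = 0$, whence $\delta \in ((\mathfrak{e}_{6,\sR})^C)_E = (\mathfrak{f}_{4,\sR})^C$ by Lemma \ref{lemma 4.1}; this proves existence of the expression $\phi = \delta + \tilde{T}$. For uniqueness I would apply any relation $\delta + \tilde{T} = 0$ to $E$: every derivation $\delta \in (\mathfrak{f}_{4,\sR})^C$ kills $E$ (from $E \circ X = X$ one gets $\delta E \circ X = 0$ for all $X$, so $\delta E = \delta E \circ E = 0$), while $\tilde{T}E = T$, forcing $T = 0$ and then $\delta = 0$. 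The dimension statement is then immediate: $\dim_C (\mathfrak{f}_{4,\sR})^C = 3$ by Theorem \ref{theorem 3.5} and $\dim_C ((\mathfrak{J}_{\sR})^C)_0 = 6 - 1 = 5$, giving $\dim_C (\mathfrak{e}_{6,\sR})^C = 3 + 5 = 8$.

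The conceptual weight of the theorem sits in Lemma \ref{lemma 4.1} — that an $E$-annihilating element of $(\mathfrak{e}_{6,\sR})^C$ is automatically a Jordan derivation — but since that lemma is cited, the only genuinely nontrivial work internal to this proof is the trace vanishing $\tr(\phi E) = 0$, for which the polarize-and-specialize scheme above is the efficient route.
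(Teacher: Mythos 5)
Your proof is correct and takes essentially the same approach as the paper: set $T := \phi E$, check it is traceless, use Lemma \ref{lemma 4.2} and Lemma \ref{lemma 4.1} to conclude $\delta := \phi - \tilde{T} \in (\mathfrak{f}_{4,\sR})^C$, and obtain uniqueness and the dimension count by evaluating at $E$. The only difference is in the trace step: the paper gets $\tr(\phi E) = (\phi E, E \times E) = (\phi E, E, E) = 0$ immediately by specializing $X = E$ in the defining relation $(\phi X, X, X) = 0$, so your polarization detour, while valid, is unnecessary.
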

\begin{proof}
  For $ \phi \in (\mathfrak{e}_{6,\sR})^C $, set $ T:=\phi E $, then we have $ T \in (\mathfrak{J}_{\sR})^C $ and $ \tr(T)=0 $. Indeed, it is clear $ T \in (\mathfrak{J}_{\sR})^C $
  and it follows that $\tr(T)=(T,E,E)=(\phi E,E,E)=0 $.

  \noindent Here, set $ \delta:=\phi - \tilde{T} $, then we have $ \delta \in (\mathfrak{e}_{6,\sR})^C $ (Lemma \ref{lemma 4.2}). In addition, it follows from $ \delta E=(\phi -\tilde{T})E=\phi E-\tilde{T}E=T-T=0 $
  that $ \delta \in (\mathfrak{f}_{4,\sR})^C $ (Lemma \ref{lemma 4.1}).
   Hence we have $ \phi=\delta+\tilde{T} $.

   In order to prove the uniqueness of the expression, it is sufficient to show that
   \begin{align*}
   \delta+\tilde{T}=0, \delta \in  (\mathfrak{f}_{4,\sR})^C, T \in (\mathfrak{J}_{\sR})^C, \,\,{\text{then}}\,\, \delta=0, T=0.
   \end{align*}
   Indeed, if we apply it on $ E $, then we have $ T=0 $, so $ \delta=0 $.

Thus, from Theorem \ref{theorem 3.5}, we have $ \dim_C((\mathfrak{e}_{6,\sR})^C )=3+(6-1)=8 $.
\end{proof}

We define a space $ (EIV_{\sR})^C $ by
\begin{align*}
(EIV_{\sR})^C=\left\lbrace X \in (\mathfrak{J}_{\sR})^C \relmiddle{|}\det\,X=1 \right\rbrace.
\end{align*}

Here, as that in the proof of \cite[Proposition 3.8.2]{iy0}, we can prove that any element $ X \in (\mathfrak{J}_{\sR})^C $ can be transformed to a diagonal form
by some element $ \alpha \in (E_{6,\sR})_0 $:
\begin{align*}
\alpha X=\begin{pmatrix}
\xi_1 & 0 & 0 \\
0 & \xi_2 & 0 \\
0 & 0 & \xi_3
\end{pmatrix},\,\, \xi_i \in C,
\end{align*}
where the group $ (E_{6,\sR})_0 $ is a connected component containing the unit element $ 1 $.
Moreover, we can choose $ \alpha \in ((E_{6,\sR})^C)_0 $ so that two of $ \xi_1,\xi_2,\xi_3 $ are non-negative real numbers.
We use this fact in the proof of proposition below.

Then we will prove the following proposition.

\begin{proposition}\label{proposition 4.4}
 The homogeneous space $ (E_{6,\sR})^C/(F_{4,\sR})^C $ is homeomorphic to the space $ (EIV_{\sR})^C ${\rm :} $ (E_{6,\sR})^C/(F_{4,\sR})^C \simeq (EIV_{\sR})^C $.

 In particular, the group $ (E_{6,\sR})^C $  is connected.
\end{proposition}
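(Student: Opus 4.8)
The plan is to realize $(EIV_{\sR})^C$ as a single $(E_{6,\sR})^C$-orbit and to identify the isotropy subgroup at $E$ with $(F_{4,\sR})^C$. First I would introduce the orbit map
\begin{align*}
\pi\colon (E_{6,\sR})^C \longrightarrow (EIV_{\sR})^C, \qquad \pi(\alpha)=\alpha E,
\end{align*}
which is continuous and well defined, since a direct computation gives $\det E=1$ and every $\alpha \in (E_{6,\sR})^C$ preserves $\det$, so $\det(\alpha E)=1$. The three steps are then: prove $\pi$ is surjective (transitivity), identify $\pi^{-1}(E)$ with $(F_{4,\sR})^C$, and deduce connectedness from the resulting homeomorphism.

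For transitivity, let $X \in (EIV_{\sR})^C$. By the diagonalization fact recalled above, some $\alpha \in ((E_{6,\sR})^C)_0$ carries $X$ to $\diag(\xi_1,\xi_2,\xi_3)$ with two of the $\xi_i$ non-negative real; as $\det$ is preserved we have $\xi_1\xi_2\xi_3=1$, so the product being non-zero forces all three to be \emph{positive} reals $\lambda_i>0$ with $\lambda_1\lambda_2\lambda_3=1$. Now put $T=\diag(-\log\lambda_1,-\log\lambda_2,-\log\lambda_3)$; then $\tr(T)=-\log(\lambda_1\lambda_2\lambda_3)=0$, hence $\tilde{T}\in(\mathfrak{e}_{6,\sR})^C$ by Lemma \ref{lemma 4.2} and $\exp(\tilde{T})\in((E_{6,\sR})^C)_0$. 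Since $\tilde{T}$ sends a diagonal matrix $\diag(\xi_1,\xi_2,\xi_3)$ to $\diag(t_1\xi_1,t_2\xi_2,t_3\xi_3)$, we get $\exp(\tilde{T})\diag(\lambda_1,\lambda_2,\lambda_3)=E$. Composing, $X$ is carried to $E$ by an element of the identity component, which proves transitivity.

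Next I would compute the isotropy subgroup $\pi^{-1}(E)=\{\alpha\in(E_{6,\sR})^C\mid\alpha E=E\}$. The inclusion $(F_{4,\sR})^C\subseteq\pi^{-1}(E)$ holds because $E$ is the Jordan unit, so every $\alpha$ preserving $\circ$ must fix it; the reverse inclusion is precisely the identity $((E_{6,\sR})^C)_E=(F_{4,\sR})^C$ proved in the course of Lemma \ref{lemma 4.1}. Thus $\pi$ induces a continuous bijection $(E_{6,\sR})^C/(F_{4,\sR})^C\to(EIV_{\sR})^C$, and by the standard homogeneous-space theorem for a smooth transitive action of a Lie group on a manifold this bijection is a homeomorphism, giving $(E_{6,\sR})^C/(F_{4,\sR})^C\simeq(EIV_{\sR})^C$.

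Finally, connectedness would follow by feeding two facts into the fibre bundle $(F_{4,\sR})^C\to(E_{6,\sR})^C\to(EIV_{\sR})^C$. The fibre $(F_{4,\sR})^C\cong SO(3,C)$ is connected by Theorem \ref{theorem 3.2}. The base $(EIV_{\sR})^C$ is connected as well: each $X$ is joined inside $(EIV_{\sR})^C$ to its diagonal form by the path coming from the connected diagonalizing element, while the diagonal locus $\{\diag(\xi_1,\xi_2,\xi_3)\mid\xi_1\xi_2\xi_3=1\}\cong(C^{\times})^2$ is connected. A bundle with connected base and connected fibre has connected total space, so $(E_{6,\sR})^C$ is connected. (Equivalently, the identity component already acts transitively and contains the connected isotropy subgroup, forcing $(E_{6,\sR})^C=((E_{6,\sR})^C)_0$.) I expect the transitivity step, namely reducing an arbitrary diagonal form to $E$, to be the crux, but the cited normalization to positive real eigenvalues makes the scaling by $\exp(\tilde{T})$ completely explicit.
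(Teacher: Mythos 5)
Your proposal is correct and follows essentially the same route as the paper: diagonalize $X$ by an element of $((E_{6,\sR})^C)_0$ with two non-negative real eigenvalues, use $\det X=1$ to make all eigenvalues positive, rescale the diagonal to $E$ by exponentiating a traceless diagonal element $\tilde{T}\in(\mathfrak{e}_{6,\sR})^C$ (the paper packages this as the two one-parameter subgroups $\alpha_{12}(s)$, $\alpha_{23}(t)$, which is the same device), identify the isotropy group at $E$ with $(F_{4,\sR})^C$ by the cited references, and deduce connectedness of $(E_{6,\sR})^C$ from the connectedness of the fibre $(F_{4,\sR})^C\cong SO(3,C)$ and of the orbit $(EIV_{\sR})^C=((E_{6,\sR})^C)_0E$. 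The only differences are cosmetic: your single exponential $\exp(\tilde{T})$ with $T=\diag(-\log\lambda_1,-\log\lambda_2,-\log\lambda_3)$ replaces the paper's two-step scaling, and your explicit path/bundle argument for connectedness of the base is equivalent to the paper's observation that it is an orbit of the connected identity component.
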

\begin{proof}
  First, the group $ (E_{6,\sR})^C $  acts on  $ (EIV_{\sR})^C $, obviously. We will prove that the action of $ (E_{6,\sR})^C $ on $ (EIV_{\sR})^C $ is transitive.

  For a given $ X \in (EIV_{\sR})^C $, $ X $ can be transformed to a diagonal form by some $ \alpha \in (E_{6,\sR})_0 \subset (E_{6,\sR})^C $ as mentioned above:
  \begin{align*}
  \alpha X=\begin{pmatrix}
  \xi_1 & 0 & 0 \\
  0 & \xi_2 & 0 \\
  0 & 0 & \xi_3
  \end{pmatrix}=:X',\,\, \xi_i \in C.
  \end{align*}
  Moreover, we can choose $ \alpha \in ((E_{6,\sR})^C)_0 $ so that two of $ \xi_1,\xi_2,\xi_3 $ are non-negative real numbers.
  Hence, from $ \det\,(\alpha X)=\det \,X=1 $, that is, $ \xi_1\xi_2\xi_3=1 $, we have $ \xi_i >0, i=1,2,3 $.

  Let the element $ (s/2)(E_1-E_2)^\sim, (t/2)(E_2-E_3)^\sim \in (\mathfrak{e}_{6,\sR})^C, s,t \in \R $ (Theorem \ref{theorem 4.3}). We denote $ \exp((s/2)(E_1-E_2)^\sim), \exp((t/2)(E_2-E_3)^\sim) \in ((E_{6,\sR})^C)_0$ by $ \alpha_{12}(s), \alpha_{23}(t) $, respectively. Moreover, the explicit form of the actions of $ \alpha_{12}(s), \alpha_{23}(t) $ to $ (\mathfrak{J}_{\sR})^C $ are respectively given as follows:
  \begin{align*}
  \alpha_{12}(s)X=
  \begin{pmatrix}
  e^s\xi_1 & x_3 & e^{s/2}x_2 \\
  x_3 & e^{-s}\xi_2 & e^{-s/2}x_1 \\
  e^{s/2}x_2 & e^{-s/2}x_1 & \xi_3
  \end{pmatrix},\quad
  \alpha_{23}(t)X=
  \begin{pmatrix}
  \xi_1 & e^{t/2}x_3 & e^{-t/2}x_2 \\
  e^{t/2}x_3 & e^t\xi_2 & x_1 \\
  e^{-t/2}x_2 & x_1 & e^{-t}\xi_3
  \end{pmatrix}.
  \end{align*}
  Then, apply $ \alpha_{12}(s) $ on $ X' $, then we have $ \alpha_{12}(s)(X')=\diag(e^s\xi_1, e^{-s}\xi_2, \xi_3) $. Since we can choose $ s_0 \in \R $ such that $ e^{s_0}\xi_1=1 $, together with $ \det(\alpha_{12}(s)(X'))=1 $, we have $ \alpha_{12}(s_0)(X')=\diag(1, \xi, 1/\xi)=:X'' $.
  In addition,  apply $ \alpha_{23}(t) $ on $ X'' $, then we have $ \alpha_{23}(t)(X'')=\diag(1, e^t\xi, 1/(e^t\xi)) $. As in the case above, since we can choose $ t_0 \in \R $ such that $ e^{t_0}\xi=1 $, we have $ \alpha_{23}(t_0)(X'')=\diag(1, 1, 1)=E $. This shows the transitivity of action to $ (EIV_{\sR})^C $ by the group $ (E_{6,\sR})^C $. The isotropy subgroup of $ (E_{6,\sR})^C $ at $ E $ is the group $ (F_{4,\sR})^C $ (\cite[Subsection 3.1 (p.276)]{iy4}, cf. \cite[Theorem 3.7.1]{iy0}).

  Thus we have the required homoeomorphim
  \begin{align*}
  (E_{6,\sR})^C/(F_{4,\sR})^C \simeq (EIV_{\sR})^C.
  \end{align*}

  Moreover, since $ (F_{4,\sR})^C $ is connected because of $ (F_{4,\sR})^C  \cong SO(3,C) $ (Theorem \ref{theorem 3.2}) and $ (EIV_{\sR})^C=((E_{6,\sR})^C)_0E $ is connected,
  we have that the group $ (E_{6,\sR})^C $ is also connected.
\end{proof}

Now, we will determine the structure of the group $ (E_{6,\sR})^C $.

\begin{theorem}\label{theorem 4.5}
  The group $ (E_{6,\sR})^C $ is isomorphic to the group $ SL(3,C) ${\rm :} $ (E_{6,\sR})^C \cong SL(3,C) $.
\end{theorem}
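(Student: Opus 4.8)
The plan is to mimic the construction of Theorem \ref{theorem 3.2}, simply enlarging the source group from $SO(3,C)$ to $SL(3,C)$. Concretely, I would define a mapping $f_{6,C} : SL(3,C) \to (E_{6,\sR})^C$ by the very same formula
\begin{align*}
f_{6,C}(A)X = AX\,{}^t\!A, \quad X \in (\mathfrak{J}_{\sR})^C,
\end{align*}
and first verify that it is well-defined. Since $AX\,{}^t\!A$ is again a symmetric matrix over $C$ and $f_{6,C}(A)$ is a $C$-linear isomorphism with inverse $f_{6,C}(A^{-1})$, the only genuine point is the determinant condition: by multiplicativity $\det(AX\,{}^t\!A) = (\det A)^2\det X = \det X$ because $\det A = 1$, whence $f_{6,C}(A) \in (E_{6,\sR})^C$. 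The homomorphism property $f_{6,C}(AB) = f_{6,C}(A)f_{6,C}(B)$ then follows exactly as in Theorem \ref{theorem 3.2} from ${}^t(AB) = {}^t\!B\,{}^t\!A$.

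Next I would determine the kernel. If $A \in \Ker f_{6,C}$, then $AX\,{}^t\!A = X$ for every $X$; taking $X = E$ gives $A\,{}^t\!A = E$, so $A \in O(3,C)$ and ${}^t\!A = A^{-1}$. The defining relation then reads $AX = XA$ for all symmetric $X$; testing against the $E_i$ forces $A$ to be diagonal, and testing against the $F_i(1)$ forces its diagonal entries to coincide, so $A = aE$ is scalar. Combining $A\,{}^t\!A = E$ (which gives $a^2 = 1$) with $\det A = 1$ (which gives $a^3 = 1$) yields $a = 1$, hence $\Ker f_{6,C} = \{E\}$; in particular the kernel is discrete.

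Finally I would invoke Lemma \ref{lemma 2.1}. The target group $(E_{6,\sR})^C$ is connected by Proposition \ref{proposition 4.4}, the kernel is discrete, and $\dim_C SL(3,C) = 8 = \dim_C (\mathfrak{e}_{6,\sR})^C$ by Theorem \ref{theorem 4.3}; therefore $f_{6,C}$ is surjective. A surjective homomorphism of Lie groups with trivial kernel is an isomorphism, and we conclude $(E_{6,\sR})^C \cong SL(3,C)$.

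The essential difficulties have in fact been dispatched beforehand: the dimension count of Theorem \ref{theorem 4.3} and, above all, the connectedness of $(E_{6,\sR})^C$ established in Proposition \ref{proposition 4.4} via the homogeneous space $(EIV_{\sR})^C$. Given those, the remaining argument is routine, and the only step requiring a little care is the kernel computation, where one must use enough of the symmetric matrices $E_i$ and $F_i(1)$ to pin $A$ down as a scalar and then reconcile $a^2 = 1$ with $a^3 = 1$.
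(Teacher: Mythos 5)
Your proposal is correct and follows essentially the same route as the paper: the same mapping $f_{6,C}(A)X = AX\,{}^t\!A$, the same well-definedness and homomorphism checks, and surjectivity via Lemma \ref{lemma 2.1} using the connectedness of $(E_{6,\sR})^C$ (Proposition \ref{proposition 4.4}) and the dimension count of Theorem \ref{theorem 4.3}. Your kernel computation is in fact slightly more careful than the paper's (which merely cites the $F_4$ case), since you correctly note that for $A \in SL(3,C)$ one must first extract $A\,{}^t\!A = E$ by testing against $X = E$ before reducing to the commutation argument.
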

\begin{proof}
    We define a mapping $ f_{6,C}: SL(3,C) \to (E_{6,\sR})^C $ by
    \begin{align*}
    f_{6,C}(A)X=AX{}^t\!A, \,\,X \in (\mathfrak{J}_{\sR})^C.
    \end{align*}

First, we will prove that $ f_{6,C} $ is well-defined. Since it is clear $ f_{6,C}(A) \in \Iso_{C}((\mathfrak{J}_{\sR})^C)$, we will show $ \det(f_{6,C}(A)X)=\det\,X $. Certainly, we have the following
\begin{align*}
\det(f_{6,C}(A)X)=\det(AX{}^t\!A)=(\det\,A)(\det\,X)(\det{}^t\!A)=\det\,X.
\end{align*}
Subsequently, we will prove that $ f_{6,C} $ is a homomorphism. For $ A,B \in SL(3,C) $, it follows that
\begin{align*}
f_{6,C}(AB)X=(AB)X{}^t(AB)=A(BX{}^t\!B){}^t\!A=f_{6,C}(A)f_{6,C}(B)X,\,\,X \in (\mathfrak{J}_{\sR})^C,
\end{align*}
that is, $ f_{6,C}(AB)=f_{6,C}(A)f_{6,C}(B) $.

Next, we will determine $ \Ker\,f_{6,C} $. As in the case $ \Ker\,F_{4,C} $, we have the following
\begin{align*}
\Ker\,f_{6,C}&=\left\lbrace A \in SL(3,C) \relmiddle{|} f_{6,C}(A)=E \right\rbrace
\\
&=\left\lbrace A \in SL(3,C) \relmiddle{|} AX{}^t\!A=X \,\,{\text{for all}}\,\,X \in (\mathfrak{J}_{\sR})^C \right\rbrace
\\
&=\left\lbrace  E \right\rbrace.
\end{align*}

Finally, we will prove that $ f_{6,C} $ is surjective. Since the group $ (E_{6,\sR})^C $ is connected (Proposition \ref{proposition 4.4}) and $ \Ker\,f_{6,C} $ is discrete, together with $ \dim_C((\mathfrak{e}_{6,\sR})^C)=8=\dim_C(\mathfrak{sl}(3,C)) $ (Theorem \ref{theorem 4.3}), we see that $ f_{6,C} $ is surjective.

Therefore we have the required isomorphism
\begin{align*}
(E_{6,\sR})^C \cong SL(3,C).
\end{align*}
\end{proof}

The following corollary is the direct result of Theorem \ref{theorem 4.5}.

\begin{corollary}\label{corollary 4.6}
  The Lie algebra $ (\mathfrak{e}_{6,\sR})^C $ is isomorphic to the Lie algebra $ \mathfrak{sl}(3,C) ${\rm :} $ (\mathfrak{e}_{6,\sR})^C \cong \mathfrak{sl}(3,C) $.
\end{corollary}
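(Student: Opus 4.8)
The plan is to obtain the Lie algebra isomorphism simply by differentiating the group isomorphism furnished by Theorem \ref{theorem 4.5}. Recall the standard fact that an isomorphism of Lie groups induces, via its differential at the identity, an isomorphism of the corresponding Lie algebras. Since Theorem \ref{theorem 4.5} exhibits the explicit isomorphism $ f_{6,C}: SL(3,C) \to (E_{6,\sR})^C $ given by $ f_{6,C}(A)X=AX{}^t\!A $, it suffices to identify its differential and observe that this differential is itself a Lie algebra isomorphism.

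Concretely, I would first compute the differential $ f_{6,C_*}: \mathfrak{sl}(3,C) \to (\mathfrak{e}_{6,\sR})^C $ of $ f_{6,C} $ at the identity. Writing $ A=\exp(tB) $ for $ B \in \mathfrak{sl}(3,C) $ and differentiating $ f_{6,C}(\exp(tB))X=\exp(tB)X{}^t\!\exp(tB) $ at $ t=0 $ yields
\begin{align*}
f_{6,C_*}(B)X=BX+X{}^t\!B,\,\,X \in (\mathfrak{J}_{\sR})^C.
\end{align*}
One then checks that $ f_{6,C_*}(B) \in (\mathfrak{e}_{6,\sR})^C $ for every $ B \in \mathfrak{sl}(3,C) $, that is, that $ (f_{6,C_*}(B)X,X,X)=0 $; this is a short direct computation using the pairing of $ BX+X{}^t\!B $ against $ X \times X $ together with the trace condition $ \tr(B)=0 $.

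Finally, since $ f_{6,C} $ is an isomorphism of (complex) Lie groups and is holomorphic, its differential $ f_{6,C_*} $ is automatically a $ C $-linear Lie algebra isomorphism; in particular it is bijective and preserves brackets, so $ (\mathfrak{e}_{6,\sR})^C \cong \mathfrak{sl}(3,C) $ follows at once. There is no genuine obstacle here: the entire content is already contained in Theorem \ref{theorem 4.5}, and the corollary is the routine passage from an isomorphism of groups to the induced isomorphism of their Lie algebras. The only mild bookkeeping is confirming the explicit form of $ f_{6,C_*} $ and its well-definedness, both of which are immediate.
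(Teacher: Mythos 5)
Your proposal is correct and takes essentially the same route as the paper: the paper's proof of this corollary likewise defines $f_{6,C_*}(S)X = SX + X\,{}^t\!S$ as the differential of the isomorphism $f_{6,C}$ from Theorem \ref{theorem 4.5} and notes that it induces the required isomorphism $(\mathfrak{e}_{6,\sR})^C \cong \mathfrak{sl}(3,C)$. Your additional remarks on verifying $(f_{6,C_*}(B)X,X,X)=0$ and on the general principle that a Lie group isomorphism differentiates to a Lie algebra isomorphism simply make explicit what the paper leaves implicit.
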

\begin{proof}
   We define a mapping $ f_{6,C_*}: \mathfrak{sl}(3,C) \to (\mathfrak{e}_{6,\sR})^C $ by
  \begin{align*}
  f_{6,C_*}(S)X=SX+X{}^t\!S, \,\,X \in (\mathfrak{J}_{\sR})^C.
  \end{align*}
  This mapping induces the required isomorphism.
\end{proof}

Let $ \alpha \in (E_{6,\sR})^C $. Then, as in the proof of \cite[Lemma 3.2.1]{iy7}, we can prove $ {}^t\!\alpha^{-1} \in (E_{6,\sR})^C $. Hence we define an involutive automorphism $ \lambda $ of $ (E_{6,\sR})^C $ by
\begin{align*}
  \lambda(\alpha)={}^t\!\alpha^{-1},\,\, \alpha \in (E_{6,\sR})^C.
\end{align*}

We consider the following subgroup $ ((E_{6,\sR})^C)^{\tau\lambda} $ of $ (E_{6,\sR})^C $:
\begin{align*}
((E_{6,\sR})^C)^{\tau\lambda}=\left\lbrace \alpha \in (E_{6,\sR})^C \relmiddle{|} \tau\lambda(\alpha)\tau=\alpha \right\rbrace.
\end{align*}

Then we have the following proposition.

\begin{proposition}\label{proposition 4.7}
    The group $ ((E_{6,\sR})^C)^{\tau\lambda} $ coincides with the group $ E_{6,\sR} ${\rm :} $ ((E_{6,\sR})^C)^{\tau\lambda}=E_{6,\sR}  $.
\end{proposition}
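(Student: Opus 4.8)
The plan is to establish the proposition as a single equivalence: for $ \alpha \in (E_{6,\sR})^C $, the defining relation $ \tau\lambda(\alpha)\tau=\alpha $ of $ ((E_{6,\sR})^C)^{\tau\lambda} $ is equivalent to the Hermitian invariance $ \langle \alpha X,\alpha Y\rangle=\langle X,Y\rangle $ that cuts out $ E_{6,\sR} $ inside $ (E_{6,\sR})^C $. Since both groups are by definition subgroups of $ (E_{6,\sR})^C $, every element already preserves the determinant, so the determinant condition plays no role and only this one equivalence must be proved.

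First I would reformulate the fixed-point condition. Because $ \tau^2=1 $, the relation $ \tau\lambda(\alpha)\tau=\alpha $ is the same as $ \lambda(\alpha)=\tau\alpha\tau $, that is $ {}^t\!\alpha^{-1}=\tau\alpha\tau $, or equivalently $ {}^t\!\alpha\,(\tau\alpha\tau)=1 $.

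Next I would unwind the Hermitian condition through $ \langle X,Y\rangle=(\tau X,Y) $ and the transpose $ {}^t\!\alpha $ taken with respect to the nondegenerate complex-bilinear inner product $ (X,Y)=\tr(X\circ Y) $ on $ (\mathfrak{J}_{\sR})^C $, defined by $ ({}^t\!\alpha X,Y)=(X,\alpha Y) $. Using $ \tau^2=1 $ in the middle step, for all $ X,Y $ one has
\[
\langle \alpha X,\alpha Y\rangle=(\tau\alpha X,\alpha Y)=\bigl((\tau\alpha\tau)(\tau X),\alpha Y\bigr)=\bigl({}^t\!\alpha\,(\tau\alpha\tau)(\tau X),Y\bigr).
\]
Comparing with $ \langle X,Y\rangle=(\tau X,Y) $, and using that $ (\,\cdot\,,\,\cdot\,) $ is nondegenerate while $ \tau $ is a bijection (so $ \tau X $ runs over all of $ (\mathfrak{J}_{\sR})^C $), the equality $ \langle \alpha X,\alpha Y\rangle=\langle X,Y\rangle $ for all $ X,Y $ holds exactly when $ {}^t\!\alpha\,(\tau\alpha\tau)=1 $. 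This is precisely the reformulated fixed-point condition from the previous paragraph, whence $ E_{6,\sR}=((E_{6,\sR})^C)^{\tau\lambda} $.

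There is no genuine obstacle here; once the Hermitian form is written through the bilinear form the argument is pure bookkeeping. The only points demanding care are the conjugate-linearity of $ \tau $, which makes $ \tau\alpha\tau $ again $ C $-linear and justifies $ \tau\alpha X=(\tau\alpha\tau)(\tau X) $, and the fact that the transpose must be taken relative to the symmetric bilinear form $ (X,Y) $ rather than the Hermitian one $ \langle X,Y\rangle $; keeping these straight is what makes the two conditions line up.
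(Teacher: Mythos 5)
Your proof is correct and takes essentially the same route as the paper: both arguments unwind the Hermitian form through $ \langle X,Y \rangle=(\tau X,Y) $ and the transpose taken with respect to the symmetric bilinear form, so the computation is the same. The only difference is organizational --- you reduce both defining conditions to the single identity $ {}^t\!\alpha\,(\tau\alpha\tau)=1 $ and conclude by one equivalence, whereas the paper proves the two inclusions $ E_{6,\sR} \subset ((E_{6,\sR})^C)^{\tau\lambda} $ and $ ((E_{6,\sR})^C)^{\tau\lambda} \subset E_{6,\sR} $ separately.
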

\begin{proof}
Let $ \alpha \in  E_{6,\sR} $. Then it follows from $ ({}^t\!\alpha^{-1}X, Y)=(X,\alpha^{-1}Y), X, Y \in (\mathfrak{J}_{\sR})^C $ that
\begin{align*}
({}^t\!\alpha^{-1}X, Y)&=\langle \tau{}^t\!\alpha^{-1}X, Y \rangle,
\\
(X,\alpha^{-1}Y)&=\langle \tau X , \alpha^{-1} Y \rangle=\langle \alpha \tau X , \alpha \alpha^{-1} Y \rangle=\langle \alpha \tau X , Y \rangle,
\end{align*}
that is, $ \langle \tau{}^t\!\alpha^{-1}X, Y \rangle=\langle \alpha \tau X , Y \rangle $ for all $  X, Y \in (\mathfrak{J}_{\sR})^C $.
Hence we have $ \tau{}^t\!\alpha^{-1}=\alpha \tau $, that is, $ \tau\lambda(\alpha)\tau=\alpha $. Thus we obtain $ E_{6,\sR} \subset ((E_{6,\sR})^C)^{\tau\lambda} $.

Conversely, let $ \beta \in ((E_{6,\sR})^C)^{\tau\lambda} $. For $ ({}^t\!\beta^{-1}X, Y)=(X,\beta^{-1}Y), X, Y \in (\mathfrak{J}_{\sR})^C $, set $ \beta^{-1}Y:=Y' $ and $ X:=\tau X' $, then we have $ ({}^t\!\beta^{-1}\tau X', \beta Y')=(\tau X',Y'), X', Y' \in (\mathfrak{J}_{\sR})^C  $, that is, $ \langle \tau {}^t\!\beta^{-1}\tau X', \beta Y' \rangle =\langle X', Y'\rangle $. Hence, from $ \tau\lambda(\beta)\tau=\beta $, we have $ \langle \beta X', \beta Y' \rangle =\langle X', Y'\rangle $. Thus we obtain $ ((E_{6,\sR})^C)^{\tau\lambda} \subset E_{6,\sR} $. With above, the proof of this proposition is completed.
\end{proof}

We will prove lemma needed in the proof of theorem below.

\begin{lemma}\label{lemma 4.8}
The mapping $ f_{6,C}: SL(3,C) \to (E_{6,\sR})^C $ satisfies the following relational formula{\rm :}
\begin{align*}
\tau\lambda(f_{6,C}(A))\tau=f_{6,C}(\tau{}^t\!A^{-1}).
\end{align*}
\end{lemma}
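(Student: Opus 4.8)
The plan is to evaluate both sides on an arbitrary $ X \in (\mathfrak{J}_{\sR})^C $, handling the effect of $ \lambda $ and of conjugation by $ \tau $ on $ f_{6,C} $ separately.

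First I would identify the transpose $ {}^t\!(f_{6,C}(A)) $ taken with respect to the inner product $ (X,Y) $. Since the elements $ X, Y $ are symmetric matrices, $ (X,Y) = \tr(X \circ Y) = \tr(XY) $, and the cyclic invariance of the trace gives
\begin{align*}
(f_{6,C}(A)X, Y) = \tr\bigl((AX\,{}^t\!A)\,Y\bigr) = \tr\bigl(X\,({}^t\!A\,Y A)\bigr) = (X, f_{6,C}({}^t\!A)Y),
\end{align*}
where I use that $ {}^t\!A\,Y A $ is again symmetric (because $ {}^t\!Y = Y $), hence lies in $ (\mathfrak{J}_{\sR})^C $. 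This yields $ {}^t\!(f_{6,C}(A)) = f_{6,C}({}^t\!A) $. Because $ f_{6,C} $ is a homomorphism (Theorem \ref{theorem 4.5}), taking inverses gives
\begin{align*}
\lambda(f_{6,C}(A)) = {}^t\!(f_{6,C}(A))^{-1} = f_{6,C}({}^t\!A)^{-1} = f_{6,C}({}^t\!A^{-1}).
\end{align*}

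Next I would compute the conjugation by $ \tau $. For any $ B \in SL(3,C) $, applying $ \tau $ entrywise and using that $ \tau $ is multiplicative on matrix products and satisfies $ \tau(\tau X) = X $, I obtain
\begin{align*}
(\tau f_{6,C}(B)\tau)X = \tau\bigl(B\,(\tau X)\,{}^t\!B\bigr) = (\tau B)\,X\,{}^t\!(\tau B) = f_{6,C}(\tau B)X,
\end{align*}
so that $ \tau f_{6,C}(B)\tau = f_{6,C}(\tau B) $. Substituting $ B = {}^t\!A^{-1} $ and combining with the previous computation finishes the proof:
\begin{align*}
\tau\lambda(f_{6,C}(A))\tau = \tau f_{6,C}({}^t\!A^{-1})\tau = f_{6,C}(\tau\,{}^t\!A^{-1}).
\end{align*}

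The only genuinely non-formal step is the first one, the identification of $ {}^t\!(f_{6,C}(A)) $ with $ f_{6,C}({}^t\!A) $; everything else is routine manipulation with the trace form and entrywise conjugation. The point requiring care is to take the transpose with respect to the correct bilinear form $ (X,Y) $ and to check that $ {}^t\!A\,Y A $ stays symmetric, so that the whole computation genuinely takes place inside $ (\mathfrak{J}_{\sR})^C $.
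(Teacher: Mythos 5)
Your proof is correct and follows essentially the same route as the paper: both establish $ {}^t\!(f_{6,C}(A)) = f_{6,C}({}^t\!A) $ via the trace form, pass to $ \lambda(f_{6,C}(A)) = f_{6,C}({}^t\!A^{-1}) $ using the homomorphism property, and then conjugate by $ \tau $ entrywise. The only difference is cosmetic: you isolate $ \tau f_{6,C}(B)\tau = f_{6,C}(\tau B) $ as a separate general identity, whereas the paper carries out the same computation inline.
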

\begin{proof}
First, we will show $ {}^t\!f_{6,C}(A)=f_{6,C}({}^t\!A) $. Indeed, it follows that
\begin{align*}
({}^t\!f_{6,C}(A)X, Y)=(X,f_{6,C}(A)Y)=(X,AY{}^t\!A)=({}^t\!AXA,Y)=(f_{6,C}({}^t\!A)X,Y), \,\,X,Y \in (\mathfrak{J}_{\sR})^C.
\end{align*}
Hence we have
\begin{align*}
\tau\lambda\left(f_{6,C}(A)\right) \tau X&=\tau\left({}^t\!{f_{6,C}(A)}^{-1}\right) \tau X=\tau\left( f_{6,C}({}^t\!A^{-1})\right) \tau X
\\
&=\tau\left({}^t\!A^{-1}(\tau X)A^{-1}\right)=(\tau{}^t\!A^{-1})X{\,}^t\!(\tau{}^t\!A^{-1})
\\
&=f_{6,C}(\tau{}^t\!A^{-1})X, \,\,X \in (\mathfrak{J}_{\sR})^C,
\end{align*}
   that is, $ \tau\lambda(f_{6,C}(A))\tau=f_{6,C}(\tau{}^t\!A^{-1}) $.
\end{proof}

Now, we will determine the structure of the group $ E_{6,\sR} $.

\begin{theorem}\label{theorem 4.9}
The group $ E_{6,\sR} $ is isomorphic to the group $ SU(3) ${\rm :} $ E_{6,\sR} \cong SU(3) $.
\end{theorem}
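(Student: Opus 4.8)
The plan is to proceed exactly as in the proof of Theorem \ref{theorem 3.4}, using the identification $E_{6,\sR}=((E_{6,\sR})^C)^{\tau\lambda}$ of Proposition \ref{proposition 4.7} and restricting the isomorphism $f_{6,C}$ of Theorem \ref{theorem 4.5}. Concretely, I would define a mapping
\begin{align*}
f_6 : SU(3) \to ((E_{6,\sR})^C)^{\tau\lambda}, \qquad f_6(A)=f_{6,C}(A),
\end{align*}
as the restriction of $f_{6,C}$, where $SU(3)=\{A \in SL(3,C) \mid \tau{}^t\!A^{-1}=A\}$; note that the defining relation $\tau{}^t\!A^{-1}=A$ is equivalent to the unitarity $A\,\tau({}^t\!A)=E$ together with $\det A=1$, so this is indeed the special unitary group.

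The verification that $f_6$ is well defined is the crux and rests on Lemma \ref{lemma 4.8}. For $A \in SU(3)$ we have $\tau{}^t\!A^{-1}=A$, hence
\begin{align*}
\tau\lambda(f_{6,C}(A))\tau=f_{6,C}(\tau{}^t\!A^{-1})=f_{6,C}(A),
\end{align*}
which says precisely that $f_{6,C}(A) \in ((E_{6,\sR})^C)^{\tau\lambda}=E_{6,\sR}$. That $f_6$ is a homomorphism is immediate, being the restriction of the homomorphism $f_{6,C}$, and its kernel satisfies $\Ker f_6 \subset \Ker f_{6,C}=\{E\}$ by Theorem \ref{theorem 4.5}, so $f_6$ is injective.

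For surjectivity I would take $\alpha \in E_{6,\sR}=((E_{6,\sR})^C)^{\tau\lambda}\subset (E_{6,\sR})^C$ and use Theorem \ref{theorem 4.5} to write $\alpha=f_{6,C}(A)$ for a unique $A \in SL(3,C)$. The fixed-point condition $\tau\lambda(\alpha)\tau=\alpha$ combined with Lemma \ref{lemma 4.8} gives $f_{6,C}(\tau{}^t\!A^{-1})=f_{6,C}(A)$, and injectivity of $f_{6,C}$ (its kernel is $\{E\}$) forces $\tau{}^t\!A^{-1}=A$, i.e. $A \in SU(3)$. Hence $\alpha=f_6(A)$ lies in the image, so $f_6$ is a bijective homomorphism of Lie groups and $E_{6,\sR}\cong SU(3)$. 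I do not expect a genuine obstacle here: all the substantive work has already been carried out in Theorem \ref{theorem 4.5}, Proposition \ref{proposition 4.7} and Lemma \ref{lemma 4.8}, and the only point requiring care is the elementary translation between the algebraic condition $\tau{}^t\!A^{-1}=A$ and membership in $SU(3)$, carrying along the determinant-one condition inherited from $SL(3,C)$.
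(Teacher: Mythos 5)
Your proposal is correct and follows essentially the same route as the paper: identify $E_{6,\sR}$ with $((E_{6,\sR})^C)^{\tau\lambda}$ via Proposition \ref{proposition 4.7}, restrict $f_{6,C}$ to get $f_6$, use Lemma \ref{lemma 4.8} both for well-definedness and to force $\tau{}^t\!A^{-1}=A$ in the surjectivity step, and conclude injectivity from $\Ker f_{6,C}=\{E\}$. The only difference is cosmetic — you make explicit the appeal to triviality of $\Ker f_{6,C}$ when deducing $\tau{}^t\!A^{-1}=A$, which the paper leaves implicit.
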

\begin{proof}
Let the group $ SU(3)=\left\lbrace Q \in M(3,C) \relmiddle{|}(\tau {}^t\!Q)Q=E, \det\, Q=1\right\rbrace $ and the group $ ((E_{6,\sR})^C)^{\tau\lambda} $ as $ E_{6,\sR} $ (Proposition \ref{proposition 4.7}).
Then we define a mapping $ f_6: SU(3) \to ((E_{6,\sR})^C)^{\tau\lambda} $ by the restriction of the mapping $ f_{6,C} $: $ f_6(Q)=f_{6,C}(Q) $.

First, we will prove that $ f_6 $ is well-defined. However, since $ f_6 $ is the restriction of the mapping $ f_{6,C} $, we easily see $ f_6(Q) \in (E_{6,\sR})^C $. Moreover, it follows from Lemma \ref{lemma 4.8} that
\begin{align*}
\tau \lambda(f_6(Q))\tau=\tau\lambda(f_{6,C}(Q))\tau =f_{6,C}(\tau{}^t\!Q^{-1})=f_{6,C}(Q).
\end{align*}
Hence we have $ f_6(Q) \in ((E_{6,\sR})^C)^{\tau\lambda} $, so that $ f_6 $ is well-defined. Subsequently, we will prove that $ f_6 $ is a homomorphism. However, since $ f_6 $ is the restriction of the mapping $ f_{6,C} $, it is clear.

Next, we will prove that $ f_6 $ is surjective. Let $ \alpha \in ((E_{6,\sR})^C)^{\tau\lambda}  \subset (E_{6,\sR})^C$. Then there exists $ A \in SL(3,C) $ such that $ \alpha=f_{6,C}(A) $ (Theorem \ref{theorem 4.5}). Moreover, since $ \alpha $ satisfies the condition $ \tau\lambda(\alpha)\tau=\alpha $, it follows from
\begin{align*}
\tau\lambda(\alpha)\tau=\tau \lambda(f_{6,C}(A))\tau=f_{6,C}(\tau {}^t\!A^{-1})
\end{align*}
that $\tau {}^t\!A^{-1}=A $, that is, $ (\tau {}^t\!A) A=E $, and together with $ A \in SL(3,C) $, we see $ A \in SU(3) $. Thus there exists $ Q \in SU(3) $ such that $ \alpha=f_{6,C}(Q)=f_6(Q) $, so that $ f_6 $ is surjective.

Finally, we will determine $ \Ker\,f_6 $. Using the result of $ \Ker\,f_{6,C} $, we easily see $ \Ker\,f_6=\{E\} $.

Therefore, from $ E_{6,\sR}=((E_{6,\sR})^C)^{\tau\lambda} $(Proposition \ref{proposition 4.7}), we have the required isomorphism
    \begin{align*}
    E_{6,\sR} \cong SU(3).
    \end{align*}
\end{proof}

We move the determination of the root system and the Dynkin diagram of the Lie algebra $ (\mathfrak{e}_{6,\sR})^C $.

We define a Lie subalgebra $ \mathfrak{h}_6 $ of $ (\mathfrak{e}_{6,\sR})^C $ by
\begin{align*}
\mathfrak{h}_6=\left\lbrace \phi_6=\tilde{T}_0 \relmiddle{|}
\begin{array}{l}
T_0=\tau_1E_1+\tau_2E_2+\tau_3E_3 \in ({\mathfrak{J}_{\sR}}^C)_0,
\\
\quad \tau_1+\tau_2+\tau_3=0, \tau_i \in C
\end{array}
\right\rbrace.
\end{align*}
Then $ \mathfrak{h}_6 $ is a Cartan subalgebra of $ ({\mathfrak{e}_{6,\sR}})^C $.

\begin{theorem}\label{theorem 4.10}
The roots $ \varDelta $ of $ (\mathfrak{e}_{6,\sR})^C $ relative to $ \mathfrak{h}_6 $ are given by
\begin{align*}
   \varDelta=\left\lbrace
   \pm\dfrac{1}{2}(\tau_2-\tau_3),
   \pm\dfrac{1}{2}(\tau_3-\tau_1),
   \pm\dfrac{1}{2}(\tau_1-\tau_2) \right\rbrace.
\end{align*}
\end{theorem}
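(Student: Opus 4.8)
The plan is to exhibit, for each of the six values listed, an explicit root vector in $(\mathfrak{e}_{6,\sR})^C$ and to verify the eigenvalue equation by a direct bracket computation, in the spirit of Theorem \ref{theorem 3.6}. By Theorems \ref{theorem 3.5} and \ref{theorem 4.3}, the algebra $(\mathfrak{e}_{6,\sR})^C$ is spanned over $C$ by $\tilde{A}_1(1),\tilde{A}_2(1),\tilde{A}_3(1)$ together with $\tilde{T}$ for $T$ ranging over a basis of $((\mathfrak{J}_{\sR})^C)_0$, namely $E_1-E_2,\ E_2-E_3$ (which span $\mathfrak{h}_6$) and $F_1(1),F_2(1),F_3(1)$; throughout, $\tilde{F}_i(1)$ denotes the $E_6$-type operator $X\mapsto F_i(1)\circ X$, whereas $\tilde{A}_i(1)$ is the $F_4$-type operator of Section 3. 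Consequently a complement of $\mathfrak{h}_6$ is spanned by the six elements $\tilde{A}_i(1)\pm\tilde{F}_i(1)$, $i=1,2,3$, and I claim these are the root vectors.

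Writing $T_0=\tau_1E_1+\tau_2E_2+\tau_3E_3$ with $\tau_1+\tau_2+\tau_3=0$, the computation rests on two bracket identities. Since $\tilde{A}_i(c)\in(\mathfrak{f}_{4,\sR})^C$ is a derivation of $\circ$, one has $[\tilde{A}_i(c),\tilde{T}_0]=(\tilde{A}_i(c)T_0)^\sim$, and evaluating $\tilde{A}_i(c)T_0$ by the properties $(\ast)$ gives
\begin{align*}
[\tilde{T}_0,\tilde{A}_i(c)]=\dfrac{1}{2}(\tau_{i+1}-\tau_{i+2})\tilde{F}_i(c),\qquad i=1,2,3,
\end{align*}
indices mod $3$. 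The companion identity $[\tilde{T}_0,\tilde{F}_i(c)]=\frac{1}{2}(\tau_{i+1}-\tau_{i+2})\tilde{A}_i(c)$, a bracket of two $E_6$-type operators landing in $(\mathfrak{f}_{4,\sR})^C$, I would establish directly from $[\tilde{T}_0,\tilde{F}_i(c)]X=T_0\circ(F_i(c)\circ X)-F_i(c)\circ(T_0\circ X)$ checked on $E_1,E_2,E_3,F_1(1),F_2(1),F_3(1)$. Adding the two yields
\begin{align*}
[\tilde{T}_0,\tilde{A}_i(1)\pm\tilde{F}_i(1)]=\pm\dfrac{1}{2}(\tau_{i+1}-\tau_{i+2})(\tilde{A}_i(1)\pm\tilde{F}_i(1)),
\end{align*}
so $\tilde{A}_i(1)\pm\tilde{F}_i(1)$ is a root vector for $\pm\frac{1}{2}(\tau_{i+1}-\tau_{i+2})$; letting $i=1,2,3$ produces exactly $\pm\frac{1}{2}(\tau_2-\tau_3)$, $\pm\frac{1}{2}(\tau_3-\tau_1)$, $\pm\frac{1}{2}(\tau_1-\tau_2)$. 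As $\mathfrak{h}_6$ and these six vectors together span the whole $8$-dimensional algebra, no further roots can occur, and $\varDelta$ is as stated.

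The step I expect to be the main obstacle is the second bracket identity: unlike $[\tilde{T}_0,\tilde{A}_i(c)]$, it is not furnished by the derivation property and must be ground out as a commutator of Jordan multiplications. A clean way to avoid this bookkeeping is to pass through the isomorphism $f_{6,C_*}\colon\mathfrak{sl}(3,C)\to(\mathfrak{e}_{6,\sR})^C$ of Corollary \ref{corollary 4.6}: one checks $\tilde{T}_0=\frac{1}{2}f_{6,C_*}(T_0)$ and $\tilde{A}_i(1)+\tilde{F}_i(1)=f_{6,C_*}(E_{i+1\,i+2})$, where $E_{pq}$ is the matrix unit with $1$ in the $(p,q)$-entry, whence $[\tilde{T}_0,f_{6,C_*}(E_{pq})]=\frac{1}{2}f_{6,C_*}([T_0,E_{pq}])=\frac{1}{2}(\tau_p-\tau_q)f_{6,C_*}(E_{pq})$ recovers all six roots simultaneously.
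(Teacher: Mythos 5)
Your proof is correct and follows essentially the same route as the paper: the same Cartan subalgebra $\mathfrak{h}_6$, the same root vectors $\tilde{A}_i(1)\pm\tilde{F}_i(1)$ with the same eigenvalue computations (the paper expands $\tilde{T}_0=\tau_1\tilde{E}_1+\tau_2\tilde{E}_2+\tau_3\tilde{E}_3$ and asserts the individual brackets, which is exactly what your two identities package), and the same closing argument that $\mathfrak{h}_6$ together with these six vectors spans the $8$-dimensional algebra. Your final remark — transporting everything through $f_{6,C_*}$ of Corollary \ref{corollary 4.6}, via $\tilde{T}_0=\frac{1}{2}f_{6,C_*}(T_0)$ and $\tilde{A}_i(1)+\tilde{F}_i(1)=f_{6,C_*}(E_{i+1\,i+2})$, so that the roots are just the standard matrix-unit roots of $\mathfrak{sl}(3,C)$ — is a cleaner variant the paper does not use, and it neatly sidesteps the one laborious step, the bracket $[\tilde{T}_0,\tilde{F}_i(c)]$ of two Jordan multiplications, which the paper leaves unjustified inside its displayed computation.
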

\begin{proof}
    Let $ \phi_6=\tilde{T}_0 \in \mathfrak{h}_6 $ and $ \tilde{A}_1(1)+\tilde{F}_1(1) \in (\mathfrak{e}_{6,\sR})^C $. Then it follows that
    \begin{align*}
    [\phi_6,\tilde{A}_1(1)+\tilde{F}_1(1) ]
    &=[\tilde{T}_0,\tilde{A}_1(1)+\tilde{F}_1(1)]=[\tau_1\tilde{E}_1+\tau_2\tilde{E}_2+\tau_3\tilde{E}_3,\tilde{A}_1(1)+\tilde{F}_1(1) ]
    \\
    &=\tau_2((1/2)\tilde{F}_1(1)+(1/2)\tilde{A}_1(1))+\tau_3((-1/2)\tilde{F}_1(1)+(-1/2)\tilde{A}_1(1))
    \\
    &=(1/2)(\tau_2-\tau_3)(\tilde{A}_1(1)+\tilde{F}_1(1)),
    \end{align*}
that is, $ [\phi_6,\tilde{A}_1(1)+\tilde{F}_1(1) ]=(1/2)(\tau_2-\tau_3)(\tilde{A}_1(1)+\tilde{F}_1(1)) $. Hence we see that $ (1/2)(\tau_2-\tau_3) $ is a root of $ (\mathfrak{e}_{6,\sR})^C $ and $ \tilde{A}_1(1)+\tilde{F}_1(1) $ is a root vector associated with its root. Similarly, we have that $ (-1/2)(\tau_2-\tau_3) $ is a root of $ (\mathfrak{e}_{6,\sR})^C $ and $ \tilde{A}_1(1)-\tilde{F}_1(1) $ is a root vector associated with its root. The remainders of roots and root vectors associated with these roots are obtained as follows:
\begin{longtable}[c]{cc}
 \hspace{0mm} $ \text{roots}  $
& \hspace{5mm} $ \text{root vectors associated with roots} $ \cr
$ (1/2)(\tau_3-\tau_1) $ & $ \tilde{A}_2(1)+\tilde{F}_2(1) $ \cr
$ (-1/2)(\tau_3-\tau_1) $ & $ \tilde{A}_2(1)-\tilde{F}_2(1) $ \cr
$ (1/2)(\tau_1-\tau_2) $ & $ \tilde{A}_3(1)+\tilde{F}_3(1) $ \cr
$ (-1/2)(\tau_1-\tau_2) $ & $ \tilde{A}_3(1)-\tilde{F}_3(1) $.
\end{longtable}
Thus, since $(\mathfrak{e}_{6,\sR})^C $ is spanned by $ \mathfrak{h}_6 $ and the root vectors associated with roots above, the roots obtained above are all.
\end{proof}

In $ (\mathfrak{e}_{6,\sR})^C $, we define an inner product $ (\phi_1, \phi_2)_6 $ by
\begin{align*}
(\phi_1, \phi_2)_6=(\delta_1+\tilde{T}_1, \delta_2+\tilde{T}_2)_6:=(\delta_1, \delta_2)_4+(T_1,T_2).
\end{align*}

Here, we will determine the Killing form of $ (\mathfrak{e}_{6,\sR})^C $.

\begin{theorem}\label{theorem 4.11}
 The Killing form $ B_{6,\sR} $ of $ (\mathfrak{e}_{6,\sR})^C $ is given by
\begin{align*}
  B_{6,\sR}(\phi_1, \phi_2)
&=\dfrac{3}{2}(\phi_1, \phi_2)_6
\\
&=\dfrac{3}{2}(\delta_1, \delta_2)_4+\dfrac{3}{2}(T_1,T_2)
\\
&=6B_{4,\sR}(\delta_1, \delta_2)+\dfrac{3}{2}(T_1,T_2)
\\
&=\dfrac{6}{5}\tr(\phi_1\phi_2).
\end{align*}
\end{theorem}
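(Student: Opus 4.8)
The plan is to exploit that $(\mathfrak{e}_{6,\sR})^C \cong \mathfrak{sl}(3,C)$ is simple (Corollary \ref{corollary 4.6}), so that the space of $\ad$-invariant symmetric bilinear forms on $(\mathfrak{e}_{6,\sR})^C$ is one-dimensional; every such form is therefore a scalar multiple of the Killing form $B_{6,\sR}$. Both forms on the right-hand side are of this kind: $\tr(\phi_1\phi_2)$ is the trace form of the defining $6$-dimensional representation of $(\mathfrak{e}_{6,\sR})^C$ on $(\mathfrak{J}_{\sR})^C$, and $(\phi_1,\phi_2)_6$ is invariant by the analogous argument as in ${\mathfrak{e}_6}^C$. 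Hence there are constants $k,k' \in C$ with $B_{6,\sR}=k(\,\cdot\,,\cdot\,)_6=k'\tr(\,\cdot\,\cdot\,)$, and it remains only to pin these down by evaluating at one convenient element, for which I would take $\phi_1=\phi_2=\tilde{A}_1(1)$.

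To compute $k$, I would evaluate $B_{6,\sR}(\tilde{A}_1(1),\tilde{A}_1(1))=\tr(\ad(\tilde{A}_1(1))\,\ad(\tilde{A}_1(1)))$ over the $8$-dimensional algebra $(\mathfrak{e}_{6,\sR})^C=(\mathfrak{f}_{4,\sR})^C\oplus((\mathfrak{J}_{\sR})^C)_0$ (Theorem \ref{theorem 4.3}). Since $\tilde{A}_1(1)\in(\mathfrak{f}_{4,\sR})^C$, the operator $\ad(\tilde{A}_1(1))$ preserves each summand, so $\ad(\tilde{A}_1(1))^2$ is block-diagonal and its trace splits accordingly. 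On $(\mathfrak{f}_{4,\sR})^C$ it is exactly the bracket already used in Theorem \ref{theorem 3.7}, contributing trace $-1/2$. On $((\mathfrak{J}_{\sR})^C)_0$, the relation $[\delta,\tilde{T}]=(\delta T)^{\sim}$ valid for any derivation $\delta$ gives $\ad(\tilde{A}_1(1))^2\tilde{T}=(\tilde{A}_1(1)^2T)^{\sim}$, so the trace of this block equals the operator trace of $\tilde{A}_1(1)^2$ on $((\mathfrak{J}_{\sR})^C)_0$; because $\tilde{A}_1(1)E=0$, the $E$-direction contributes nothing, and this equals the full trace $\tr(\tilde{A}_1(1)\tilde{A}_1(1))=-5/2$ already found in Theorem \ref{theorem 3.7}. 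Thus $B_{6,\sR}(\tilde{A}_1(1),\tilde{A}_1(1))=-1/2-5/2=-3$, while $(\tilde{A}_1(1),\tilde{A}_1(1))_6=(\tilde{A}_1(1),\tilde{A}_1(1))_4=-2$ from Theorem \ref{theorem 3.7}; hence $k=3/2$. The same two numbers give $k'$ at once: from $\tr(\tilde{A}_1(1)\tilde{A}_1(1))=-5/2$ we get $k'=(-3)/(-5/2)=6/5$.

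Finally I would assemble the stated chain of equalities: the first is $k=3/2$; the second is merely the definition $(\phi_1,\phi_2)_6=(\delta_1,\delta_2)_4+(T_1,T_2)$; the third rewrites $\tfrac{3}{2}(\delta_1,\delta_2)_4=6B_{4,\sR}(\delta_1,\delta_2)$ using $B_{4,\sR}=\tfrac14(\,\cdot\,,\cdot\,)_4$ (Theorem \ref{theorem 3.7}); and the last is $k'=6/5$. I expect the main obstacle to be the single new calculation $B_{6,\sR}(\tilde{A}_1(1),\tilde{A}_1(1))=-3$, that is, the trace of $\ad(\tilde{A}_1(1))^2$ over the extra $5$ dimensions $((\mathfrak{J}_{\sR})^C)_0$; the reduction $\ad(\tilde{A}_1(1))^2\tilde{T}=(\tilde{A}_1(1)^2T)^{\sim}$ is precisely what makes this tractable, letting me reuse the trace $-5/2$ from Theorem \ref{theorem 3.7} rather than recomputing in $(\mathfrak{e}_{6,\sR})^C$. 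For full rigour I would also confirm that the constant $k=3/2$ is consistent on the $((\mathfrak{J}_{\sR})^C)_0$ block, equivalently that $(\,\cdot\,,\cdot\,)_6$ is genuinely $\ad$-invariant, by a second evaluation at $\tilde{T}$ with $T=E_1-E_2$, where $(\tilde{T},\tilde{T})_6=2$ and $\tr(\tilde{T}\tilde{T})=5/2$ reproduce the same ratios $3/2$ and $6/5$.
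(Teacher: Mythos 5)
Your proposal is correct and shares the paper's framework: simplicity of $(\mathfrak{e}_{6,\sR})^C\cong\mathfrak{sl}(3,C)$ (Corollary \ref{corollary 4.6}) forces $B_{6,\sR}=k(\,\cdot\,,\,\cdot\,)_6=k'\tr(\,\cdot\,\,\cdot\,)$, and the constants are fixed by evaluation at a single element. Where you genuinely differ is the choice of test element and how the Killing form is evaluated there. The paper takes $\phi_1=\phi_2=\tilde{E}_1-\tilde{E}_2$ (in the $\tilde{T}$-summand) and computes $\tr(\ad(\tilde{E}_1-\tilde{E}_2)^2)=3$ directly on all eight basis vectors, together with $(\tilde{E}_1-\tilde{E}_2,\tilde{E}_1-\tilde{E}_2)_6=2$ and $\tr((\tilde{E}_1-\tilde{E}_2)^2)=5/2$, obtaining $k=3/2$ and $k'=6/5$. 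You instead take $\tilde{A}_1(1)$ in the $(\mathfrak{f}_{4,\sR})^C$-summand and exploit that $\ad(\tilde{A}_1(1))$ preserves the decomposition of Theorem \ref{theorem 4.3}, with $[\delta,\tilde{T}]=(\delta T)^\sim$ identifying the action on the $\tilde{T}$-block with the action of $\tilde{A}_1(1)$ itself on $((\mathfrak{J}_{\sR})^C)_0$; both block traces, $-1/2$ and $-5/2$, are then literally the two numbers already computed in Theorem \ref{theorem 3.7}, so no new computation is needed, and $B_{6,\sR}(\tilde{A}_1(1),\tilde{A}_1(1))=-3$, $(\tilde{A}_1(1),\tilde{A}_1(1))_6=-2$ give the same $k=3/2$, $k'=6/5$. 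Your block reduction is sound: for $\delta\in(\mathfrak{f}_{4,\sR})^C$ one has $\tr(\delta T)=(\delta T,E)=-(T,\delta E)=0$, so $(\delta T)^\sim$ stays in the $\tilde{T}$-summand, and $\tilde{A}_1(1)^2E=0$ justifies replacing the trace over $((\mathfrak{J}_{\sR})^C)_0$ by the trace over all of $(\mathfrak{J}_{\sR})^C$. What your route buys is economy, since everything is recycled from Theorem \ref{theorem 3.7}; what the paper's route buys is that its one computation, performed on the $\tilde{T}$-block, directly exhibits the proportionality constant there, which in your version is relegated to the closing consistency check at $(E_1-E_2)^\sim$ (that check is precisely the paper's calculation). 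Note also that both you and the paper leave the $\ad$-invariance of $(\,\cdot\,,\,\cdot\,)_6$, on which the whole proportionality scheme rests, at the level of an assertion; you at least flag it explicitly, so your write-up is marginally the more careful on this point.
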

\begin{proof}
 The Lie algebra $ (\mathfrak{e}_{6,\sR})^C $ is simple because of $ (\mathfrak{e}_{6,\sR})^C \cong \mathfrak{sl}(3,C) $ (Corollary \ref{corollary 4.6}). By the analogues argument as that in $ {\mathfrak{e}_6}^C $, we will determine the values $ k, k' \in C $
 such that
 \begin{align*}
 B_{6,\sR}(\phi_1, \phi_2)
 =k (\phi_1, \phi_2)_4=k' \tr(\phi_1\phi_2).
 \end{align*}
 First, in order to determine $ k $, let $ \phi_1=\phi_2=\tilde{E}_1-\tilde{E}_2 $. Then we have
 \begin{align*}
 (\tilde{E}_1-\tilde{E}_2)_6=(E_1-E_2,E_1-E_2)=2.
 \end{align*}
 On the other hand, it follows from
 \begin{align*}
   \ad(\tilde{E}_1-\tilde{E}_2) \ad(\tilde{E}_1-\tilde{E}_2)\tilde{A}_1(1)&=[\tilde{E}_1-\tilde{E}_2,[\tilde{E}_1-\tilde{E}_2,\tilde{A}_1(1)]]=[\tilde{E}_1-\tilde{E}_2,(-1/2)\tilde{F}_1(1)]
   \\
   &=(1/4)\tilde{A}_1(1),
   \\
   \ad(\tilde{E}_1-\tilde{E}_2) \ad(\tilde{E}_1-\tilde{E}_2)\tilde{A}_2(1)&=[\tilde{E}_1-\tilde{E}_2,[\tilde{E}_1-\tilde{E}_2,\tilde{A}_2(1)]]=[\tilde{E}_1-\tilde{E}_2,(-1/2)\tilde{F}_2(1)]
   \\
   &=(1/4)\tilde{A}_2(1),
   \\
    \ad(\tilde{E}_1-\tilde{E}_2) \ad(\tilde{E}_1-\tilde{E}_2)\tilde{A}_3(1)&=[\tilde{E}_1-\tilde{E}_2,[\tilde{E}_1-\tilde{E}_2,\tilde{A}_3(1)]]=[\tilde{E}_1-\tilde{E}_2,\tilde{F}_3(1)]=\tilde{A}_3(1),
   \\
   \ad(\tilde{E}_1-\tilde{E}_2) \ad(\tilde{E}_1-\tilde{E}_2)(\tilde{E}_i-\tilde{E}_{i+1})&=[\tilde{E}_1-\tilde{E}_2,[\tilde{E}_1-\tilde{E}_2,\tilde{E}_i-\tilde{E}_{i+1}]]=0\,\,(i=1,2),
   \\
   \ad(\tilde{E}_1-\tilde{E}_2) \ad(\tilde{E}_1-\tilde{E}_2)\tilde{F}_1(1)&=[\tilde{E}_1-\tilde{E}_2,[\tilde{E}_1-\tilde{E}_2,\tilde{F}_1(1)]]=[\tilde{E}_1-\tilde{E}_2,(-1/2)\tilde{A}_1(1)]
   \\
   &=(1/4)\tilde{F}_1(1),
   \\
   \ad(\tilde{E}_1-\tilde{E}_2) \ad(\tilde{E}_1-\tilde{E}_2)\tilde{F}_2(1)&=[\tilde{E}_1-\tilde{E}_2,[\tilde{E}_1-\tilde{E}_2,\tilde{F}_2(1)]]=[\tilde{E}_1-\tilde{E}_2,(-1/2)\tilde{A}_2(1)]
   \\
   &=(1/4)\tilde{F}_2(1),
   \\
   \ad(\tilde{E}_1-\tilde{E}_2) \ad(\tilde{E}_1-\tilde{E}_2)\tilde{F}_3(1)&=[\tilde{E}_1-\tilde{E}_2,[\tilde{E}_1-\tilde{E}_2,\tilde{F}_1(1)]]=[\tilde{E}_1-\tilde{E}_2,\tilde{A}_3(1)]=\tilde{F}_3(1)
 \end{align*}
 that $ B_{6,\sR}(\tilde{E}_1-\tilde{E}_2,\tilde{E}_1-\tilde{E}_2)=\tr(\ad(\tilde{E}_1-\tilde{E}_2) \ad(\tilde{E}_1-\tilde{E}_2))=(1/4+1/4+1)\times 2=3 $. Hence we have $ k=3/2 $.

 Next, we will determine $ k' $. Similarly, it follows from
 \begin{align*}
 (\tilde{E}_1-\tilde{E}_2)(\tilde{E}_1-\tilde{E}_2)E_1&=E_1,
 \\
 (\tilde{E}_1-\tilde{E}_2)(\tilde{E}_1-\tilde{E}_2)E_2&=E_2,
 \\
 (\tilde{E}_1-\tilde{E}_2)(\tilde{E}_1-\tilde{E}_2)E_3&=0,
 \\
 (\tilde{E}_1-\tilde{E}_2)(\tilde{E}_1-\tilde{E}_2)F_1(1)&=(1/4)F_(1),
 \\
 (\tilde{E}_1-\tilde{E}_2)(\tilde{E}_1-\tilde{E}_2)F_2(1)&=(1/4)F_2(1),
 \\
 (\tilde{E}_1-\tilde{E}_2)(\tilde{E}_1-\tilde{E}_2)F_3(1)&=0
 \end{align*}
 that $ \tr((\tilde{E}_1-\tilde{E}_2)(\tilde{E}_1-\tilde{E}_2))=1+1+(1/4)\times 2=5/2 $. Hence we have $ k'=6/5 $.
\end{proof}

Subsequently, we will prove the following theorem.

\begin{theorem}\label{theorem 4.12}
In the root system $ \varDelta $ of Theorem {\rm \ref{theorem 4.9}},
\begin{align*}
    \varPi=\{\alpha_1, \alpha_2\}
\end{align*}
is a fundamental root system of $ (\mathfrak{e}_{6,\sR})^C $, where $ \alpha=(1/2)(\tau_2-\tau_3), \alpha_2=(1/2)(\tau_3-\tau_1) $. The Dynkin diagram of $ (\mathfrak{e}_{6,\sR})^C $ is given by

     {\setlength{\unitlength}{1mm}
        \scalebox{0.9}
        {\begin{picture}(100,13)
            \put(0,9){}
            \put(70,10){\circle{2}} \put(69,6){$\alpha_1$}
            \put(71,10){\line(1,0){8}}
            \put(80,10){\circle{2}} \put(79,6){$\alpha_2$}



            \end{picture}}
    }
\end{theorem}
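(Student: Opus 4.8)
The plan is to first confirm that $ \varPi=\{\alpha_1,\alpha_2\} $ expresses every root as an integral combination with coefficients of a single sign, and then to use the Killing form $ B_{6,\sR} $ of Theorem \ref{theorem 4.11} to compute the two lengths and the angle between $ \alpha_1 $ and $ \alpha_2 $, from which the bond of the diagram is read off.

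First I would exploit the relation $ \tau_1+\tau_2+\tau_3=0 $ defining $ \mathfrak{h}_6 $. Since
\[
\alpha_1+\alpha_2=\dfrac{1}{2}(\tau_2-\tau_3)+\dfrac{1}{2}(\tau_3-\tau_1)=\dfrac{1}{2}(\tau_2-\tau_1)=-\dfrac{1}{2}(\tau_1-\tau_2),
\]
the six roots of Theorem \ref{theorem 4.10} are precisely $ \pm\alpha_1,\pm\alpha_2,\pm(\alpha_1+\alpha_2) $. Hence every root is an integral combination of $ \alpha_1,\alpha_2 $ whose coefficients are all non-negative or all non-positive, so $ \varPi=\{\alpha_1,\alpha_2\} $ is a fundamental root system with positive roots $ \alpha_1,\alpha_2,\alpha_1+\alpha_2 $.

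Next I would determine the canonical elements $ \delta_{\alpha_1},\delta_{\alpha_2}\in\mathfrak{h}_6 $. For $ \phi_6=\tilde{T}_0 $ with $ T_0=\tau_1E_1+\tau_2E_2+\tau_3E_3 $, the Killing form of Theorem \ref{theorem 4.11} reduces to $ B_{6,\sR}(\tilde{S},\tilde{T}_0)=\frac{3}{2}(S,T_0) $ because the $ (\mathfrak{f}_{4,\sR})^C $-part vanishes on $ \mathfrak{h}_6 $. Imposing $ B_{6,\sR}(\delta_{\alpha_i},\phi_6)=\alpha_i(\phi_6) $ for all $ \phi_6\in\mathfrak{h}_6 $ and using $ \tau_1+\tau_2+\tau_3=0 $ gives $ \delta_{\alpha_1}=\frac{1}{3}(E_2-E_3)^{\sim} $ and $ \delta_{\alpha_2}=\frac{1}{3}(E_3-E_1)^{\sim} $. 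From these I would compute
\[
(\alpha_1,\alpha_1)=(\alpha_2,\alpha_2)=\dfrac{1}{3},\qquad (\alpha_1,\alpha_2)=-\dfrac{1}{6},
\]
so that $ \cos\theta=(\alpha_1,\alpha_2)/\sqrt{(\alpha_1,\alpha_1)(\alpha_2,\alpha_2)}=-\frac{1}{2} $. Thus $ \alpha_1 $ and $ \alpha_2 $ have equal length and meet at angle $ 2\pi/3 $, which is exactly the single-bonded diagram of type $ A_2 $ asserted in the statement.

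The only genuinely delicate point is solving for the canonical elements: because $ \tau_1+\tau_2+\tau_3=0 $ makes $ \mathfrak{h}_6 $ two-dimensional, one cannot match the coefficients of $ \tau_1,\tau_2,\tau_3 $ freely but must first eliminate, say, $ \tau_1 $ and then match the coefficients of the independent variables $ \tau_2,\tau_3 $. Everything else is direct substitution, and the resulting $ A_2 $ diagram is of course consistent with the isomorphism $ (\mathfrak{e}_{6,\sR})^C\cong\mathfrak{sl}(3,C) $ of Corollary \ref{corollary 4.6}.
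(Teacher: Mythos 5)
Your proposal is correct and follows essentially the same route as the paper: identify $\alpha_1+\alpha_2$ as the remaining positive root, determine the canonical elements $\frac{1}{3}(E_2-E_3)^{\sim}$ and $\frac{1}{3}(E_3-E_1)^{\sim}$ via the restriction of $B_{6,\sR}$ to the real Cartan subalgebra, and compute $(\alpha_i,\alpha_j)$ to get $\cos\theta_{12}=-\frac{1}{2}$ and the $A_2$ diagram. Your explicit remark about eliminating one $\tau_i$ before matching coefficients (since $\tau_1+\tau_2+\tau_3=0$) is a point the paper passes over silently, but the values you obtain agree with the paper's in every detail.
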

\begin{proof}
   The remaining positive root $ (-1/2)(\tau_1-\tau_2) $ is expressed by $ \alpha_1,\alpha_2 $ as follows:
   \begin{align*}
   (-1/2)(\tau_1-\tau_2)=(1/2)(\tau_2-\tau_3)+(1/2)(\tau_3-\tau_1)=\alpha_1+\alpha_2.
   \end{align*}
   Hence we see that $ \varPi=\{\alpha_1, \alpha_2\} $ is the fundamental root system of $ (\mathfrak{e}_{6,\sR})^C $.

Let the real part $ \mathfrak{h}_{6,\sR} $ of $ \mathfrak{h}_6 $:
   \begin{align*}
   \mathfrak{h}_{6,\sR}=\left\lbrace \phi=\tilde{T}_0 \relmiddle{|}
   \begin{array}{l}
   T_0=\tau_1E_1+\tau_2E_2+\tau_3E_3 \in (({\mathfrak{J}_{\sR}})^C)_0,
   \\
   \quad \tau_1+\tau_2+\tau_3=0, \tau_i \in \R
   \end{array}
   \right\rbrace.
   \end{align*}

The killing form $ B_{6,\sR} $ of $ (\mathfrak{e}_{6,\sR})^C $ is given by $ B_{6,\sR}(\phi_1, \phi_2)=(6/5)\tr(\phi_1\phi_2)$, so is on $ \mathfrak{h}_{6,\sR} $. Hence, for $ \phi:=(\tau_1E_1+\tau_2E_2+\tau_3E_3)^\sim,\phi':=({\tau_1}'E_1+{\tau_2}'E_2+{\tau_3}'E_3)^\sim \in \mathfrak{h}_{6,\sR} $, from Theorem \ref{theorem 4.11} we easily obtain
   \begin{align*}
   B_{6,\sR}(\phi,\phi')=\dfrac{3}{2}(\tau_1{\tau_1}'+\tau_2{\tau_2}+\tau_3{\tau_3}').
   \end{align*}

Now, the canonical elements $ \phi_{\alpha_1}, \phi_{\alpha_2} \in \mathfrak{h}_{6,\sR}$ corresponding to $ \alpha_1, \alpha_2 $ are determined as follows:
   \begin{align*}
   \phi_{\alpha_1}=\dfrac{1}{3}(E_2-E_3)^\sim,
   \\
   \phi_{\alpha_2}=\dfrac{1}{3}(E_3-E_1)^\sim.
   \end{align*}
   Indeed, let $ \phi=({\tau_1}'E_1+{\tau_2}'E_2+{\tau_3}'E_3)^\sim \in \mathfrak{h}_{6,\sR}$ and set $ \phi_{\alpha_1}:=(\tau_1E_1+\tau_2E_2+\tau_3E_3)^\sim $, then from $ B_{6,\sR}(\phi_{\alpha_1},\phi)=\alpha_1(\phi) $, we have
   \begin{align*}
   \tau_1=0,\,\,\tau_2=\dfrac{1}{3},\,\,\tau_3=-\dfrac{1}{3}.
   \end{align*}
   Hence we have $ \phi_{\alpha_1}=(1/3)(E_2-E_3)^\sim $, and similarly $ \phi_{\alpha_2}=(1/3)(E_3-E_1)^\sim $.

   With above, we see that
   \begin{align*}
   (\alpha_1,\alpha_1)&=B_{6,\sR}(\phi_{\alpha_1},\phi_{\alpha_1})=\dfrac{3}{2}\left(\dfrac{1}{3}\cdot \dfrac{1}{3}+\left( -\dfrac{1}{3}\right)\cdot \left( -\dfrac{1}{3}\right) \right)=\dfrac{1}{3},
   \\
   (\alpha_1,\alpha_2)&=B_{6,\sR}(\phi_{\alpha_1},\phi_{\alpha_2})=\dfrac{3}{2}\left( -\dfrac{1}{3}\right)\cdot \dfrac{1}{3} =-\dfrac{1}{6},
   \\
   (\alpha_2,\alpha_2)&=B_{6,\sR}(\phi_{\alpha_2},\phi_{\alpha_2})=\dfrac{3}{2}\left(\left( -\dfrac{1}{3}\right)\cdot \left( -\dfrac{1}{3}\right) +\dfrac{1}{3}\cdot \dfrac{1}{3}\right)=\dfrac{1}{3}
   \end{align*}
   Hence, since we have
   \begin{align*}
   \cos\theta_{12}&=\dfrac{(\alpha_1, \alpha_2)}{\sqrt{(\alpha_1,\alpha_1)(\alpha_2,\alpha_2)}}=-\dfrac{1}{2},
   \end{align*}
   we can draw the Dynkin diagram.
\end{proof}

\section{The group $ (E_{7,\sR})^C, E_{7,\sR} $ and the root system, the Dynkin diagram of the Lie algebra $ ({\mathfrak{e}_{7,\sR}})^C $ of the group $ (E_{7,\sR})^C $}

Let $ \mathfrak{P}^C $ be the Freudenthal $ C $-vector space. As in $ (\mathfrak{J}_{\sR})^C $, we consider a $ C $-vector space $ (\mathfrak{P}_{\sR})^C $ which is defined by replacing $ \mathfrak{C} $ with $ \R $ in $ \mathfrak{P}^C $:
\begin{align*}
(\mathfrak{P}_{\sR})^C:=(\mathfrak{J}_{\sR})^C \oplus (\mathfrak{J}_{\sR})^C \oplus C \oplus C .
\end{align*}

We consider the following groups $ (E_{7,\sR})^C, E_{7,\sR} $ which are respectively  defined by replacing $ \mathfrak{P}^C $ with $ (\mathfrak{P}_{\sR})^C $ in the groups $ (E_7)^C, E_7 $:
\begin{align*}
(E_{7,\sR})^C&=\left\lbrace \alpha \in \Iso_{C}((\mathfrak{P}_{\sR})^C)\relmiddle{|} \alpha (P\times Q)\alpha^{-1}=\alpha P \times \alpha Q \right\rbrace,
\\[1mm]
E_{7,\sR}&=\left\lbrace \alpha \in \Iso_{C}((\mathfrak{P}_{\sR})^C)\relmiddle{|} \alpha (P\times Q)\alpha^{-1}=\alpha P \times \alpha Q, \langle\alpha P,\alpha Q \rangle=\langle P,Q \rangle \right\rbrace.
\end{align*}
The group $ E_{7,\sR} $ is a compact Lie group as the closed subgroup of the unitary group $ U(14)=U((\mathfrak{P}_{\sR})^C)=\{ \alpha \in \Iso_{C}((\mathfrak{P}_{\sR})^C)\,|\, \langle\alpha P,\alpha Q \rangle=\langle P,Q \rangle\}$ and the group $ (E_{7,\sR})^C $ is its complexification of $ E_{7,\sR} $.

Here, as in the Lie algebras $ {\mathfrak{e}_7}^C, \mathfrak{e}_7 $ of the groups $ {E_7}^C, E_7 $, the Lie algebras $ (\mathfrak{e}_{7,\sR})^C, \mathfrak{e}_{7,\sR} $ of the groups $ (E_{7,\sR})^C, E_{7,\sR} $ are given by:
\begin{align*}
(\mathfrak{e}_{7,\sR})^C&=\left\lbrace  \varPhi(\phi,A,B,\nu) \in \Hom_{C}((\mathfrak{P}_{\sR})^C) \relmiddle{|}\phi \in (\mathfrak{e}_{6,\sR})^C, A,B \in (\mathfrak{J}_{\sR})^C, \nu \in C \right\rbrace ,
\\[1mm]
\mathfrak{e}_{7,\sR}&=\left\lbrace  \varPhi(\phi,A,-\tau A,\nu) \in \Hom_{C}((\mathfrak{P}_{\sR})^C) \relmiddle{|} \phi \in \mathfrak{e}_{6,\sR}, A\in (\mathfrak{J}_{\sR})^C, \nu \in i\R \right\rbrace ,
\end{align*}
respectively.

In particular, we have $ \dim_C((\mathfrak{e}_{7,\sR})^C)=8+6\times 2+1=21 $ and  $ \dim(\mathfrak{e}_{7,\sR})=8+12+1=21 $.

\vspace{2mm}

Hereafter, we will determine the type of the group $ (E_{7,\sR})^C $ as Lie algebras.

\noindent Let $ \mathfrak{sp}(3,\H^C) $ be the Lie algebra of the symplectic group
$ Sp(3,\H^C) $:
\begin{align*}
   \mathfrak{sp}(3,\H^C)=\left\lbrace  D \in M(3,\H^C) \relmiddle{|} D+D^*=0 \right\rbrace.
\end{align*}

Then we have the following lemma.

\begin{lemma}\label{lemma 5.1}
   Any element $ D \in \mathfrak{sp}(3,\H^C) $ is uniquely expressed by
   \begin{align*}
   D=B+L(e_2E)+sE,\,\,B \in \mathfrak{su}(3,\C^C),L \in \mathfrak{S}(3,\C^C),s \in Ce_1,
   \end{align*}
   where $ \mathfrak{S}(3,\C^C)=\{ L \in M(3,\C^C) \,|\, {}^t\!L=L \} $ and $ e_1,e_2 $ are the basis in $ \H^C:=\{1,e_1,e_2,e_3\}_{\rm span} $.
\end{lemma}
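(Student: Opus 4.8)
The plan is to use the Cayley--Dickson presentation of $\H^C$ over $\C^C=\{1,e_1\}_{\rm span}$. Recalling $e_2e_1=-e_1e_2$, every $q\in\H^C$ splits uniquely as $q=z+we_2$ with $z,w\in\C^C$, so that $\H^C=\C^C\oplus\C^C e_2$ as a $C$-vector space. Writing $\bar z$ for the conjugation of $\C^C$ that fixes $C$ and sends $e_1\mapsto -e_1$, and $D^*={}^t\bar D$ for the adjoint defining $\mathfrak{sp}(3,\H^C)$ (bar $=$ entrywise quaternionic conjugation), I would first verify on the basis the two identities $e_2z=\bar z e_2$ and $\overline{z+we_2}=\bar z-we_2$. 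Applied entrywise, the splitting gives $M(3,\H^C)=M(3,\C^C)\oplus M(3,\C^C)e_2$, so each $D$ is written uniquely as $D=P+Qe_2$ with $P,Q\in M(3,\C^C)$.

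Next I would translate the defining condition. Since transposition merely permutes entries and leaves the $e_2$-factor untouched, the two rules above yield $D^*={}^t\bar D={}^t\bar P-{}^tQ\,e_2$. Hence $D+D^*=0$ reads
\[
(P+{}^t\bar P)+(Q-{}^tQ)e_2=0,
\]
and by the directness of the sum $\H^C=\C^C\oplus\C^C e_2$ this separates into $P+{}^t\bar P=0$ and $Q={}^tQ$; that is, $P$ is skew-Hermitian over $\C^C$ and $Q\in\mathfrak{S}(3,\C^C)$.

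It then remains to analyse the skew-Hermitian $P$. Setting $s:=\tfrac13\tr P$ and $B:=P-sE$, the identity ${}^t\bar P=-P$ gives $\overline{\tr P}=-\tr P$, so $\tr P\in Ce_1$ and therefore $s\in Ce_1$; moreover $sE$ is itself skew-Hermitian (because $\bar s=-s$), whence so is $B$, and $\tr B=0$, i.e. $B\in\mathfrak{su}(3,\C^C)$. Writing $L:=Q$ and noting $L(e_2E)=Le_2=Qe_2$ entrywise, I obtain $D=B+L(e_2E)+sE$ in the required form. Uniqueness follows by reversing the splittings: $P,Q$ are determined by $D$ through the direct decomposition of $M(3,\H^C)$, and then $s=\tfrac13\tr P$, $B=P-sE$, $L=Q$ are each forced.

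The one place needing care --- and the real content of the lemma --- is the bookkeeping forced by noncommutativity: the rule $e_2z=\bar z e_2$, and the fact that it is the $e_1$-conjugation (not the scalar conjugation $i\mapsto-i$ on $C$) that enters $D^*$. Getting this exactly right is what places the central parameter $s$ in $Ce_1$ and the off-diagonal block $L$ in the \emph{symmetric} space $\mathfrak{S}(3,\C^C)$ rather than in a Hermitian one. A dimension count, $\dim_C\mathfrak{su}(3,\C^C)+\dim_C\mathfrak{S}(3,\C^C)+\dim_C Ce_1=8+12+1=21=\dim_C\mathfrak{sp}(3,\H^C)$, serves as a final consistency check.
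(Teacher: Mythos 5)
Your proof is correct and follows essentially the same route as the paper's: both rest on the splitting $\H^C=\C^C\oplus\C^C e_2$ (the paper writes it out entry by entry, you in block form $D=P+Qe_2$), translate the condition $D+D^*=0$ into $P+{}^t\ov{P}=0$ and ${}^tQ=Q$, and then peel off $s=\tfrac{1}{3}\tr P\in Ce_1$ so that $B=P-sE$ lands in $\mathfrak{su}(3,\C^C)$. The only difference is presentational --- coordinatewise verification in the paper versus your direct-sum bookkeeping --- and the uniqueness argument is the same in both.
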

\begin{proof}
    First, we confirm that any element $ D \in \mathfrak{sp}(3,\H^C) $ is expressed by
    \begin{align*}
    D=\begin{pmatrix}
    b_1 & d_1 & d_2 \\
    -\ov{d}_1 & b_2 & d_3 \\
    -\ov{d}_2 & -\ov{d}_3 & b_3
    \end{pmatrix},\,\, b_i \in {\rm Im}(\H^C), d_i \in \H^C.
    \end{align*}
    Here, set $ b_i:=r_ie_1+c_ie_2, d_i:=p_i+q_ie_2, r_i \in C, c_i,p_i,q_i \in \C^C, i=1,2,3 $, then $ D $ is also expressed as follows:
    \begin{align*}
    D&=\begin{pmatrix}
    r_1e_1 & p_1 & p_2 \\
    -\ov{p}_1 & r_2e_1 & p_3 \\
    -\ov{p}_2 & -\ov{p}_3 & r_3e_1
    \end{pmatrix}+
    \begin{pmatrix}
    c_1e_2 & q_1e_2 & q_2e_2 \\
    q_1e_2 & c_2e_2 & q_3e_2 \\
    q_2e_2 & q_3e_2 & c_3e_2
    \end{pmatrix}\left( =:B'+L(e_2E)\right)
    \\
    &=\begin{pmatrix}
    r_1e_1-s & p_1 & p_2 \\
    -\ov{p}_1 & r_2e_1-s & p_3 \\
    -\ov{p}_2 & -\ov{p}_3 & r_3e_1-s
    \end{pmatrix}+
    \begin{pmatrix}
    c_1 & q_1 & q_2 \\
    q_1 & c_2 & q_3 \\
    q_2 & q_3 & c_3
    \end{pmatrix}(e_2E)+sE,
    \end{align*}
    where $ s:=(1/3)(r_1+r_2+r_3)e_1 \in Ce_1 $ is uniquely determined for $ B' \in \mathfrak{u}(3,\C^C) $.

    Hence we see that $ D \in \mathfrak{sp}(3,\H^C) $ is expressed by $ D=B+L(e_2E)+sE,\,\,B \in \mathfrak{su}(3,\C^C),L \in \mathfrak{S}(3,\C^C), s \in Ce_1 $. In order to prove the uniqueness of expression, it is sufficient to show that
    \begin{align*}
    D:=B'+L(e_2E)=0,\,\,{\text{then}}\,\,B'=0, L=0.
    \end{align*}
    However, it is clear. With above, this lemma is proved.
\end{proof}

Subsequently, we will prove the following lemma.

\begin{lemma}\label{lemma 5.2}
  The Lie algebra $ \mathfrak{su}(3,\C^C) $ is isomorphic to the Lie algebra $ \mathfrak{sl}(3,C) ${\rm :} $ \mathfrak{su}(3,\C^C) \cong \mathfrak{sl}(3,C) $.
\end{lemma}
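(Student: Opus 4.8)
The plan is to construct an explicit $C$-linear Lie algebra isomorphism, in the same concrete spirit as the maps $f_{4,C_*}, f_{6,C_*}$ used earlier. First I would fix coordinates on the base algebra. Writing $\C^C = \{z_0 + z_1 e_1 \mid z_0, z_1 \in C\}$ with $e_1{}^2 = -1$ and $e_1$ central, the $\C$-conjugation is $\ov{z_0 + z_1 e_1} = z_0 - z_1 e_1$. Any $B \in M(3,\C^C)$ then decomposes uniquely as $B = B_0 + B_1 e_1$ with $B_0, B_1 \in M(3,C)$, so that $B^* = {}^t\!\ov{B} = {}^t\!B_0 - {}^t\!B_1\, e_1$. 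Hence the anti-Hermitian condition $B^* + B = 0$ separates into ${}^t\!B_0 = -B_0$ and ${}^t\!B_1 = B_1$, i.e. $B_0 \in \mathfrak{so}(3,C)$ and $B_1$ symmetric; since such a $B_0$ is automatically trace-free, $\tr B = 0$ reduces to $\tr B_1 = 0$. Thus
\[
\mathfrak{su}(3,\C^C) = \{B_0 + B_1 e_1 \mid B_0 \in \mathfrak{so}(3,C),\ {}^t\!B_1 = B_1,\ \tr B_1 = 0\},
\]
which already gives $\dim_C \mathfrak{su}(3,\C^C) = 3 + 5 = 8 = \dim_C \mathfrak{sl}(3,C)$.

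Second, I would define $\psi \colon \mathfrak{su}(3,\C^C) \to \mathfrak{sl}(3,C)$ by $\psi(B_0 + B_1 e_1) = B_0 + i B_1$, where $i \in C$. This is manifestly $C$-linear, and its image lies in $\mathfrak{sl}(3,C)$ because $B_0$ is trace-free (antisymmetric) and $i B_1$ is trace-free (traceless symmetric). It is bijective: every $S \in \mathfrak{sl}(3,C)$ splits uniquely as $S = \tfrac12(S - {}^t\!S) + \tfrac12(S + {}^t\!S)$ into an antisymmetric part and a trace-free symmetric part, so $\psi^{-1}(S) = \tfrac12(S - {}^t\!S) + \tfrac{1}{2i}(S + {}^t\!S)\,e_1$ is the unique preimage.

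Third, and this is the crux, I would verify that $\psi$ respects brackets. Using that $e_1$ is central with $e_1{}^2 = -1$, a direct multiplication in $M(3,\C^C)$ yields
\[
[B_0 + B_1 e_1,\ B_0' + B_1' e_1] = ([B_0,B_0'] - [B_1,B_1']) + ([B_0,B_1'] + [B_1,B_0'])\,e_1,
\]
so $\psi$ of the left-hand side equals $([B_0,B_0'] - [B_1,B_1']) + i([B_0,B_1'] + [B_1,B_0'])$; expanding $[B_0 + iB_1,\ B_0' + iB_1']$ in $M(3,C)$ and using $i^2 = -1$ produces exactly the same expression. The hard part is precisely getting this matching right: the factor $i$ inserted in the definition of $\psi$ is what converts the relation $e_1{}^2 = -1$ of $\C^C$ into $i^2 = -1$ in $C$, so that the sign on the $[B_1,B_1']$ term comes out correctly, whereas the naive identification sending $e_1 \mapsto e_1$ or $e_1\mapsto 1$ fails. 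Conceptually $\psi$ is the algebra isomorphism $\C^C \cong C \oplus C$ (with $e_1 \mapsto (i,-i)$) followed by projection onto the first factor, the anti-Hermitian condition forcing the second factor to be $-{}^t(\,\cdot\,)$ of the first. With this bracket identity in hand, $\psi$ is a $C$-linear bijective Lie homomorphism, giving the desired isomorphism $\mathfrak{su}(3,\C^C) \cong \mathfrak{sl}(3,C)$.
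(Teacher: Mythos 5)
Your proof is correct, and the isomorphism you construct is in essence the same one as in the paper: the paper defines $g(B)=\iota B-\ov{\iota}\,{}^t\!B$ with $\iota=(1/2)(1+ie_1)$, and writing $B=B_0+B_1e_1$ as you do, one computes $g(B)=B_0-iB_1$, so your $\psi(B)=B_0+iB_1$ is exactly $g$ composed with the automorphism $S\mapsto -{}^t\!S$ of $\mathfrak{sl}(3,C)$; both rest on the splitting $\C^C\cong C\oplus C$ that you make explicit at the end. The differences are in the bookkeeping, and each has a payoff. The paper never decomposes $B$ into components: it verifies $\ov{g(B)}=g(B)$, $\tr(g(B))=0$ and the bracket identity purely by idempotent calculus ($\iota^2=\iota$, $\iota\ov{\iota}=0$), which is shorter, but it then needs injectivity plus the dimension count $\dim_C(\mathfrak{su}(3,\C^C))=8=\dim_C(\mathfrak{sl}(3,C))$ to conclude surjectivity — and that dimension claim is asserted rather than derived. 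Your component decomposition ($B_0$ antisymmetric, $B_1$ symmetric traceless) both justifies that dimension count and lets you bypass it entirely by exhibiting the explicit inverse $\psi^{-1}(S)=\tfrac12(S-{}^t\!S)+\tfrac{1}{2i}(S+{}^t\!S)\,e_1$, at the modest cost of a slightly longer bracket computation.
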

\begin{proof}
 We define a mapping $ g: \mathfrak{su}(3,\C^C) \to \mathfrak{sl}(3,C)$ by
 \begin{align*}
 g(B)=\iota B-\ov{\iota}\,{}^t\!B,
 \end{align*}
 where $ \iota=(1/2)(1+ie_1) $ with $ \iota^2=\iota, {\ov{\iota}}^2=\ov{\iota}, \iota\ov{\iota}=0, \iota+\ov{\iota}=1 $. 

First, we will prove that $ g $ is well-defined. It follows from
\begin{align*}
\ov{g(B)}&=\ov{\iota B-\ov{\iota}\,{}^t\!B}=\ov{\iota}\ov{B}-\iota{}^t\ov{B}=\ov{\iota}(-{}^t\!B)-\iota(-B)=\iota B-\ov{\iota}\,{}^t\!B=g(B),
\\[2mm]
\tr(g(B))&=\iota\tr(B)-\ov{\iota}\tr({}^t\!B)=\iota\tr(B)-\ov{\iota}\tr(B)=0
\end{align*}
that $ g(B) \in \mathfrak{sl}(3,C) $. Hence $ g $ is well-defined. Subsequently, we will prove that $ g $ is a Lie-homomorphism. It follows that
\begin{align*}
[g(B_1),g(B_2)]&=[\iota B_1-\ov{\iota}\,{}^t\!B_1,\iota B_2-\ov{\iota}\,{}^t\!B_2 ]=\iota[B_1,B_2]+\ov{\iota}[{}^t\!B_1, {}^t\!B_2]
\\
&=\iota[B_1,B_2]-\ov{\iota}\,{}^t\![B_1,B_2]
\\
&=g([B_1,B_2]).
\end{align*}
Hence $ g $ is a Lie-homomorphism. 

Next, we will prove that $ g $ is injective. In order to prove it, it is sufficient to show that $ g(B)=0 $ implies $ B=0 $. From $ g(B)=0$, that is, $\iota B=\ov{\iota}\,{}^t\!B $, apply $ \iota $ on both sides, then we have $ \iota B=0 $. Moreover, take transpose on both sides, we have $ \iota {}^t\!B=\ov{\iota}B $ and apply $ \ov{\iota} $ on this both sides, then we have $ \ov{\iota}B=0 $. Thus, since $ \iota B=0 $ and $ \ov{\iota}B=0 $, we obtain $ B=0 $, so $ g $ is injective, and together with $ \dim_C(\mathfrak{su}(3,\C^C))=8=\dim_C(\mathfrak{sl}(3,C)) $, $ g $ is surjective. 

Therefore we have the required isomorphism
\begin{align*}
\mathfrak{su}(3,\C^C) \cong \mathfrak{sl}(3,C).
\end{align*}

\end{proof}

Now, we will prove the following theorem.

\begin{theorem}\label{theorem 5.3}
    The Lie algebra $ (\mathfrak{e}_{7,\sR})^C $ is isomorphic to the Lie algebra $ \mathfrak{sp}(3,\H^C) ${\rm :}$ (\mathfrak{e}_{7,\sR})^C \cong\mathfrak{sp}(3,\H^C) $.
\end{theorem}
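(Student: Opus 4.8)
The plan is to construct an explicit $C$-linear isomorphism $\varphi : \mathfrak{sp}(3,\H^C) \to (\mathfrak{e}_{7,\sR})^C$ that matches the decomposition of Lemma \ref{lemma 5.1} with the $\varPhi(\phi, A, B, \nu)$-decomposition of $(\mathfrak{e}_{7,\sR})^C$, and then to verify that it preserves brackets. First I would settle the dimension bookkeeping that dictates how the summands correspond. By Lemma \ref{lemma 5.2} the summand $\mathfrak{su}(3,\C^C)$ has $\dim_C = 8$, while $\mathfrak{S}(3,\C^C)$ has $\dim_C = 12$ (six independent symmetric entries, each in the two-dimensional $\C^C$) and $Ce_1$ has $\dim_C = 1$, so $\dim_C \mathfrak{sp}(3,\H^C) = 8 + 12 + 1 = 21 = \dim_C(\mathfrak{e}_{7,\sR})^C$. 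This forces the assignment $\mathfrak{su}(3,\C^C) \to (\mathfrak{e}_{6,\sR})^C$ (the $\phi$-slot), $\mathfrak{S}(3,\C^C)(e_2E) \to$ the $(A,B)$-slot, and $Ce_1E \to$ the $\nu$-slot.

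The structural observation I would exploit is that the idempotents of Lemma \ref{lemma 5.2} split $\C^C = C\iota \oplus C\ov{\iota}$, whence $\mathfrak{S}(3,\C^C) = \iota\,\mathfrak{S}(3,C) \oplus \ov{\iota}\,\mathfrak{S}(3,C)$ with each $\mathfrak{S}(3,C)$ canonically identified with $(\mathfrak{J}_{\sR})^C$. Writing $D = B_0 + L(e_2E) + sE$ as in Lemma \ref{lemma 5.1}, with $L = \iota A + \ov{\iota}B$ for $A, B \in (\mathfrak{J}_{\sR})^C$ and $s = \lambda e_1$, I would set
\begin{align*}
\varphi(D) = \varPhi\bigl(f_{6,C_*}(g(B_0)),\; c_1 A,\; c_2 B,\; c_3\lambda\bigr)
\end{align*}
for suitable normalizing constants $c_1, c_2, c_3 \in C$, where $f_{6,C_*}\circ g : \mathfrak{su}(3,\C^C) \to (\mathfrak{e}_{6,\sR})^C$ is the composite of the isomorphisms of Lemma \ref{lemma 5.2} and Corollary \ref{corollary 4.6}. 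Bijectivity of $\varphi$ is then immediate from the uniqueness assertions of Lemma \ref{lemma 5.1} and Theorem \ref{theorem 4.3} together with the fact that each constituent map is an isomorphism.

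It remains to prove $\varphi([D_1, D_2]) = [\varphi(D_1), \varphi(D_2)]$, and by bilinearity this reduces to checking the six bracket types among the three generator classes $\mathfrak{su}(3,\C^C)$, $\mathfrak{S}(3,\C^C)(e_2E)$ and $Ce_1E$, comparing the matrix commutator on the left with the explicit $\varPhi$-bracket formula of the $\mathfrak{e}_7$-theory on the right. Most of these are bookkeeping: $[\mathfrak{su},\mathfrak{su}]$ is handled by Lemma \ref{lemma 5.2}; using $e_2 z = \ov{z}\,e_2$ one checks that the central element $sE$ scales the $\iota$- and $\ov{\iota}$-parts of $L(e_2E)$ oppositely, matching the $\mp\frac{1}{3}\nu$ action built into $\varPhi(\phi, A, B, \nu)$; and $[\mathfrak{su},\mathfrak{S}e_2]$ matches the action of $\phi$ and $-{}^t\phi$ on the $A$- and $B$-slots.

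The hard part will be the bracket $[L_1(e_2E), L_2(e_2E)]$. On the symplectic side a short computation with $e_2^2 = -1$ and $e_2 z = \ov{z}\,e_2$ gives $-(L_1\ov{L_2} - L_2\ov{L_1})$, which (being anti-Hermitian over $\C^C$) splits into an $\mathfrak{su}(3,\C^C)$-piece and a $Ce_1$-piece; on the $\mathfrak{e}_7$ side the corresponding bracket $[\varPhi(0,A_1,B_1,0),\varPhi(0,A_2,B_2,0)]$ produces a $\phi$-component assembled from the operations $A_i \vee B_j$ and a $\nu$-component assembled from the inner products $(A_i, B_j)$. The crux is therefore to verify the identity relating the Freudenthal operation $A \vee B = [\tilde{A}, \tilde{B}] + (A\circ B - \frac{1}{3}(A,B)E)^{\sim}$ on $(\mathfrak{J}_{\sR})^C$, and the pairing $(A,B)$, to the $\mathfrak{su}\oplus Ce_1$-components of the symmetric-matrix product $L_1\ov{L_2} - L_2\ov{L_1}$, and then to pin down $c_1, c_2, c_3$ so that all six bracket types agree simultaneously. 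I expect the determination of these constants, and the precise matching of $f_{6,C_*}\circ g$ against products of symmetric matrices, to be the only genuinely delicate point; once it is done the theorem follows.
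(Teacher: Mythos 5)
Your proposal is correct and follows essentially the same route as the paper: the paper's map $f_{7,C_*}(B+L(e_2E)+sE)=\varPhi(f_{6,C_*}(g(B)),\,-(\iota L+\ov{\iota}\,\ov{L}),\,\ov{\iota}\,L+\iota\,\ov{L},\,3(\iota-\ov{\iota})s)$ is exactly your $\varphi$ (in your coordinates $L=\iota A+\ov{\iota}B$, $s=\lambda e_1$ it reads $\varPhi(f_{6,C_*}(g(B_0)),-A,B,-3i\lambda)$, i.e.\ $c_1=-1$, $c_2=1$, $c_3=-3i$), and the paper verifies the same six bracket types, with the genuinely computational case being $[L(e_2E),L'(e_2E)]$, handled via the decomposition $-L\ov{L'}+L'\ov{L}=(\text{traceless part})+sE$ and the $A\vee B$ formula, exactly as you predict. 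Bijectivity is likewise immediate, via injectivity plus the dimension count $\dim_C\mathfrak{sp}(3,\H^C)=21=\dim_C((\mathfrak{e}_{7,\sR})^C)$.
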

\begin{proof}
Let the Lie algebra $ \mathfrak{sp}(3,\H^C):=\{ B +L(e_2E)+sE \,|\, B \in \mathfrak{su}(3,\C^C), L \in \mathfrak{S}(3,\C^C), s \in Ce_1\} $ (Lemma \ref{lemma 5.1}).
Then we define a mapping $ f_{7,C_*}: \mathfrak{sp}(3,\H^C) \to (\mathfrak{e}_{7,\sR})^C$ by
    \begin{align*}
    f_{7,C_*}( B +L(e_2E)+sE)=\varPhi(f_{6,C_*}(g(B)), -(\iota L+\ov{\iota}\,\ov{L}),\ov{\iota}\,L+\iota\,\ov{L}, 3(\iota-\ov{\iota})s),
    \end{align*}
where the mapping $ g: \mathfrak{su}(3,\C^C) \to \mathfrak{sl}(3,C) $ is defined in the proof of Lemma \ref{lemma 5.1} and $ f_{6,C_*} $ is defined in the proof of Corollary \ref{corollary 4.6}.

First, we will prove that $ f_{7,C_*} $ is well-defined. From Corollary \ref{corollary 4.6} and Lemma \ref{lemma 5.2}, we have $ f_{6,C_*}(g(B)) \in (\mathfrak{e}_{6,\sR})^C $, and in addition, it is easy to verify $ -(\iota L+\ov{\iota}\,\ov{L}),\ov{\iota}\,L+\iota\,\ov{L} \in (\mathfrak{J}_{\sR})^C $ and $ 3(\iota-\ov{\iota})s \in C $. Hence  $ f_{7,C_*} $ is well-defined. Subsequently,  we will prove that $ f_{7,C_*} $ is a Lie-homomorphism. It follows that
    \begin{align*}
   (1)&\quad  [f_{7,C_*}(B), f_{7,C_*}(B')]
    \\
    &=[\varPhi(f_{6,C_*}(g(B)),0,0,0), \varPhi(f_{6,C_*}(g(B')),0,0,0)]
    \\
    &=\varPhi([f_{6,C_*}(g(B)),f_{6,C_*}(g(B')],0,0,0)
    \\
    &=\varPhi(f_{6,C_*}([g(B),g(B')]),0,0,0)
    \\
    &=\varPhi(f_{6,C_*}(g([B,B'])),0,0,0)
    \\
    &=f_{7,C_*}([B,B']),
    \\[2mm]
    (2) &\quad  [f_{7,C_*}(B), f_{7,C_*}(L(e_2E))]
    \\
    &=[\varPhi(f_{6,C_*}(g(B)),0,0,0), \varPhi(0,-(\iota L+\ov{\iota}\,\ov{L}),\ov{\iota}\,L+\iota\,\ov{L},0)]
    \\
    &=\varPhi(0,f_{6,C_*}(g(B))(-(\iota L+\ov{\iota}\,\ov{L})), -{}^t\!(f_{6,C_*}(g(B)))(\ov{\iota}\,L+\iota\,\ov{L}),0 )\,\,(-{}^t\!(f_{6,C_*}(g(B)))=f_{6,C_*}(g(\ov{B}))
    \\
    &=\varPhi(0,-(\iota(BL+L{}^t\!B)+\ov{\iota}(\ov{BL+L{}^t\!B})), \ov{\iota}(BL+L{}^t\!B)+\iota(\ov{BL+L{}^t\!B}),0)
    \\
    &=f_{7,C_*}((BL+L{}^t\!B)(e_2E))
    \\
    &=f_{7,C_*}([B,L(e_2E)]),
    \\[2mm]
    (3) &\quad  [f_{7,C_*}(B), f_{7,C_*}(sE)]
    \\
    &=[\varPhi(f_{6,C_*}(g(B)),0,0,0), \varPhi(0,0,0,3(\iota-\ov{\iota})s)]
    \\
    &=0
    \\
    &=f_{7,C_*}([B,sE)]).
    \end{align*}
    Here, in order to show $ [f_{7,C_*}(L(e_2E), f_{7,C_*}(L'(e_2E))]=f_{7,C_*}([L(e_2E), L'(e_2E)]) $ as $ (4) $, we use the following claims (i), (ii) and definition (iii):
    \begin{align*}
    &({\rm i})\,\, [L(e_2E), L'(e_2E))]=-L\ov{L'}+L'\ov{L}, \,\,{\text{then}}\,\,-L\ov{L'}+L'\ov{L}-sE \in \mathfrak{su}(3,\C^C), s:=(1/3)\tr(-L\ov{L'}+L'\ov{L}).
    \\
    &(\rm ii)\,\,\text{For $ B:=-L\ov{L'}+L'\ov{L}-sE \in \mathfrak{su}(3,\C^C), s:=(1/3)\tr(-L\ov{L'}+L'\ov{L}), g(B) $ is given by}
    \\
    & \hspace{40mm}g(B)=-\iota L\ov{L'}+\iota L'\ov{L}+\ov{\iota}\,\ov{L'}L-\ov{\iota}\,\ov{L}L'-(\iota-\ov{\iota})sE,
    \\
    &\qquad \text{where the mapping $ g $ is defined in the proof of Lemma \ref{lemma 5.2}}.
    \\
    &(\rm iii)\,\, \text{For $ A,B \in (\mathfrak{J}_{\sR})^C $, as that in \cite[Definition (p.76)]{iy0}, an element $ A \vee B \in (\mathfrak{e}_{6,\sR})^C$ is defined by }
    \\
    & \hspace{40mm} A \vee B=[\tilde{A},\tilde{B}]+\left(A \circ B-\dfrac{1}{3}(A,B)E \right)^\sim,
    \\
    &\qquad \text{where for $ A \in (\mathfrak{J}_{\sR})^C $, the $ C $-linear transformation $ \tilde{A}:(\mathfrak{J}_{\sR})^C \to (\mathfrak{J}_{\sR})^C $ is defined by $ \tilde{A}X=A \circ X $. }
    \end{align*}
    Now, we begin the proof of $ (4) $.
    \begin{align*}
    (4) &\quad  [f_{7,C_*}(L(e_2E), f_{7,C_*}(L'(e_2E))]
    \\
    &=[\varPhi(0,-(\iota L+\ov{\iota}\,\ov{L}),\ov{\iota}\,L+\iota\,\ov{L},0), \varPhi(0,-(\iota L'+\ov{\iota}\,\ov{L'}),\ov{\iota}\,L'+\iota\,\ov{L'},0)]
    \\
    &=\varPhi(-2(\iota L+\ov{\iota}\,\ov{L})\vee (\ov{\iota}\,L'+\iota\,\ov{L'})+2(\iota L'+\ov{\iota}\,\ov{L'})\vee (\ov{\iota}\,L+\iota\,\ov{L}),0,0,
    \\
    &\hspace{60mm}-(\iota L+\ov{\iota}\,\ov{L},\ov{\iota}\,L'+\iota\,\ov{L'})+(\ov{\iota}\,L+\iota\,\ov{L},\iota L'+\ov{\iota}\,\ov{L'} ),
    \end{align*}
    where using the formula of the definition $ {\rm (iii)} $, the explicit form of the action to $ (\mathfrak{J}_{\sR})^C $ of  $ -2(\iota L+\ov{\iota}\,\ov{L})\vee (\ov{\iota}\,L'+\iota\,\ov{L'})+2(\iota L'+\ov{\iota}\,\ov{L'})\vee (\ov{\iota}\,L+\iota\,\ov{L})
    $ above is expressed by
    \begin{align*}
    &\quad (-2(\iota L+\ov{\iota}\,\ov{L})\vee (\ov{\iota}\,L'+\iota\,\ov{L'})+2(\iota L'+\ov{\iota}\,\ov{L'})\vee (\ov{\iota}\,L+\iota\,\ov{L}) )X, \,\,X \in (\mathfrak{J}_{\sR})
    \\
    &=(-\iota L\ov{L'}+\iota L'\ov{L}-\ov{\iota}\,\ov{L}L'+\ov{\iota}\,\ov{L'}L)X+X(-\iota \ov{L'}L+\iota \ov{L}L'-\ov{\iota}\,L'\ov{L}+\ov{\iota}\,L\ov{L'})
    \\
    &\hspace{51mm}+((2/3)\tr(\iota L\ov{L'}+\ov{\iota}\ov{L}L')-(2/3)\tr(\iota L'\ov{L}+\ov{\iota}\ov{L'}L))X
    \\
    &=(-\iota L\ov{L'}+\iota L'\ov{L}-\ov{\iota}\,\ov{L}L'+\ov{\iota}\,\ov{L'}L)X+X(-\iota \ov{L'}L+\iota \ov{L}L'-\ov{\iota}\,L'\ov{L}+\ov{\iota}\,L\ov{L'})-2(\iota-\ov{\iota})sX
    \\
    &=(-\iota L\ov{L'}+\iota L'\ov{L}-\ov{\iota}\,\ov{L}L'+\ov{\iota}\,\ov{L'}L-(\iota-\ov{\iota})s)X+X(-\iota \ov{L'}L+\iota \ov{L}L'-\ov{\iota}\,L'\ov{L}+\ov{\iota}\,L\ov{L'}-(\iota-\ov{\iota})s)
    \\
    &=g(B)X+X\,{}^t\!g(B)\,\,(\text{claim (ii)})
    \\
    &=f_{6,C_*}(g(B))X,
    \end{align*}
note that in the calculations above, the calculations from the first line to second line are omitted because these are long, 
    and as for $ -(\iota L+\ov{\iota}\,\ov{L},\ov{\iota}\,L'+\iota\,\ov{L'})+(\ov{\iota}\,L+\iota\,\ov{L},\iota L'+\ov{\iota}\,\ov{L'} ) $,  using $ (X,Y)=\tr(X\circ Y), X, Y \in (\mathfrak{J}_{\sR})^C $, it follows that
    \begin{align*}
    -(\iota L+\ov{\iota}\,\ov{L},\ov{\iota}\,L'+\iota\,\ov{L'})+(\ov{\iota}\,L+\iota\,\ov{L},\iota L'+\ov{\iota}\,\ov{L'} )&=-\tr(\iota L\ov{L'}+\ov{\iota}\,\ov{L}L')+\tr(\ov{\iota}L\ov{L'}+\iota\ov{L}L')
    \\
    &=-\tr((\iota-\ov{\iota})L\ov{L'}-(\iota-\ov{\iota})\ov{L}L')
    \\
    &=(\iota-\ov{\iota})\tr(-L\ov{L'}+\ov{L}L')
    \\
    &=(\iota-\ov{\iota})\tr(-L\ov{L'}+L'\ov{L})
    \\
    &=3(\iota-\ov{\iota})s.
    \end{align*}
    Hence we have
    \begin{align*}
    [f_{7,C_*}(L(e_2E), f_{7,C_*}(L'(e_2E))]=\varPhi(f_{6,C_*}(g(B)),0,0,3(\iota-\ov{\iota})s).
    \end{align*}
    On the other hand, it follows that
    \begin{align*}
    f_{7,C_*}([L(e_2E),L'(e_2E)])&=f_{7,C_*}(-L\ov{L'}+L'\ov{L})
    \\
    &=f_{7,C_*}((-L\ov{L'}+L'\ov{L}-sE)+sE)\,\,(\text{claim (i)})
    \\
    &=\varPhi(f_{6,C_*}(g(B)),0,0,3(\iota-\ov{\iota})s).
    \end{align*}
    With above, we obtain
    \begin{align*}
    [f_{7,C_*}(L(e_2E), f_{7,C_*}(L'(e_2E))]=f_{7,C_*}([L(e_2E, L'(e_2E))]).
    \end{align*}
 Further, the proof of a Lie-homomorphism is continued. It follows that
    \begin{align*}
    (5)&\quad [f_{7,C_*}(L(e_2E), f_{7,C_*}(sE))]
    \\
    &=[\varPhi(0,-(\iota L+\ov{\iota}\,\ov{L}),\ov{\iota}\,L+\iota\,\ov{L},0), \varPhi(0,0,0,3(\iota-\ov{\iota})s)]
    \\
    &=\varPhi(0,(-2/3)3(\iota-\ov{\iota})s(-(\iota L+\ov{\iota}\,\ov{L})),(2/3)3(\iota-\ov{\iota})s(\ov{\iota}\,L+\iota\,\ov{L}),0)
    \\
    &=\varPhi(0,2(\iota sL-\ov{\iota}s\ov{L}),-2(\ov{\iota}sL-\iota s\ov{L}),0)
    \\
    &=\varPhi(0,2(\iota sL+\ov{\iota}\,\ov{sL}),-2(\ov{\iota}sL+\iota \ov{sL}),0)
    \\
   &=\varPhi(0,-(\iota (-2sL)+\ov{\iota}(\ov{-2sL})),\ov{\iota}(-2sL)+\iota (\ov{-2sL}),0)
   \\
   &=f_{7,C_*}((-2sL)(e_2E))
   \\
   &=f_{7,C_*}([L(e_2E),sE]),
   \\[2mm]
   (6)&\quad [f_{7,C_*}(sE), f_{7,C_*}(s'E))]
   \\
   &=[\varPhi(0,0,0,3(\iota-\ov{\iota})s),\varPhi(0,0,0,3(\iota-\ov{\iota})s') ]
   \\
   &=0
   \\
   &=f_{7,C_*}([sE,s'E]).
   \end{align*}

Consequently, the proof of homomorphism is completed.

 Next, we will prove that $ f_{7,C_*} $ is injective. Since we easily see $ \Ker\,f_{7,C_*}=\{0\}$, it is clear. Finally, we will prove that  $ f_{7,C_*} $ is surjective. Since $ f_{7,C_*} $ is injective and together with $ \dim(\mathfrak{sp}(3,\H^C))=21=\dim_C((\mathfrak{e}_{7,\sR})^C) $ (in the beginning of this section (p.18)), $ f_{7,C_*} $ is surjective.

   Therefore we have the required isomorphism
   \begin{align*}
   (\mathfrak{e}_{7,\sR})^C \cong \mathfrak{sp}(3,\H^C).
   \end{align*}
\end{proof}

We define a $ C $-linear transformation $ \lambda $ of $ (\mathfrak{P}_{\sR})^C $ by
\begin{align*}
\lambda(X,Y,\xi,\eta)=(Y,-X,\eta,-\xi).
\end{align*}
Then we have $ \lambda \in E_{7,\sR} \subset (E_{7,\sR})^C $  and $ \lambda^2=-1,\lambda^4=1 $. Hereafter, the element $ \lambda $ plays important role.

We define a space $ (\mathfrak{M}_{\sR})^C $ by
\begin{align*}
(\mathfrak{M}_{\sR})^C&=\left\lbrace P \in (\mathfrak{P}_{\sR})^C \relmiddle{|} P \times P=0,P \not=0 \right\rbrace
\\
&=\left\lbrace P=(X,Y,\xi,\eta) \in (\mathfrak{P}_{\sR})^C \relmiddle{|}\begin{array}{l}
X \vee Y=0, X \times X=\eta Y,
\\
Y \times Y=\xi X, (X,Y)=3\xi\eta,
\\
P\not=0
\end{array} \right\rbrace.
\end{align*}

Then we have the following proposition.

\begin{proposition}\label{proposition 5.4}
    For $ \alpha \in (E_{7,\sR})^C $, we have $ \alpha (\mathfrak{M}_{\sR})^C=(\mathfrak{M}_{\sR})^C $.
\end{proposition}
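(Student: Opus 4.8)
The plan is to read the statement directly off the defining relation of $(E_{7,\sR})^C$, which says precisely that the Freudenthal cross operation is equivariant under the group action; no structural results about $(\mathfrak{M}_{\sR})^C$ beyond its definition are needed.

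First I would fix $\alpha \in (E_{7,\sR})^C$ and take an arbitrary $P \in (\mathfrak{M}_{\sR})^C$, so that $P \times P = 0$ as a $C$-linear operator on $(\mathfrak{P}_{\sR})^C$ and $P \neq 0$. Specializing the defining identity $\alpha(P \times Q)\alpha^{-1} = \alpha P \times \alpha Q$ to the case $Q = P$ gives
\[
\alpha P \times \alpha P = \alpha(P \times P)\alpha^{-1} = \alpha\, 0\, \alpha^{-1} = 0.
\]
Since $\alpha \in \Iso_C((\mathfrak{P}_{\sR})^C)$ is bijective, $P \neq 0$ forces $\alpha P \neq 0$; hence $\alpha P$ meets both conditions in the definition of $(\mathfrak{M}_{\sR})^C$, and therefore $\alpha(\mathfrak{M}_{\sR})^C \subseteq (\mathfrak{M}_{\sR})^C$.

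For the reverse inclusion I would use that $(E_{7,\sR})^C$ is a group, so $\alpha^{-1} \in (E_{7,\sR})^C$ as well. Applying the inclusion just established to $\alpha^{-1}$ yields $\alpha^{-1}(\mathfrak{M}_{\sR})^C \subseteq (\mathfrak{M}_{\sR})^C$, that is, $(\mathfrak{M}_{\sR})^C \subseteq \alpha(\mathfrak{M}_{\sR})^C$; combining the two inclusions gives the claimed equality $\alpha(\mathfrak{M}_{\sR})^C = (\mathfrak{M}_{\sR})^C$.

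There is essentially no obstacle here: the proposition is an immediate consequence of the definition of $(E_{7,\sR})^C$. The only point that requires a moment's care is that $P \times P = 0$ is an equality of $C$-linear operators (recall that $P \times Q$ denotes the Freudenthal cross operation, a mapping $(\mathfrak{P}_{\sR})^C \to (\mathfrak{P}_{\sR})^C$), so that conjugation by $\alpha$ sends the zero operator to the zero operator, which is exactly what the displayed computation uses. In particular, the alternative description of $(\mathfrak{M}_{\sR})^C$ by the system $X \vee Y = 0,\ X \times X = \eta Y,\ Y \times Y = \xi X,\ (X,Y) = 3\xi\eta$ is not needed for this argument.
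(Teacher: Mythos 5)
Your proof is correct and is exactly the argument the paper invokes: the paper's proof simply cites the analogous statement in Yokota's \emph{Exceptional Lie groups} (Proposition 4.2.2 (1)), whose proof is the same specialization $Q=P$ of the defining relation $\alpha(P\times Q)\alpha^{-1}=\alpha P\times\alpha Q$, followed by bijectivity of $\alpha$ and application of the inclusion to $\alpha^{-1}$ to get equality. Nothing is missing; your remark that $P\times P=0$ is an identity of operators, so conjugation preserves it, is precisely the point on which the argument rests.
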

\begin{proof}
    We can prove this proposition as that in the proof of \cite[Proposition 4.2.2 (1)]{iy0}.
\end{proof}
Moreover, $ \alpha \in (E_{7,\sR})^C $ leaves the alternative inner product invariant: $ \{\alpha P, \alpha Q \}=\{ P,Q \}, P,Q \in (\mathfrak{P}_{\sR})^C $. This proof can be also proved as that in the proof of \cite[Proposition 4.2.2 (2)]{iy0}.
\vspace{2mm}

We consider the following subgroup $ ((E_{7,\sR})^C)_{\dot{1},\underset{\dot{}}{1}} $ of $ (E_{7,\sR})^C $:
\begin{align*}
((E_{7,\sR})^C)_{\dot{1},\text{\d{$ 1 $}}} =\left\lbrace \alpha \in (E_{7,\sR})^C \relmiddle{|}\alpha \dot{1}=\dot{1},\alpha\text{\d{$ 1 $}}=\text{\d{$ 1 $}} \right\rbrace.
\end{align*}

\begin{proposition}\label{proposition 5.5}
   The group $ ((E_{7,\sR})^C)_{\dot{1},\underset{\dot{}}{1}} $ is isomorphic to the group $ (E_{6,\sR})^C ${\rm :} $ ((E_{7,\sR})^C)_{\dot{1},\underset{\dot{}}{1}} \cong (E_{6,\sR})^C $.
\end{proposition}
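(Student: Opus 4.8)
The plan is to realize the isomorphism through the same device by which $(E_{6,\sR})^C$ embeds into $(E_{7,\sR})^C$. I would define $f\colon (E_{6,\sR})^C \to ((E_{7,\sR})^C)_{\dot{1},\underset{\dot{}}{1}}$ by $f(\alpha)=\tilde{\alpha}$, where $\tilde{\alpha}(X,Y,\xi,\eta)=(\alpha X,{}^t\!\alpha^{-1}Y,\xi,\eta)$; note ${}^t\!\alpha^{-1}\in(E_{6,\sR})^C$ as already observed in the definition of $\lambda$ in Section 4. That $\tilde{\alpha}\in(E_{7,\sR})^C$ and that $\tilde{\alpha}\dot{1}=\dot{1}$, $\tilde{\alpha}\underset{\dot{}}{1}=\underset{\dot{}}{1}$, is verified exactly as in the classical statement ${E_6}^C=({E_7}^C)_{\dot{1},\underset{\dot{}}{1}}$ recalled in Section 2, using the defining identity $\alpha X\times\alpha Y={}^t\!\alpha^{-1}(X\times Y)$ of $(E_{6,\sR})^C$ together with the explicit form of $\varPhi(\phi,A,B,\nu)$. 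The map $f$ is plainly a homomorphism, and it is injective since $\tilde{\alpha}=1$ forces $\alpha=1$ on the first summand. All the content therefore lies in surjectivity.

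For surjectivity I would take $\beta\in((E_{7,\sR})^C)_{\dot{1},\underset{\dot{}}{1}}$ and first show that $\beta$ respects the decomposition $(\mathfrak{P}_{\sR})^C=\dot{(\mathfrak{J}_{\sR})^C}\oplus\underset{\dot{}}{(\mathfrak{J}_{\sR})^C}\oplus C\dot{1}\oplus C\underset{\dot{}}{1}$. The key computation is $\dot{1}\times\underset{\dot{}}{1}=\varPhi(0,0,0,-3/8)$, whose action on $(X,Y,\xi,\eta)$ is $(\tfrac18 X,-\tfrac18 Y,-\tfrac38\xi,\tfrac38\eta)$; the four summands above are precisely its eigenspaces, with the four distinct eigenvalues $\tfrac18,-\tfrac18,-\tfrac38,\tfrac38$. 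Applying the defining relation of $(E_{7,\sR})^C$ with $P=\dot{1}$, $Q=\underset{\dot{}}{1}$ and using $\beta\dot{1}=\dot{1}$, $\beta\underset{\dot{}}{1}=\underset{\dot{}}{1}$ gives $\beta(\dot{1}\times\underset{\dot{}}{1})\beta^{-1}=\dot{1}\times\underset{\dot{}}{1}$, so $\beta$ commutes with this operator and hence preserves each eigenspace. Consequently there are invertible $C$-linear maps $\alpha,\beta'$ of $(\mathfrak{J}_{\sR})^C$ with $\beta\dot{X}=\dot{(\alpha X)}$ and $\beta\underset{\dot{}}{Y}=\underset{\dot{}}{(\beta'Y)}$.

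To identify $\alpha$ and $\beta'$ I would exploit a second cross product, $\dot{X}\times\dot{1}=\varPhi(0,\tfrac14 X,0,0)$, and conjugate it by $\beta$. From $\beta(\dot{X}\times\dot{1})\beta^{-1}=\dot{(\alpha X)}\times\dot{1}=\varPhi(0,\tfrac14\alpha X,0,0)$, applying both sides to $\dot{X'}$ and to $\underset{\dot{}}{Y'}$ and reading off the $\underset{\dot{}}{(\mathfrak{J}_{\sR})^C}$- and $C\dot{1}$-components yields, respectively, $\alpha X\times\alpha X'=\beta'(X\times X')$ for all $X,X'$ and $(X,(\beta')^{-1}Y')=(\alpha X,Y')$ for all $X,Y'$. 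The second identity forces $\beta'={}^t\!\alpha^{-1}$, and substituting this into the first gives $\alpha X\times\alpha X'={}^t\!\alpha^{-1}(X\times X')$, which is exactly the defining condition for $\alpha\in(E_{6,\sR})^C$. Hence $\beta=\tilde{\alpha}=f(\alpha)$, proving surjectivity. The main obstacle is precisely this identification step: everything hinges on recognizing $\dot{1}\times\underset{\dot{}}{1}$ as a semisimple operator with simple spectrum, so that commutation pins down the block structure, and on extracting the two relations above from the conjugation identity; the remaining verifications are the routine Freudenthal-algebra computations familiar from the classical ${E_7}^C$ case.
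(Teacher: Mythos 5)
Your proof is correct and is essentially the classical argument behind the paper's proof, which consists only of a citation to Yokota's Proposition 4.4.1 for $({E_7}^C)_{\dot{1},\underset{\dot{}}{1}}\cong {E_6}^C$: the key computations $\dot{1}\times\underset{\dot{}}{1}=\varPhi(0,0,0,-\tfrac38)$ with simple spectrum $\{\tfrac18,-\tfrac18,-\tfrac38,\tfrac38\}$, $\dot{X}\times\dot{1}=\varPhi(0,\tfrac14 X,0,0)$, and the resulting identities $\beta'={}^t\!\alpha^{-1}$ and $\alpha X\times\alpha X'={}^t\!\alpha^{-1}(X\times X')$ all check out and reproduce, for the $\R$-form, exactly the block-identification proof the paper defers to. The only cosmetic difference is that the standard reference pins down the second block via invariance of the skew-symmetric form $\{P,Q\}$ rather than by conjugating $\dot{X}\times\dot{1}$, which changes nothing essential.
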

\begin{proof}
  We can prove this proposition as that in the proof of \cite[Proposition 4.4.1]{iy2}.
\end{proof}

Subsequently, we consider the following subgroup $ ((E_{7,\sR})^C)_{\underset{\dot{}}{1}} $ of $ (E_{7,\sR})^C $:
\begin{align*}
((E_{7,\sR})^C)_{\text{\d{$ 1 $}}} =\left\lbrace \alpha \in (E_{7,\sR})^C \relmiddle{|}\alpha\underset{\dot{}}{1}=\text{\d{$ 1 $}} \right\rbrace.
\end{align*}

\begin{lemma}\label{lemma 5.6}
   The Lie algebra $ ((\mathfrak{e}_{7,\sR})^C)_{\underset{\dot{}}{1}}  $ of the group $ ((E_{7,\sR})^C)_{\underset{\dot{}}{1}} $ is given by
   \begin{align*}
   ((\mathfrak{e}_{7,\sR})^C)_{\underset{\dot{}}{1}}
   &=\left\lbrace \varPhi(\phi,A,B,\nu) \in (\mathfrak{e}_{7,\sR})^C \relmiddle{|}  \varPhi(\phi,A,B,\nu)\underset{\dot{}}{1}=0 \right\rbrace
   \\
   &=\left\lbrace \varPhi(\phi,0,B,0) \relmiddle{|} \phi \in (\mathfrak{e}_{6,\sR})^C, B \in (\mathfrak{J}_{\sR})^C \right\rbrace.
   \end{align*}

   In particular, we have $ \dim_C(((\mathfrak{e}_{7,\sR})^C)_{\underset{\dot{}}{1}})=8+6=14 $.
\end{lemma}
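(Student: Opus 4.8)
The plan is to identify the Lie algebra of the isotropy subgroup $((E_{7,\sR})^C)_{\underset{\dot{}}{1}}$ with the annihilator of $\underset{\dot{}}{1}$ inside $(\mathfrak{e}_{7,\sR})^C$ via the standard Lie group--Lie algebra correspondence, and then to read off that annihilator from the explicit formula defining $\varPhi(\phi,A,B,\nu)$.

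First I would establish the first equality. The group $((E_{7,\sR})^C)_{\underset{\dot{}}{1}}$ is by definition the isotropy subgroup at $\underset{\dot{}}{1}$ for the natural action of $(E_{7,\sR})^C$ on $(\mathfrak{P}_{\sR})^C$. By the classical fact on Lie algebras of closed subgroups, an element $\varPhi \in (\mathfrak{e}_{7,\sR})^C$ lies in the Lie algebra of this subgroup if and only if $\exp(t\varPhi)\underset{\dot{}}{1}=\underset{\dot{}}{1}$ for all $t$. Differentiating at $t=0$ gives $\varPhi\underset{\dot{}}{1}=0$; conversely, if $\varPhi\underset{\dot{}}{1}=0$ then $\varPhi^n\underset{\dot{}}{1}=0$ for every $n\geq 1$, whence $\exp(t\varPhi)\underset{\dot{}}{1}=\underset{\dot{}}{1}$. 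This yields
\[
((\mathfrak{e}_{7,\sR})^C)_{\underset{\dot{}}{1}} = \{\varPhi(\phi,A,B,\nu)\in(\mathfrak{e}_{7,\sR})^C \mid \varPhi(\phi,A,B,\nu)\underset{\dot{}}{1}=0\}.
\]

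The main step is then a direct substitution into the defining formula for $\varPhi(\phi,A,B,\nu)$ with $\underset{\dot{}}{1}=(0,0,0,1)$, that is $X=Y=\xi=0$ and $\eta=1$. Each of the four components collapses: the first reduces to $\eta A=A$, the second and third vanish identically, and the fourth reduces to $-\nu\eta=-\nu$. Hence $\varPhi(\phi,A,B,\nu)\underset{\dot{}}{1}=(A,0,0,-\nu)=\dot{A}-\nu\underset{\dot{}}{1}$, which is zero precisely when $A=0$ and $\nu=0$. Together with the uniqueness of the expression $\varPhi(\phi,A,B,\nu)$ for elements of $(\mathfrak{e}_{7,\sR})^C$, this gives the second equality $((\mathfrak{e}_{7,\sR})^C)_{\underset{\dot{}}{1}}=\{\varPhi(\phi,0,B,0)\mid\phi\in(\mathfrak{e}_{6,\sR})^C,\,B\in(\mathfrak{J}_{\sR})^C\}$.

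Finally, for the dimension I would count the free parameters $\phi$ and $B$. By Theorem \ref{theorem 4.3} we have $\dim_C((\mathfrak{e}_{6,\sR})^C)=8$, and $\dim_C((\mathfrak{J}_{\sR})^C)=6$. Since $\phi$ and $B$ vary independently and the assignment $(\phi,B)\mapsto\varPhi(\phi,0,B,0)$ is injective by the same uniqueness, we obtain $\dim_C(((\mathfrak{e}_{7,\sR})^C)_{\underset{\dot{}}{1}})=8+6=14$. There is no substantial obstacle here: the only points requiring care are the passage from the group stabilizer to the Lie-algebra annihilator, which rests on the classical correspondence, and keeping track of the uniqueness of the $\varPhi$-expression so that the parametrization by $(\phi,B)$ is faithful for the dimension count.
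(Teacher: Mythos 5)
Your proposal is correct and follows essentially the same route as the paper: the heart of both arguments is the substitution of $\underset{\dot{}}{1}=(0,0,0,1)$ into the defining formula for $\varPhi(\phi,A,B,\nu)$, yielding $\varPhi(\phi,A,B,\nu)\underset{\dot{}}{1}=(A,0,0,-\nu)$ and hence $A=0$, $\nu=0$. The extra material you supply --- the exponential argument identifying the isotropy Lie algebra with the annihilator of $\underset{\dot{}}{1}$, and the parameter count via the uniqueness of the $\varPhi$-expression --- merely makes explicit what the paper's proof leaves implicit, and is sound.
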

\begin{proof}
    Let $ \varPhi(\phi,A,B,\nu) \in (\mathfrak{e}_{7,\sR})^C $. Then, since it follows that
    \begin{align*}
    \varPhi(\phi,A,B,\nu)\underset{\dot{}}{1}=(A,0,0,-\nu),
    \end{align*}
    we have $ A=0 $ and $ \nu=0 $. Thus we have the required result.
\end{proof}

Then we have the following proposition.

\begin{proposition}\label{proposition 5.7}
    The group $ ((E_{7,\sR})^C)_{\underset{\dot{}}{1}} $ is the semi-direct product of groups $ \exp(\varPhi(0,0,(\mathfrak{J}_{\sR})^C,0)) $ and $ (E_{6,\sR})^C ${\rm :} $ ((E_{7,\sR})^C)_{\underset{\dot{}}{1}}= \exp(\varPhi(0,0,(\mathfrak{J}_{\sR})^C,0)) \rtimes (E_{6,\sR})^C $, where the group $ \exp(\varPhi(0,0,(\mathfrak{J}_{\sR})^C,0)):=\{\exp(\varPhi(0,0,B,0))\,|\,\allowbreak B \in (\mathfrak{J}_{\sR})^C\} $.

    In particular,the group $ ((E_{7,\sR})^C)_{\underset{\dot{}}{1}} $ is connected.
\end{proposition}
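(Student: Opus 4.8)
The plan is to realize $G := ((E_{7,\sR})^C)_{\underset{\dot{}}{1}}$ as the internal semi-direct product of $N := \exp(\varPhi(0,0,(\mathfrak{J}_{\sR})^C,0))$ and the copy of $(E_{6,\sR})^C$ embedded via $\alpha \mapsto \tilde{\alpha}$, by checking the four standard conditions: both are subgroups of $G$, they intersect trivially, they generate $G$, and $N$ is normal. First I would record the inclusions. Since $\varPhi(0,0,B,0) \in ((\mathfrak{e}_{7,\sR})^C)_{\underset{\dot{}}{1}}$ by Lemma \ref{lemma 5.6}, every $\exp(\varPhi(0,0,B,0))$ fixes $\underset{\dot{}}{1}$, so $N \subset G$. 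On the other hand $\tilde{\alpha}(X,Y,\xi,\eta)=(\alpha X,{}^t\!\alpha^{-1}Y,\xi,\eta)$ fixes both $\dot{1}=(0,0,1,0)$ and $\underset{\dot{}}{1}=(0,0,0,1)$ by its very definition, so the image of $(E_{6,\sR})^C$ equals $((E_{7,\sR})^C)_{\dot{1},\underset{\dot{}}{1}}$ (Proposition \ref{proposition 5.5}) and lies in $G$.

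The heart of the argument is the explicit action of $N$ on $\dot{1}$ together with a factorization. Writing $\kappa=\varPhi(0,0,B,0)$, I would compute the cyclic chain $\kappa\dot{1}=(0,B,0,0)$, $\kappa^2\dot{1}=(2B\times B,0,0,0)$, $\kappa^3\dot{1}=(0,0,0,6\det B)$ (using $(B,B\times B)=3\det B$) and $\kappa^4\dot{1}=0$, whence
\begin{align*}
\exp(\varPhi(0,0,B,0))\dot{1}=(B\times B,\,B,\,1,\,\det B).
\end{align*}
Now for arbitrary $\alpha \in G$ I would use that $\alpha$ preserves $(\mathfrak{M}_{\sR})^C$ (Proposition \ref{proposition 5.4}) and the skew inner product $\{\,,\,\}$. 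Since $\dot{1}\in(\mathfrak{M}_{\sR})^C$ and $\{\dot{1},\underset{\dot{}}{1}\}=1$, writing $\alpha\dot{1}=(X_0,Y_0,\xi_0,\eta_0)$ gives $\xi_0=\{\alpha\dot{1},\underset{\dot{}}{1}\}=\{\alpha\dot{1},\alpha\underset{\dot{}}{1}\}=\{\dot{1},\underset{\dot{}}{1}\}=1$, and $\alpha\dot{1}\in(\mathfrak{M}_{\sR})^C$ forces the Freudenthal relations $Y_0\times Y_0=\xi_0 X_0=X_0$ and $\det Y_0=(1/3)(X_0,Y_0)=\eta_0$. Comparing with the displayed formula, the choice $B=Y_0$ yields $\exp(\varPhi(0,0,Y_0,0))\dot{1}=\alpha\dot{1}$. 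Hence $\beta:=\exp(-\varPhi(0,0,Y_0,0))\,\alpha$ fixes both $\dot{1}$ and $\underset{\dot{}}{1}$, so $\beta \in (E_{6,\sR})^C$ by Proposition \ref{proposition 5.5}, and $\alpha=\exp(\varPhi(0,0,Y_0,0))\beta \in N\,(E_{6,\sR})^C$, proving $G=N\,(E_{6,\sR})^C$.

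It then remains to finish the semi-direct structure. From the displayed action, $\exp(\varPhi(0,0,B,0))\dot{1}=\dot{1}$ forces $B=0$, so $N\cap(E_{6,\sR})^C=\{1\}$. For normality I would restrict the general conjugation identity $\tilde{\alpha}\varPhi(\phi,A,B,\nu)\tilde{\alpha}^{-1}=\varPhi(\alpha\phi\alpha^{-1},\alpha A,{}^t\!\alpha^{-1}B,\nu)$ to obtain
\begin{align*}
\tilde{\alpha}\,\varPhi(0,0,B,0)\,\tilde{\alpha}^{-1}=\varPhi(0,0,{}^t\!\alpha^{-1}B,0),\qquad \alpha \in (E_{6,\sR})^C,
\end{align*}
which shows $(E_{6,\sR})^C$ normalizes $N$; since $N$ normalizes itself and $G=N\,(E_{6,\sR})^C$, it follows that $N$ is a normal subgroup of $G$, giving $G=N\rtimes(E_{6,\sR})^C$. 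Finally, $N$ is connected as the continuous image under $\exp$ of the connected vector space $(\mathfrak{J}_{\sR})^C$, and $(E_{6,\sR})^C$ is connected (Proposition \ref{proposition 4.4}), so their semi-direct product $G$ is connected.

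The main obstacle is the second paragraph: producing the exact formula for $\exp(\varPhi(0,0,B,0))\dot{1}$ (which requires seeing that $\varPhi(0,0,B,0)$ is nilpotent on the chain through $\dot{1}$ and invoking $(B,B\times B)=3\det B$), and then recognizing that the defining relations of $(\mathfrak{M}_{\sR})^C$ satisfied by $\alpha\dot{1}$ are precisely what makes $B=Y_0$ reproduce $\alpha\dot{1}$. Once this matching is in hand, the inclusions, the trivial intersection, the conjugation formula, and the connectedness are all routine.
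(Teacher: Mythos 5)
Your proof is correct and follows essentially the same route as the paper: the explicit computation $\exp(\varPhi(0,0,B,0))\dot{1}=(B\times B,\,B,\,1,\,\det B)$, the use of $\alpha\underset{\dot{}}{1}=\underset{\dot{}}{1}$ and $\alpha\dot{1}\in(\mathfrak{M}_{\sR})^C$ to factor $\alpha=\exp(\varPhi(0,0,Y_0,0))\,\beta$ with $\beta\in(E_{6,\sR})^C$ via Proposition \ref{proposition 5.5}, the conjugation formula $\tilde{\alpha}\,\varPhi(0,0,B,0)\,\tilde{\alpha}^{-1}=\varPhi(0,0,{}^t\!\alpha^{-1}B,0)$ for normality, and connectedness of both factors. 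Your explicit verification that $\exp(\varPhi(0,0,(\mathfrak{J}_{\sR})^C,0))\cap(E_{6,\sR})^C=\{1\}$ is a small but welcome addition, since the paper leaves this point (which is needed for the well-definedness of the projection $p$ in its split exact sequence) implicit.
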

\begin{proof}
    Let $ \alpha \in ((E_{7,\sR})^C)_{\underset{\dot{}}{1}} $ and set $ \alpha \dot{1}:=(X,Y,\xi,\eta) $. Then it follows from
    \begin{align*}
    1=\{\dot{1},\underset{\dot{}}{1}\}=\{\alpha \dot{1},\alpha \underset{\dot{}}{1}\}=\{(X,Y,\xi,\eta), (0,0,0,1)\}=\xi
    \end{align*}
    that $ \xi=1 $, that is, $ \alpha \dot{1}=(X,Y,1,\eta) $, and moreover $ \alpha \dot{1} $ is of form $ (Y \times Y,Y,1,(1/3)(Y \times Y,Y)) $ from $ \dot{1} \in (\mathfrak{M}_{\sR})^C $:
    \begin{align*}
    \alpha \dot{1}=(Y \times Y,Y,1,\dfrac{1}{3}(Y \times Y,Y)).
    \end{align*}

    Here, set $ \delta:=\varPhi(0,0,B,0) $. Then it follows that
    \begin{align*}
    \delta \dot{1}
    &=\exp(\varPhi(0,0,B,0)) \dot{1}=\left( \displaystyle{\sum\limits_{n=0}^{\infty}\dfrac{1}{n!}(\varPhi(0,0,B,0))^n}\right) \dot{1}
    =(B \times B,B,1,\dfrac{1}{3}(B\times B,B)),
    \end{align*}
    and moreover it is easy to verify that $ \delta \underset{\dot{}}{1}=\underset{\dot{}}{1} $.

    Hence we have
    \begin{align*}
    \delta^{-1}\alpha \dot{1}&=\delta^{-1}(Y \times Y,Y,1,\dfrac{1}{3}(Y \times Y,Y))=\dot{1},
    \\
    \delta^{-1}\alpha \underset{\dot{}}{1}&=\delta^{-1} \underset{\dot{}}{1}=\underset{\dot{}}{1}.
    \end{align*}
    Hence, since we have $ \delta^{-1}\alpha \in (E_{6,\sR})^C $ from Proposition \ref{proposition 5.5}, the group $ ((E_{7,\sR})^C)_{\underset{\dot{}}{1}} $ is of the form
    \begin{align*}
    ((E_{7,\sR})^C)_{\underset{\dot{}}{1}}= \exp(\varPhi(0,0,(\mathfrak{J}_{\sR})^C,0)) (E_{6,\sR})^C.
    \end{align*}

    Next, we will show that $ ((E_{7,\sR})^C)_{\underset{\dot{}}{1}} $ is the semi-direct product of groups $ \exp(\varPhi(0,0,(\mathfrak{J}_{\sR})^C,0)) $ and $ (E_{6,\sR})^C $. For $ \beta \in (E_{6,\sR})^C $, it is to verify that
    \begin{align*}
    \beta \exp(\varPhi(0,0,B,0))\beta^{-1}=\exp(\varPhi(0,0,{}^t\!\beta^{-1} B,0)).
    \end{align*}
    Indeed, for $ (X,Y,\xi,\eta) \in (\mathfrak{P}_{\sR})^C $, first it follows that
    \begin{align*}
    \beta \varPhi(0,0,B,0)\beta^{-1}(X,Y,\xi,\eta)
    &=\beta \varPhi(0,0,B,0)(\beta^{-1}X,{}^t\!\beta Y,\xi,\eta)
    \\
    &=\beta(2B\times {}^t\!\beta Y,\xi B, 0, (B,\beta^{-1}X))
    \\
    &=(\beta(2B\times {}^t\!\beta Y),{}^t\!\beta^{-1}\xi B, 0, (B,\beta^{-1}X))
    \\
    &=(2{}^t\!\beta^{-1}B\times Y,\xi\,{}^t\!\beta^{-1}B, 0, ({}^t\!\beta^{-1}B,X))
    \\
    &=\varPhi(0,0,{}^t\!\beta^{-1}B,0)(X,Y,\xi,\eta),
    \end{align*}
    that is, $  \beta \varPhi(0,0,B,0)\beta^{-1}=\varPhi(0,0,{}^t\!\beta^{-1}B,0) $. Using this formula, we have the following
    \begin{align*}
    \beta \exp(\varPhi(0,0,B,0))\beta^{-1}&=\beta\left(\displaystyle{\sum\limits_{n=0}^{\infty}\dfrac{1}{n!}(\varPhi(0,0,B,0))^n} \right)\beta^{-1}
    \\
    &=\displaystyle{\sum\limits_{n=0}^{\infty}\dfrac{1}{n!}(\beta\varPhi(0,0,B,0)\beta^{-1})^n}
    \\
    &=\displaystyle{\sum\limits_{n=0}^{\infty}\dfrac{1}{n!}(\varPhi(0,0,{}^t\!\beta^{-1}B,0))^n}
    \\
    &=\exp(\varPhi(0,0,{}^t\!\beta^{-1} B,0)).
    \end{align*}
    Hence, for $ \alpha \in ((E_{7,\sR})^C)_{\underset{\dot{}}{1}} $, it follows from $ \alpha=\delta\beta \in \exp(\varPhi(0,0,(\mathfrak{J}_{\sR})^C,0)) (E_{6,\sR})^C=((E_{7,\sR})^C)_{\underset{\dot{}}{1}} $ that
    \begin{align*}
    \alpha\exp(\varPhi(0,0,B,0))\alpha^{-1}&=\delta\beta\exp(\varPhi(0,0,B,0))\beta^{-1}\delta^{-1}
    \\
    &=\delta\exp(\varPhi(0,0,{}^t\!\beta^{-1} B,0))\delta^{-1} \in  \exp(\varPhi(0,0,(\mathfrak{J}_{\sR})^C,0)).
    \end{align*}
    Thus the group $ \exp(\varPhi(0,0,(\mathfrak{J}_{\sR})^C,0)) $ is a normal subgroup of $ ((E_{7,\sR})^C)_{\underset{\dot{}}{1}} $.
In addition, we have a split exact sequence
    \begin{align*}
    1 \longrightarrow \exp(\varPhi(0,0,(\mathfrak{J}_{\sR})^C,0)) \overset{j}{\longrightarrow} ((E_{7,\sR})^C)_{\underset{\dot{}}{1}} \overset{\overset{p}{\scalebox{1.0}{$\longrightarrow$}}}{\underset{s}{\longleftarrow}} (E_{6,\sR})^C \longrightarrow 1.
    \end{align*}
Indeed, first we define a mapping $ j $ by $ j(\delta)=\delta $. Then it is clear that $ j $ is a injective homomorphism. Subsequently, we define a mapping $ p $ by $ p(\alpha):=p(\delta\beta)=\beta, \delta \in \exp(\varPhi(0,0,(\mathfrak{J}_{\sR})^C,0)), \beta \in (E_{6,\sR})^C $. Then, since  it follows that
    \begin{align*}
    p(\alpha_1\alpha_2)&=p((\delta_1\beta_1)(\delta_2\beta_2))=p(\delta_1(\beta_1\delta_2{\beta_1}^{-1})\beta_1\beta_2)\,\,(\beta_1\delta_2{\beta_1}^{-1}=:{\delta_2}' \in \exp(\varPhi(0,0,(\mathfrak{J}_{\sR})^C,0)))
    \\
    &=p((\delta_1{\delta_2}')(\beta_1\beta_2))=\beta_1\beta_2
    \\
    &=p(\alpha_1)p(\alpha_2),
    \end{align*}
$ p $ is a homomorphism. Let $ \beta \in (E_{6,\sR})^C $, then there exists $ \alpha \in ((E_{7,\sR})^C)_{\underset{\dot{}}{1}} $ such that $ \alpha=\delta\beta $ for $ \delta \in \exp(\varPhi(0,0,(\mathfrak{J}_{\sR})^C,0)) $. This implies that $ p $ is surjective. Finally, we define a mapping $ s $ by $ s(\beta):=s(\delta^{-1}\alpha)=\alpha $. Then, as in the mapping $ p $, we easily see that $ s $ is a homomorphism and $ ps=1 $. With above, the short sequence is a split exact sequence.

Thus the group $ ((E_{7,\sR})^C)_{\underset{\dot{}}{1}} $ is the semi-direct product of groups $ \exp(\varPhi(0,0,(\mathfrak{J}_{\sR})^C,0)) $ and $ (E_{6,\sR})^C $:
    \begin{align*}
    ((E_{7,\sR})^C)_{\underset{\dot{}}{1}}= \exp(\varPhi(0,0,(\mathfrak{J}_{\sR})^C,0)) \rtimes (E_{6,\sR})^C.
    \end{align*}

Finally, the connectedness of the group $  ((E_{7,\sR})^C)_{\underset{\dot{}}{1}} $ follows from the connectedness of the groups $ \exp(\varPhi(0,0,(\mathfrak{J}_{\sR})^C,0)) $ and $ (E_{6,\sR})^C $ are connected (Proposition \ref{proposition 4.4}).
\end{proof}

Now, we will prove the connectedness of the group  $ (E_{7,\sR})^C $.

\begin{theorem}\label{theorem 5.8}
The homogeneous space $ (E_{7,\sR})^C/((E_{7,\sR})^C)_{\underset{\dot{}}{1}} $ is homeomorphic to the space $ (\mathfrak{M}_{\sR})^C ${\rm :} $ (E_{7,\sR})^C \allowbreak /((E_{7,\sR})^C)_{\underset{\dot{}}{1}} \simeq (\mathfrak{M}_{\sR})^C $.

  In particular, the group $ (E_{7,\sR})^C $ is connected.
\end{theorem}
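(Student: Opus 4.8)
The plan is to realize $(\mathfrak{M}_{\sR})^C$ as a single orbit of $(E_{7,\sR})^C$ through the point $\underset{\dot{}}{1}=(0,0,0,1)$ and then to apply the homogeneous-space argument already used in Proposition \ref{proposition 4.4}. First one checks that $\underset{\dot{}}{1}\in (\mathfrak{M}_{\sR})^C$ (the relations $X\vee Y=0,\ X\times X=\eta Y,\ Y\times Y=\xi X,\ (X,Y)=3\xi\eta$ hold trivially there), and that $(E_{7,\sR})^C$ does act on $(\mathfrak{M}_{\sR})^C$ by Proposition \ref{proposition 5.4}. The isotropy subgroup of this action at $\underset{\dot{}}{1}$ is by definition exactly $((E_{7,\sR})^C)_{\underset{\dot{}}{1}}$, so once transitivity is established the orbit map induces the desired homeomorphism $(E_{7,\sR})^C/((E_{7,\sR})^C)_{\underset{\dot{}}{1}}\simeq (\mathfrak{M}_{\sR})^C$.

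The core of the argument is transitivity. The key computation is $\exp(\varPhi(0,A,0,0))\underset{\dot{}}{1}=(A,\,A\times A,\,\det A,\,1)$ for every $A\in (\mathfrak{J}_{\sR})^C$, which follows because $\varPhi(0,A,0,0)$ acts nilpotently on the subspace generated by $\underset{\dot{}}{1}$. Now take any $P=(X,Y,\xi,\eta)\in (\mathfrak{M}_{\sR})^C$. On the stratum $\eta\neq 0$ the defining relations force $Y=\eta^{-1}(X\times X)$ and $\xi=\eta^{-2}\det X$, so $P$ is completely determined by the pair $(X,\eta)$. Acting by the one-parameter group $\exp(\varPhi(0,0,0,\nu))$, which multiplies $(X,Y,\xi,\eta)$ componentwise by $(e^{-\nu/3},e^{\nu/3},e^{\nu},e^{-\nu})$, I normalize $\eta=1$; since the transformation preserves $(\mathfrak{M}_{\sR})^C$, the resulting point is $(X',X'\times X',\det X',1)=\exp(\varPhi(0,X',0,0))\underset{\dot{}}{1}$, so $P$ lies in the orbit of $\underset{\dot{}}{1}$.

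It remains to move an arbitrary $P$ into the stratum $\eta\neq 0$. This is a short case analysis: if $\eta=0$ but $\xi\neq 0$ one applies $\lambda$, which sends $(X,Y,\xi,\eta)$ to $(Y,-X,\eta,-\xi)$ and makes the last entry nonzero; if $\eta=\xi=0$ one uses $\exp(\varPhi(0,0,B,0))$ and $\exp(\varPhi(0,A,0,0))$, whose infinitesimal actions change the $\eta$- and $\xi$-entries by $(B,X)$ and $(A,Y)$ and the $X$- and $Y$-entries by $2B\times Y$ and $2A\times X$, to produce a nonzero scalar component; each such entry is a nonconstant polynomial in the parameter whenever its argument is nonzero, so a suitable choice over $C$ yields $\eta\neq 0$. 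I would arrange the reduction so that every transformation used---the three exponentials and $\lambda$---lies in the identity component $((E_{7,\sR})^C)_0$ (the exponentials manifestly do, and $\lambda$ is joined to the identity inside $(E_{7,\sR})^C$), so that in fact $((E_{7,\sR})^C)_0$ already acts transitively on $(\mathfrak{M}_{\sR})^C$.

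For the connectedness conclusion, transitivity of the identity component gives $(\mathfrak{M}_{\sR})^C=((E_{7,\sR})^C)_0\,\underset{\dot{}}{1}$, the continuous image of a connected group, hence connected. Since the isotropy group $((E_{7,\sR})^C)_{\underset{\dot{}}{1}}$ is connected by Proposition \ref{proposition 5.7}, it is contained in $((E_{7,\sR})^C)_0$; together with transitivity of $((E_{7,\sR})^C)_0$ this forces $(E_{7,\sR})^C=((E_{7,\sR})^C)_0$, so $(E_{7,\sR})^C$ is connected. The main obstacle is the transitivity step, and inside it the degenerate strata $\eta=0$: carrying the relations that define $(\mathfrak{M}_{\sR})^C$ through the successive one-parameter transformations, and ensuring that every move is realized by an element of the identity component---in particular verifying that $\lambda\in((E_{7,\sR})^C)_0$---is where the genuine care is required.
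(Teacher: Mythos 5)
Your proposal is correct and is essentially the paper's argument: identify $(\mathfrak{M}_{\sR})^C$ with the orbit of $\underset{\dot{}}{1}$, prove transitivity by cases on $(\eta,\xi,Y,X)$ using exponentials and $\lambda$, and deduce connectedness of $(E_{7,\sR})^C$ from connectedness of the isotropy group and of the orbit. Two remarks are worth making. First, your treatment of the stratum $\eta\neq 0$ is cleaner than the paper's: normalizing $\eta=1$ by $\exp(\varPhi(0,0,0,\nu))$ and then using $\exp(\varPhi(0,X',0,0))\underset{\dot{}}{1}=(X',\,X'\times X',\,\det X',\,1)$ avoids the paper's closed-form computation of $\exp(\varPhi(0,A,0,\nu))^{n}\underset{\dot{}}{1}$ and its separate sub-case $\eta=1$; moreover your relation $\xi=\eta^{-2}\det X$ is the correct one (the paper's displayed third entry $\tfrac{1}{3\eta}(X,X\times X)$ is a typo for $\tfrac{1}{3\eta^{2}}(X,X\times X)$, as follows from $(X,Y)=3\xi\eta$). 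Second, the point you flag but do not prove, $\lambda\in((E_{7,\sR})^C)_0$, is a genuine obligation, and the paper is silent about it as well: it uses $\lambda$ in its Cases (ii) and (iv) and then asserts $(\mathfrak{M}_{\sR})^C=((E_{7,\sR})^C)_0\,\underset{\dot{}}{1}$, which requires every transformation employed to lie in the identity component. You can discharge it by verifying $\lambda=\exp\bigl(\tfrac{\pi}{2}\varPhi(0,E,-E,0)\bigr)$: the operator $\varPhi(0,E,-E,0)$ preserves the splitting of $(\mathfrak{P}_{\sR})^C$ into the span of $(E,0,0,0),(0,E,0,0),\dot{1},\underset{\dot{}}{1}$ and the planes spanned by $(Z,0,0,0),(0,Z,0,0)$ with $\tr(Z)=0$; on the latter it squares to $-1$, and a short check on both pieces shows the exponential at $t=\pi/2$ is exactly $\lambda$. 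Alternatively you can bypass $\lambda$ altogether: for $P=(X,Y,\xi,0)$ with $\xi\neq 0$, the $\eta$-component of $\exp(t\varPhi(0,0,E,0))P$ equals $t(E,X)+t^{2}(E,E\times Y)+t^{3}\xi$, a nonconstant polynomial in $t$, so a suitable $t\in C$ already moves $P$ into the stratum $\eta\neq0$ using exponentials only, and a similar choice of $B$ in $\exp(t\varPhi(0,0,B,0))$ handles $\xi=\eta=0$ as well.
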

\begin{proof}
Obviously, the group $ (E_{7,\sR})^C $ acts on the space $ (\mathfrak{M}_{\sR})^C $, so we will prove that this action is transitive. In order to prove the transitivity, it is sufficient to show that any element $ P \in (\mathfrak{M}_{\sR})^C $ can be transformed to $ \underset{\dot{}}{1} \in (\mathfrak{M}_{\sR})^C $ by some $ \alpha \in (E_{7,\sR})^C $.

    Case (i) where $ P=(X,Y,\xi,\eta), \eta\not=0 $.

From the definition of $ (\mathfrak{M}_{\sR})^C $, $ P $ is of the form
    \begin{align*}
    P=(X, \dfrac{1}{\eta} (X \times X), \dfrac{1}{3\eta}(X,X \times X),\eta).
    \end{align*}
Well, for $ \varPhi(0,A,0,\nu) \in (\mathfrak{e}_{7,\sR})^C, \nu\not=0 $, we will compute $ \varPhi(0,A,0,\nu)^n\underset{\dot{}}{1} $:
    \begin{align*}
    \varPhi(0,A,0,\nu)^n\underset{\dot{}}{1}
    =\begin{pmatrix}
    \left(\dfrac{3}{\nu} \right)\left(\left( -\dfrac{1}{2}\right) (-\nu)^n+\dfrac{1}{2} \left( -\dfrac{\nu}{3}\right)^n  \right)A
    \\[2mm]
     \left(\dfrac{3}{\nu} \right)^2\left( \left( -\dfrac{1}{2}\right)  \left( -\dfrac{\nu}{3}\right)^n+\dfrac{1}{4}(-\nu)^n+\dfrac{1}{4}\left( \dfrac{\nu}{3}\right)^n  \right)(A \times A)
     \\[2mm]
     \left(\dfrac{3}{\nu} \right)^3\left(\dfrac{1}{24}\nu^n-\dfrac{1}{24}(-\nu)^n+\dfrac{1}{16}\left(-\dfrac{\nu}{3} \right)^n-\dfrac{1}{16}\left(\dfrac{\nu}{3} \right)^n  \right)(A,A \times A)
     \\[2mm]
     (-\nu)^n
    \end{pmatrix}.
    \end{align*}
    Then, using the formula above,  we have the following result
    \begin{align*}
    (\exp(\varPhi(0,A,0,\nu))\underset{\dot{}}{1}&=\left(\displaystyle{\sum\limits_{n=0}^{\infty}\dfrac{1}{n!}\varPhi(0,A,0,\nu)^n} \right)\underset{\dot{}}{1}
    \\[2mm]
    &=\begin{pmatrix}
    \left(\dfrac{3}{\nu} \right)\left( -\dfrac{1}{2}e^{-\nu}+\dfrac{1}{2}e^{-\frac{\nu}{3}} \right)A
    \\[2mm]
     \left(\dfrac{3}{\nu} \right)^2\left( \left( -\dfrac{1}{2}\right) e^{-\frac{\nu}{3}}+\dfrac{1}{4}e^{-\nu}+\dfrac{1}{4}e^{\frac{\nu}{3}}  \right)(A \times A)
     \\[2mm]
     \left(\dfrac{3}{\nu} \right)^3\left(\dfrac{1}{24}e^{\nu}-\dfrac{1}{24}e^{-\nu}+\dfrac{1}{16}e^{-\frac{\nu}{3}}-\dfrac{1}{16}e^{\frac{\nu}{3}}  \right)(A,A \times A)
     \\[2mm]
     e^{-\nu}
    \end{pmatrix}.
    \end{align*}
Hence, for a given $ P=(X, (1/\eta) (X \times X), (1/3\eta)(X,X \times X),\eta) \in (\mathfrak{M}_{\sR})^C $, we choose $ \nu \in C $ such that $ \eta=e^{-\nu} $ and construct $ A:=((3/\nu)((1/2)e^{-\nu})+(1/2)e^{-\nu/3})^{-1}X \in (\mathfrak{J}_{\sR})^C $ using $ X $ and $ \nu $.
Then we have
    \begin{align*}
    (\exp(\varPhi(0,A,0,\nu))\underset{\dot{}}{1}=(X, \dfrac{1}{\eta} (X \times X), \dfrac{1}{3\eta}(X,X \times X),\eta).
    \end{align*}
    Thus there exists $ (\exp(\varPhi(0,A,0,\nu)) \in ((E_{7,\sR})^C)_0 $ such that $ (\exp(\varPhi(0,A,0,\nu))^{-1}(X,(1/\eta) (X \times X), (1/3\eta)\allowbreak(X, X \times X),\eta)=\underset{\dot{}}{1} $.

    \noindent For $ \varPhi(0,A,0,\nu) \in (\mathfrak{e}_{7,\sR})^C $, if $ \nu=0 $, we easily see
    \begin{align*}
    (\exp(\varPhi(0,A,0,0))\underset{\dot{}}{1}=(A,A \times A, \dfrac{1}{3}(A,A \times A),1).
    \end{align*}
    Hence, for a given $ P=(X, (1/\eta) (X \times X), (1/3\eta)(X,X \times X),\eta) \in (\mathfrak{M}_{\sR})^C $, we choose $ X $ as $ A $, then we have
    \begin{align*}
    (\exp(\varPhi(0,X,0,0))\underset{\dot{}}{1}=(X, \dfrac{1}{\eta} (X \times X), \dfrac{1}{3\eta}(X,X \times X),\eta).
    \end{align*}
Thus, as in the case above, there exists $ (\exp(\varPhi(0,A,0,0)) \in ((E_{7,\sR})^C)_0 $ such that $ (\exp(\varPhi(0,X,0,0))^{-1}(X,\allowbreak  (1/\eta) (X \times X), (1/3\eta)(X,X \times X),\eta)=\underset{\dot{}}{1} $.
    \vspace{1mm}

    Case (ii) where $ P=(X,Y,\xi,0), \xi\not=0 $.

Let the element $ \lambda \in (E_{7,\sR})^C $ be the $ C $-linear transformation of $ (\mathfrak{P}_{\sR})^C $. Then, since we have $ \lambda(X,Y,\xi,0)\allowbreak =(Y,-X,0,\xi), \xi\not=0 $, this case is reduced to (i).
    \vspace{1mm}

     Case (iii) where $ P=(X,Y,0,0), Y\not=0 $.

Note that $ Y $ satisfies the condition $ Y \times Y=0 $.
For a given $ P=(X,Y,0,0), Y\not=0 $, we choose $ \tau Y $, then we have
     \begin{align*}
     \exp(\varPhi(0,\tau Y,0,0))(X,Y,0,0)
     &=(X,Y+2\tau Y \times X, (\tau Y,Y)+(\tau Y,\tau Y \times X),0)
     \\
     &=(X,Y+2\tau Y \times X, (\tau Y,Y)+(X,\tau Y \times \tau Y ),0)
     \\
     &=(X,Y+2\tau Y \times X, (\tau Y,Y)+(X,\tau (Y \times Y) ),0)
     \\
     &=(X,Y+2\tau Y \times X, (\tau Y,Y),0).
     \end{align*}
Here, we confirm that $ (\tau Y,Y)=0 $ if and only if $ Y=0 $, so that we see  $ (\tau Y,Y) \not=0 $ from $ Y \not=0 $. Hence this case is reduced to (ii).
     \vspace{1mm}

     Case (iv) where $ P=(X,0,0,0), X\not=0 $.

As in the case (ii), let the element $ \lambda \in (E_{7,\sR})^C $ again. Then, since we have $ \lambda(X,0,0,0)=(0,-X,0,0), X \allowbreak \not=0 $, this case is reduced to (iii).

With above, the proof of transitive is completed.

Thus it follows from the transitivity that $ (\mathfrak{M}_{\sR})^C=((E_{7,\sR})^C)_0 \text{\d{$ 1 $}} $, so that $ (\mathfrak{M}_{\sR})^C $ is connected.
The group $ (E_{7,\sR})^C $ acts transitively on $ (\mathfrak{M}_{\sR})^C $ and the isotropy subgroup of $ (E_{7,\sR})^C $ at $ \text{\d{$ 1 $}} $ is $ ((E_{7,\sR})^C)_{\underset{\dot{}}{1}} $.

Therefore we have the homeomorphism
\begin{align*}
     (E_{7,\sR})^C/((E_{7,\sR})^C)_{\underset{\dot{}}{1}} \simeq (\mathfrak{M}_{\sR})^C.
\end{align*}

Finally, the connectedness of $ (E_{7,\sR})^C $ follows from the connectedness of $ ((E_{7,\sR})^C)_{\underset{\dot{}}{1}} $ and $ (\mathfrak{M}_{\sR})^C $ (Proposition \ref{proposition 5.7}).
\end{proof}
\vspace{2mm}

We will determine the center of the group $ (E_{7,\sR})^C $. Before that, for $ \theta \in C^*:=\{ \theta \in C\,|\, \theta \not=0\} $, we define a $ C $-linear transformation $ \phi(\theta) $ of $ (\mathfrak{M}_{\sR})^C $ by
\begin{align*}
\phi(\theta)(X,Y,\xi,\eta)=(\theta^{-1}X, \theta Y, \theta^3 \xi,\theta^{-3} \eta).
\end{align*}
Then we have $ \phi(\theta) \in (E_{7,\sR})^C $.

\begin{theorem}\label{theorem 5.9}
   The center $ z((E_{7,\sR})^C) $ of the group $ (E_{7,\sR})^C $ is the cyclic group of order two:
   \begin{align*}
   z((E_{7,\sR})^C) = \{ 1, -1 \} \cong \Z_2.
   \end{align*}
\end{theorem}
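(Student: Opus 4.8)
The plan is to show that every element $\alpha$ of the center $z((E_{7,\sR})^C)$ is forced to be a scalar transformation $c\cdot 1$ of $(\mathfrak{P}_{\sR})^C$ with $c^2=1$, and conversely that $1$ and $-1$ are genuinely central. Since $\alpha$ commutes with every element of the group, I would first exploit the one-parameter family $\phi(\theta)$ introduced just above the theorem. For a generic $\theta\in C^*$, say any $\theta$ with $\theta^{12}\neq1$, the four scalars $\theta^{-1},\theta,\theta^{3},\theta^{-3}$ are pairwise distinct, so the four summands of $(\mathfrak{P}_{\sR})^C=(\mathfrak{J}_{\sR})^C\oplus(\mathfrak{J}_{\sR})^C\oplus C\oplus C$ are precisely the eigenspaces of $\phi(\theta)$. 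Commuting $\alpha$ with $\phi(\theta)$ then forces $\alpha$ to preserve each summand, that is
\[
\alpha(X,Y,\xi,\eta)=(\alpha_1 X,\alpha_2 Y,a\xi,b\eta),\quad \alpha_1,\alpha_2\in\Iso_C((\mathfrak{J}_{\sR})^C),\ a,b\in C.
\]

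Next I would use the subgroup $(E_{6,\sR})^C$ of $(E_{7,\sR})^C$, which acts by $\tilde\beta(X,Y,\xi,\eta)=(\beta X,{}^t\beta^{-1}Y,\xi,\eta)$ (Proposition \ref{proposition 5.5}), together with the isomorphism $(E_{6,\sR})^C\cong SL(3,C)$, $\beta=f_{6,C}(A)$, $X\mapsto AX{}^t\!A$ (Theorem \ref{theorem 4.5}). Commuting $\alpha$ with $\tilde\beta$ for all $\beta$ yields $\alpha_1\beta=\beta\alpha_1$ and $\alpha_2({}^t\beta^{-1})=({}^t\beta^{-1})\alpha_2$. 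Because the action $X\mapsto AX{}^t\!A$ of $SL(3,C)$ on $(\mathfrak{J}_{\sR})^C$ is the irreducible symmetric-square representation, Schur's lemma gives $\alpha_1=c_1\cdot1$ and $\alpha_2=c_2\cdot1$ with $c_1,c_2\in C$. To merge all four scalars I would then impose commutativity with the elements $\exp\varPhi(0,0,B,0)\in((E_{7,\sR})^C)_{\underset{\dot{}}{1}}$ of Proposition \ref{proposition 5.7}. Since $\varPhi(0,0,B,0)(X,Y,\xi,\eta)=(2B\times Y,\xi B,0,(B,X))$, comparing $\alpha\,\varPhi(0,0,B,0)$ with $\varPhi(0,0,B,0)\,\alpha$ on the $X$-, $Y$- and $\eta$-components gives $c_1=c_2=a=b=:c$, so that $\alpha=c\cdot1$ is scalar.

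Finally, since $\alpha=c\cdot1$ belongs to $(E_{7,\sR})^C$ it must satisfy $\alpha(P\times Q)\alpha^{-1}=\alpha P\times\alpha Q$. The left-hand side equals $P\times Q$ (a scalar conjugates trivially), while the Freudenthal cross operation is bilinear in its two arguments, so $\alpha P\times\alpha Q=(cP)\times(cQ)=c^2(P\times Q)$. As the cross operation is not identically zero, this forces $c^2=1$, hence $c=\pm1$. Conversely $1$ is trivially central and $-1=\lambda^2\in(E_{7,\sR})^C$ (using $\lambda\in(E_{7,\sR})^C$, $\lambda^2=-1$) is a scalar, hence central; thus $z((E_{7,\sR})^C)=\{1,-1\}\cong\Z_2$. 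I expect the only genuinely delicate point to be the irreducibility of $(\mathfrak{J}_{\sR})^C$ under $SL(3,C)$, which justifies the use of Schur's lemma; should one wish to avoid representation theory, this step can be replaced by a direct computation — diagonalizing $\alpha_1$ against $f_{6,C}(\diag(\mu_1,\mu_2,\mu_3))$ for generic $\mu_i$ and then using off-diagonal $\beta$ and the relations $(\ast)$ linking the $E_i$ and $F_i(x)$ to equate all the resulting eigenvalues. That bookkeeping is the sole laborious portion, the remaining steps being short.
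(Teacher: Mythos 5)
Your proof is correct, but it takes a genuinely different route from the paper's. The paper never establishes that a central element is a scalar operator on $(\mathfrak{P}_{\sR})^C$; instead it tracks only the two vectors $\alpha\dot{1}$ and $\alpha\underset{\dot{}}{1}$: commutation with the central elements $\omega 1$ of $(E_{6,\sR})^C \cong SL(3,C)$ kills the Jordan components of $\alpha\dot{1}$, membership of $\alpha\dot{1}$ in $(\mathfrak{M}_{\sR})^C$ together with commutation with $\phi(\theta)$ and $\lambda$ forces $\alpha\dot{1}=\xi\dot{1}$, $\alpha\underset{\dot{}}{1}=\xi^{-1}\underset{\dot{}}{1}$ with $\xi=\pm 1$, and then Proposition 5.5 places $\alpha$ (or $-\alpha$) inside $z((E_{6,\sR})^C)=\{1,\omega 1,\omega^2 1\}$, where a final commutation with $\lambda$ eliminates $\omega,\omega^2$. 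You instead prove the stronger intermediate statement that any central element acts as a single scalar $c\cdot 1$ on all of $(\mathfrak{P}_{\sR})^C$ — via the eigenspace decomposition under a generic $\phi(\theta)$, Schur's lemma for the irreducible symmetric-square action of $SL(3,C)$ on $(\mathfrak{J}_{\sR})^C$, and the merging of the four scalars through $\varPhi(0,0,B,0)$ — and then extract $c^2=1$ from bilinearity of the Freudenthal cross operation; your converse direction ($-1=\lambda^2$ is a central element of the group) matches the paper's. What your approach buys is conceptual transparency and no case analysis ($\xi\eta=0$, $\xi=0$ versus $\eta=0$, the shape of elements of $(\mathfrak{M}_{\sR})^C$); what the paper's buys is complete elementarity — no appeal to irreducibility or Schur — in keeping with its stated aim of elementary proofs. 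Two points you should make explicit, both routine: (i) you pass from commutation with the group elements $\exp(t\varPhi(0,0,B,0))$, $t\in C$, to commutation with the generator $\varPhi(0,0,B,0)$ itself, which needs the one-parameter family and differentiation at $t=0$; (ii) in the Schur step for $\alpha_2$, the operators ${}^t\beta^{-1}$ range over all of $(E_{6,\sR})^C$ because $\beta\mapsto{}^t\beta^{-1}$ is an automorphism, so $\alpha_2$ commutes with the same irreducible action and is likewise scalar.
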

\begin{proof}
 Let $\alpha \in z((E_{7,\sR})^C)$. From the commutativity with $\beta \in (E_{6,\sR})^C \subset (E_{7,\sR})^C$, we have $\beta\alpha\dot{1} = \alpha\beta\dot{1} = \alpha\dot{1}$. Set $\alpha\dot{1} = (X, Y, \xi, \eta) \in (\mathfrak{M}_{\sR})^C$, then from $(\beta X, {}^t\!\beta^{-1} Y, \xi, \eta) = (X, Y, \xi, \eta)$, we have
 \begin{align*}
 \beta X = X, \;\; {}^t\!\beta^{-1} Y = Y  \quad \text{for all} \quad \beta \in (E_{6,\sR})^C.
 \end{align*}
 Hence we have $X = Y = 0$. Indeed, note that $ z((E_{6,\sR})^C)=z(SL(3,C))(=\{1, \omega 1, \omega^2 1 \}) $ follows from Theorem \ref{theorem 4.5}. By letting $ \omega 1 \in z((E_{6,\sR})^C) , \omega \in C, \omega^3=1, \omega \not=1$ as $ \beta $, we easily obtain $ X=Y=0 $.
 Thus $\alpha\dot{1}$ is of the form
 \begin{align*}
 \alpha\dot{1} = (0, 0, \xi, \eta).
 \end{align*}
 Since $\alpha\dot{1} \in (\mathfrak{M}_{\sR})^C$, we have $\xi\eta = 0$, that is, $ \xi=0 $ or $ \eta=0 $. Here, suppose $\xi = 0$, then we have $\alpha\dot{1} = (0, 0, 0, \eta), \eta \neq 0$. Also from the commutativity with $\phi(\theta) \in (E_{7,\sR})^C$, we have
   \begin{align*}
   (0, 0, 0, \theta^{-3}\eta) &= \phi(\theta)(0, 0, 0, \eta) = \phi(\theta)\alpha\dot{1}=\alpha\phi(\theta)\dot{1}
   \\
   &= \alpha(0, 0, \theta^3, 0)=\theta^3\alpha\dot{1}
   \\
   &= (0, 0, 0, \theta^3\eta),
   \end{align*}
 that is, $\theta^{-3}\eta = \theta^3\eta$ for all $\theta \in C^*$. This is a contradiction. Hence, since we see $\xi \neq 0$, we have $\eta = 0$, that is, $\alpha\dot{1} = (0, 0, \xi, 0)$.

 \noindent Subsequently, set $ \alpha\text{\d{$ 1 $}}=(Z,W,\rho,\zeta) $, then by an argument similar to that in the case where $ \alpha \dot{1} $ above, we have $\alpha\underset{\dot{}}{1}= (0, 0, 0, \zeta)$. In addition, from $\{ \alpha\dot{1}, \alpha\underset{\dot{}}{1} \} = 1$, we have $\xi\zeta = 1$. Hence we have
 \begin{align*}
 \alpha\dot{1} = (0, 0, \xi, 0), \quad \alpha\underset{\dot{}}{1} = (0, 0, 0, \xi^{-1}).
 \end{align*}
 Moreover, from the commutativity with $\lambda \in (E_{7,\sR})^C$, we have
 \begin{align*}
 (0, 0, 0, -\xi)
 & = \lambda(0, 0, \xi, 0) = \lambda\alpha\dot{1} = \alpha\lambda\dot{1}
 \\
 & = \alpha(0, 0, 0, -1)
 \\
 &= (0, 0, 0, - \xi^{-1}),
 \end{align*}
 that is, $ \xi = \xi^{-1} $. Hence we have $ \xi=1 $ or $ \xi=-1 $.

 In the case where $\xi =1$. From $ \alpha\dot{1}=\dot{1} $ and $ \alpha\underset{\dot{}}{1}=\underset{\dot{}}{1} $, we have $\alpha \in (E_{6,\sR})^C$ (Proposition \ref{proposition 5.5}), so that we see $\alpha \in z((E_{6,\sR})^C) = \{1, \omega 1, \omega^2 1 \}  $ as a necessary condition. Then $ \alpha $ is expressed by
 \begin{align*}
 \alpha = \begin{pmatrix}
 \omega'1 & 0 & 0 & 0 \cr
 0 & {\omega'}^{-1}1 & 0 & 0 \cr
 0 & 0 & 1 & 0 \cr
 0 & 0 & 0 & 1
 \end{pmatrix}, \,\, \omega': = 1, \omega \;\text{or} \; \omega^2
 \end{align*}
 as the element of $ (E_{7,\sR})^C $.
 Again from the commutativity with $\lambda$, we have
 \begin{align*}
 (0, \omega'X, 0, 0) &= - \lambda(\omega'X, 0, 0, 0) = - \lambda\alpha(X, 0, 0, 0)
 \\
 &= - \alpha\lambda(X, 0, 0, 0) = \alpha(0, X, 0, 0)
 \\
 &= (0, {\omega'}^{-1}X, 0, 0),
 \end{align*}
 for all $X \in (\mathfrak{J}_{\sR})^C$, that is, $\omega' = {\omega'}^{-1}$. This implies  $\omega' = 1$. Hence we have $\alpha = 1$.

 In the case where $\xi = - 1$, we have $- \alpha \in z((E_{6,\sR})^C)$, so that we have $- \alpha = 1$ by the similar argument as above.

 With above, we obtain $ z((E_{7,\sR})^C) \subset \{ 1, -1 \} $, and vice versa.

 Therefore we have the required result
 \begin{align*}
 z((E_{7,\sR})^C) = \{ 1, -1 \} \cong \Z_2
 \end{align*}
\end{proof}

Now, we will determine the structure of the group $ (E_{7,\sR})^C $.

\begin{theorem}\label{theorem 5.10}
  The group $ (E_{7,\sR})^C $ is isomorphic to the group $ Sp(3,\H^C) ${\rm :} $ (E_{7,\sR})^C \cong Sp(3,\H^C) $.
\end{theorem}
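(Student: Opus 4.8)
The plan is to integrate the Lie algebra isomorphism of Theorem \ref{theorem 5.3} to the group level and then pin down the kernel by a center argument. Since $ Sp(3,\H^C) $ is simply connected (it is the complexification of the compact simply connected group $ Sp(3) $), the isomorphism $ f_{7,C_*} : \mathfrak{sp}(3,\H^C) \to (\mathfrak{e}_{7,\sR})^C $ of Theorem \ref{theorem 5.3} lifts to a homomorphism of Lie groups $ f_{7,C} : Sp(3,\H^C) \to (E_{7,\sR})^C $ whose differential is $ f_{7,C_*} $. In particular $ (f_{7,C})_* = f_{7,C_*} $ is a Lie algebra isomorphism, so $ \Ker\,f_{7,C} $ is a discrete subgroup of $ Sp(3,\H^C) $. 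I emphasize that, as noted in the introduction, this $ f_{7,C} $ is produced abstractly by lifting and not by an explicit matrix formula.

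Next I would establish surjectivity. The group $ (E_{7,\sR})^C $ is connected (Theorem \ref{theorem 5.8}), $ \Ker\,f_{7,C} $ is discrete, and
\begin{align*}
\dim_C(\mathfrak{sp}(3,\H^C)) = 21 = \dim_C((\mathfrak{e}_{7,\sR})^C),
\end{align*}
so Lemma \ref{lemma 2.1} shows that $ f_{7,C} $ is surjective. Being discrete and normal in the connected group $ Sp(3,\H^C) $, the subgroup $ \Ker\,f_{7,C} $ is central, hence contained in $ z(Sp(3,\H^C)) $.

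It then remains to determine $ \Ker\,f_{7,C} $, and this is where the main obstacle lies: since $ f_{7,C} $ is not given explicitly, one cannot simply read off the image of a central element such as $ -E $. Instead I would first compute $ z(Sp(3,\H^C)) $ directly: a central element must be a scalar matrix $ cE $ with $ c $ in the center $ C $ of $ \H^C $, and the defining relation $ (cE)(cE)^{*} = E $ forces $ c^2 = 1 $, so $ z(Sp(3,\H^C)) = \{E,-E\} \cong \Z_2 $. Consequently $ \Ker\,f_{7,C} $ is either $ \{E\} $ or $ \{E,-E\} $. If it were $ \{E,-E\} $, then $ (E_{7,\sR})^C \cong Sp(3,\H^C)/\{E,-E\} $ would be the adjoint group of the simple Lie group $ Sp(3,\H^C) $ and would therefore have trivial center; this contradicts Theorem \ref{theorem 5.9}, which asserts $ z((E_{7,\sR})^C) \cong \Z_2 $. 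Hence $ \Ker\,f_{7,C} = \{E\} $, so $ f_{7,C} $ is an isomorphism and we obtain the required $ (E_{7,\sR})^C \cong Sp(3,\H^C) $.
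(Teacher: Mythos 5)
Your proof is correct and follows essentially the same route as the paper: both rest on the connectedness of $(E_{7,\sR})^C$ (Theorem \ref{theorem 5.8}), the Lie algebra isomorphism $(\mathfrak{e}_{7,\sR})^C \cong \mathfrak{sp}(3,\H^C)$ (Theorem \ref{theorem 5.3}), and the center computation $z((E_{7,\sR})^C) \cong \Z_2$ (Theorem \ref{theorem 5.9}) to exclude the quotient $Sp(3,\H^C)/\Z_2$. The only difference is presentational: you spell out the covering-theoretic lift of $f_{7,C_*}$, the surjectivity via Lemma \ref{lemma 2.1}, and the fact that the discrete normal kernel is central, whereas the paper compresses all of this into the dichotomy that a connected group with this Lie algebra must be $Sp(3,\H^C)$ or $Sp(3,\H^C)/\Z_2$.
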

\begin{proof}
Since the group $ (E_{7,\sR})^C $ is connected (Theorem \ref{theorem 5.8}) and its associated Lie algebra $ (\mathfrak{e}_{7,\sR})^C $ is isomorphic to the Lie algebra $ \mathfrak{sp}(3,\H^C) $ (Theorem \ref{theorem 5.3}), the group $ (E_{7,\sR})^C $ is isomorphic to either of the following groups
  \begin{align*}
  Sp(3,\H^C), \quad Sp(3,\H^C)/\Z_2.
  \end{align*}
Therefore, from Theorem \ref{theorem 5.9}, $ (E_{7,\sR})^C $ have to be isomorphic to $ Sp(3,\H^C) $:
  \begin{align*}
  (E_{7,\sR})^C \cong Sp(3,\H^C).
  \end{align*}
\end{proof}



Now, in order to determine the structure of the group $ E_{7,\sR} $, we make some preparations below.

First, we consider the following subgroup $ ((E_{7,\sR})^C)^{\tau\lambda} $ of $ (E_{7,\sR})^C $:
\begin{align*}
((E_{7,\sR})^C)^{\tau\lambda}:=\left\lbrace \alpha \in (E_{7,\sR})^C \relmiddle{|} \tau\lambda\alpha\lambda^{-1}\tau=\alpha \right\rbrace.
\end{align*}

Then we have the following proposition.

\begin{proposition}\label{proposition 5.11}
    The group $ ((E_{7,\sR})^C)^{\tau\lambda} $ coincides with the group $ E_{7,\sR} ${\rm :} $ ((E_{7,\sR})^C)^{\tau\lambda}=E_{7,\sR}  $.
\end{proposition}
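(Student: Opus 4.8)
The plan is to follow the template of Propositions \ref{proposition 3.3} and \ref{proposition 4.7}, the decisive new ingredient being an identity expressing the Hermite inner product $\langle P,Q\rangle$ through the skew-symmetric inner product $\{P,Q\}$ by means of $\tau\lambda$. First I would record the purely formal computation $\{\lambda P,Q\}=(P,Q)$ for $P,Q\in(\mathfrak{P}_{\sR})^C$, which is immediate from $\lambda(X,Y,\xi,\eta)=(Y,-X,\eta,-\xi)$ together with the definitions of $\{\,,\,\}$ and $(\,,\,)$. Combining this with $\langle P,Q\rangle=(\tau P,Q)$ and the fact that $\lambda$, being given by a real matrix, commutes with $\tau$, I obtain the key identity
\begin{align*}
\langle P,Q\rangle=\{\tau\lambda P,Q\},\quad P,Q\in(\mathfrak{P}_{\sR})^C.
\end{align*}
Throughout I would use that every $\alpha\in(E_{7,\sR})^C$ preserves $\{\,,\,\}$ (the remark following Proposition \ref{proposition 5.4}) and that $\{\,,\,\}$ is non-degenerate.

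For the inclusion $E_{7,\sR}\subset((E_{7,\sR})^C)^{\tau\lambda}$, let $\alpha\in E_{7,\sR}$, so that $\langle\alpha P,\alpha Q\rangle=\langle P,Q\rangle$. Rewriting both sides with the key identity and then applying $\{S,\alpha Q\}=\{\alpha^{-1}S,Q\}$ (a consequence of the $\{\,,\,\}$-invariance of $\alpha$), the equality reads $\{\alpha^{-1}\tau\lambda\alpha P,Q\}=\{\tau\lambda P,Q\}$ for all $P,Q$. Non-degeneracy of $\{\,,\,\}$ then forces $\alpha^{-1}\tau\lambda\alpha=\tau\lambda$, i.e. $\tau\lambda\alpha=\alpha\tau\lambda$; right-multiplying by $\lambda^{-1}\tau$ and using $\tau^2=1$ yields $\tau\lambda\alpha\lambda^{-1}\tau=\alpha$, so $\alpha\in((E_{7,\sR})^C)^{\tau\lambda}$.

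For the reverse inclusion, let $\beta\in((E_{7,\sR})^C)^{\tau\lambda}$, i.e. $\tau\lambda\beta\lambda^{-1}\tau=\beta$, which is equivalent to $\beta^{-1}\tau\lambda\beta=\tau\lambda$. Running the previous computation backwards gives $\langle\beta P,\beta Q\rangle=\{\tau\lambda\beta P,\beta Q\}=\{\beta^{-1}\tau\lambda\beta P,Q\}=\{\tau\lambda P,Q\}=\langle P,Q\rangle$, so $\beta$ preserves $\langle\,,\,\rangle$ and hence $\beta\in E_{7,\sR}$. Combining the two inclusions finishes the argument.

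The main obstacle is getting the bookkeeping of $\tau$ and $\lambda$ exactly right: one must verify the signs in $\{\lambda P,Q\}=(P,Q)$, confirm $\tau\lambda=\lambda\tau$, and keep in mind that $\tau^{-1}=\tau$ while $\lambda^{-1}=-\lambda$ (so that conjugation by $\lambda$ is insensitive to this sign). Once the identity $\langle P,Q\rangle=\{\tau\lambda P,Q\}$ is established, the rest is formal manipulation resting on the non-degeneracy of $\{\,,\,\}$ and on the $\{\,,\,\}$-invariance already available for all of $(E_{7,\sR})^C$.
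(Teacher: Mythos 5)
Your proposal is correct and follows essentially the same route as the paper: both rest on the key identity $\langle P,Q\rangle=\{\tau\lambda P,Q\}$ on $(\mathfrak{P}_{\sR})^C$ combined with the $\{\,,\,\}$-invariance of elements of $(E_{7,\sR})^C$, and one of your two inclusions is verbatim the paper's computation. The only cosmetic difference is in the direction $E_{7,\sR}\subset((E_{7,\sR})^C)^{\tau\lambda}$, where the paper passes to the symmetric form via $(P,Q)=\{\lambda P,Q\}$ and compares $(\tau\beta P,\beta Q)$ with $(\lambda^{-1}\beta\tau\lambda P,\beta Q)$, whereas you stay with $\{\,,\,\}$ and invoke its non-degeneracy directly.
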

\begin{proof}
    Let $ \alpha \in ((E_{7,\sR})^C)^{\tau\lambda} $. It follows from $ \langle P,Q\rangle =\{\tau \lambda P,Q \}, P,Q \in (\mathfrak{P}_{\sR})^C $ that
    \begin{align*}
    \langle \alpha P, \alpha Q \rangle=\{\tau\lambda \alpha P,\alpha Q \}=\{\alpha \tau\lambda P,\alpha Q\}=\{\tau\lambda P,Q\}=\langle P,Q \rangle.
    \end{align*}
    Hence we have $ \alpha \in E_{7,\sR}$.

    Conversely, let $ \beta \in E_{7,\sR} $. It follows from $ (P,Q)=\{\lambda P,Q\} $ that
    \begin{align*}
    \langle \beta P, \beta Q \rangle&=\{\tau\lambda \beta P,\beta Q\}=\{\lambda\tau \beta P,\beta Q\}=(\tau\beta P,\beta Q),
    \\[1mm]
    \langle P,Q \rangle&=\{\tau\lambda P,Q\}=\{\beta\tau\lambda P,\beta Q\}=\{\lambda\lambda^{-1}\beta\tau\lambda P,\beta Q\}=(\lambda^{-1}\beta\tau\lambda P,\beta Q).
    \end{align*}
    Hence we have $ \tau\beta=\lambda^{-1}\beta\tau\lambda $, that is, $ \tau\lambda\beta=\beta\tau\lambda $, so $ \beta \in ((E_{7,\sR})^C)^{\tau\lambda} $.

    With above, this proposition is proved.
\end{proof}

Hence the following corollary follows from Proposition \ref{proposition 5.11} above.

\begin{corollary}\label{corollary 5.12}
The Lie algebra $ ((\mathfrak{e}_{7,\sR})^C)^{\tau\lambda} $ of the group $ ((E_{7,\sR})^C)^{\tau\lambda} $ coincides with the Lie algebra $ \mathfrak{e}_{7,\sR} $ of the group $ E_{7,\sR} ${\rm :}$ ((\mathfrak{e}_{7,\sR})^C)^{\tau\lambda}=\mathfrak{e}_{7,\sR} $.
\end{corollary}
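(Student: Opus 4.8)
The plan is to read this corollary as the infinitesimal counterpart of Proposition~\ref{proposition 5.11}, so that essentially all of the work has already been done. Write $\sigma$ for the map $\sigma(\alpha)=\tau\lambda\alpha\lambda^{-1}\tau$ on $(E_{7,\sR})^C$, so that by definition $((E_{7,\sR})^C)^{\tau\lambda}=((E_{7,\sR})^C)^{\sigma}$ is its fixed-point subgroup. Since $\lambda$ has real (indeed integral) coefficients, $\tau$ and $\lambda$ commute, $\tau\lambda=\lambda\tau$; together with $\lambda^2=-1$ this gives $\sigma^2(\alpha)=(-1)\alpha(-1)=\alpha$, so $\sigma$ is an involutive automorphism of the real Lie group $(E_{7,\sR})^C$. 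I would therefore invoke the standard fact that the Lie algebra of the fixed-point group of such an automorphism is the $+1$-eigenspace of its differential.

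Next I would compute that differential. For $\varPhi\in(\mathfrak{e}_{7,\sR})^C$ the relevant one-parameter subgroups are $\exp(t\varPhi)$ with $t\in\R$; since $\tau$ fixes real parameters, $\sigma(\exp(t\varPhi))=\tau\lambda\exp(t\varPhi)\lambda^{-1}\tau=\exp\bigl(t\,\tau\lambda\varPhi\lambda^{-1}\tau\bigr)$, whence $d\sigma(\varPhi)=\tau\lambda\varPhi\lambda^{-1}\tau$. Consequently the Lie algebra of $((E_{7,\sR})^C)^{\tau\lambda}$ is exactly $\{\varPhi\in(\mathfrak{e}_{7,\sR})^C\mid \tau\lambda\varPhi\lambda^{-1}\tau=\varPhi\}$, which is the subalgebra denoted $((\mathfrak{e}_{7,\sR})^C)^{\tau\lambda}$.

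Finally I would close the argument by Proposition~\ref{proposition 5.11}: the groups $((E_{7,\sR})^C)^{\tau\lambda}$ and $E_{7,\sR}$ coincide, hence so do their Lie algebras; and since the Lie algebra of $E_{7,\sR}$ is $\mathfrak{e}_{7,\sR}$ by definition, this yields $((\mathfrak{e}_{7,\sR})^C)^{\tau\lambda}=\mathfrak{e}_{7,\sR}$. There is no genuine obstacle here: the whole substance was carried by Proposition~\ref{proposition 5.11}, and the only points needing care are checking that $\sigma$ is a true involution (using $\tau\lambda=\lambda\tau$ and $\lambda^2=-1$) and the standard identification of $\mathrm{Lie}(G^\sigma)$ with the fixed-point subalgebra of $d\sigma$. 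If a self-contained verification were preferred, one could instead substitute the general element $\varPhi(\phi,A,B,\nu)$ into the condition $\tau\lambda\varPhi\lambda^{-1}\tau=\varPhi$ and check directly that it forces $\phi\in\mathfrak{e}_{6,\sR}$, $B=-\tau A$ and $\nu\in i\R$, thereby recovering the defining description of $\mathfrak{e}_{7,\sR}$; but this merely reproves Proposition~\ref{proposition 5.11} at the infinitesimal level.
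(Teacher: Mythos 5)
Your proof is correct, but it takes a genuinely different route from the paper's. You argue purely infinitesimally: after checking that $\sigma(\alpha)=\tau\lambda\alpha\lambda^{-1}\tau$ is an involutive automorphism of the real Lie group $(E_{7,\sR})^C$ (via $\tau\lambda=\lambda\tau$ and $\lambda^2=-1$), you invoke the standard identification of $\mathrm{Lie}(G^\sigma)$ with the $+1$-eigenspace of $d\sigma$ --- a fact that needs no connectedness hypothesis --- and then transport the group-level equality $((E_{7,\sR})^C)^{\tau\lambda}=E_{7,\sR}$ of Proposition \ref{proposition 5.11} down to the Lie algebras. The paper instead goes through topology: it combines Theorem \ref{theorem 5.10} with the simple connectedness of $Sp(3,\H^C)\cong Sp(3,C)$ to conclude that $(E_{7,\sR})^C$ is simply connected, applies the E.~Cartan--Ra\v{s}evskii lemma (Lemma \ref{lemma 2.2}) to deduce that the fixed-point subgroup $((E_{7,\sR})^C)^{\tau\lambda}=E_{7,\sR}$ is connected, and only then reads off the coincidence of Lie algebras. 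Your argument is more elementary and self-contained: it does not depend on Theorem \ref{theorem 5.10} or on external facts about $Sp(3,C)$, and it makes explicit the step the paper leaves implicit behind ``thus we have the required result,'' namely that the Lie algebra of a fixed-point subgroup is the fixed-point subalgebra of the differential. What the paper's route buys in exchange is the connectedness of $E_{7,\sR}$ as a by-product, which is precisely the topological input invoked again in the proof of Theorem \ref{theorem 5.16}; your proof, by design, yields no such topological information. Your closing remark is also accurate: substituting $\varPhi(\phi,A,B,\nu)$ into $\tau\lambda\varPhi\lambda^{-1}\tau=\varPhi$ forces $\phi\in\mathfrak{e}_{6,\sR}$, $B=-\tau A$, $\nu\in i\R$, recovering the paper's explicit description of $\mathfrak{e}_{7,\sR}$, though, as you note, that computation is just Proposition \ref{proposition 5.11} at the infinitesimal level.
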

\begin{proof}
Since the group $ Sp(3,C) $ is simply connected (\cite[Theorem 6.13]{iy11}) and together with $ Sp(3,\H^C) \cong Sp(3,C) $ (\cite[Theorem 3.6 (2)]{iy11}), $ (E_{7,\sR})^C $ is simply connected (Theorem \ref{theorem 5.10}). Hence $ ((E_{7,\sR})^C)^{\tau\lambda} $ is connected, so that $  E_{7,\sR} $ is also connected (Proposition \ref{proposition 5.11}). Thus we have the required result.
\end{proof}

We consider the following subgroup $ (E_{7,\sR})_{\dot{1}} $ of $ E_{7,\sR} $:
\begin{align*}
(E_{7,\sR})_{\dot{1}} =\left\lbrace \alpha \in E_{7,\sR} \relmiddle{|}\alpha \dot{1}=\dot{1} \right\rbrace.
\end{align*}

\begin{proposition}\label{proposition 5.13}
   The group $ (E_{7,\sR})_{\dot{1}} $ is isomorphic to the group $ E_{6,\sR} ${\rm :} $ (E_{7,\sR})_{\dot{1}} \cong E_{6,\sR} $.
\end{proposition}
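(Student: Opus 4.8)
The plan is to realize $(E_{7,\sR})_{\dot{1}}$ as the $\tau\lambda$-fixed subgroup sitting inside the group $((E_{7,\sR})^C)_{\dot{1},\underset{\dot{}}{1}}$ of Proposition \ref{proposition 5.5}, and then transport the isomorphism $((E_{7,\sR})^C)_{\dot{1},\underset{\dot{}}{1}} \cong (E_{6,\sR})^C$ to the compact real forms by restricting to $\tau\lambda$-fixed points.

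First I would observe that, for an element of the compact form $E_{7,\sR}$, fixing $\dot{1}$ already forces the fixing of $\underset{\dot{}}{1}$. Indeed, by Proposition \ref{proposition 5.11} we have $E_{7,\sR} = ((E_{7,\sR})^C)^{\tau\lambda}$, so every $\alpha \in (E_{7,\sR})_{\dot{1}}$ commutes with the $C$-linear transformation $\tau\lambda$ of $(\mathfrak{P}_{\sR})^C$. Since $\lambda\dot{1} = -\underset{\dot{}}{1}$ and $\tau\underset{\dot{}}{1} = \underset{\dot{}}{1}$, we get $\tau\lambda\dot{1} = -\underset{\dot{}}{1}$, whence
\[
\alpha\underset{\dot{}}{1} = -\alpha(\tau\lambda\dot{1}) = -\tau\lambda(\alpha\dot{1}) = -\tau\lambda\dot{1} = \underset{\dot{}}{1}.
\]
Consequently $(E_{7,\sR})_{\dot{1}} = E_{7,\sR} \cap ((E_{7,\sR})^C)_{\dot{1},\underset{\dot{}}{1}}$, which reduces the problem to comparing the two real-form conditions across the isomorphism of Proposition \ref{proposition 5.5}.

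The isomorphism of Proposition \ref{proposition 5.5} identifies $\beta \in (E_{6,\sR})^C$ with $\tilde{\beta} \in (E_{7,\sR})^C$ given by $\tilde{\beta}(X,Y,\xi,\eta) = (\beta X, {}^t\!\beta^{-1}Y, \xi, \eta)$. The key step is then to verify that $\tilde{\beta}$ lies in $E_{7,\sR}$, i.e.\ commutes with $\tau\lambda$ on $(\mathfrak{P}_{\sR})^C$, if and only if $\beta$ lies in $E_{6,\sR} = ((E_{6,\sR})^C)^{\tau\lambda}$ of Proposition \ref{proposition 4.7}, i.e.\ satisfies $\tau\,{}^t\!\beta^{-1}\tau = \beta$ (here the symbol $\lambda$ denotes the group involution $\lambda(\beta)={}^t\!\beta^{-1}$ of $(E_{6,\sR})^C$, not the map on $(\mathfrak{P}_{\sR})^C$). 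This is a direct componentwise computation: evaluating $\tau\lambda\tilde{\beta}$ and $\tilde{\beta}\tau\lambda$ on a general $(X,Y,\xi,\eta)$ and comparing the first two slots both yield exactly the relation $\tau\,{}^t\!\beta^{-1}\tau = \beta$, while the two scalar slots match automatically.

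Granting this compatibility, the isomorphism of Proposition \ref{proposition 5.5} restricts to a bijection between the respective $\tau\lambda$-fixed subgroups, so that $(E_{7,\sR})_{\dot{1}} \cong E_{6,\sR}$. The main (though not conceptually deep) obstacle is the notational clash between the two maps written $\lambda$—the $C$-linear transformation of $(\mathfrak{P}_{\sR})^C$ and the group involution of $(E_{6,\sR})^C$—so I would take care to keep the two $\tau\lambda$-conditions apart; the verification itself is a routine but slightly lengthy manipulation of $\tau$, $\lambda$ and the transpose–inverse.
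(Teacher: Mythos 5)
Your proof is correct and is essentially the argument the paper intends: the paper's own proof of Proposition \ref{proposition 5.13} is a one-line citation to Yokota's Theorem 4.7.2, whose proof is exactly your reduction --- use the $\tau\lambda$-characterizations of the compact forms (Propositions \ref{proposition 5.11} and \ref{proposition 4.7}) to show that fixing $\dot{1}$ forces fixing $\underset{\dot{}}{1}$, identify $(E_{7,\sR})_{\dot{1}}$ inside $((E_{7,\sR})^C)_{\dot{1},\underset{\dot{}}{1}} \cong (E_{6,\sR})^C$ via Proposition \ref{proposition 5.5}, and check that $\tilde{\beta}$ commutes with $\tau\lambda$ on $(\mathfrak{P}_{\sR})^C$ precisely when $\tau\,{}^t\!\beta^{-1}\tau=\beta$. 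In short, you have written out, using the paper's own propositions, the details that the paper delegates to the citation; no gap.
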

\begin{proof}
  We can prove this proposition as that in the proof of \cite[Theorem 4.7.2]{iy0}.
\end{proof}

As for the Lie algebra $ \mathfrak{sp}(3) $, we have the following lemma as that in Lemma \ref{lemma 5.1}.

\begin{lemma}\label{lemma 5.14}
   Any element $ D \in \mathfrak{sp}(3) $ is uniquely expressed by
   \begin{align*}
      D=B+L(e_2E)+sE,\,\,B \in \mathfrak{su}(3),L \in \mathfrak{S}(3,\C),s \in \R e_1,
   \end{align*}
   where $ \mathfrak{S}(3,\C)=\{ L \in M(3,\C) \,|\, {}^t\!L=L \} $ and $ e_1,e_2 $ are the basis in $ \H:=\{1,e_1,e_2,e_3\}_{\rm span} $.
\end{lemma}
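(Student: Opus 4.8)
The plan is to follow the proof of Lemma \ref{lemma 5.1} essentially line for line, specialising the complexified coefficient rings to their compact real forms: $\H^C \rightsquigarrow \H$, $\C^C \rightsquigarrow \C$ and $C \rightsquigarrow \R$. First I would record the matrix shape of a general element of $\mathfrak{sp}(3)$. Since $D \in M(3,\H)$ satisfies $D + D^* = 0$, its diagonal entries are purely imaginary quaternions and its strictly lower entries are minus the quaternionic conjugates of the strictly upper ones, so that
\begin{align*}
D = \begin{pmatrix}
b_1 & d_1 & d_2 \\
-\ov{d}_1 & b_2 & d_3 \\
-\ov{d}_2 & -\ov{d}_3 & b_3
\end{pmatrix}, \quad b_i \in {\rm Im}(\H),\ d_i \in \H.
\end{align*}

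Next I would invoke the splitting $\H = \C \oplus \C e_2$, in which $e_2 z = \ov{z}\,e_2$ for $z \in \C$, and write $b_i = r_i e_1 + c_i e_2$ and $d_i = p_i + q_i e_2$ with $r_i \in \R$ and $c_i, p_i, q_i \in \C$. Separating the $\C$-component from the $\C e_2$-component expresses $D = B' + L(e_2E)$, where $B' \in M(3,\C)$ carries the diagonal $r_i e_1$ together with the off-diagonal pairs $p_i, -\ov{p}_i$, and $L \in M(3,\C)$ carries the diagonal $c_i$ together with the off-diagonal $q_i$. The key observation is that the single relation ``lower $=$ minus conjugate of upper'' does two jobs at once: on the $\C$-component it yields $B' + {}^t\!\ov{B'} = 0$, i.e. $B' \in \mathfrak{u}(3,\C)$, while on the $\C e_2$-component the overall minus sign and the sign reversal of the $e_2$-part under conjugation cancel, so the $e_2$-coefficients of the $(i,j)$ and $(j,i)$ entries coincide, giving ${}^t\!L = L$, i.e. $L \in \mathfrak{S}(3,\C)$. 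Putting $s := (1/3)(r_1 + r_2 + r_3)e_1 \in \R e_1$ and $B := B' - sE$ then places $B$ in $\mathfrak{su}(3)$ and produces the claimed form $D = B + L(e_2E) + sE$.

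For uniqueness I would argue exactly as in Lemma \ref{lemma 5.1}: it suffices to show that $B' + L(e_2E) = 0$ forces $B' = 0$ and $L = 0$, which is immediate from the linear independence of the $\C$- and $\C e_2$-parts of $\H$; the subsequent split of $B'$ into its traceless part $B \in \mathfrak{su}(3)$ and its scalar part $sE$ with $s \in \R e_1$ is the unique decomposition $\mathfrak{u}(3,\C) = \mathfrak{su}(3) \oplus \R e_1 E$.

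I do not anticipate any genuine obstacle, since this lemma is precisely the compact real form of the already-established Lemma \ref{lemma 5.1} and the argument is structurally identical. The only step demanding a little attention is the quaternionic bookkeeping---checking, via $\ov{p + q e_2} = \ov{p} - q e_2$, that the minus-conjugate off-diagonal entries make the $\C$-part skew-Hermitian and the $e_2$-part symmetric simultaneously---but this is the very computation already carried out over $\C^C$ in the previous lemma, so it transfers verbatim.
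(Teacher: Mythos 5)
Your proposal is correct and takes essentially the same approach as the paper: the paper's proof of this lemma is literally the one-line instruction to repeat the proof of Lemma \ref{lemma 5.1} with $\H^C,\C^C$ replaced by $\H,\C$, which is exactly what you do, down to the decomposition $D=B'+L(e_2E)$, the extraction of $s=(1/3)(r_1+r_2+r_3)e_1$, and the uniqueness argument. The additional bookkeeping you supply (that $\ov{p+qe_2}=\ov{p}-qe_2$ makes the $\C$-part skew-Hermitian and the $e_2$-part symmetric) is a correct elaboration of the step the paper leaves implicit.
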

\begin{proof}
We can prove this lemma by replacing $ \H^C , \C^C $ with $ \H , \C $ in the proof of Lemma \ref{lemma 5.1}, respectively.
\end{proof}

We will prove the following theorem.

\begin{theorem}\label{theorem 5.15}
   The Lie algebra $ \mathfrak{e}_{7,\sR} $ is isomorphic to the Lie algebra $ \mathfrak{sp}(3) ${\rm :} $ \mathfrak{e}_{7,\sR} \cong\mathfrak{sp}(3) $.
\end{theorem}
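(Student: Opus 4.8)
The plan is to obtain the desired isomorphism by restricting the map $f_{7,C_*}$ of Theorem \ref{theorem 5.3} to the compact real forms. Concretely, I would take $f_{7_*} : \mathfrak{sp}(3) \to \mathfrak{e}_{7,\sR}$ to be the restriction of $f_{7,C_*}$ to $\mathfrak{sp}(3) \subset \mathfrak{sp}(3,\H^C)$, where an element is written $D = B + L(e_2E) + sE$ with $B \in \mathfrak{su}(3)$, $L \in \mathfrak{S}(3,\C)$, $s \in \R e_1$ by Lemma \ref{lemma 5.14}. Since $f_{7,C_*}$ is already a Lie isomorphism, the restriction is automatically an injective Lie homomorphism; the only two points to settle are that its image actually lies in $\mathfrak{e}_{7,\sR}$ (well-definedness) and that it is onto. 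Surjectivity will then follow at once from injectivity together with the dimension count $\dim_{\sR}\mathfrak{sp}(3) = 8 + 12 + 1 = 21 = \dim(\mathfrak{e}_{7,\sR})$ recorded at the beginning of this section.

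The core of the argument is the well-definedness check. For $D = B + L(e_2E) + sE \in \mathfrak{sp}(3)$ one has
\begin{align*}
f_{7,C_*}(D) = \varPhi\!\left(f_{6,C_*}(g(B)),\, -(\iota L + \ov{\iota}\,\ov{L}),\, \ov{\iota}\,L + \iota\,\ov{L},\, 3(\iota - \ov{\iota})s\right),
\end{align*}
and I must verify the three membership conditions built into $\mathfrak{e}_{7,\sR} = \{\varPhi(\phi, A, -\tau A, \nu) \mid \phi \in \mathfrak{e}_{6,\sR}, A \in (\mathfrak{J}_{\sR})^C, \nu \in i\R\}$. The two linear/scalar conditions are immediate from the idempotent algebra $\iota^2 = \iota$, $\iota\ov{\iota} = 0$, $\iota + \ov{\iota} = 1$ together with $\tau\iota = \ov{\iota}$: since the entries of $L \in \mathfrak{S}(3,\C)$ are fixed by $\tau$, one computes $-\tau\!\left(-(\iota L + \ov{\iota}\,\ov{L})\right) = \ov{\iota}\,L + \iota\,\ov{L}$, which is exactly the third slot, so the relation $B = -\tau A$ holds; and because $\iota - \ov{\iota} = i e_1$ with $e_1^2 = -1$, writing $s = r e_1$ with $r \in \R$ gives $3(\iota - \ov{\iota})s = -3ir \in i\R$, so the $\nu$-slot lies in $i\R$.

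The remaining, and main, obstacle is the $\phi$-slot condition $f_{6,C_*}(g(B)) \in \mathfrak{e}_{6,\sR}$, which forces one to track $g$ and $f_{6,C_*}$ through the several conjugations in play. I would first show that $g$ sends the compact form into the compact form: for $B \in \mathfrak{su}(3)$ the entries are $\tau$-fixed, so a short computation using $\tau\iota = \ov{\iota}$ yields $\tau\,{}^t(g(B)) = \ov{\iota}\,{}^tB - \iota B = -g(B)$, i.e. $g(B)$ belongs to the compact real form of $\mathfrak{sl}(3,C)$, namely $\{A \in \mathfrak{sl}(3,C) \mid \tau\,{}^tA = -A\}$. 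The differential of the isomorphism $f_6 = f_{6,C}|_{SU(3)}$ of Theorem \ref{theorem 4.9} then carries this compact form into $\mathfrak{e}_{6,\sR}$, which establishes the $\phi$-slot condition and hence well-definedness, completing the proof. Alternatively, the whole argument can be phrased as the statement that $f_{7,C_*}$ intertwines the conjugation $\tau$ on $\mathfrak{sp}(3,\H^C)$, whose fixed points form $\mathfrak{sp}(3)$, with the involution $\tau\lambda$ of Corollary \ref{corollary 5.12} whose fixed points form $\mathfrak{e}_{7,\sR}$, so that the isomorphism of Theorem \ref{theorem 5.3} restricts to an isomorphism of the fixed-point subalgebras.
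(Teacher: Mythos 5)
Your proposal is correct, and its construction coincides with the paper's: the paper likewise defines $f_{7,\sR_*}$ as the restriction of $f_{7,C_*}$ to $\mathfrak{sp}(3)$ decomposed via Lemma \ref{lemma 5.14}, gets the homomorphism and injectivity properties for free from Theorem \ref{theorem 5.3}, and its well-definedness check consists of exactly your three slot conditions --- $g(B)\in\mathfrak{su}(3)$ so that $f_{6,C_*}(g(B))\in\mathfrak{e}_{6,\sR}$ by (the differential of) Theorem \ref{theorem 4.9}, the relation $\ov{\iota}\,L+\iota\,\ov{L}=-\tau\bigl(-(\iota L+\ov{\iota}\,\ov{L})\bigr)$, and the scalar slot, where your computation $3(\iota-\ov{\iota})s=-3ir\in i\R$ is the one consistent with the definition of $\mathfrak{e}_{7,\sR}$ (the paper's proof asserts $3(\iota-\ov{\iota})s\in\R$, which is a slip). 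Where you genuinely depart from the paper is surjectivity. You deduce it from injectivity together with the equality of real dimensions $\dim(\mathfrak{sp}(3))=8+12+1=21=\dim(\mathfrak{e}_{7,\sR})$, both available in the paper; this is elementary and perfectly valid. The paper instead works with the model $\mathfrak{e}_{7,\sR}=((\mathfrak{e}_{7,\sR})^C)^{\tau\lambda}$ of Corollary \ref{corollary 5.12}: it takes an arbitrary $\tau\lambda$-fixed $\varPhi$, writes $\varPhi=f_{7,C_*}(B+L(e_2E)+sE)$ using Theorem \ref{theorem 5.3}, and shows by a slot-by-slot computation (the key identity being $-\tau\,{}^t(f_{6,C_*}(g(B)))\tau=f_{6,C_*}(g(\tau B))$) that the invariance forces $B\in\mathfrak{su}(3)$, $L\in\mathfrak{S}(3,\C)$, $s\in\R e_1$. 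That computation is precisely the intertwining statement you sketch as an alternative phrasing at the end: the paper's route is longer but exhibits the fixed-point correspondence between the two conjugations explicitly, while yours buys brevity at no cost in rigor, since it leans only on results already established.
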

\begin{proof}
   Let the Lie algebra $ \mathfrak{sp}(3):=\{ B +L(e_2E)+sE \,|\, B \in \mathfrak{su}(3), L \in \mathfrak{S}(3,\C), s \in \R e_1\} $ (Lemma \ref{lemma 5.14}) and the Lie algebra $ ((\mathfrak{e}_{7,\sR})^C)^{\tau\lambda} $ as $ \mathfrak{e}_{7,\sR} $.
   Then we define a mapping $ f_{7,\sR_*}: \mathfrak{sp}(3) \to (\mathfrak{e}_{7,\sR})^{\tau\lambda}$ by
  \begin{align*}
  f_{7,\sR_*}( B +L(e_2E)+sE)=\varPhi(f_{6,C_*}(g(B)), -(\iota L+\ov{\iota}\,\ov{L}),\ov{\iota}\,L+\iota\,\ov{L}, 3(\iota-\ov{\iota})s).
  \end{align*}
  Note that this mapping is the restriction of the mapping $ f_{7,C_*} $ which is defined in the proof of Theorem \ref{theorem 5.3} and moreover $ g(B) \in \mathfrak{su}(3) \subset \mathfrak{sl}(3,C) $.

 First, we will prove that $ f_{7,\sR_*} $ is well-defined. From Theorem \ref{theorem 4.9}, we have $ f_{6_*}(B) \in \mathfrak{e}_{6,\sR} $, and in addition, it is easy to verify $ -(\iota L+\ov{\iota}\,\ov{L}),\ov{\iota}\,L+\iota\,\ov{L} \in (\mathfrak{J}_{\sR})^C, \ov{\iota}\,L+\iota\,\ov{L}=-\tau(-(\iota L+\ov{\iota}\,\ov{L}))  $ and $ 3(\iota-\ov{\iota})s \in \R $. Hence  $ f_{7,\sR_*} $ is well-defined. Subsequently,  we will prove that $ f_{7,\sR_*} $ is a Lie-homomorphism.
 However, since $ f_{7,\sR_*} $ is the restriction of the mapping $ f_{7,C_*} $, we can prove this by same argument as that in the proof of Theorem \ref{theorem 5.3}.
\if0
   It follows that
   \begin{align*}
      (1)&\quad  [f_{7,C_*}(B), f_{7,C_*}(B')]
      \\
      &=[\varPhi(f_{6,C_*}(g(B)),0,0,0), \varPhi(f_{6,C_*}(g(B')),0,0,0)]
      \\
      &=\varPhi([f_{6,C_*}(g(B)),f_{6,C_*}(g(B')],0,0,0)
      \\
      &=\varPhi(f_{6,C_*}([g(B),g(B')]),0,0,0)
      \\
      &=\varPhi(f_{6,C_*}(g([B,B'])),0,0,0)
      \\
      &=f_{7,C_*}([B,B']),
      \\[2mm]
      (2) &\quad  [f_{7,C_*}(B), f_{7,C_*}(L(e_2E))]
      \\
      &=[\varPhi(f_{6,C_*}(g(B)),0,0,0), \varPhi(0,-(\iota L+\ov{\iota}\,\ov{L}),\ov{\iota}\,L+\iota\,\ov{L},0)]
      \\
      &=\varPhi(0,f_{6,C_*}(g(B))(-(\iota L+\ov{\iota}\,\ov{L})), -{}^t\!(f_{6,C_*}(g(B)))(\ov{\iota}\,L+\iota\,\ov{L}),0 )\,\,(-{}^t\!(f_{6,C_*}(g(B)))=f_{6,C_*}(g(\ov{B}))
      \\
      &=\varPhi(0,-(\iota(BL+L{}^t\!B)+\ov{\iota}(\ov{BL+L{}^t\!B})), \ov{\iota}(BL+L{}^t\!B)+\iota(\ov{BL+L{}^t\!B}),0)
      \\
      &=f_{7,C_*}((BL+L{}^t\!B)(e_2E))
      \\
      &=f_{7,C_*}([B,L(e_2E)]),
      \\[2mm]
      (3) &\quad  [f_{7,C_*}(B), f_{7,C_*}(sE)]
      \\
      &=[\varPhi(f_{6,C_*}(g(B)),0,0,0), \varPhi(0,0,0,3(\iota-\ov{\iota})s)]
      \\
      &=0
      \\
      &=f_{7,C_*}([B,sE)]).
   \end{align*}
   Here, in order to show $ [f_{7,C_*}(L(e_2E), f_{7,C_*}(L'(e_2E))]=f_{7,C_*}([L(e_2E), L'(e_2E)]) $ as $ (4) $, we use the following claims:
   \begin{align*}
      &({\rm i})\,\, [L(e_2E, L'(e_2E))]=-L\ov{L'}+L'\ov{L}, \,\,{\text{then}}\,\,-L\ov{L'}+L'\ov{L}-sE \in \mathfrak{su}(3,\C^C), s:=(1/3)\tr(-L\ov{L'}+L'\ov{L}).
      \\
      &(\rm ii)\,\,\text{For $ B:=-L\ov{L'}+L'\ov{L}-sE \in \mathfrak{su}(3,\C^C), s:=(1/3)\tr(-L\ov{L'}+L'\ov{L}), g(B) $ is given by}
      \\
      & \hspace{40mm}g(B)=-\iota L\ov{L'}+\iota L'\ov{L}+\ov{\iota}\,\ov{L'}L-\ov{\iota}\,\ov{L}L'-(\iota-\ov{\iota})sE,
      \\
      &\qquad \text{where the mapping $ g $ is defined in the proof of Lemma \ref{lemma 6.0.2}}.
      \\
      &(\rm iii)\,\, \text{For $ A,B \in (\mathfrak{J}_{\sR})^C $, $ A \vee B \in (\mathfrak{e}_{6,\sR})^C$ is defined by }
      \\
      & \hspace{40mm} A \vee B=[\tilde{A},\tilde{B}]+\left(A \circ B-\dfrac{1}{3}(A,B)E \right)^\sim,
      \\
      &\qquad \text{where for $ A \in (\mathfrak{J}_{\sR})^C $, the $ C $-linear transformation $ \tilde{A}:(\mathfrak{J}_{\sR})^C \to (\mathfrak{J}_{\sR})^C $ is defined by $ \tilde{A}X=A \circ X $. }
   \end{align*}
   Now, we start the proof of $ (4) $.
   \begin{align*}
      (4) &\quad  [f_{7,C_*}(L(e_2E), f_{7,C_*}(L'(e_2E))]
      \\
      &=[\varPhi(0,-(\iota L+\ov{\iota}\,\ov{L}),\ov{\iota}\,L+\iota\,\ov{L},0), \varPhi(0,-(\iota L'+\ov{\iota}\,\ov{L'}),\ov{\iota}\,L'+\iota\,\ov{L'},0)]
      \\
      &=\varPhi(0,-2(\iota L+\ov{\iota}\,\ov{L})\vee (\ov{\iota}\,L'+\iota\,\ov{L'})+2(\iota L'+\ov{\iota}\,\ov{L'})\vee (\ov{\iota}\,L+\iota\,\ov{L}),0,0,
      \\
      &\hspace{60mm}-(\iota L+\ov{\iota}\,\ov{L},\ov{\iota}\,L'+\iota\,\ov{L'})+(\ov{\iota}\,L+\iota\,\ov{L},\iota L'+\ov{\iota}\,\ov{L'} ),
   \end{align*}
   where using the formula of claim $ {\rm (iii)} $, the explicit form of the action to $ (\mathfrak{J}_{\sR})^C $ of  $ -2(\iota L+\ov{\iota}\,\ov{L})\vee (\ov{\iota}\,L'+\iota\,\ov{L'})+2(\iota L'+\ov{\iota}\,\ov{L'})\vee (\ov{\iota}\,L+\iota\,\ov{L})
   $ above is expressed by
   \begin{align*}
      &\quad (-2(\iota L+\ov{\iota}\,\ov{L})\vee (\ov{\iota}\,L'+\iota\,\ov{L'})+2(\iota L'+\ov{\iota}\,\ov{L'})\vee (\ov{\iota}\,L+\iota\,\ov{L}) )X, \,\,X \in (\mathfrak{J}_{\sR})
      \\
      &=(-\iota L\ov{L'}+\iota L'\ov{L}-\ov{\iota}\,\ov{L}L'+\ov{\iota}\,\ov{L'}L)X+X(-\iota \ov{L'}L+\iota \ov{L}L'-\ov{\iota}\,L'\ov{L}+\ov{\iota}\,L\ov{L'})
      \\
      &\hspace{51mm}+((2/3)\tr(\iota L\ov{L'}+\ov{\iota}\ov{L}L')-(2/3)\tr(\iota L'\ov{L}+\ov{\iota}\ov{L'}L))X
      \\
      &=(-\iota L\ov{L'}+\iota L'\ov{L}-\ov{\iota}\,\ov{L}L'+\ov{\iota}\,\ov{L'}L)X+X(-\iota \ov{L'}L+\iota \ov{L}L'-\ov{\iota}\,L'\ov{L}+\ov{\iota}\,L\ov{L'})-2(\iota-\ov{\iota})sX
      \\
      &=(-\iota L\ov{L'}+\iota L'\ov{L}-\ov{\iota}\,\ov{L}L'+\ov{\iota}\,\ov{L'}L-(\iota-\ov{\iota})s)X+X(-\iota \ov{L'}L+\iota \ov{L}L'-\ov{\iota}\,L'\ov{L}+\ov{\iota}\,L\ov{L'}-(\iota-\ov{\iota})s)
      \\
      &=g(B)X+X\,{}^t\!g(B)\,\,(\text{claim (ii)})
      \\
      &=f_{6,C_*}(g(B))X,
   \end{align*}
   and as for $ -(\iota L+\ov{\iota}\,\ov{L},\ov{\iota}\,L'+\iota\,\ov{L'})+(\ov{\iota}\,L+\iota\,\ov{L},\iota L'+\ov{\iota}\,\ov{L'} ) $,  using $ (X,Y)=\tr(X\circ Y), X, Y \in (\mathfrak{J}_{\sR})^C $, it follows that
   \begin{align*}
      -(\iota L+\ov{\iota}\,\ov{L},\ov{\iota}\,L'+\iota\,\ov{L'})+(\ov{\iota}\,L+\iota\,\ov{L},\iota L'+\ov{\iota}\,\ov{L'} )&=-\tr(\iota L\ov{L'}+\ov{\iota}\,\ov{L}L')+\tr(\ov{\iota}L\ov{L'}+\iota\ov{L}L')
      \\
      &=-\tr((\iota-\ov{\iota})L\ov{L'}-(\iota-\ov{\iota})\ov{L}L')
      \\
      &=(\iota-\ov{\iota})\tr(-L\ov{L'}+\ov{L}L')
      \\
      &=(\iota-\ov{\iota})\tr(-L\ov{L'}+L'\ov{L})
      \\
      &=3(\iota-\ov{\iota})s.
   \end{align*}
   Hence we have
   \begin{align*}
      [f_{7,C_*}(L(e_2E), f_{7,C_*}(L'(e_2E))]=\varPhi(f_{6,C_*}(g(B)),0,0,3(\iota-\ov{\iota})s).
   \end{align*}
   On the other hand, it follows that
   \begin{align*}
      f_{7,C_*}([L(e_2E),L'(e_2E)])&=f_{7,C_*}(-L\ov{L'}+L'\ov{L})
      \\
      &=f_{7,C_*}((-L\ov{L'}+L'\ov{L}-sE)+sE)\,\,(\text{claim (i)})
      \\
      &=\varPhi(f_{6,C_*}(g(B)),0,0,3(\iota-\ov{\iota})s).
   \end{align*}
   With above, we obtain
   \begin{align*}
      [f_{7,C_*}(L(e_2E), f_{7,C_*}(L'(e_2E))]=f_{7,C_*}([L(e_2E, L'(e_2E))]).
   \end{align*}
   Further, the proof of a Lie-homomorphism is continued. It follows that
   \begin{align*}
      (5)&\quad [f_{7,C_*}(L(e_2E), f_{7,C_*}(sE))]
      \\
      &=[\varPhi(0,-(\iota L+\ov{\iota}\,\ov{L}),\ov{\iota}\,L+\iota\,\ov{L},0), \varPhi(0,0,0,3(\iota-\ov{\iota})s)]
      \\
      &=\varPhi(0,(-2/3)3(\iota-\ov{\iota})s(-(\iota L+\ov{\iota}\,\ov{L})),(2/3)3(\iota-\ov{\iota})s(\ov{\iota}\,L+\iota\,\ov{L}),0)
      \\
      &=\varPhi(0,2(\iota sL-\ov{\iota}s\ov{L}),-2(\ov{\iota}sL-\iota s\ov{L}),0)
      \\
      &=\varPhi(0,2(\iota sL+\ov{\iota}\,\ov{sL}),-2(\ov{\iota}sL+\iota \ov{sL}),0)
      \\
      &=\varPhi(0,-(\iota (-2sL)+\ov{\iota}(\ov{-2sL})),\ov{\iota}(-2sL)+\iota (\ov{-2sL}),0)
      \\
      &=f_{7,C_*}((-2sL)(e_2E))
      \\
      &=f_{7,C_*}([L(e_2E),sE]),
      \\[2mm]
      (6)&\quad [f_{7,C_*}(sE), f_{7,C_*}(s'E))]
      \\
      &=[\varPhi(0,0,0,3(\iota-\ov{\iota})s),\varPhi(0,0,0,3(\iota-\ov{\iota})s') ]
      \\
      &=0
      \\
      &=f_{7,C_*}([sE,s'E]).
   \end{align*}

   Consequently, the proof of homomorphism is completed.
\fi

Next, we will prove that $ f_{7,\sR_*} $ is injective. Since we easily see $ \Ker\,f_{7,\sR_*}=\Ker\,f_{7,C_*}=\{0\}$, it is clear.

Finally, we will prove that  $ f_{7,C_*} $ is surjective. Let $ \varPhi \in ((\mathfrak{e}_{7,\sR})^C)^{\tau\lambda} \subset (\mathfrak{e}_{7,\sR})^C $. Then there exists $ B+L(e_2E)+sE \in \mathfrak{sp}(3,\H^C), B \in \mathfrak{su}(3,\C^C) , L \in \mathfrak{S}(3,\C^C), s \in Ce_1$ such that $ \varPhi=f_{7,C_*}(B+L(e_2E)+sE) $ (Theorem \ref{theorem 5.3}). Moreover, $ \varPhi $ satisfies the condition $ (\tau\lambda)\varPhi(\lambda^{-1}\tau)=\varPhi $, that is,  $ (\tau\lambda)f_{7,C_*}(B+L(e_2E)+sE)(\lambda^{-1}\tau)=f_{7,C_*}(B+L(e_2E)+sE) $, so it follows that
\begin{align*}
(\tau\lambda)f_{7,C_*}(B+L(e_2E)+sE)(\lambda^{-1}\tau)&=(\tau\lambda)\varPhi(f_{6,C_*}(g(B)), -(\iota L+\ov{\iota}\ov{L}), \ov{\iota}L+\iota\ov{L}, 3(\iota-\ov{\iota})s)(\lambda^{-1}\tau)
\\
&=\varPhi(-\tau\,{}^t(f_{6,C_*}(g(B)))\tau, -\tau(\ov{\iota}L+\iota\ov{L}), \tau(-(\iota L+\ov{\iota}\ov{L})),3\tau (\iota-\ov{\iota})s).
\end{align*}
Here, we use the following claim:
\begin{align*}
-\tau\,{}^t(f_{6,C_*}(g(B)))\tau=f_{6,C_*}(g(\tau B)).
\end{align*}
Indeed, first it follows from Lemma \ref{lemma 5.2} that
\begin{align*}
({}^tf_{6,C_*}(g(B))X,Y)&=(X,f_{6,C_*}(g(B))Y)=(X,g(B)Y+Y\,{}^t(g(B)))
\\
&=(X,g(B)Y)+(X,Y\,{}^t(g(B)))=({}^t(g(B))X,Y)+(g(B)X,Y)
\\
&=({}^t(g(B))X+g(B)X,Y)=(f_{6,C_*}({}^t(g(B)))X,Y), \;\; X,Y \in (\mathfrak{J}_{\sR})^C,
\end{align*}
that is, $ {}^tf_{6,C_*}(g(B))=f_{6,C_*}({}^t(g(B))) $, and moreover it follows that
\begin{align*}
\tau f_{6,C_*}(g(B))\tau X&=\tau(g(B)(\tau X)+(\tau X){}^t(g(B)))=(\tau g(B))X+X(\tau{}^t(g(B)))
\\
&=(\tau g(B))X+X{}^t(\tau g(B))=f_{6,C_*}(\tau g(B))X,\;\; X \in (\mathfrak{J}_{\sR})^C,
\end{align*}
that is, $ \tau f_{6,C_*}(g(B))\tau=f_{6,C_*}(\tau g(B)) $. In addition, it is easy to verify that $ -\tau\,{}^t(g(B))=g(\tau B)$. With above, the claim is shown. Hence $ B \in \mathfrak{su}(3) $ follows from $ -\tau\,{}^t(f_{6,C_*}(g(B)))\tau=f_{6,C_*}(g(B)) $.

\noindent As for the remaining parts, $ L \in \mathfrak{S}(3,\C) $ follows from $ -\tau(\ov{\iota}L+\iota\ov{L})=\ov{\iota}L+\iota\ov{L} $ and $ \tau(-(\iota L+\ov{\iota}\ov{L}))=\ov{\iota}L+\iota\ov{L} $ and we have $ s \in \R e_1 $ from $ 3\tau (\iota-\ov{\iota})s=3(\iota-\ov{\iota})s $. Thus there exists $ B+L(e_2E)+sE \in \mathfrak{sp}(3), B \in \mathfrak{su}(3) , L \in \mathfrak{S}(3,\C), s \in \Re_1$ such that $ \varPhi=f_{7,C_*}(B+L(e_2E)+sE) $. The proof of surjective is completed.


Therefore we have the required isomorphism
   \begin{align*}
      \mathfrak{e}_{7,\sR} \cong \mathfrak{sp}(3).
   \end{align*}
\end{proof}

\if0
We will determine the center of the group $ E_{7,\sR} $.

\begin{theorem}\label{theorem 5.//}
   The center $ z(E_{7,\sR}) $ of the group $ E_{7,\sR} $ is the cyclic group of order two:
   \begin{align*}
      z(E_{7,\sR}) = \{ 1, -1 \} \cong \Z_2.
   \end{align*}
\end{theorem}
\begin{proof}
Let $\alpha \in z(E_{7,\sR})$. From the commutativity with $\beta \in E_{6,\sR} \subset E_{7,\sR}$, we have $\beta\alpha\dot{1} = \alpha\beta\dot{1} = \alpha\dot{1}$. Set $\alpha\dot{1} = (X, Y, \xi, \eta) \in (\mathfrak{M}_{\sR})^C$, then from $(\beta X, \tau\beta\tau Y, \xi, \eta) = (X, Y, \xi, \eta)$, we have
   \begin{align*}
      \beta X = X, \;\; \tau\beta\tau Y = Y  \quad \text{for all} \quad \beta \in E_{6,\sR}.
   \end{align*}
   Hence we have $X = Y = 0$. Indeed, note that $ z(E_{6,\sR})=z(SU(3))(=\{1, \omega 1, \omega^2 1 \}) $ follows from Theorem \ref{theorem 5.0.9}. By letting $ \omega 1 \in z(E_{6,\sR}) , \omega \in C, \omega^3=1, \omega \not=1$ as $ \beta $, we easily obtain $ X=Y=0 $.
   Thus $\alpha\dot{1}$ is of the form
   \begin{align*}
      \alpha\dot{1} = (0, 0, \xi, \eta).
   \end{align*}
   Since $\alpha\dot{1} \in (\mathfrak{M}_{\sR})^C$, we have $\xi\eta = 0$, that is, $ \xi=0 $ or $ \eta=0 $. Here, suppose $\xi = 0$, then we have $\alpha\dot{1} = (0, 0, 0, \eta), \eta \neq 0$. Here, note that $ \phi(\theta) \in E_{7,\sR} $ for $ \theta \in U(1)=\{\theta \in C^*\,|\,(\tau\,{}^t\theta)\theta=1\}  $, from the commutativity with $\phi(\theta) $, we have
\begin{align*}
      (0, 0, 0, \theta^{-3}\eta) &= \phi(\theta)(0, 0, 0, \eta) = \phi(\theta)\alpha\dot{1}=\alpha\phi(\theta)\dot{1}
      \\
      &= \alpha(0, 0, \theta^3, 0)=\theta^3\alpha\dot{1}
      \\
      &= (0, 0, 0, \theta^3\eta),
\end{align*}
that is, $\theta^{-3}\eta = \theta^3\eta$ for all $\theta \in U(1)$. This is a contradiction. Hence, since we see $\xi \neq 0$, we have $\eta = 0$, that is, $\alpha\dot{1} = (0, 0, \xi, 0)$. Here, since $ \langle\alpha \dot{1}, \alpha \dot{1}\rangle=\langle \dot{1}, \dot{1}\rangle $ holds, we have $ (\tau\xi)\xi=1 $. Subsequently, set $ \alpha \text{\d{$ 1 $}}=(X',Y',\xi',\eta') $, then it follows that
\begin{align*}
\xi'=\langle \dot{1}, \alpha \text{\d{$ 1 $}} \rangle=\langle \xi^{-1}\alpha \dot{1}, \alpha \text{\d{$ 1 $}}\rangle=\tau\xi^{-1}\langle \alpha \dot{1}, \alpha \text{\d{$ 1 $}}\rangle=\tau\xi^{-1}\langle\dot{1},\text{\d{$ 1 $}}\rangle=0,
\end{align*}
that is, $ \xi'=0 $. Moreover, it follows that
\begin{align*}
\eta'=\{\dot{1},\alpha \text{\d{$ 1 $}}\}=\{\xi^{-1}\alpha \dot{1}, \alpha \text{\d{$ 1 $}}\}=\xi^{-1}\{\alpha \dot{1}, \alpha \text{\d{$ 1 $}}\}=\xi^{-1}\{\dot{1},\text{\d{$ 1 $}}\}=\xi^{-1},
\end{align*}
that is, $ \eta'=\xi^{-1} $. In addition, it follows from $ \langle\alpha \text{\d{$ 1 $}}, \alpha \text{\d{$ 1 $}}\rangle=\langle\text{\d{$ 1 $}}, \text{\d{$ 1 $}}\rangle=1 $ that
\begin{align*}
\langle X, X \rangle+\langle Y, Y \rangle+(\tau\xi^{-1})\xi^{-1}=1.
\end{align*}
Hence, we have $ X=Y=0 $ from $ (\tau\xi)\xi=1 $, so that $ \alpha \text{\d{$ 1 $}}=(0,0,0,\xi^{-1}) $.

\noindent From the commutativity with $\lambda \in E_{7,\sR}$, we have
\begin{align*}
(0, 0, 0, -\xi)= \lambda(0, 0, \xi, 0) = \lambda\alpha\dot{1} = \alpha\lambda\dot{1} = \alpha(-\text{\d{$ 1 $}})
= (0, 0, 0, - \xi^{-1}),
\end{align*}
that is, $ \xi = \xi^{-1} $. Hence we have $ \xi=1 $ or $ \xi=-1 $.

In the case where $\xi =1$. From $ \alpha\dot{1}=\dot{1} $, we have $\alpha \in E_{6,\sR}$ (Proposition \ref{proposition 6.13}), so that we see $\alpha \in z(E_{6,\sR}) = \{1, \omega 1, \omega^2 1 \}  $ as a necessary condition. Then $ \alpha $ is expressed by
   \begin{align*}
      \alpha = \begin{pmatrix}
         \omega'1 & 0 & 0 & 0 \cr
         0 & \tau{\omega'}1 & 0 & 0 \cr
         0 & 0 & 1 & 0 \cr
         0 & 0 & 0 & 1
      \end{pmatrix}, \,\, \omega' = 1, \omega \;\text{or} \; \omega^2
   \end{align*}
   as the element of $ E_{7,\sR} $.
   Again from the commutativity with $\lambda$, we have
   \begin{align*}
      (0, \omega'X, 0, 0) &= - \lambda(\omega'X, 0, 0, 0) = - \lambda\alpha(X, 0, 0, 0)
      \\
      &= - \alpha\lambda(X, 0, 0, 0) = \alpha(0, X, 0, 0)
      \\
      &= (0, {\omega'}^{-1}X, 0, 0),
   \end{align*}
   for all $X \in (\mathfrak{J}_{\sR})^C$, that is, $\omega' = {\omega'}^{-1}$. This implies  $\omega' = 1$. Hence we have $\alpha = 1$.

   In the case where $\xi = - 1$, we have $- \alpha \in z(E_{6,\sR})$, so that we have $- \alpha = 1$ by the similar argument as above.

   With above, we obtain $ z(E_{7,\sR}) \subset \{ 1, -1 \} $, and vice versa.

   Therefore we have the required result
   \begin{align*}
      z(E_{7,\sR}) = \{ 1, -1 \} \cong \Z_2
   \end{align*}
\end{proof}
\fi

Now, we will determine the structure of the group $ E_{7,\sR} $.

\begin{theorem}\label{theorem 5.16}
   The group $ E_{7,\sR} $ is isomorphic to the group $ Sp(3) ${\rm :} $ E_{7,\sR} \cong Sp(3) $.
\end{theorem}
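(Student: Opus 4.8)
The plan is to mirror the argument of Theorem~\ref{theorem 5.10} in the compact setting. First I would invoke that $E_{7,\sR}$ is connected: this was established in Corollary~\ref{corollary 5.12}, where the connectedness of $((E_{7,\sR})^C)^{\tau\lambda}=E_{7,\sR}$ (Proposition~\ref{proposition 5.11}) follows from Lemma~\ref{lemma 2.2} together with the simple connectedness of $(E_{7,\sR})^C\cong Sp(3,\H^C)\cong Sp(3,C)$. Next, Theorem~\ref{theorem 5.15} supplies the Lie algebra isomorphism $\mathfrak{e}_{7,\sR}\cong\mathfrak{sp}(3)$. Since $Sp(3)$ is a simply connected compact Lie group whose center is $\{E,-E\}\cong\Z_2$, any connected Lie group with Lie algebra $\mathfrak{sp}(3)$ must be isomorphic to either $Sp(3)$ or $Sp(3)/\Z_2$. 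Hence $E_{7,\sR}$ is one of these two groups.

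To decide between them I would determine the center $z(E_{7,\sR})$; the claim is that $z(E_{7,\sR})=\{1,-1\}\cong\Z_2$, which excludes the centerless quotient $Sp(3)/\Z_2$ and forces $E_{7,\sR}\cong Sp(3)$. The center computation proceeds exactly as in Theorem~\ref{theorem 5.9}, but restricted to the compact subgroups and one-parameter families. Let $\alpha\in z(E_{7,\sR})$ and set $\alpha\dot{1}=(X,Y,\xi,\eta)$. Commuting $\alpha$ with every $\beta\in E_{6,\sR}\subset E_{7,\sR}$ and using $z(E_{6,\sR})=z(SU(3))=\{1,\omega 1,\omega^2 1\}$ (via Theorem~\ref{theorem 4.9}) forces $X=Y=0$, so $\alpha\dot{1}=(0,0,\xi,\eta)$; the membership $\alpha\dot{1}\in(\mathfrak{M}_{\sR})^C$ then gives $\xi\eta=0$. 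Commuting with $\phi(\theta)$ for $\theta\in U(1)$ rules out $\xi=0$, whence $\eta=0$ and $\alpha\dot{1}=(0,0,\xi,0)$. A parallel analysis of $\alpha\text{\d{$ 1 $}}$, together with $\{\alpha\dot{1},\alpha\text{\d{$ 1 $}}\}=1$ and commutativity with $\lambda$, yields $\xi=\pm 1$. Finally, for $\xi=1$ the relations $\alpha\dot{1}=\dot{1}$ and $\alpha\text{\d{$ 1 $}}=\text{\d{$ 1 $}}$ place $\alpha$ in $E_{6,\sR}$ (Proposition~\ref{proposition 5.13}), so $\alpha\in z(E_{6,\sR})$, and commuting once more with $\lambda$ forces $\omega'=\omega'^{-1}$, i.e. $\alpha=1$; the case $\xi=-1$ gives $-\alpha=1$ by the same reasoning.

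I expect the main obstacle to be this center determination, and in particular the careful bookkeeping of the Hermitian and skew-symmetric inner products in the compact form. One must verify that $\phi(\theta)\in E_{7,\sR}$ precisely for $\theta\in U(1)$ (rather than for all of $C^*$ as in the complex case of Theorem~\ref{theorem 5.9}), and that the compatibility relations $\langle\alpha\dot{1},\alpha\dot{1}\rangle=\langle\dot{1},\dot{1}\rangle$ and $\{\alpha\dot{1},\alpha\text{\d{$ 1 $}}\}=1$ are exploited correctly to pin down $\xi$ under the conjugation condition $\tau\lambda\alpha\lambda^{-1}\tau=\alpha$. Everything else—connectedness (Corollary~\ref{corollary 5.12}) and the Lie algebra isomorphism (Theorem~\ref{theorem 5.15})—is already in hand, so once $z(E_{7,\sR})\cong\Z_2$ is established the conclusion is immediate: among the two connected candidates $Sp(3)$ and $Sp(3)/\Z_2$ sharing the Lie algebra $\mathfrak{sp}(3)$, only $Sp(3)$ has a center of order two, and therefore $E_{7,\sR}\cong Sp(3)$.
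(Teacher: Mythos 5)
Your proposal is correct, and its skeleton (connectedness of $E_{7,\sR}$ via Proposition~\ref{proposition 5.11} and Corollary~\ref{corollary 5.12}, the Lie algebra isomorphism $\mathfrak{e}_{7,\sR}\cong\mathfrak{sp}(3)$ of Theorem~\ref{theorem 5.15}, and then distinguishing $Sp(3)$ from $Sp(3)/\Z_2$ by the center) is exactly the paper's. Where you diverge is in how the center information is obtained: you propose to recompute $z(E_{7,\sR})$ from scratch by running the whole argument of Theorem~\ref{theorem 5.9} inside the compact group (commuting with the center of $E_{6,\sR}\cong SU(3)$, with $\phi(\theta)$ for $\theta\in U(1)$, and with $\lambda$, plus the unitarity bookkeeping you flag). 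The paper instead avoids this entirely with a two-line observation: the elements $\pm 1$, which form $z((E_{7,\sR})^C)$ by Theorem~\ref{theorem 5.9}, lie in $E_{7,\sR}$ and commute with everything in it, so $z((E_{7,\sR})^C)\subset z(E_{7,\sR})$; hence $z(E_{7,\sR})$ contains at least two elements, which already excludes the centerless quotient $Sp(3)/\Z_2$. Note that a lower bound on the center suffices here --- one never needs the exact equality $z(E_{7,\sR})=\{1,-1\}$. Your route is sound (all the compact-form verifications you list do go through, e.g.\ $\phi(\theta)\in E_{7,\sR}$ exactly when $\theta\in U(1)$, and Proposition~\ref{proposition 5.13} handles the case $\xi=1$), and it yields the full center as a byproduct; but it costs a page of computation that the inclusion $z((E_{7,\sR})^C)\subset z(E_{7,\sR})$ renders unnecessary, since the exact center then follows for free from $E_{7,\sR}\cong Sp(3)$.
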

\begin{proof}
First, it is to verify that $ z((E_{7,\sR})^C) \subset z(E_{7,\sR}) $, where both are the center of the groups $ (E_{7,\sR})^C $ and $ E_{7,\sR} $, respectively. Indeed, let $ \delta \in z((E_{7,\sR})^C) $. Then, since $ E_{7,\sR} \subset (E_{7,\sR})^C $, $ \delta $ commutes with any elements of $ E_{7,\sR} $. Hence we have $ \delta \in z(E_{7,\sR}) $, that is, $ z((E_{7,\sR})^C) \subset z(E_{7,\sR}) $. Here, since $ (E_{7,\sR})^C \cong Sp(3,\H^C) $(Theorem \ref{theorem 5.10}) and $ Sp(3,\H^C) $ is simply connected (\cite[Theorems 3.6 (2), 6.13]{iy11}), the group $  (E_{7,\sR})^C $ is also simply connected. Hence the group $ E_{7,\sR}=((E_{7,\sR})^C)^{\tau\lambda} $ is connected (Proposition \ref{proposition 5.11}). In addition, since the Lie algebra $ \mathfrak{e}_{7,\sR} $ of its group is isomorphic to the Lie algebra $ \mathfrak{sp}(3) $ (Theorem \ref{theorem 5.15}), the group $ E_{7,\sR} $ is isomorphic to either of the following groups
\begin{align*}
Sp(3),\quad Sp(3)/\Z_2.
\end{align*}
In the latter case, if $ E_{7,\sR} \cong Sp(3/\Z_2) $, then the center $ z(Sp(3)/\Z_2) $ is trivial, so that the center $ z(E_{7,\sR}) $ is also trivial. However, since $  z((E_{7,\sR})^C) \subset z(E_{7,\sR}) $ as mentioned above and $ z((E_{7,\sR})^C)=\{1,-1\} $ (Theorem \ref{theorem 5.9}), the elements $ 1, -1 $ at least belong to $z(E_{7,\sR}) $. Thus this case is impossible.

Therefore $ E_{7,\sR} $ has to be isomorphic to $ Sp(3) $:
\begin{align*}
       E_{7,\sR} \cong Sp(3).
    \end{align*}
\end{proof}



We move the determination of the root system and the Dynkin diagram of the Lie algebra $ (\mathfrak{e}_{7,\sR})^C $.
\vspace{1mm}

We define a Lie subalgebra $ \mathfrak{h} $ of $ (\mathfrak{e}_{7,\sR})^C $ by
\begin{align*}
\mathfrak{h}_7=\left\lbrace \varPhi_7=\varPhi(\phi,0,0,\nu) \relmiddle{|}
\begin{array}{l}
\phi=\tilde{T}_0 \in \mathfrak{h}_6,
\\
\quad T_0=\tau_1E_1+\tau_2E_2+\tau_3E_3 \in (({\mathfrak{J}_{\sC}})^C)_0,
\\
\qquad\quad \tau_1+\tau_2+\tau_3=0, \tau_i \in C,
\\
\nu \in C
\end{array}
\right\rbrace.
\end{align*}
Then the Lie subalgebra $ \mathfrak{h}_7 $ is a Cartan subalgebra of $ (\mathfrak{e}_{7,\sR})^C $.

\begin{theorem}\label{theorem 5.17}
    The roots $ \varDelta $ of $ (\mathfrak{e}_{7,\sR})^C  $ relative to $ \mathfrak{h}_7 $ are given by
    \begin{align*}
    \varDelta=\left\lbrace
    \begin{array}{l}
    \pm\dfrac{1}{2}(\tau_2-\tau_3),\pm\dfrac{1}{2}(\tau_3-\tau_1),\pm\dfrac{1}{2}(\tau_1-\tau_2)
    \\[2mm]
    \pm(\tau_1+\dfrac{2}{3}\nu), \pm(\tau_2+\dfrac{2}{3}\nu), \pm(\tau_3+\dfrac{2}{3}\nu),
    \\[2mm]
    \pm(\dfrac{1}{2}\tau_1-\dfrac{2}{3}\nu), \pm(\dfrac{1}{2}\tau_2-\dfrac{2}{3}\nu), \pm(\dfrac{1}{2}\tau_3-\dfrac{2}{3}\nu)
    \end{array}
    \right\rbrace .
    \end{align*}
\end{theorem}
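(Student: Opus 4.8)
The plan is to mimic the root computations carried out in Theorems \ref{theorem 3.6} and \ref{theorem 4.10}: fix a generic Cartan element $\varPhi_7 = \varPhi(\tilde{T}_0, 0, 0, \nu) \in \mathfrak{h}_7$ with $T_0 = \tau_1 E_1 + \tau_2 E_2 + \tau_3 E_3$, exhibit an explicit basis of root-vector candidates spanning a complement of $\mathfrak{h}_7$ in $(\mathfrak{e}_{7,\sR})^C$, and verify directly that each is an eigenvector of $\ad\,\varPhi_7$ with the stated eigenvalue. The only bracket rules I need are the ones describing how $\varPhi_7$ acts on the three natural summands: for $\phi \in (\mathfrak{e}_{6,\sR})^C$ and $A, B \in (\mathfrak{J}_{\sR})^C$,
\begin{align*}
[\varPhi_7, \varPhi(\phi, 0, 0, 0)] &= \varPhi([\tilde{T}_0, \phi], 0, 0, 0), \\
[\varPhi_7, \varPhi(0, A, 0, 0)] &= \varPhi(0, \tilde{T}_0 A + (2/3)\nu A, 0, 0), \\
[\varPhi_7, \varPhi(0, 0, B, 0)] &= \varPhi(0, 0, -\tilde{T}_0 B - (2/3)\nu B, 0),
\end{align*}
which are specializations of the Lie bracket of $(\mathfrak{e}_{7,\sR})^C$ (using $\tilde{T}_0 X = T_0 \circ X$ and ${}^t\tilde{T}_0 = \tilde{T}_0$, since $T_0$ is diagonal).

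The first line of roots comes essentially for free: the map $\phi \mapsto \varPhi(\phi, 0, 0, 0)$ embeds $(\mathfrak{e}_{6,\sR})^C$ as a subalgebra, the $\phi$-component of $\varPhi_7$ is exactly the Cartan element $\tilde{T}_0 \in \mathfrak{h}_6$ used in Theorem \ref{theorem 4.10}, and the first bracket above shows $\ad\,\varPhi_7$ restricted to this subalgebra agrees with the $\mathfrak{h}_6$-action. Hence the root vectors $\varPhi(\tilde{A}_i(1) \pm \tilde{F}_i(1), 0, 0, 0)$ carry precisely the $(\mathfrak{e}_{6,\sR})^C$-roots $\pm(1/2)(\tau_{i+1} - \tau_{i+2})$ of Theorem \ref{theorem 4.10}, giving the top row of $\varDelta$.

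For the remaining twelve roots I would compute the two diagonal actions. On the $A$-part one has $\tilde{T}_0 E_i = T_0 \circ E_i = \tau_i E_i$, while $\tilde{T}_0 F_i(1) = T_0 \circ F_i(1) = (1/2)(\tau_{i+1} + \tau_{i+2}) F_i(1) = -(1/2)\tau_i F_i(1)$, the last step using $\tau_1 + \tau_2 + \tau_3 = 0$. Combined with the $(2/3)\nu$ shift this yields
\begin{align*}
[\varPhi_7, \varPhi(0, E_i, 0, 0)] &= (\tau_i + (2/3)\nu)\,\varPhi(0, E_i, 0, 0), \\
[\varPhi_7, \varPhi(0, F_i(1), 0, 0)] &= (-(1/2)\tau_i + (2/3)\nu)\,\varPhi(0, F_i(1), 0, 0),
\end{align*}
and the analogous brackets on the $B$-part produce the opposite shifts $-(\tau_i + (2/3)\nu)$ and $(1/2)\tau_i - (2/3)\nu$. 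These are exactly the second and third rows of $\varDelta$, with root vectors $\varPhi(0, E_i, 0, 0)$, $\varPhi(0, 0, E_i, 0)$, $\varPhi(0, F_i(1), 0, 0)$, $\varPhi(0, 0, F_i(1), 0)$.

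Finally I would argue completeness by a dimension count, as in the earlier theorems: the six $\phi$-type, six $A$-type and six $B$-type root vectors together with $\mathfrak{h}_7$ are manifestly linearly independent and number $18 + 3 = 21 = \dim_C(\mathfrak{e}_{7,\sR})^C$, so they span $(\mathfrak{e}_{7,\sR})^C$ and the list above exhausts $\varDelta$. The one point demanding care — and the main source of potential sign errors — is the $B$-part bracket: one must track both the transpose ${}^t\tilde{T}_0 = \tilde{T}_0$ and the sign convention $-{}^t\phi - (2/3)\nu$ governing the $B$-component, since it is precisely this that flips the $A$-part roots to their negatives and pairs them correctly.
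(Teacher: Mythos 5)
Your proposal is correct and follows essentially the same route as the paper: the paper likewise inherits the top row of roots from the embedding $(\mathfrak{e}_{6,\sR})^C \hookrightarrow (\mathfrak{e}_{7,\sR})^C$, computes $[\varPhi_7,\varPhi(0,E_i,0,0)]$ and $[\varPhi_7,\varPhi(0,F_i(1),0,0)]$ (and their $B$-component counterparts with the sign flip coming from $-{}^t\phi-(2/3)\nu$) to get the remaining twelve roots, and closes by observing that $\mathfrak{h}_7$ together with the listed root vectors spans $(\mathfrak{e}_{7,\sR})^C$. Your explicit eigenvalue computations $\tilde{T}_0E_i=\tau_iE_i$, $\tilde{T}_0F_i(1)=-(1/2)\tau_iF_i(1)$ and the dimension count $18+3=21$ match the paper's argument in substance.
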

\begin{proof}
    The roots of $ (\mathfrak{e}_{6,\sR})^C   $ are also the roots of $ (\mathfrak{e}_{7,\sR})^C  $. Indeed, let the root $ \alpha $ of $ (\mathfrak{e}_{6,\sR})^C  $ and its associated root vector $ S \in (\mathfrak{e}_{6,\sR})^C  \subset (\mathfrak{e}_{7,\sR})^C  $. Then we have
    \begin{align*}
    [\varPhi_7, S]&=[\varPhi(\phi,0,0,\nu), \varPhi(S,0,0,0)]
    \\
    &=\varPhi([\phi,S],0,0,0)
    \\
    &=\varPhi(\alpha(\phi)S,0,0,0)
    \\
    &=\alpha(\phi)\varPhi(S,0,0,0)
    \\
    &=\alpha(\varPhi)S.
    \end{align*}

    We will determine the remainders of roots. We will show a few examples.
    First, let $ \varPhi_7=\varPhi(\phi,0,\allowbreak 0,\nu) \in \mathfrak{h}_7 $ and $ \varPhi(0,E_1,0,0) \in (\mathfrak{e}_{7,\sR}) ^C$. Then it follows that
    \begin{align*}
    [\varPhi_7, \varPhi(0,E_1,0,0)]&=[\varPhi(\phi, 0,0,\nu), \varPhi(0,E_1,0,0)]
    \\
    &=\varPhi(0,(\phi+(2/3)\nu)E_1,0,0)
    \\
    &=\varPhi(0,(\tau_1+(2/3)\nu)E_1,0,0)
    \\
    &=(\tau_1+(2/3)\nu)\varPhi(0,E_1,0,0),
    \end{align*}
that is, $ [\varPhi_7, \varPhi(0,E_1,0,0)]=(\tau_1+(2/3)\nu)\varPhi(0,E_1,0,0) $. Hence we see that $ \tau_1+(2/3)\nu $ is a root and $ \varPhi(0,E_1,0,0) $ is an associated root vector. Next, let $ \varPhi(0, F_1(1),0,0) \in (\mathfrak{e}_{7,\sR})^C $. Then it follows that
    \begin{align*}
    [\varPhi_7, \varPhi(0,F_1(1),0,0)]&=[\varPhi(\phi, 0,0,\nu), \varPhi(0,F_1(1),0,0)]
    \\
    &=\varPhi(0,(\phi+(2/3)\nu)F_1(1),0,0)
    \\
    &=\varPhi(0,\phi F_1(1)+(2/3)\nu F_1(1), 0,0)
    \\
    &=\varPhi(0,(1/2)(\tau_2+\tau_3)F_1(1)+(2/3)\nu F_1(1),0,0)
    \\
    &=((1/2)(\tau_2+\tau_3)+(2/3)\nu)\varPhi(0,F_1(1),0,0),
    \end{align*}
that is, $ [\varPhi_7, \varPhi(0,F_1(1),0,0)]=(-(1/2)\tau_1+(2/3)\nu)\varPhi(0,F_1(1),0,0) $. Hence we see that $ -(1/2)\tau_1+(2/3)\nu $ is a root and $  \varPhi(0,F_1(1),0,0) $ is an associated root vector. The remainders of roots and these associated root vectors except ones of the Lie algebra $ (\mathfrak{e}_{6,\sR})^C $ and above are obtained as follows:
    \begin{longtable}[c]{ll}
        \hspace{3mm}
        $ \text{roots}  $
        & \hspace{-5mm}
        $ \text{associated root vectors} $
        \cr
        $ \tau_2+(2/3)\nu $
        & $ \varPhi(0,E_2,0,0) $
        \cr
        $ \tau_3+(2/3)\nu $
        & $ \varPhi(0,E_3,0,0) $
        \cr
        $ -(\tau_1+(2/3)\nu) $
        & $ \varPhi(0,0,E_1,0) $
        \cr
        $ -(\tau_2+(2/3)\nu) $
        & $ \varPhi(0,0,E_2,0) $
        \cr
        $ -(\tau_3+(2/3)\nu) $
        & $ \varPhi(0,0,E_3,0) $
        \cr
        $ (1/2)\tau_1-(2/3)\nu $
        & $ \varPhi(0,0,F_1(1),0) $
        \cr
        $ -(1/2)\tau_2+(2/3)\nu $
        & $ \varPhi(0,F_2(1),0,0) $
        \cr
        $ (1/2)\tau_2-(2/3)\nu $
        & $ \varPhi(0,0,F_2(1),0) $
        \cr
        $ (-1/2)\tau_3+(2/3)\nu $
        & $ \varPhi(0,F_3(1),0,0) $
        \cr
        $ (1/2)\tau_3-(2/3)\nu $
        & $ \varPhi(0,0,F_3(1),0) $
    \end{longtable}
    Thus, since $ (\mathfrak{e}_{7,\sR})^C $ is spanned by $ \mathfrak{h}_{7,\sR} $ and associated root vectors above, the roots obtained above are all.
\end{proof}

In $ (\mathfrak{e}_{7,\sR})^C $, we define an inner product $ (\varPhi_1, \varPhi_2)_7 $ by
\begin{align*}
(\varPhi_1, \varPhi_2)_7:=-2(\phi_1,\phi_2)_6-4(A_1,B_2)-4(A_2,B_1)-\dfrac{8}{3}\nu_1\nu_2.
\end{align*}

Here, we will determine the Killing form of $ (\mathfrak{e}_{7,\sR})^C $.

\begin{theorem}\label{theorem 5.18}
   The killing form $ B_{7,\sR} $ of $ (\mathfrak{e}_{7,\sR})^C $ is given by
    \begin{align*}
    B_{7,\sR}(\varPhi_1,\varPhi_2)&=-2(\varPhi_1,\varPhi_2)_7
    \\
    &=-4(\phi_1,\phi_2)_6+8(A_1,B_2)+8(A_2,B_1)+\dfrac{16}{3}\nu_1\nu_2
    \\
    &=\dfrac{8}{3}B_{6,\sR}(\phi_1,\phi_2)+8(A_1,B_2)+8(A_2,B_1)+\dfrac{16}{3}\nu_1\nu_2
    \\
    &=\dfrac{8}{5}\tr(\varPhi_1\varPhi_2).
    \end{align*}
\end{theorem}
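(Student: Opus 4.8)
The plan is to follow verbatim the template of Theorems \ref{theorem 3.7} and \ref{theorem 4.11}: exploit simplicity of the Lie algebra to reduce everything to a one-dimensional space of invariant forms, and then fix the proportionality constants on a single convenient element. First I would record that $(\mathfrak{e}_{7,\sR})^C$ is simple. This is immediate from Theorem \ref{theorem 5.3}, which gives $(\mathfrak{e}_{7,\sR})^C \cong \mathfrak{sp}(3,\H^C)$, a simple Lie algebra of type $C_3$. Hence the space of $\ad$-invariant symmetric bilinear forms on $(\mathfrak{e}_{7,\sR})^C$ is one-dimensional. Since both the trace form $\tr(\varPhi_1\varPhi_2)$ (invariant, as it comes from the representation of $(\mathfrak{e}_{7,\sR})^C$ on $(\mathfrak{P}_{\sR})^C$) and the form $(\varPhi_1,\varPhi_2)_7$ are $\ad$-invariant, there exist constants $k, k' \in C$ with
\[
B_{7,\sR}(\varPhi_1,\varPhi_2) = k(\varPhi_1,\varPhi_2)_7 = k'\tr(\varPhi_1\varPhi_2),
\]
so the whole statement reduces to computing $k$ and $k'$.

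Next I would evaluate all three forms on the single test element $\varPhi_1 = \varPhi_2 = \varPhi(0,0,0,1)$, i.e.\ $\phi = 0$, $A = B = 0$, $\nu = 1$, which lies in the Cartan subalgebra $\mathfrak{h}_7$ and on which none of the forms vanishes. Directly from the definition, $(\varPhi_1,\varPhi_2)_7 = -\tfrac{8}{3}$. For the Killing form I would use Theorem \ref{theorem 5.17} through the identity $B_{7,\sR}(H,H) = \sum_{\alpha \in \varDelta}\alpha(H)^2$ for $H \in \mathfrak{h}_7$: with $\tau_i = 0$ and $\nu = 1$ the six roots $\pm(\tau_i + \tfrac{2}{3}\nu)$ and the six roots $\pm(\tfrac{1}{2}\tau_i - \tfrac{2}{3}\nu)$ each evaluate to $\pm\tfrac{2}{3}$ and so contribute $\tfrac{4}{9}$ apiece, while the six $\mathfrak{e}_6$-roots vanish, giving $B_{7,\sR}(\varPhi_1,\varPhi_1) = 12\cdot\tfrac{4}{9} = \tfrac{16}{3}$. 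Matching $k\,(\varPhi_1,\varPhi_1)_7 = \tfrac{16}{3}$ yields $k = -2$. Finally, computing $\tr(\varPhi(0,0,0,1)^2)$ directly on $(\mathfrak{P}_{\sR})^C$, where the operator acts as multiplication by $-\tfrac{1}{3}, \tfrac{1}{3}, 1, -1$ on the four summands $(\mathfrak{J}_{\sR})^C, (\mathfrak{J}_{\sR})^C, C, C$ of respective dimensions $6,6,1,1$, gives $\tr(\varPhi_1\varPhi_2) = 6\cdot\tfrac{1}{9} + 6\cdot\tfrac{1}{9} + 1 + 1 = \tfrac{10}{3}$; matching $k'\cdot\tfrac{10}{3} = \tfrac{16}{3}$ yields $k' = \tfrac{8}{5}$. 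The remaining intermediate expressions in the statement are then pure algebra: substituting the definition of $(\cdot,\cdot)_7$ into $-2(\cdot,\cdot)_7$ produces the term-by-term form, and the relation $B_{6,\sR}(\phi_1,\phi_2) = \tfrac{3}{2}(\phi_1,\phi_2)_6$ of Theorem \ref{theorem 4.11} converts the $(\phi_1,\phi_2)_6$ term into $\tfrac{8}{3}B_{6,\sR}(\phi_1,\phi_2)$.

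I expect the main obstacle to be the Killing-form trace, which \emph{a priori} demands controlling $\ad$ on the full $21$-dimensional algebra; the decisive simplification is to stay inside $\mathfrak{h}_7$ and read the value off the root list of Theorem \ref{theorem 5.17} as a sum of squares of linear forms. A secondary point worth flagging is the $\ad$-invariance of $(\cdot,\cdot)_7$, which is needed to invoke the uniqueness of the invariant form; this is the exact analogue of the verification already used for $(\cdot,\cdot)_6$ and $(\cdot,\cdot)_4$ and may be cited as such rather than redone. One test element suffices to fix both $k$ and $k'$ precisely because proportionality is already guaranteed by simplicity, so no separate verification of the $(A_1,B_2)$, $(A_2,B_1)$, or $\phi$-components is required.
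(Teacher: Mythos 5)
Your proposal is correct and has the same skeleton as the paper's proof: invoke simplicity (via $(\mathfrak{e}_{7,\sR})^C \cong \mathfrak{sp}(3,\H^C)$, Theorem \ref{theorem 5.3}) to reduce to two proportionality constants $k,k'$, then fix both on the single element $\varPhi_0 = \varPhi(0,0,0,1)$, for which $(\varPhi_0,\varPhi_0)_7 = -8/3$ and $\tr(\varPhi_0\varPhi_0) = 10/3$ exactly as you compute. The one step you do differently is the evaluation $B_{7,\sR}(\varPhi_0,\varPhi_0) = 16/3$: the paper computes $(\ad\varPhi_0)^2$ directly, using $[\varPhi_0,[\varPhi_0,\varPhi(\phi,A,B,\nu)]] = \varPhi(0,\tfrac{4}{9}A,\tfrac{4}{9}B,0)$ and taking the trace over the $12$-dimensional $(A,B)$-part, whereas you stay in the Cartan subalgebra and use $B_{7,\sR}(H,H) = \sum_{\alpha\in\varDelta}\alpha(H)^2$ with the root list of Theorem \ref{theorem 5.17}. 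Both give $12\cdot\tfrac{4}{9} = \tfrac{16}{3}$ with comparable effort; the paper's route is self-contained (only the bracket formula is needed), while yours additionally relies on the completeness of the root list and the one-dimensionality of the root spaces (which does follow from $18+3=21$ in Theorem \ref{theorem 5.17}), but generalizes better when $\ad^2$ is awkward to compute directly. Your flag about the $\ad$-invariance of $(\cdot,\cdot)_7$ is well taken --- the paper also leaves this implicit, citing only the analogy with ${\mathfrak{e}_7}^C$. One incidental observation your ``pure algebra'' step would surface: expanding $-2(\varPhi_1,\varPhi_2)_7$ gives $+4(\phi_1,\phi_2)_6 + 8(A_1,B_2)+8(A_2,B_1)+\tfrac{16}{3}\nu_1\nu_2$, so the coefficient $-4$ in the second displayed line of the theorem is a sign typo (the third line, $\tfrac{8}{3}B_{6,\sR} = \tfrac{8}{3}\cdot\tfrac{3}{2}(\phi_1,\phi_2)_6 = 4(\phi_1,\phi_2)_6$, confirms this).
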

\begin{proof}
    The Lie algebra $ (\mathfrak{e}_{7,\sR})^C $ is simple because of $ (\mathfrak{e}_{7,\sR})^C \cong \mathfrak{sp}(3,\H^C) $ (Corollary \ref{corollary 4.6}). By the analogues argument as that in $ {\mathfrak{e}_7}^C $, we will determine the values $ k, k' \in C $
    such that
    \begin{align*}
    B_{7,\sR}(\varPhi_1, \varPhi_2)
    =k (\varPhi_1, \varPhi_2)_7=k' \tr(\varPhi_1, \varPhi_2).
    \end{align*}
    First, in order to determine $ k $, let $ \varPhi_1=\varPhi_2=\varPhi(0,0,0,1)=:\varPhi_0 $. Then we have
    \begin{align*}
    (\varPhi_0, \varPhi_0)_7=-\dfrac{8}{3}.
    \end{align*}
     On the other hand, if follows from
     \begin{align*}
     (\ad \varPhi_0)(\ad \varPhi_0)\varPhi(\phi,A,B,\nu)&=[\varPhi_0,[\varPhi_0, \varPhi(\phi,A,B,\nu)]]
     \\
     &=[\varPhi_0,\varPhi(0,\dfrac{2}{3}A,-\dfrac{2}{3}B,0)]
     \\
     &=\varPhi(0,\dfrac{4}{9}A,\dfrac{4}{9}B,0).
     \end{align*}
     that $ B_{7,\sR}(\ad \varPhi_0,\ad \varPhi_0)=\tr((\ad \varPhi_0)(\ad \varPhi_0))=(4/9)\times 6 \times 2=16/3 $. Hence we have $ k=-2 $.

     Next, we will determine $ k' $. Similarly, it follows from
     \begin{align*}
     \varPhi_0\varPhi_0(X,Y,\xi,\eta)=\varPhi_0(-\dfrac{1}{3}X,\dfrac{1}{3}Y,\xi,-\eta)=(\dfrac{1}{9}X,\dfrac{1}{9}Y,\xi,\eta)
     \end{align*}
     that $ \tr(\varPhi_0\varPhi_0)=(1/9)\times 6\times 2+1+1=10/3 $. Hence we have $ k'=8/5 $.
\end{proof}

Subsequently, we have the following theorem.

\begin{theorem}\label{theorem 5.19}
    In the root system $ \varDelta $ of Theorem {\rm \ref{theorem 5.17}},
    \begin{align*}
    \varPi=\{\alpha_1, \alpha_2, \alpha_3 \}
    \end{align*}
    is a fundamental root system of $ (\mathfrak{e}_{7,\sR})^C $, where
    $ \alpha_1=(1/2)(\tau_3-\tau_1),
    \alpha_2=(-1/2)\tau_3+(2/3)\nu,
    \alpha_3=-(\tau_2+(2/3)\nu) $.
    The Dynkin diagram of $ (\mathfrak{e}_{7,\sR})^C $ is given by
    \vspace{-2mm}

    {
        \setlength{\unitlength}{1mm}
        \scalebox{1.0}
        {\begin{picture}(100,20)
            \put(60,10){\circle{2}} \put(59,6){$\alpha_1$}
            \put(61,10){\line(1,0){8}}
            \put(70,10){\circle{2}} \put(69,6){$\alpha_2$}
            \put(70.7,9.2){$\langle$}
            \put(71.2,10.7){\line(1,0){8}}
            \put(71.2,9.3){\line(1,0){8}}
            \put(80,10){\circle{2}} \put(79,6){$\alpha_3$}

            \end{picture}}
    }
\end{theorem}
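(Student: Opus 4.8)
The plan is to imitate the two-stage argument used in the proof of Theorem~\ref{theorem 4.12}. First I would verify that $\varPi=\{\alpha_1,\alpha_2,\alpha_3\}$ is a base of the root system $\varDelta$ of Theorem~\ref{theorem 5.17}, and then I would compute the mutual inner products of the $\alpha_i$ from the Killing form $B_{7,\sR}$ in order to read off the bonds of the Dynkin diagram.

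For the first stage, using the relation $\tau_1+\tau_2+\tau_3=0$ I would express each of the six non-simple positive roots as a non-negative integer combination of $\alpha_1,\alpha_2,\alpha_3$; direct substitution gives, for instance,
\begin{align*}
\alpha_2+\alpha_3=\tfrac12(\tau_1-\tau_2),\quad
2\alpha_2+\alpha_3=\tau_1+\tfrac23\nu,\quad
2\alpha_1+2\alpha_2+\alpha_3=\tau_3+\tfrac23\nu,
\end{align*}
and similarly $\alpha_1+\alpha_2$, $\alpha_1+\alpha_2+\alpha_3$ and $\alpha_1+2\alpha_2+\alpha_3$ recover the three remaining positive roots. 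Since each of the nine positive roots of $\varDelta$ is obtained in this way exactly once, with non-negative coefficients, every root is an integer combination of the $\alpha_i$ whose coefficients all have one sign, and hence $\varPi$ is a fundamental system.

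For the second stage I would pass to the real form $\mathfrak{h}_{7,\sR}$ of $\mathfrak{h}_7$ and evaluate $B_{7,\sR}$ there. Because the $A,B$-components vanish on $\mathfrak{h}_7$, Theorem~\ref{theorem 5.18} reduces to $B_{7,\sR}(\varPhi,\varPhi')=\tfrac83 B_{6,\sR}(\phi,\phi')+\tfrac{16}{3}\nu\nu'$, which by the computation in the proof of Theorem~\ref{theorem 4.12} equals $4(\tau_1\tau_1'+\tau_2\tau_2'+\tau_3\tau_3')+\tfrac{16}{3}\nu\nu'$. Determining the canonical elements $\varPhi_{\alpha_i}\in\mathfrak{h}_{7,\sR}$ from $B_{7,\sR}(\varPhi_{\alpha_i},\varPhi_7)=\alpha_i(\varPhi_7)$ and evaluating the induced inner products should give
\begin{align*}
&(\alpha_1,\alpha_1)=(\alpha_2,\alpha_2)=\tfrac18,\quad (\alpha_3,\alpha_3)=\tfrac14,\quad (\alpha_1,\alpha_3)=0,
\\
&(\alpha_1,\alpha_2)=-\tfrac1{16},\quad (\alpha_2,\alpha_3)=-\tfrac18.
\end{align*}
These yield $\cos\theta_{12}=-\tfrac12$ (a single bond joining two roots of equal length), $\cos\theta_{23}=-\tfrac1{\sqrt2}$ together with $(\alpha_3,\alpha_3)/(\alpha_2,\alpha_2)=2$ (a double bond with $\alpha_3$ long), and $(\alpha_1,\alpha_3)=0$ (no bond between $\alpha_1$ and $\alpha_3$), which is exactly the diagram displayed in the statement.

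The computations are routine; the two places to be careful are the consistent use of the constraint $\tau_1+\tau_2+\tau_3=0$ when solving the three linear systems for the canonical elements, and the orientation of the double bond. The latter is the one genuinely substantive point: it is the length ratio $(\alpha_3,\alpha_3):(\alpha_2,\alpha_2)=2:1$ that certifies $\alpha_3$ as the long root and so pins the diagram down as $C_3$ rather than $B_3$, in agreement with $(\mathfrak{e}_{7,\sR})^C\cong\mathfrak{sp}(3,\H^C)$ from Theorem~\ref{theorem 5.3}.
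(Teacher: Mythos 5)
Your proposal is correct and follows essentially the same route as the paper: expressing the six non-simple positive roots in terms of $\alpha_1,\alpha_2,\alpha_3$, restricting the Killing form $B_{7,\sR}(\varPhi,\varPhi')=4(\tau_1\tau_1'+\tau_2\tau_2'+\tau_3\tau_3')+\tfrac{16}{3}\nu\nu'$ to $\mathfrak{h}_{7,\sR}$, solving for the canonical elements, and reading off the same inner products $(\alpha_1,\alpha_1)=(\alpha_2,\alpha_2)=\tfrac18$, $(\alpha_3,\alpha_3)=\tfrac14$, $(\alpha_1,\alpha_2)=-\tfrac1{16}$, $(\alpha_2,\alpha_3)=-\tfrac18$, $(\alpha_1,\alpha_3)=0$. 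Your closing observation that the length ratio pins the diagram as $C_3$ with $\alpha_3$ long, matching $(\mathfrak{e}_{7,\sR})^C\cong\mathfrak{sp}(3,\H^C)$, is exactly the substantive point the paper's diagram encodes.
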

\begin{proof}
    The remaining positive roots are expressed by $ \alpha_1,\alpha_2,\alpha_3 $ as follows:
    \begin{align*}
    (-1/2)(\tau_2-\tau_3)&=\alpha_1+\alpha_2+\alpha_3,
    \\
    (1/2)(\tau_1-\tau_2)&=\alpha_2+\alpha_3,
    \\
    \tau_1+(2/3)\nu&=2\alpha_2+\alpha_3,
    \\
    \tau_3+(2/3)\nu&=2\alpha_1+2\alpha_2+\alpha_3,
    \\
    (-1/2)\tau_1+(2/3)\nu&=\alpha_1+\alpha_2,
    \\
    (-1/2)\tau_2+(2/3)\nu&=\alpha_1+2\alpha_2+\alpha_3.
    \end{align*}
    Hence we see that $ \varPi=\{\alpha_1, \alpha_2, \alpha_3 \} $ is a fundamental root system of $ (\mathfrak{e}_{7,\sR})^C $. Let the real part $ \mathfrak{h}_{7,\sR} $ of $ \mathfrak{h}_7 $:
    \begin{align*}
    \mathfrak{h}_{7,\sR}=\left\lbrace \varPhi=\varPhi(\phi,0,0,\nu) \relmiddle{|}
    \begin{array}{l}
    \phi=\tilde{T}_0 \in \mathfrak{h}_{6,\sR},
    \\
    \quad T_0=\tau_1E_1+\tau_2E_2+\tau_3E_3 \in (\mathfrak{J}_{\sR})_0,
    \\
    \qquad\quad \tau_1+\tau_2+\tau_3=0, \tau_i \in \R,
    \\
    \nu \in \R
    \end{array}
    \right\rbrace.
    \end{align*}
    Here, in $ (\mathfrak{e}_{7,\sR})^C $, the Killing form $ B_{7,\sR} $ of $ (\mathfrak{e}_{7,\sR})^C $ is given by
    \begin{align*}
    B_{7,\sR}(\varPhi,\varPhi')&=(8/3)B_{6,\sR}(\phi,\phi')+8(A,B')
    +8(A',B)+\dfrac{16}{3}\nu\nu'
    \\
    &=(8/5)\tr\,(\varPhi\varPhi'),
    \end{align*}
    so is on $ \mathfrak{h}_{7,\sR} $. Hence, for $  \varPhi=\varPhi(\phi,0,0,\nu),\varPhi'=\varPhi(\phi',0,0,\nu') \in \mathfrak{h}_{7,\sR} $, we easily obtain
    \begin{align*}
    B_{7,\sR}(\varPhi, \varPhi')=4(\tau_1{\tau_1}'+\tau_2{\tau_2}'+\tau_3{\tau_3}')+\dfrac{16}{3}\nu\nu'.
    \end{align*}
    Indeed, it follows from $ B_{6,\sR}(\phi, \phi')=(3/2)(\tau_1{\tau_1}'+\tau_2{\tau_2}'+\tau_3{\tau_3}')$ that
    \begin{align*}
    B_{7,\sR}(\varPhi, \varPhi')&=\dfrac{8}{3}B_{6,\sR}(\phi, \phi')+\dfrac{16}{3}\nu\nu'
    \\
    &=\dfrac{8}{3}\cdot\dfrac{3}{2} (\tau_1{\tau_1}'+\tau_2{\tau_2}'+\tau_3{\tau_3}')+\dfrac{16}{3}\nu\nu'
    \\
    &=4(\tau_1{\tau_1}'+\tau_2{\tau_2}'+\tau_3{\tau_3}')+\dfrac{16}{3}\nu\nu'.
    \end{align*}

    Now, the canonical elements $ \varPhi_{\alpha_1}, \varPhi_{\alpha_2}, \varPhi_{\alpha_3} \in \mathfrak{h}_{7,\sR} $ corresponding to $ \alpha_1,\alpha_2,\alpha_3 $ are determined as follows:
    \begin{align*}
    \varPhi_{\alpha_1}&=\dfrac{1}{8}\varPhi((-\tilde{E}_1+\tilde{E}_3,0,0,0),
    \\
    \varPhi_{\alpha_2}&=\dfrac{1}{24}\varPhi(\tilde{E}_1+\tilde{E}_2-2\tilde{E}_3,0,0,3)
    \\
    \varPhi_{\alpha_3}&=\dfrac{1}{24}\varPhi(2\tilde{E}_1-4\tilde{E}_2+2\tilde{E}_3,0,0,-3).
    \end{align*}
    Indeed, let $ \varPhi=\varPhi(\phi',0,0,\nu') \in \mathfrak{h}_{7,\sR}$ and set $ \varPhi_{\alpha_1}=\varPhi(\phi,0,0,\nu) $, then it follows from $ B_7(\varPhi_{\alpha_1},\varPhi)=\alpha_1(\varPhi) $ that
    \begin{align*}
    B_{7,\sR}(\varPhi_{\alpha_1},\varPhi)&=4(\tau_1{\tau_1}'+\tau_2{\tau_2}'+\tau_3{\tau_3}')+\dfrac{16}{3}\nu\nu'
    \\
    \alpha_1(\varPhi)&=-\dfrac{1}{2}{\tau_1}'+\dfrac{1}{2}{\tau_3}'
    \end{align*}
    Hence we have
    \begin{align*}
    \tau_1=-\dfrac{1}{8}, \tau_2=0, \tau_3=\dfrac{1}{8}, \nu=0,
    \end{align*}
    that is, $ \varPhi_{\alpha_1}=(1/8)\varPhi((-\tilde{E}_1+\tilde{E}_3,0,0,0) $. Note that $ \varPhi_{\alpha_1} $ can be also obtained by
    \begin{align*}
    \varPhi_{\alpha_1}=(B_{7,\sR}(\tilde{A}_2(1)+\tilde{F}_2(1),\tilde{A}_2(1)+\tilde{F}_2(1)))^{-1}[\tilde{A}_2(1)+\tilde{F}_2(1),\tilde{A}_2(1)+\tilde{F}_2(1))],
    \end{align*}
    where the elements $ \tilde{A}_2(1)+\tilde{F}_2(1),\tilde{A}_2(1)-\tilde{F}_2(1) $ are the root vectors associated with the roots $ \alpha_1, -\alpha_1 $.
    For the remainders of $ \varPhi_{\alpha_i},i=2,3$, as in the case above, we obtain the required results.

    With above, we see that
    \begin{align*}
    (\alpha_1,\alpha_1)&=B_{7,\sR}(\varPhi_{\alpha_1},\varPhi_{\alpha_1})=4\cdot\left( \left(-\dfrac{1}{8} \right)\cdot\left(-\dfrac{1}{8} \right)+\dfrac{1}{8} \cdot\dfrac{1}{8}\right) =\dfrac{1}{8},
    \\
    (\alpha_1,\alpha_2)&=B_{7,\sR}(\varPhi_{\alpha_1},\varPhi_{\alpha_2})=4\cdot\left( \left(-\dfrac{1}{8} \right)\cdot\dfrac{1}{24}+\dfrac{1}{8}\cdot\left(-\dfrac{2}{24}\right)\right)=-\dfrac{1}{16},
    \\
    (\alpha_1,\alpha_3)&=B_{7,\sR}(\varPhi_{\alpha_1},\varPhi_{\alpha_3})=4\cdot\left(\left( -\dfrac{1}{8}\right)\cdot\dfrac{2}{24} +\dfrac{1}{8} \cdot\dfrac{2}{24} \right)=0,
    \\
    (\alpha_2,\alpha_2)&=B_{7,\sR}(\varPhi_{\alpha_2},\varPhi_{\alpha_2})=4\cdot\left(\dfrac{1}{24}\cdot\dfrac{1}{24}+\cdot\dfrac{1}{24}\cdot \dfrac{1}{24}+\left( -\dfrac{2}{24}\right)\cdot\left(-\dfrac{2}{24} \right)  \right)=\dfrac{1}{8},
    \\
    (\alpha_2,\alpha_3)&=B_{7,\sR}(\varPhi_{\alpha_2},\varPhi_{\alpha_3})=4\cdot\left( \dfrac{1}{24}\cdot \dfrac{2}{24} +\dfrac{1}{24}\left( -\dfrac{4}{24} \right)+\left(-\dfrac{2}{24} \right) \cdot \dfrac{2}{24} \right)=-\dfrac{1}{8},
    \\
    (\alpha_3,\alpha_3)&=B_{7,\sR}(\varPhi_{\alpha_3},\varPhi_{\alpha_3})=4\cdot\left(\dfrac{2}{24}\cdot\dfrac{2}{24} +\left( -\dfrac{4}{24}\right) \cdot\left( -\dfrac{4}{24}\right)  + \dfrac{2}{24}\cdot\dfrac{2}{24} \right) = \dfrac{1}{4}.
    \end{align*}
    Hence, since we have
    \begin{align*}
    \cos\theta_{12}&=\dfrac{(\alpha_1, \alpha_2)}{\sqrt{(\alpha_1,\alpha_1)(\alpha_2,\alpha_2)}}=-\dfrac{1}{2},\quad
    \cos\theta_{13}=\dfrac{(\alpha_1, \alpha_3)}{\sqrt{(\alpha_1,\alpha_1)(\alpha_3,\alpha_3)}}=0,
    \\
    \cos\theta_{23}&=\dfrac{(\alpha_2, \alpha_3)}{\sqrt{(\alpha_2,\alpha_2)(\alpha_3,\alpha_3)}}=-\dfrac{1}{\sqrt{2}}, \quad (\alpha_3,\alpha_3)=\dfrac{1}{4} > \dfrac{1}{8}=(\alpha_2,\alpha_2),
    \end{align*}
    we can draw the Dynkin diagram.
\end{proof}

\section{ The root system, the Dynkin diagram of the Lie algebra $ ({\mathfrak{e}_{8,\sR}})^C $ of the group $ (E_{8,\sR})^C $ and the Lie groups $ (E_{8,\sR})^C, E_{8,\sR} $ }

We consider the following complex Lie algebra $ (\mathfrak{e}_{8,\sR})^C $ which is given by replacing $ \mathfrak{C} $ with $ \R $ in the complex Lie algebra $ {\mathfrak{e}_8}^C $:
\begin{align*}
(\mathfrak{e}_{8,\sR})^C&=(\mathfrak{e}_{7,\sR})^C \oplus (\mathfrak{P}_{\sR})^C \oplus (\mathfrak{P}_{\sR})^C \oplus C \oplus C \oplus C
\\
&=\left\lbrace R=(\varPhi,P,Q,r,s,t) \relmiddle{|}
\begin{array}{l}
\varPhi \in (\mathfrak{e}_{7,\sR})^C, P,Q \in (\mathfrak{P}_{\sR})^C, \\
r,s,t \in C
\end{array}
\right\rbrace
\end{align*}
In particular, we have $ \dim_C((\mathfrak{e}_{8,\sC})^C)=21+14\times 2+3=52 $. In $ (\mathfrak{e}_{8,\sR})^C $, we can define a Lie bracket and can prove its simplicity as that in $ {\mathfrak{e}_8}^C $.
In addition, we define a symmetric inner product $ (R_1,R_2)_8 $ by
\begin{align*}
(R_1,R_2)_8:=(\varPhi_1,\varPhi_2)_7-\{Q_1, P_2\}+\{P_1,Q_2\}-8r_1r_2-4t_1s_2-4s_1t_2,
\end{align*}
where $ R_i:=(\varPhi_i,P_i,Q_i,r_i,s_i,t_i) \in (\mathfrak{e}_{8,\sR})^C,i=1,2 $. Then $ (\mathfrak{e}_{8,\sR})^C $ leaves the symmetric inner product $ (R_1,R_2)_8 $ invariant (cf. \cite[Lemma 5.3.1]{iy0}).

Then we have the following theorem.

\begin{theorem}\label{theorem 6.1}
    The Killing form $ B_{8,\sR} $ of $ (\mathfrak{e}_{8,\sR})^C $ is given by
    \begin{align*}
    B_{8,\sR}(R_1,R_2)&=-\dfrac{9}{2}(R_1,R_2)_8
    \\
    &=-\dfrac{9}{2}(\varPhi_1,\varPhi_2)_7+\dfrac{9}{2}\{Q_1, P_2\}-\dfrac{9}{2}\{P_1,Q_2\}+36r_1r_2+18t_1s_2+18s_1t_2
    \\
    &=\dfrac{9}{4}B_{7,\sR}(\varPhi_1,\varPhi_2)+\dfrac{9}{2}\{Q_1, P_2\}-\dfrac{9}{2}\{P_1,Q_2\}+36r_1r_2+18t_1s_2+18s_1t_2.
    \end{align*}
\end{theorem}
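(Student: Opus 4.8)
The plan is to follow the same strategy already used for the Killing forms in Theorems \ref{theorem 3.7}, \ref{theorem 4.11} and \ref{theorem 5.18}. Since $(\mathfrak{e}_{8,\sR})^C$ is simple (as noted just above the theorem, its Lie bracket is modelled on that of ${\mathfrak{e}_8}^C$) and the symmetric inner product $(R_1,R_2)_8$ is invariant under the adjoint action (cf. \cite[Lemma 5.3.1]{iy0}), the uniqueness up to a scalar of an invariant symmetric bilinear form on a simple Lie algebra guarantees a constant $k \in C$ with $B_{8,\sR}(R_1,R_2)=k\,(R_1,R_2)_8$. Thus the whole problem reduces to determining $k$, after which the remaining equalities are formal.

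To pin down $k$, I would evaluate both sides on the single element $R_0:=\tilde{1}=(0,0,0,1,0,0)$. The defining formula for $(\cdot,\cdot)_8$ gives at once $(R_0,R_0)_8=-8$, since every term except $-8r_1r_2$ vanishes. On the other side, a direct use of the Lie bracket of $(\mathfrak{e}_{8,\sR})^C$ shows that $\ad(R_0)$ acts on a general element by
\begin{align*}
\ad(R_0)(\varPhi,P,Q,r,s,t)=(0,P,-Q,0,2s,-2t),
\end{align*}
so that $R_0$ is the characteristic grading element of the five-grading $(\mathfrak{e}_{8,\sR})^C=(\mathfrak{e}_{7,\sR})^C\oplus(\mathfrak{P}_{\sR})^C\oplus(\mathfrak{P}_{\sR})^C\oplus C\oplus C\oplus C$, with eigenvalues $0,1,-1,0,2,-2$ on the six summands. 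Consequently $(\ad R_0)^2$ scales these summands by $0,1,1,0,4,4$, and weighting by their dimensions $21,14,14,1,1,1$ (recall $\dim_C(\mathfrak{P}_{\sR})^C=14$) yields
\begin{align*}
B_{8,\sR}(R_0,R_0)=\tr\bigl((\ad R_0)^2\bigr)=14+14+4+4=36.
\end{align*}
Hence $k=36/(-8)=-9/2$, which establishes the first equality $B_{8,\sR}(R_1,R_2)=-\tfrac{9}{2}(R_1,R_2)_8$.

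The remaining two equalities are then purely algebraic. Expanding $-\tfrac{9}{2}(R_1,R_2)_8$ by the definition of the inner product produces the second line verbatim. For the third line I would substitute the relation $B_{7,\sR}(\varPhi_1,\varPhi_2)=-2(\varPhi_1,\varPhi_2)_7$ from Theorem \ref{theorem 5.18}, which converts the term $-\tfrac{9}{2}(\varPhi_1,\varPhi_2)_7$ into $\tfrac{9}{4}B_{7,\sR}(\varPhi_1,\varPhi_2)$ while leaving all the $\{\,,\}$-terms and the $r,s,t$-terms untouched.

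The only genuine point requiring care is the proportionality step: one must know both that $(\mathfrak{e}_{8,\sR})^C$ is simple and that $(\cdot,\cdot)_8$ is a nonzero invariant symmetric form, so that it must be a scalar multiple of the Killing form. The first fact is available from the remarks preceding the theorem, and the nonvanishing of $(\cdot,\cdot)_8$ is confirmed a posteriori by $(R_0,R_0)_8=-8\neq0$; invariance is \cite[Lemma 5.3.1]{iy0}. Once proportionality is granted, the computation of $k$ is the short eigenvalue-and-dimension bookkeeping displayed above, entirely parallel to the determinations of $k$ in the lower-rank cases, so no serious obstacle remains.
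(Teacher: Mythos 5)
Your proposal is correct and follows essentially the same route as the paper: simplicity of $(\mathfrak{e}_{8,\sR})^C$ plus invariance of $(\cdot,\cdot)_8$ gives $B_{8,\sR}=k(\cdot,\cdot)_8$, and evaluating at $\tilde{1}=(0,0,0,1,0,0)$ yields $(\tilde{1},\tilde{1})_8=-8$ and $\tr\bigl((\ad\tilde{1})^2\bigr)=14+14+4+4=36$, hence $k=-9/2$. Your explicit remarks on the grading eigenvalues and on nondegeneracy/invariance only make the proportionality step more careful than the paper's one-line appeal to simplicity.
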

\begin{proof}
    Since $ (\mathfrak{e}_{8,\sR})^C $ is simple, there exists $k \in C$ such that
    \begin{align*}
    B_{8,\sR}(R_1, R_2) = k(R_1, R_2)_8, \quad R_i \in (\mathfrak{e}_{8,\sR})^C.
    \end{align*}
    We will determine $k$. Let $ R_1 = R_2= (0, 0, 0, 1, 0, 0)=: \tilde{1}$. Then we have
    \begin{align*}
    (\tilde{1},\tilde{1})_8=-8.
    \end{align*}
    On the other hand, it follows from
    \begin{align*}
    (\ad\,\tilde{1})(\ad\,\tilde{1})R
    &=[\tilde{1}, [\tilde{1}, (\varPhi, P, Q, r, s, t)]\,],\,\, R \in (\mathfrak{e}_{8,\sR})^C
    \\
    &= [\tilde{1}, (0, P, - Q, 0, 2s, - 2t)]
    \\
    &= (0, P, Q, 0, 4s, 4t)
    \end{align*}
    that
    \begin{align*}
    B_{8,\sR}(\tilde{1}, \tilde{1})=\tr((\ad\tilde{1})(\ad\tilde{1}))= 14 \times 2 + 4 \times 2 = 36.
    \end{align*}
    Thus we have $k = - 9/2$.

    Therefore we have
    \begin{align*}
    B_{8,\sR}(R_1, R_2) = -\dfrac{9}{2}(R_1, R_2)_8
    \end{align*}
    and the remainders of formulas can be easily obtained.
\end{proof}

\if0

We define a subgroup  $({E_8}^C)^{w_3,\varepsilon_1,\varepsilon_2,\gamma_3}$ of the group $({E_8}^C)^{w_3}$ by
\begin{align*}
({E_8}^C)^{w_3,\varepsilon_1,\varepsilon_2,\gamma_3} = \left\{\alpha
\in {E_8}^C \, \left| \,
w_3\alpha = \alpha w_3, \varepsilon_1\alpha=\alpha\varepsilon_1,\varepsilon_2\alpha=\alpha\varepsilon_2,
\gamma_3\alpha=\alpha\gamma_3
\right. \right\}.
\end{align*}
Then the Lie algebra $ ({\mathfrak{e}_8}^C)^{w_3,\varepsilon_1,\varepsilon_2,\gamma_3} $ of the group $({E_8}^C)^{w_3,\varepsilon_1,\varepsilon_2,\gamma_3}$ is given by
\begin{align*}
({\mathfrak{e}_8}^C)^{w_3,\varepsilon_1,\varepsilon_2,\gamma_3}&=\left\lbrace  R \in {\mathfrak{e}_8}^C \relmiddle{|} w_3R=R, \varepsilon_1R=R, \varepsilon_2R=R, \gamma_3R=R \right\rbrace
\\
&=\left\lbrace  R=(\varPhi(\phi,A,B,\nu),P,Q,r,s,t) \relmiddle{|}
\begin{array}{l}
\phi \in ({\mathfrak{e}_6}^C)^{w_3,\varepsilon_1,\varepsilon_2,\gamma_3}, A,B \in (\mathfrak{J}_{\sC})^C, \nu \in C,
\\
P,Q \in (\mathfrak{P}_{\sC})^C, r,s,t \in C
\end{array}
\right\rbrace .
\end{align*}
In particular, we have $ \dim_C((\mathfrak{e}_{8,\sC})^C)=35+20\times 2+3=78 $.

Then we have the following theorem.

\begin{theorem}\label{Theorem 7.0.1}
    The Lie algebra $ (\mathfrak{e}_{8,\sC})^C $ is isomorphic to the Lie algebra $ ({\mathfrak{e}_8}^C)^{w_3,\varepsilon_1,\varepsilon_2,\gamma_3} $ {\rm :} $ (\mathfrak{e}_{8,\sC})^C \cong  ({\mathfrak{e}_8}^C)^{w_3,\varepsilon_1,\varepsilon_2,\gamma_3} $.
\end{theorem}
\begin{proof}
    We define a mapping $ h_{8,\sC_*}:({\mathfrak{e}_8}^C)^{w_3,\varepsilon_1,\varepsilon_2,\gamma_3} \to (\mathfrak{e}_{8,\sC})^C  $ by
    \begin{align*}
    h_{8,\sC_*}((\varPhi(\phi,A,B,\nu),P,Q,r,s,t))=(\varPhi(\phi\vert_{(\mathfrak{J}_{\sC})^C},A,B,\nu),P,Q,r,s,t).
    \end{align*}

    First, we will prove that $ h_{8,\sC_*} $ is well-defined. For $ \phi \in ({\mathfrak{e}_6}^C)^{w_3,\varepsilon_1,\varepsilon_2,\gamma_3} $, there exist $ B, C \in \mathfrak{su}(3,\C^C) $ such that $ \phi=f_{6,{w_3}_*}(B,C) $ (Theorem \ref{theorem 5.0.3}). Then we see $ \phi\vert_{(\mathfrak{J}_{\sC})^C}=f_{6,{w_3}_*}(B,C)\vert_{(\mathfrak{J}_{\sC})^C}=f_{6,\sC_*}(B,C) \in (\mathfrak{e}_{6,\sC})^C $ (Theorem \ref{theorem 5.0.2}). Hence $ h_{8,\sC_*} $ is well-defined. Subsequently,
    we will prove that $ h_{8,\sC_*} $ is a homomorphism. It follows that
    \begin{align*}
    &\quad [h_{8,\sC_*}((\varPhi(\phi,A,B,\nu),P,Q,r,s,t)), h_{8,\sC_*}((\varPhi(\phi',A',B',\nu'),P',Q',r',s',t'))]
    \\
    &=[(\varPhi(\phi\vert_{(\mathfrak{J}_{\sC})^C},A,B,\nu),P,Q,r,s,t), (\varPhi(\phi'\vert_{(\mathfrak{J}_{\sC})^C},A',B',\nu'),P',Q',r',s',t')]
    \\
    &=:(\varPhi'',P'',Q'',r'',s'',t''),
    \end{align*}
    where
    \begin{align*}
    \varPhi''&:=[\varPhi(\phi\vert_{(\mathfrak{J}_{\sC})^C},A,B,\nu),\varPhi(\phi'\vert_{(\mathfrak{J}_{\sC})^C},A',B',\nu')]+P \times Q'-P'\times Q,
    \\
    P''&:=\varPhi(\phi\vert_{(\mathfrak{J}_{\sC})^C},A,B,\nu)P'-\varPhi(\phi'\vert_{(\mathfrak{J}_{\sC})^C},A',B',\nu')P+rP'-r'P+sQ'-s'Q,
    \\
    Q''&:=\varPhi(\phi\vert_{(\mathfrak{J}_{\sC})^C},A,B,\nu)Q'-\varPhi(\phi'\vert_{(\mathfrak{J}_{\sC})^C},A',B',\nu')Q-rQ'+r'Q+tP'-t'P,
    \\
    r''&:=(-1/8)\left\lbrace P,Q' \right\rbrace+(1/8)\left\lbrace P',Q \right\rbrace +st'-s't,
    \\
    s''&:=(1/4)\left\lbrace P,P' \right\rbrace+2rs'-2r's,
    \\
    t''&:=(-1/4)\left\lbrace Q,Q' \right\rbrace-2rt'+2r't.
    \end{align*}
    Here, note that $ \phi\vert_{(\mathfrak{J}_{\sC})^C}X_{\sC}=\phi X_{\sC}, X_{\sC} \in (\mathfrak{J}_{\sC})^C $, we do the following computation of parts relative to $ \phi\vert_{(\mathfrak{J}_{\sC})^C} $:
    \begin{align*}
    &\quad [\varPhi(\phi\vert_{(\mathfrak{J}_{\sC})^C},A,B,\nu),\varPhi(\phi'\vert_{(\mathfrak{J}_{\sC})^C},A',B',\nu')]
    \\[1mm]
    &=\varPhi\left(
    \begin{array}{c}
    [\phi\vert_{(\mathfrak{J}_{\sC})^C},\phi'\vert_{(\mathfrak{J}_{\sC})^C}]+2A\vee B'-2A'\vee B
    \\[1mm]
    (\phi\vert_{(\mathfrak{J}_{\sC})^C}+(2/3)\nu)A'-(\phi'\vert_{(\mathfrak{J}_{\sC})^C}+(2/3)\nu')A
    \\[1mm]
    (-{}^t\phi\vert_{(\mathfrak{J}_{\sC})^C}-(2/3)\nu)B'-(-{}^t\phi'\vert_{(\mathfrak{J}_{\sC})^C}-(2/3)\nu')B
    \\[1mm]
    (A,B')-(B,A')
    \end{array}
    \right)
    \\[1mm]
    &=\varPhi\left(
    \begin{array}{c}
    [\phi,\phi']\vert_{(\mathfrak{J}_{\sC})^C}+2A\vee B'-2A'\vee B
    \\[1mm]
    (\phi+(2/3)\nu)A'-(\phi+(2/3)\nu')A
    \\[1mm]
    (-{}^t\phi-(2/3)\nu)B'-(-{}^t\phi'-(2/3)\nu')B
    \\[1mm]
    (A,B')-(B,A')
    \end{array}
    \right),
    \\[3mm]
    &\quad \varPhi(\phi\vert_{(\mathfrak{J}_{\sC})^C},A,B,\nu)P', \,\, P':=(X'_{\sC},Y'_{\sC},\xi,\eta)
    \\
    &=\varPhi(\phi\vert_{(\mathfrak{J}_{\sC})^C},A,B,\nu)(X'_{\sC},Y'_{\sC},\xi,\eta)
    \\
    &=\varPhi\left(
    \begin{array}{c}
    \phi\vert_{(\mathfrak{J}_{\sC})^C}X'_{\sC}-(1/3)\nu X'_{\sC}+2B\times Y'_{\sC}+\eta' A
    \\[1mm]
    2A\times X'_{\sC}-{}^t\phi\vert_{(\mathfrak{J}_{\sC})^C} Y'_{\sC}+(1/3)\nu Y'_{\sC}+\xi' B
    \\[1mm]
    (A,Y'_{\sC})+\nu \xi'
    \\[1mm]
    (B,X'_{\sC})-\nu\eta'
    \end{array}
    \right)
    \\
    &=\varPhi\left(
    \begin{array}{c}
    \phi X'_{\sC}-(1/3)\nu X'_{\sC}+2B\times Y'_{\sC}+\eta' A
    \\[1mm]
    2A\times X'_{\sC}-{}^t\phi Y'_{\sC}+(1/3)\nu Y'_{\sC}+\xi' B
    \\[1mm]
    (A,Y'_{\sC})+\nu \xi'
    \\[1mm]
    (B,X'_{\sC})-\nu \eta'
    \end{array}
    \right)
    \\
    &=\varPhi(\phi,A,B,\nu)P'.
    \end{align*}
    The computations of the remaining parts relative to $ \phi\vert_{(\mathfrak{J}_{\sC})^C} $ are same.

    On the other hand, it follows that
    \begin{align*}
    &\quad h_{8,\sC_*} [(\varPhi(\phi,A,B,\nu),P,Q,r,s,t), (\varPhi(\phi',A',B',\nu'),P',Q',r',s',t')]
    \\
    &=:h_{8,\sC_*}((\varPhi(\phi^{(3)},A^{(3)},B^{(3)},\nu^{(3)})+P\times Q'-P'\times Q,P^{(3)},Q^{(3)},r^{(3)},s^{(3)},t^{(3)})),
    \end{align*}
    where
    \begin{align*}
    \phi^{(3)}&:=[\phi,\phi']+2A\vee B'-2A'\vee B,
    \\
    A^{(3)}&:=(\phi+(2/3)\nu)A'-(\phi'+(2/3)\nu')A,
    \\
    B^{(3)}&:=(-{}^t\phi-(2/3)\nu)B'-(-{}^t\phi'-(2/3)\nu')B,
    \\
    \nu^{(3)}&:=(A,B')-(B,A'),
    \\
    P^{(3)}&:=\varPhi(\phi,A,B.\nu)P'-\varPhi(\phi',A',B'.\nu')P+rP'-r'P+sQ'-s'Q,
    \\
    Q^{(3)}&:=\varPhi(\phi,A,B.\nu)Q'-\varPhi(\phi',A',B'.\nu')Q-rQ'+r'Q+tP'-t'P,
    \\
    r^{(3)}&:=(-1/8)\left\lbrace P,Q' \right\rbrace+(1/8)\left\lbrace P',Q \right\rbrace +st'-s't,
    \\
    s^{(3)}&:=(1/4)\left\lbrace P,P' \right\rbrace+2rs'-2r's,
    \\
    t^{(3)}&:=(-1/4)\left\lbrace Q,Q' \right\rbrace-2rt'+2r't.
    \end{align*}
    Hence, note that $ A\vee B', A'\vee B \in (\mathfrak{e}_{6,\sC})^C $  and $ P\times Q', P'\times Q \in (\mathfrak{e}_{7,\sC})^C $, since we see
    \begin{align*}
    &\quad [h_{8,\sC_*}((\varPhi(\phi,A,B,\nu),P,Q,r,s,t)), h_{8,\sC_*}((\varPhi(\phi',A',B',\nu'),P',Q',r',s',t'))]
    \\
    &=h_{8,\sC_*} [(\varPhi(\phi,A,B,\nu),P,Q,r,s,t), (\varPhi(\phi',A',B',\nu'),P',Q',r',s',t')],
    \end{align*}
    $ h_{8,\sC_*} $ is a homomorphism.

    Next, we will prove that $ h_{8,\sC_*} $ is injective. Since we easily see $ \Ker\,h_{8,\sC_*}=\{0\} $, it is clear. Finally, we will prove that $ h_{8,\sC_*} $ is surjective. Since $ h_{8,\sC_*} $ is injective and $ \dim_C(({\mathfrak{e}_8}^C)^{w_3,\varepsilon_1,\varepsilon_2,\gamma_3})=78=\dim_C((\mathfrak{e}_{8,\sC})^C ) $, we see $ h_{8,\sC_*} $ is surjective.

    Therefore we have the required isomorphism
    \begin{align*}
    (\mathfrak{e}_{8,\sC})^C \cong  ({\mathfrak{e}_8}^C)^{w_3,\varepsilon_1,\varepsilon_2,\gamma_3}.
    \end{align*}
\end{proof}

\fi

We move the determination of the root system and Dynkin diagram of the Lie algebra $ (\mathfrak{e}_{8,\sR})^C $.
\vspace{1mm}

We define a Lie subalgebra $ \mathfrak{h}_8 $ of $ (\mathfrak{e}_{8,\sR})^C $ by
\begin{align*}
\mathfrak{h}_8=\left\lbrace R_8=(\varPhi,0,0,r,0,0) \relmiddle{|}
\begin{array}{l}
\varPhi=\varPhi(\phi,0,0,\nu) \in \mathfrak{h}_7,
\\
\quad \phi=\tilde{T}_0 \in \mathfrak{h}_6,
\\
\qquad T_0=\tau_1E_1+\tau_2E_2+\tau_3E_3 \in (({\mathfrak{J}_{\sR}})^C)_0,
\\
\qquad\qquad \tau_1+\tau_2+\tau_3=0, \tau_i \in C,
\\
\quad \nu \in C,
\\
r \in C
\end{array}
\right\rbrace.
\end{align*}
Then the Lie subalgebra $ \mathfrak{h}_8 $ is a Cartan subalgebra of $ (\mathfrak{e}_{8,\sR})^C $.

\begin{theorem}\label{Theorem 6.2}
    The roots $ \varDelta $ of $ (\mathfrak{e}_{8,\sR})^C  $ relative to $ \mathfrak{h}_8 $ are given by
    \begin{align*}
    \varDelta=\left\lbrace
    \begin{array}{l}
    \pm\dfrac{1}{2}(\tau_2-\tau_3),\pm\dfrac{1}{2}(\tau_3-\tau_1),\pm\dfrac{1}{2}(\tau_1-\tau_2)
    \\[2mm]
    \pm(\tau_1+\dfrac{2}{3}\nu), \pm(\tau_2+\dfrac{2}{3}\nu), \pm(\tau_3+\dfrac{2}{3}\nu),
    \\[2mm]
    \pm(\dfrac{1}{2}\tau_1-\dfrac{2}{3}\nu), \pm(\dfrac{1}{2}\tau_2-\dfrac{2}{3}\nu), \pm(\dfrac{1}{2}\tau_3-\dfrac{2}{3}\nu),
    \\[2mm]
    \pm(\tau_1-\dfrac{1}{3}\nu+r),\pm(\tau_2-\dfrac{1}{3}\nu+r),\pm(\tau_3-\dfrac{1}{3}\nu+r),
    \\[2mm]
    \pm(-\dfrac{1}{2}\tau_1-\dfrac{1}{3}\nu+r),\pm(-\dfrac{1}{2}\tau_2-\dfrac{1}{3}\nu+r),\pm(-\dfrac{1}{2}\tau_3-\dfrac{1}{3}\nu+r),
    \\[2mm]
    \pm(\dfrac{1}{2}\tau_1+\dfrac{1}{3}\nu+r),\pm(\dfrac{1}{2}\tau_2+\dfrac{1}{3}\nu+r),\pm(\dfrac{1}{2}\tau_3+\dfrac{1}{3}\nu+r),
    \\[2mm]
    \pm(-\tau_1+\dfrac{1}{3}\nu+r),\pm(-\tau_2+\dfrac{1}{3}\nu+r),\pm(-\tau_3+\dfrac{1}{3}\nu+r),
    \\[2mm]
    \pm(\nu+r),\pm(-\nu+r),\pm 2r
    \end{array}
    \right\rbrace .
    \end{align*}
\end{theorem}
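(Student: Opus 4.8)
The plan is to read off every root as an eigenvalue of $ \ad R_8 $ for a generic $ R_8=(\varPhi,0,0,r,0,0)\in\mathfrak{h}_8 $ with $ \varPhi=\varPhi(\tilde{T}_0,0,0,\nu) $ and $ T_0=\tau_1E_1+\tau_2E_2+\tau_3E_3 $, applied to an explicit basis of root-vector candidates, using only the Lie bracket of $ (\mathfrak{e}_{8,\sR})^C $. First I would dispatch the roots coming from $ (\mathfrak{e}_{7,\sR})^C $: for the embedding $ S\mapsto(S,0,0,0,0,0) $ the bracket collapses to $ [R_8,(S,0,0,0,0,0)]=([\varPhi,S],0,0,0,0,0) $, so the extra Cartan direction $ r $ acts trivially and the eigenvalue is exactly the $ (\mathfrak{e}_{7,\sR})^C $-root of Theorem \ref{theorem 5.17} evaluated on $ \varPhi(\tilde{T}_0,0,0,\nu) $. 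This reproduces the first three rows of $ \varDelta $ verbatim.

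Second, I would handle the four new summands $ (\mathfrak{P}_{\sR})^C\oplus(\mathfrak{P}_{\sR})^C\oplus C\oplus C\oplus C $ slot by slot. Substituting $ R_1=R_8 $ into the bracket kills all but one component in each case, giving $ [R_8,(0,P,0,0,0,0)]=(0,(\varPhi+r)P,0,0,0,0) $, $ [R_8,(0,0,Q,0,0,0)]=(0,0,(\varPhi-r)Q,0,0,0) $, $ [R_8,(0,0,0,0,s,0)]=(0,0,0,0,2rs,0) $ and $ [R_8,(0,0,0,0,0,t)]=(0,0,0,0,0,-2rt) $. Hence the $ s $- and $ t $-slots yield the roots $ \pm 2r $ at once, and the two Freudenthal copies reduce to the eigenvalue problem for $ \varPhi(\tilde{T}_0,0,0,\nu) $ on $ (\mathfrak{P}_{\sR})^C $, shifted by $ +r $ on the $ P $-copy and by $ -r $ on the $ Q $-copy. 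Using $ \varPhi(\tilde{T}_0,0,0,\nu)(X,Y,\xi,\eta)=(\tilde{T}_0X-\tfrac13\nu X,\,-\tilde{T}_0Y+\tfrac13\nu Y,\,\nu\xi,\,-\nu\eta) $ (with $ {}^t\tilde{T}_0=\tilde{T}_0 $) together with $ \tilde{T}_0E_i=\tau_iE_i $ and $ \tilde{T}_0F_i(1)=-\tfrac12\tau_iF_i(1) $, which follow from $ T_0\circ E_i=\tau_iE_i $, $ T_0\circ F_i(1)=\tfrac12(\tau_{i+1}+\tau_{i+2})F_i(1) $ and $ \tau_1+\tau_2+\tau_3=0 $, the vectors $ (E_i,0,0,0) $, $ (F_i(1),0,0,0) $, $ (0,E_i,0,0) $, $ (0,F_i(1),0,0) $, $ (0,0,1,0) $, $ (0,0,0,1) $ in each copy produce precisely rows four through eight, the $ P $-copy giving the stated roots and the $ Q $-copy their negatives.

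Finally I would close with a spanning argument: the root vectors exhibited above, together with $ \mathfrak{h}_8 $, visibly span $ (\mathfrak{e}_{8,\sR})^C $, so no further roots can occur, and as a sanity check $ \dim_C\mathfrak{h}_8=4 $ (two independent $ \tau_i $ plus $ \nu $ and $ r $) while $ \dim_C(\mathfrak{e}_{8,\sR})^C=52 $, so the $ 48 $ listed roots fill all $ 52-4=48 $ one-dimensional root spaces. The computation is entirely conceptual-free; the only real work is bookkeeping, namely pairing each of the twelve basis vectors of the two $ (\mathfrak{P}_{\sR})^C $ summands with its eigenvalue and consistently tracking the $ \pm r $ shift and the sign flip between the $ P $- and $ Q $-copies. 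Because the identities $ \tilde{T}_0E_i=\tau_iE_i $ and $ \tilde{T}_0F_i(1)=-\tfrac12\tau_iF_i(1) $ make each individual eigenvalue immediate, the main obstacle is organizing these many bracket evaluations without sign errors rather than any genuine difficulty.
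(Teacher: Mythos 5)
Your proposal is correct and follows essentially the same route as the paper: brackets of $R_8$ with embedded $(\mathfrak{e}_{7,\sR})^C$ root vectors reproduce the $E_7$-rows, brackets with the basis elements $\dot{E}_i,\dot{F}_i(1),\underset{\dot{}}{E_i},\underset{\dot{}}{F_i}(1),\dot{1},\underset{\dot{}}{1}$ in the two $(\mathfrak{P}_{\sR})^C$-copies and with $s^-,t_-$ give the remaining rows via the $(\varPhi\pm r)$-action and $\pm 2r$, and completeness follows from the spanning argument. Your uniform bookkeeping of the $\pm r$ shift and the dimension check $48+4=52$ are just a cleaner organization of exactly the computation the paper tabulates.
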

\begin{proof}
    The roots of $ (\mathfrak{e}_{7,\sR})^C $ are also the roots of $ (\mathfrak{e}_{8,\sR})^C $. Indeed, let the root $ \alpha $ of $ (\mathfrak{e}_{7,\sR})^C $ and its associated root vector $ \varPhi_s \in (\mathfrak{e}_{7,\sR})^C \subset (\mathfrak{e}_{8,\sR})^C $. Then we have
    \begin{align*}
    [R_8, \varPhi_s]&=[(\varPhi,0,0,r,0,0) (\varPhi_s,0,0,0,0,0)]
    \\
    &=([\varPhi,\varPhi_s],0,0,0,0,0)
    \\
    &=(\alpha(\varPhi)\varPhi_s,0,0,0,0,0)
    \\
    &=\alpha(\varPhi)(\varPhi_s,0,0,0,0,0)
    \\
    &=\alpha(R)\varPhi_s.
    \end{align*}

    We will determine the remainders of roots. We will show a few examples.
    First, let $ R_8=(\varPhi,0,0,r,0,0) \in \mathfrak{h}_8 $ and $ {R^-}_{\dot{E}_1}:=(0,\dot{E}_1,0,0,0,0) \in (\mathfrak{e}_{8,\sR})^C $, where $ \dot{E}_1:=(E_1,0,0,0) \in (\mathfrak{P}_{\sR})^C $. Then it follows that
    \begin{align*}
    [R_8,{R^-}_{\dot{E}_1}]&=[(\varPhi,0,0,r,0,0),(0,\dot{E}_1,0,0,0,0)]
    \\
    &=(0,\varPhi\dot{E}_1+r\dot{E}_1,0,0,0,0)
    \\
    &=(0,\varPhi(\phi,0,0,\nu)(E_1,0,0,0)+r(E_1,0,0,0),0,0,0,0)
    \\
    &=(0,(\phi-(1/3)\nu+r)E_1,0,0,0),0,0,0,0),\,\,(\phi=\tilde{T}_0)
    \\
    &=(0,((\tau_1-(1/3)\nu+r)E_1,0,0,0),0,0,0,0)
    \\
    &=(\tau_1-(1/3)\nu+r)(0,\dot{E}_1,0,0,0,0)
    \\
    &=(\tau_1-(1/3)\nu+r){R^-}_{\dot{E}_1},
    \end{align*}
    that is, $ [R_8,{R^-}_{\dot{E}_1}]=(\tau_1-(1/3)\nu+r){R^-}_{\dot{E}_1} $. Hence we see that $ \tau_1-(1/3)\nu+r $ is a root and $ (0,\dot{E}_1,0,0,0,0)$ is an associated root vector.
    Next, let $ {R^-}_{\dot{F}_1(1)}:=(0,\dot{F}_1(1),0,0,0,0) \in (\mathfrak{e}_{8,\sR})^C $, where $ \dot{F}_1(1):=(F_1(1),0,0,0) \in (\mathfrak{P}_{\sR})^C $. Then it follows that
    \begin{align*}
    [R_8,{R^-}_{\dot{F}_1(1)}]&=[(\varPhi,0,0,r,0,0),(0,\dot{F}_1(1),0,0,0,0)]
    \\
    &=(0,\varPhi\dot{F}_1(1)+r\dot{F}_1(1),0,0,0,0)
    \\
    &=(0,\varPhi(\phi,0,0,\nu)(F_1(1),0,0,0)+r(F_1(1),0,0,0),0,0,0,0)
    \\
    &=(0,(\phi-(1/3)\nu+r)F_1(1),0,0,0),0,0,0,0),\,\,(\phi=\tilde{T}_0)
    \\
    &=(0,(((1/2)(\tau_2+\tau_3)-(1/3)\nu+r)F_1(1),0,0,0),0,0,0,0)
    \\
    &=(0,(-(1/2)\tau_1-(1/3)\nu+r)\dot{F}_1(1),0,0,0,0)
    \\
    &=(-(1/2)\tau_1-(1/3)\nu+r){R^-}_{\dot{F}_1(1)},
    \end{align*}
    that is, $ [R_8,{R^-}_{\dot{F}_1(1)}]=(-(1/2)\tau_1-(1/3)\nu+r){R^-}_{\dot{F}_1(1)} $. Hence we see that $ -(1/2)\tau_1-(1/3)\nu+r $ is a root and $ (0,\dot{F}_1(1),0,0,0,0) $ is an associated root vector. The remainders of roots and these associated root vectors except ones of the Lie algebra $ (\mathfrak{e}_{8,\sR})^C $ and above are obtained as follows:
    \begin{longtable}[c]{ll}
        \hspace{8mm}
        $ \text{roots}  $
        & \hspace{-5mm}
        $ \text{associated root vectors} $
        \cr
        $ -(\tau_1-(1/3)\nu+r) $
        &
        $ (0,0,\underset{\dot{}}{E_1},0,0,0) $
        \cr
        $ \tau_2-(1/3)\nu+r $
        &
        $ (0,\dot{E}_2,0,0,0,0) $
        \cr
        $ -(\tau_2-(1/3)\nu+r) $
        &
        $ (0,0,\underset{\dot{}}{E_2},0,0,0) $
        \cr
        $ \tau_3-(1/3)\nu+r $
        &
        $ (0,\dot{E}_3,0,0,0,0) $
        \cr
        $ -(\tau_3-(1/3)\nu+r) $
        &
        $ (0,0,\underset{\dot{}}{E_3},0,0,0) $
        \cr
        $ -(-
        (1/2)\tau_1-(1/3)\nu+r) $
        &
        $ (0,0,\underset{\dot{}}{F_1}(1),0,0,0) $
        \cr
        $ -(1/2)\tau_2-(1/3)\nu+r $
        &
        $ (0,\dot{F_2}(1),0,0,0,0) $
        \cr
        $ -(-(1/2)\tau_2-(1/3)\nu+r) $
        &
        $ (0,0,\underset{\dot{}}{F_2}(1),0,0,0) $
        \cr
        $ -(1/2)\tau_3-(1/3)\nu+r $
        &
        $ (0,\dot{F_3}(1),0,0,0,0) $
        \cr
        $ -(-(1/2)\tau_3-(1/3)\nu+r) $
        &
        $ (0,0,\underset{\dot{}}{F_3}(1),0,0,0) $
        \cr
        $ (1/2)\tau_1+(1/3)\nu+r $
        &
        $ (0,\underset{\dot{}}{F_1}(1),0,0,0,0) $
        \cr
        $ -((1/2)\tau_1+(1/3)\nu+r) $
        &
        $ (0,0,\dot{F_1}(1),0,0,0) $
        \cr
        $ -\tau_1+(1/3)\nu+r $
        &
        $ (0,\underset{\dot{}}{E_1},0,0,0,0) $
        \cr
        $ -(-\tau_1+(1/3)\nu+r) $
        &
        $ (0,0,\dot{E_1},0,0,0) $
        \cr
        $ -\tau_2+(1/3)\nu+r $
        &
        $ (0,\underset{\dot{}}{E_2},0,0,0,0) $
        \cr
        $ -(-\tau_2+(1/3)\nu+r) $
        &
        $ (0,0,\dot{E_2},0,0,0) $
        \cr
        $ -\tau_3+(1/3)\nu+r $
        &
        $ (0,\underset{\dot{}}{E_3},0,0,0,0) $
        \cr
        $ -(-\tau_3+(1/3)\nu+r) $
        &
        $ (0,0,\dot{E_3},0,0,0) $
        \cr
        $ (1/2)\tau_2+(1/3)\nu+r $
        &
        $ (0,\underset{\dot{}}{F_2}(1),0,0,0,0) $
        \cr
        $ -((1/2)\tau_2+(1/3)\nu+r) $
        &
        $ (0,0,\dot{F_2}(1),0,0,0) $
        \cr
        $ (1/2)\tau_3+(1/3)\nu+r $
        &
        $ (0,\underset{\dot{}}{F_3}(1),0,0,0,0) $
        \cr
        $ -((1/2)\tau_3+(1/3)\nu+r) $
        &
        $ (0,0,\dot{F_3}(1),0,0,0) $
        \cr
        $ \nu+r $
        &
        $ (0,\dot{1},0,0,0,0) $
        \cr
        $ -(\nu+r) $
        &
        $ (0,0,\underset{\dot{}}{1},0,0,0) $
        \cr
        $ -\nu+r $
        &
        $ (0,\underset{\dot{}}{1},0,0,0,0) $
        \cr
        $ -(-\nu+r) $
        &
        $ (0,0,\dot{1},0,0,0) $
        \cr
        $ 2r $
        &
        $ (0,0,0,0,1,0)  $
        \cr
        $ -2r $
        &
        $ (0,0,0,0,0,1)  $,
        \cr
    \end{longtable}
where $ \dot{E_k}:=(E_k,0,0,0),\underset{\dot{}}{E_k}:=(0,E_k,0,0), \dot{F_k}(1):=(F_k(1),0,0,0),\underset{\dot{}}{F_k}(1):=(0,F_k(1),0,0),\dot{1}:=(0,0,1,\allowbreak 0), \underset{\dot{}}{1}:=(0,0,0,1) $ in $ (\mathfrak{P}_{\sR})^C $.

Therefore, since $ (\mathfrak{e}_{8,\sR})^C $ is spanned by $ \mathfrak{h}_8 $ and the associated root vectors above, the roots obtained above are all.
\end{proof}

Subsequently, we have the following theorem.

\begin{theorem}\label{theorem 6.3}
In the root system $ \varDelta $ of Theorem {\rm \ref{Theorem 6.2}},
    \begin{align*}
    \varPi=\{\alpha_1, \alpha_2, \alpha_3, \alpha_4 \}
    \end{align*}
    is a fundamental root system of $ (\mathfrak{e}_{8,\sR})^C $, where
    $ \alpha_1=-\tau_1+(1/3)\nu+r,
    \alpha_2=-2r,
    \alpha_3=(-1/2)\tau_2-(1/3)\nu+r,
    \alpha_4=(-1/2)\tau_3+(2/3)\nu $.
    The Dynkin diagram of $ (\mathfrak{e}_{8,\sR})^C $ is given by

    {\setlength{\unitlength}{1mm}
        \scalebox{1.0}
        {\setlength{\unitlength}{1mm}
            \begin{picture}(100,20)
            \put(50,10){\circle{2}} \put(49,6){$\alpha_1$}
            \put(51,10){\line(1,0){8}}
            \put(60,10){\circle{2}} \put(59,6){$\alpha_2$}
            \put(60.6,10.7){\line(1,0){8}}
            \put(60.6,9.2){\line(1,0){8}}
            \put(68.2,9.1){$\rangle$}
            \put(70,10){\circle{2}} \put(69,6){$\alpha_3$}
            \put(71,10){\line(1,0){8}}
            \put(80,10){\circle{2}} \put(79,6){$\alpha_4$}
            \end{picture}}}
\end{theorem}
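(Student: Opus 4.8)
The plan is to reproduce, one rank higher, the two-step argument already carried out for Theorems~\ref{theorem 4.12} and \ref{theorem 5.19}: first to confirm that $\varPi=\{\alpha_1,\alpha_2,\alpha_3,\alpha_4\}$ is a base of the root system $\varDelta$ listed in Theorem~\ref{Theorem 6.2}, and then to compute the mutual inner products of the $\alpha_i$ through the Killing form $B_{8,\sR}$ so as to read off the Cartan integers and hence the diagram.

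For the first step I would observe that $\mathfrak{h}_8$ is four-dimensional, coordinatised by $\tau_1,\tau_2,\nu,r$ (with $\tau_3=-\tau_1-\tau_2$), and that $\varDelta$ consists of the $48$ functionals of Theorem~\ref{Theorem 6.2}, i.e.\ $24$ positive roots, so that $(\mathfrak{e}_{8,\sR})^C$ has rank $4$ and dimension $52$, matching type $F_4$. Linear independence of $\alpha_1,\alpha_2,\alpha_3,\alpha_4$ is immediate from their explicit forms. It then remains to write each of the $20$ non-simple positive roots as a non-negative integer combination of $\alpha_1,\alpha_2,\alpha_3,\alpha_4$; this is the mechanical solving of small linear systems, entirely analogous to the tables in Theorems~\ref{theorem 4.12} and \ref{theorem 5.19}, and it certifies that $\varPi$ is a fundamental system.

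For the second step I would first restrict the Killing form to the real Cartan subalgebra $\mathfrak{h}_{8,\sR}$. Since every element of $\mathfrak{h}_8$ has $P=Q=0$ and $s=t=0$, only the $\varPhi$- and $r$-terms of Theorem~\ref{theorem 6.1} survive, and substituting the formula of Theorem~\ref{theorem 5.18} for $B_{7,\sR}$ on $\mathfrak{h}_{7,\sR}$ yields
\begin{align*}
B_{8,\sR}(R,R')=9(\tau_1\tau_1'+\tau_2\tau_2'+\tau_3\tau_3')+12\nu\nu'+36rr'.
\end{align*}
Using this I would determine the canonical elements $R_{\alpha_i}\in\mathfrak{h}_{8,\sR}$ from $B_{8,\sR}(R_{\alpha_i},R)=\alpha_i(R)$ (projecting the $\tau$-part onto the hyperplane $\tau_1+\tau_2+\tau_3=0$), and then evaluate $(\alpha_i,\alpha_j)=B_{8,\sR}(R_{\alpha_i},R_{\alpha_j})$. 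The expected outcome is $(\alpha_1,\alpha_1)=(\alpha_2,\alpha_2)=\dfrac{1}{9}$ and $(\alpha_3,\alpha_3)=(\alpha_4,\alpha_4)=\dfrac{1}{18}$, together with $(\alpha_1,\alpha_2)=(\alpha_2,\alpha_3)=-\dfrac{1}{18}$, $(\alpha_3,\alpha_4)=-\dfrac{1}{36}$, and $(\alpha_i,\alpha_j)=0$ for the remaining pairs. These give $\cos\theta_{12}=\cos\theta_{34}=-\dfrac{1}{2}$, $\cos\theta_{23}=-\dfrac{1}{\sqrt{2}}$, and orthogonality otherwise, which is exactly the $F_4$ Dynkin diagram with the double bond joining the long root $\alpha_2$ to the short root $\alpha_3$.

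The main obstacle is organisational rather than conceptual: the sheer volume of bookkeeping in expanding all $20$ non-simple positive roots and in computing the ten inner products $(\alpha_i,\alpha_j)$. The one genuinely decisive point is the length ratio $(\alpha_2,\alpha_2):(\alpha_3,\alpha_3)=2:1$, since it is this inequality, and not the angles alone, that distinguishes the non-simply-laced $F_4$ diagram; I would therefore verify the squared lengths explicitly before drawing the arrow.
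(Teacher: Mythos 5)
Your proposal is correct and follows essentially the same route as the paper: first expressing the remaining positive roots as non-negative integer combinations of $\alpha_1,\alpha_2,\alpha_3,\alpha_4$, then restricting the Killing form to $\mathfrak{h}_{8,\sR}$ (obtaining exactly $B_{8,\sR}(R,R')=9(\tau_1\tau_1'+\tau_2\tau_2'+\tau_3\tau_3')+12\nu\nu'+36rr'$), determining the canonical elements $R_{\alpha_i}$, and computing the inner products. Your predicted values $(\alpha_1,\alpha_1)=(\alpha_2,\alpha_2)=\dfrac{1}{9}$, $(\alpha_3,\alpha_3)=(\alpha_4,\alpha_4)=\dfrac{1}{18}$, $(\alpha_1,\alpha_2)=(\alpha_2,\alpha_3)=-\dfrac{1}{18}$, $(\alpha_3,\alpha_4)=-\dfrac{1}{36}$, with all other pairs orthogonal, agree with the paper's computation, including the decisive length ratio $2:1$ that places the arrow of the double bond from $\alpha_2$ toward $\alpha_3$.
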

\begin{proof}
    The remainders of positive roots are expressed by $ \alpha_1,\alpha_2,\alpha_3,\alpha_4, $ as follows:
    \begin{align*}
    (-1/2)(\tau_2-\tau_3)&=\alpha_1+\alpha_2+\alpha_3+\alpha_4
    \\
    (1/2)(\tau_3-\tau_1)&=\alpha_1+\alpha_2+\alpha_3
    \\
    (1/2)(\tau_1-\tau_2)&=\alpha_2+2\alpha_3+\alpha_4
    \\
    \tau_1+(2/3)\nu&=\alpha_2+2\alpha_3+2\alpha_4
    \\
    -\tau_2-(2/3)\nu&=\alpha_2+2\alpha_3
    \\
    \tau_3+(2/3)\nu&=2\alpha_1+3\alpha_2+4\alpha_3+2\alpha_4
    \\
    (-1/2)\tau_1+(2/3)\nu&=\alpha_1+\alpha_2+\alpha_3+\alpha_4
    \\
    (-1/2)\tau_2+(2/3)\nu&=\alpha_1+2\alpha_2+3\alpha_3+2\alpha_4
    \\
    -\tau_1+(1/3)\nu-r&=\alpha_1+\alpha_2
    \\
    -\tau_2+(1/3)\nu-r&=\alpha_1+3\alpha_2+4\alpha_3+2\alpha_4
    \\
    \tau_3+(1/3)\nu+r&=\alpha_1+\alpha_2+2\alpha_3
    \\
    (1/2)\tau_1+(1/3)\nu-r&=\alpha_2+\alpha_3+\alpha_4
    \\
    (1/2)\tau_3+(1/3)\nu-r&=\alpha_1+2\alpha_2+2\alpha_3+\alpha_4
    \\
    (1/2)\tau_1+(1/3)\nu+r&=\alpha_3+\alpha_4
    \\
    (-1/2)\tau_2-(1/3)\nu-r&=\alpha_2+\alpha_3
    \\
    (1/2)\tau_3+(1/3)\nu+r&=\alpha_1+\alpha_2+2\alpha_3+\alpha_4
    \\
    -\tau_2+(1/3)\nu+r&=\alpha_1+2\alpha_2+4\alpha_3+2\alpha_4
    \\
    \tau_3-(1/3)\nu+r&=\alpha_1+2\alpha_2+2\alpha_3
    \\
    \nu+r&=\alpha_1+\alpha_2+2\alpha_3+2\alpha_4
    \\
    \nu-r&=\alpha_1+2\alpha_2+2\alpha_3+2\alpha_4.
    \end{align*}
    Hence we see that $ \varPi=\{\alpha_1, \alpha_2, \alpha_3, \alpha_4 \} $ is a fundamental root system of $ (\mathfrak{e}_{8,\sR})^C $. Let the real part $ \mathfrak{h}_{8,\sR} $ of $ \mathfrak{h}_8 $:

    \begin{align*}
    \mathfrak{h}_{8,\sR}=\left\lbrace R_8=(\varPhi,0,0,r,0,0) \relmiddle{|}
    \begin{array}{l}
    \varPhi=\varPhi(\phi,0,0,\nu) \in \mathfrak{h}_{7,\sR},
    \\
    \quad \phi=\tilde{T}_0 \in \mathfrak{h}_{6,\sR},
    \\
    \qquad T_0=\tau_1E_1+\tau_2E_2+\tau_3E_3 \in ({\mathfrak{J}_{\sR}})_0,
    \\
    \qquad\qquad \tau_1+\tau_2+\tau_3=0, \tau_i \in \R,
    \\
    \quad \nu \in \R,
    \\
    r \in \R
    \end{array}
    \right\rbrace.
    \end{align*}

    The Killing form $ B_{8,\sR} $ of $ (\mathfrak{e}_{8,\sR})^C $ is given by $ B_{8,\sR}(R_1,R_2)=(9/4)B_{7,\sR}(\varPhi_1,\varPhi_2)+(9/2)\{Q_1,P_2\}-(9/2)\{P_1,Q_2\}+36r_1r_2+18t_1s_2+18s_1t_2, R_i=(\varPhi_i,P_i,Q_i,r_i,s_i,t_i),i=1,2 $ (Theorem \ref{theorem 6.1}), so is on $ \mathfrak{h}_{8,\sR} $.
    Hence, for $ R=(\varPhi(\tau_1\tilde{E}_1+\tau_2\tilde{E}_2+\tau_3\tilde{E}_3,0,0,\nu),0,0,r,0,0), {R}'=({\varPhi}'({\tau_1}'\tilde{E}_1+{\tau_2}'\tilde{E}_2+{\tau_3}'\tilde{E}_3,0,0,\nu'),0,0,r'\allowbreak ,0,0)
    \in \mathfrak{h}_{8,\sR} $, we have
    \begin{align*}
    B_{8,\sR}(R, {R}')=9(\tau_1{\tau_1}'+\tau_2{\tau_2}'+\tau_3{\tau_3}')+12\nu\nu'+36rr'.
    \end{align*}
    Indeed, it follows from $ B_{7,\sR}(\varPhi, {\varPhi}')=4(\tau_1{\tau_1}'+\tau_2{\tau_2}'+\tau_3{\tau_3}')+(16/3)\nu\nu'$ that
    \begin{align*}
    B_{8,\sR}(R, {R}')&=\dfrac{9}{4}B_{7,\sR}(\varPhi, {\varPhi}')+36rr'
    \\
    &=\dfrac{9}{4}\cdot \left( 4(\tau_1{\tau_1}'+\tau_2{\tau_2}'+\tau_3{\tau_3}')+\dfrac{16}{3}\nu\nu'\right) +36rr'
    \\
    &=9(\tau_1{\tau_1}'+\tau_2{\tau_2}'+\tau_3{\tau_3}')+12\nu\nu'+36rr'.
    \end{align*}

    Now, the canonical elements $ R_{\alpha_1}, R_{\alpha_2}, R_{\alpha_3}, R_{\alpha_4} \in \mathfrak{h}_{8,\sR} $ corresponding to $ \alpha_1,\alpha_2,\alpha_3,\alpha_4 $ are determined as follows:
    \begin{align*}
    R_{\alpha_1}&=(\varPhi(\dfrac{1}{54}(-4\tilde{E}_1+2\tilde{E}_2+\tilde{E}_3),0,0,\dfrac{1}{36}),0,0,\dfrac{1}{36},0,0),
    \\
    R_{\alpha_2}&=(0,0,0,-\dfrac{2}{36},0,0),
    \\
    R_{\alpha_3}&=(\varPhi(\dfrac{1}{54}(\tilde{E}_1-2\tilde{E}_2+\tilde{E}_3),0,0,-\dfrac{1}{36}),0,0,\dfrac{1}{36},0,0),
    \\
    R_{\alpha_4}&=(\varPhi(\dfrac{1}{54}(\tilde{E}_1+\tilde{E}_2-2\tilde{E}_3),0,0,\dfrac{2}{36}),0,0,0,0,0).
    \end{align*}
 Indeed, let $ R=(\varPhi',0,0,r',0,0) \in (\mathfrak{e}_{8,\sR})^C $ and set $ R_{\alpha_1}=(\varPhi,0,0,r,0,0)\in \mathfrak{h}_{8,\sR}$, then from $ B_{8,\sR}(R_{\alpha_1},R)=\alpha_1(R) $, we have
    \begin{align*}
    \tau_1=-\dfrac{4}{54},\tau_2=\tau_3=\dfrac{2}{54}, \nu=\dfrac{1}{36}, r=\dfrac{1}{36}.
    \end{align*}
    Hence we have the required one as $ R_{\alpha_1} $. For the remainders of $ R_{\alpha_i},i=2,3,4,5,6 $, as in the case above, we obtain the required results.
    Note that $ R_{\alpha_1} $ is also given as follows:
    \begin{align*}
    R_{\alpha_1}=\left( B_{8,\sR}({\underset{\dot{}}{E_1}}^-,{\dot{E_1}}_-)\right)^{-1}[{\underset{\dot{}}{E_1}}^-,{\dot{E_1}}_-],
    \end{align*}
    where $ {\underset{\dot{}}{E_1}}^-:=(0,\dot{E_1},0,0,0,0),{\dot{E_1}}_-:=(0,0,\underset{\dot{}}{E_1},0,0,0) $ are the root vectors associated the roots $ \alpha_1,-\alpha_1 $, respectively. For the remainders of $ R_{\alpha_i}, i=2,3,4 $, as in the case above, we obtain the required results.

    With above, we see that
    \begin{align*}
    (\alpha_1,\alpha_1)&=B_{8,\sR}(R_{\alpha_1},R_{\alpha_1})
    =9\cdot\left( \left(-\dfrac{4}{54} \right)\cdot\left(-\dfrac{4}{54} \right)+\dfrac{2}{54}\cdot\dfrac{2}{54}+\dfrac{2}{54}\cdot\dfrac{2}{54}  \right)
    \\
    &\hspace{25mm}+12\cdot\dfrac{1}{36}\cdot\dfrac{1}{36}+36\cdot\dfrac{1}{36}\cdot\dfrac{1}{36}
    =\dfrac{1}{9},
    \\
    (\alpha_1,\alpha_2)&=B_{8,\sR}(R_{\alpha_1},R_{\alpha_2})=36\cdot\dfrac{1}{36} \cdot\left(-\dfrac{2}{36}\right) =-\dfrac{1}{18},
    \\
    (\alpha_1,\alpha_3)&=B_8(R_{\alpha_1},R_{\alpha_3})=9\cdot\left( \left(-\dfrac{4}{54} \right)\cdot\dfrac{1}{54} +\dfrac{2}{54}\cdot\left( -\dfrac{2}{54}\right) +\dfrac{2}{54}\cdot\dfrac{1}{54}  \right)
    \\
    &\hspace{25mm}+12\cdot\dfrac{1}{36}\cdot\left( -\dfrac{1}{36}\right) +36\cdot\dfrac{1}{36}\cdot\dfrac{1}{36}
    =0,
    \\
    (\alpha_1,\alpha_4)&=B_8(R_{\alpha_1},R_{\alpha_4})=9\cdot\left( \left(-\dfrac{4}{54} \right)\cdot\dfrac{1}{54} +\dfrac{2}{54}\cdot\dfrac{1}{54} +\dfrac{2}{54}\cdot\left( -\dfrac{2}{54}\right)   \right)
    \\
    &\hspace{25mm}+12\cdot\dfrac{1}{36}\cdot\left( -\dfrac{1}{36}\right)
    =0,
    \\
    (\alpha_2,\alpha_2)&=B_8(R_{\alpha_2},R_{\alpha_2})
    =36\cdot\left(-\dfrac{2}{36} \right)\cdot\left(-\dfrac{2}{36} \right)
    =\dfrac{1}{9},
    \\
    (\alpha_2,\alpha_3)&=B_8(R_{\alpha_2},R_{\alpha_3})=36\cdot\dfrac{1}{36} \cdot\left(-\dfrac{2}{36} \right)
    =-\dfrac{1}{18},
    \\
    (\alpha_2,\alpha_4)&=B_8(R_{\alpha_2},R_{\alpha_4})=0,
    \\
    (\alpha_3,\alpha_3)&=B_8(R_{\alpha_3},R_{\alpha_3})=9\cdot\left( \dfrac{1}{54}\cdot\dfrac{1}{54}+\left( -\dfrac{2}{54}\right) \cdot\left( -\dfrac{2}{54}\right) +\dfrac{1}{54}\cdot\dfrac{1}{54}  \right)
    \\
    &\hspace{25mm}+12\cdot\left( -\dfrac{1}{36}\right) \cdot\left( -\dfrac{1}{36}\right) +36\cdot\dfrac{1}{36}\cdot\dfrac{1}{36}
    =\dfrac{1}{18},
    \\
    (\alpha_3,\alpha_4)&=B_8(R_{\alpha_3},R_{\alpha_4})=9\cdot\left( \dfrac{1}{54} \cdot\dfrac{1}{54} +\left( -\dfrac{2}{54}\right) \cdot\dfrac{1}{54} +\dfrac{1}{54}\cdot\left( -\dfrac{2}{54}\right)   \right)
    \\
    &\hspace{25mm}+12\cdot\left( -\dfrac{1}{36}\right)\cdot\dfrac{2}{36}
    =-\dfrac{1}{36},
    \\
    (\alpha_4,\alpha_4)&=B_8(R_{\alpha_4},R_{\alpha_4})=9\cdot\left( \dfrac{1}{54} \cdot\dfrac{1}{54} +\dfrac{1}{54} \cdot\dfrac{1}{54} +\left( -\dfrac{2}{54}\right) \cdot\left( -\dfrac{2}{54}\right)   \right)
    \\
    &\hspace{25mm}+12\cdot\dfrac{2}{36}\cdot\dfrac{2}{36}
    =\dfrac{1}{18}.
    \end{align*}
    Hence, since we have
    \begin{align*}
    \cos\theta_{12}&=\dfrac{(\alpha_1, \alpha_2)}{\sqrt{(\alpha_1,\alpha_1)(\alpha_2,\alpha_2)}}=-\dfrac{1}{2},\quad
    \cos\theta_{13}=\dfrac{(\alpha_1, \alpha_3)}{\sqrt{(\alpha_1,\alpha_1)(\alpha_3,\alpha_3)}}=0,
    \\
    \cos\theta_{14}&=\dfrac{(\alpha_1, \alpha_4)}{\sqrt{(\alpha_1,\alpha_1)(\alpha_4,\alpha_4)}}=0,\quad
    \cos\theta_{23}=\dfrac{(\alpha_2, \alpha_3)}{\sqrt{(\alpha_2,\alpha_2)(\alpha_3,\alpha_3)}}=-\dfrac{1}{\sqrt{2}},
    \\
    \cos\theta_{24}&=\dfrac{(\alpha_2, \alpha_4)}{\sqrt{(\alpha_2,\alpha_2)(\alpha_4,\alpha_4)}}=0, \quad
    \cos\theta_{34}=\dfrac{(\alpha_3, \alpha_4)}{\sqrt{(\alpha_3,\alpha_3)(\alpha_4,\alpha_4)}}=-\dfrac{1}{2},
    \\
    (\alpha_3,\alpha_3)&=\dfrac{1}{18}<\dfrac{1}{9}=(\alpha_2,\alpha_2),
    \end{align*}
    we can draw the Dynkin diagram.
\end{proof}


We consider the following complex Lie group $ (E_{8,\sR})^C $ which is defined by replacing $ \mathfrak{C} $ with $ \R $ in the complex Lie group $ {E_8}^C $:
\begin{align*}
(E_{8,\sR})^C&=\Aut((\mathfrak{e}_{8,\sR})^C)
\\
&=\left\lbrace \alpha \in \Iso_{C}((\mathfrak{e}_{8,\sR})^C) \relmiddle{|}\alpha[R_1,R_2]=[\alpha R_1,\alpha R_2]\right\rbrace,
\end{align*}
where $ (\mathfrak{e}_{8,\sR})^C $ have been already defined in the beginning of this section.

The immediate aim is to prove the connectedness of the group $ (E_{8,\sR})^C $. In order to prove it, we will use the manner used in \cite{iy8}.

First, we will study the following subgroup $ ((E_{8,\sR})^C)_{\tilde{1},1^-,1_-} $ of $ (E_{8,\sR})^C $:
\begin{align*}
((E_{8,\sR})^C)_{\tilde{1},1^-,1_-}=\left\lbrace \alpha \in (E_{8,\sR})^C \relmiddle{|} \alpha \tilde{1}=\tilde{1},\alpha 1^-=1^-, \alpha 1_-=1_-\right\rbrace.
\end{align*}

Then we have the following proposition.

\begin{proposition}\label{proposition 6.4}
    The group $ ((E_{8,\sR})^C)_{\tilde{1},1^-,1_-} $ is isomorphic to the group $ (E_{7,\sR})^C ${\rm :} $ ((E_{8,\sR})^C)_{\tilde{1},1^-,1_-} \cong (E_{7,\sR})^C $.
\end{proposition}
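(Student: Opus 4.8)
The plan is to realize the isomorphism through the same embedding $(E_{7,\sR})^C \hookrightarrow (E_{8,\sR})^C$, $\alpha \mapsto \tilde\alpha$, that is recorded in the Preliminaries, where $\tilde\alpha(\varPhi, P, Q, r, s, t) = (\alpha\varPhi\alpha^{-1}, \alpha P, \alpha Q, r, s, t)$, and to show that its image is exactly $((E_{8,\sR})^C)_{\tilde{1},1^-,1_-}$. First I would check that $\tilde\alpha$ is well-defined, that is, $\tilde\alpha \in (E_{8,\sR})^C$ and $\tilde\alpha$ fixes $\tilde{1}, 1^-, 1_-$. The latter is immediate, since $\tilde\alpha$ acts as the identity on the $r,s,t$-slots. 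That $\tilde\alpha$ preserves the Lie bracket of $(\mathfrak{e}_{8,\sR})^C$ reduces, term by term in the bracket formula, to two facts already available: the defining relation $\alpha(P \times Q)\alpha^{-1} = \alpha P \times \alpha Q$ of $(E_{7,\sR})^C$, and the invariance $\{\alpha P, \alpha Q\} = \{P, Q\}$ of the skew inner product noted after Proposition \ref{proposition 5.4}; the $\varPhi$-action terms are then automatic. The map $\varphi(\alpha)=\tilde\alpha$ is a homomorphism because $\widetilde{\alpha\beta} = \tilde\alpha\tilde\beta$, and it is injective because $\tilde\alpha = 1$ forces $\alpha P = P$ for all $P \in (\mathfrak{P}_{\sR})^C$, hence $\alpha = 1$.

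The heart of the argument is surjectivity: given $\alpha \in ((E_{8,\sR})^C)_{\tilde{1},1^-,1_-}$, I must produce $\beta \in (E_{7,\sR})^C$ with $\alpha = \tilde\beta$. The device is the eigenspace decomposition of $\ad\tilde{1}$. A direct computation gives $[\tilde{1}, (\varPhi, P, Q, r, s, t)] = (0, P, -Q, 0, 2s, -2t)$, so $\ad\tilde{1}$ grades $(\mathfrak{e}_{8,\sR})^C$ by the eigenvalues $0, \pm 1, \pm 2$, with $\mathfrak{g}_0 = (\mathfrak{e}_{7,\sR})^C \oplus C\tilde{1}$, with $\mathfrak{g}_{1}$ and $\mathfrak{g}_{-1}$ the $P^-$- and $Q_-$-copies of $(\mathfrak{P}_{\sR})^C$, and with $\mathfrak{g}_2 = C1^-$, $\mathfrak{g}_{-2} = C1_-$. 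Since $\alpha$ is an automorphism fixing $\tilde{1}$, it commutes with $\ad\tilde{1}$ and hence preserves every $\mathfrak{g}_k$; fixing $1^-$ and $1_-$ then forces $\alpha$ to be the identity on $\mathfrak{g}_{\pm 2}$. On $\mathfrak{g}_1$ the map $\alpha$ induces $\beta \in \Iso_C((\mathfrak{P}_{\sR})^C)$ with $\alpha P^- = (\beta P)^-$, and on $\mathfrak{g}_{-1}$ it induces some $\gamma$ with $\alpha Q_- = (\gamma Q)_-$; applying $\alpha$ to the relation $[1^-, Q_-] = Q^-$ together with $\alpha 1^- = 1^-$ yields $\gamma = \beta$.

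It then remains to identify $\beta$ with an element of $(E_{7,\sR})^C$ and to match $\alpha$ on $\mathfrak{g}_0$. Because $\tilde{1}$ is central in $\mathfrak{g}_0$ and $(\mathfrak{e}_{7,\sR})^C$ is simple (Theorem \ref{theorem 5.3}), one has $[\mathfrak{g}_0, \mathfrak{g}_0] = (\mathfrak{e}_{7,\sR})^C$, which $\alpha$ must preserve; write $\alpha_0 := \alpha|_{(\mathfrak{e}_{7,\sR})^C}$. Applying $\alpha$ to $[\varPhi, P^-] = (\varPhi P)^-$ gives $\alpha_0\varPhi = \beta\varPhi\beta^{-1}$ for all $\varPhi$, and applying $\alpha$ to the $\varPhi$-component of $[P^-, Q_-] = (P \times Q, 0, 0, -\frac{1}{8}\{P,Q\}, 0, 0)$ gives $\alpha_0(P \times Q) = \beta P \times \beta Q$. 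Combining these two identities yields $\beta(P \times Q)\beta^{-1} = \beta P \times \beta Q$, which is precisely the defining condition $\beta \in (E_{7,\sR})^C$. Since $\alpha$ then acts exactly as $\tilde\beta$ on each graded piece, $\alpha = \tilde\beta$, and surjectivity follows.

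The whole scheme is the $(E_{8,\sR})^C$–$(E_{7,\sR})^C$ analogue of the classical identity ${E_7}^C = ({E_8}^C)_{\tilde{1},1^-,1_-}$ recalled in the Preliminaries, so I expect the well-definedness and injectivity to be routine, and the main obstacle to lie entirely in the surjectivity step: the careful bookkeeping needed to verify that the bracket identities $[\tilde{1},\,\cdot\,]$, $[\varPhi, P^-]$, $[1^-, Q_-]$ and $[P^-, Q_-]$ hold with the stated coefficients, and that they assemble into the single relation $\beta(P \times Q)\beta^{-1} = \beta P \times \beta Q$ characterizing $(E_{7,\sR})^C$.
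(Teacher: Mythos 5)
Your proposal is correct, and it is essentially the same argument as the one the paper invokes: the paper's proof of Proposition \ref{proposition 6.4} is a one-line reference to the corresponding statement $({E_8}^C)_{\tilde{1},1^-,1_-} \cong {E_7}^C$ in Yokota's book with $\mathfrak{C}$ replaced by $\R$, and that proof proceeds exactly as you do, via the embedding $\alpha \mapsto \tilde{\alpha}$ together with the $\ad\tilde{1}$-eigenspace decomposition $(\mathfrak{e}_{8,\sR})^C = \mathfrak{g}_{-2}\oplus\mathfrak{g}_{-1}\oplus\mathfrak{g}_0\oplus\mathfrak{g}_1\oplus\mathfrak{g}_2$ to produce $\beta$ and verify $\beta(P\times Q)\beta^{-1}=\beta P\times\beta Q$, $\alpha=\tilde{\beta}$. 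In effect you have written out in full the details that the paper delegates to the citation, and your bracket computations ($[\tilde{1},R]$, $[\varPhi,P^-]$, $[1^-,Q_-]$, $[P^-,Q_-]$) are all consistent with the paper's bracket formula.
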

\begin{proof}
    This proposition can be proved as in the proof of \cite[Proposition 5.7.1]{iy0} by replacing $ \mathfrak{C} $ with $ \R $.
\end{proof}

Subsequently, we will study the following subgroup $ ((E_{8,\sR})^C)_{1_-} $ of $ (E_{8,\sR})^C $:
\begin{align*}
((E_{8,\sR})^C)_{1_-}=\left\lbrace  \alpha \in (E_{8,\sR})^C \relmiddle{|}\alpha 1_-=1_- \right\rbrace.
\end{align*}

Then we have the following lemma.

\begin{lemma}\label{lemma 6.5}
    {\rm (1)} The Lie algebra $ (\mathfrak{e}_{8,\sR})^C $ of the group $ (E_{8,\sR})^C $ is given by
    \begin{align*}
    (\mathfrak{e}_{8,\sR})^C=
    \left\lbrace R=(\varPhi,P,Q,r,s,t) \relmiddle{|}
    \begin{array}{l}
    \varPhi \in (\mathfrak{e}_{7,\sR})^C, P,Q \in (\mathfrak{P}_{\sR})^C,
    r,s,t \in C
    \end{array}
    \right\rbrace.
    \end{align*}

    In particular, we have $ \dim_C((\mathfrak{e}_{8,\sR})^C)=21+14\times 2+3=52 $.
    \vspace{2mm}

    {\rm (2)} The Lie algebra $ ((\mathfrak{e}_{8,\sR})^C)_{1_-} $ of the group $ ((E_{8,\sR})^C)_{1_-} $ is given by
    \begin{align*}
    ((\mathfrak{e}_{8,\sR})^C)_{1_-}
    &=\left\lbrace R \in (\mathfrak{e}_{8,\sR})^C \relmiddle{|} [R, 1_-]=0 \right\rbrace
    \\
    &=\left\lbrace (\varPhi, 0, Q, 0, 0, t) \in (\mathfrak{e}_{8,\sR})^C  \relmiddle{|}
    \begin{array}{l}
    \varPhi \in (\mathfrak{e}_{7,\sR})^C,
    Q \in (\mathfrak{P}_{\sR})^C,
    t \in C
    \end{array} \right\rbrace .
    \end{align*}

    In particular, we have
    \begin{align*}
    \dim_C(((\mathfrak{e}_{8,\sR})^C)_{1_-}) = 21+14+1 = 36.
    \end{align*}
\end{lemma}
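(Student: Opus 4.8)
The plan is to treat the two parts separately, in each case transplanting the standard argument for ${E_8}^C$ (cf. \cite[\S5.7]{iy0}) to the present $\R$-setting.

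For part (1), I would start from the fact that $(E_{8,\sR})^C = \Aut((\mathfrak{e}_{8,\sR})^C)$ by definition, so that its Lie algebra is the derivation algebra $\Der((\mathfrak{e}_{8,\sR})^C)$: differentiating the automorphism condition $\alpha[R,R'] = [\alpha R, \alpha R']$ at $\alpha = 1$ yields exactly $D[R,R'] = [DR,R'] + [R,DR']$. Since $(\mathfrak{e}_{8,\sR})^C$ is simple (as noted at the beginning of this section), every derivation is inner, so $\ad : (\mathfrak{e}_{8,\sR})^C \to \Der((\mathfrak{e}_{8,\sR})^C)$ is an isomorphism, its kernel being the trivial center. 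Hence the Lie algebra of $(E_{8,\sR})^C$ is identified with $(\mathfrak{e}_{8,\sR})^C$ itself, which gives the stated description. The dimension count then follows from $\dim_C((\mathfrak{e}_{7,\sR})^C) = 21$ and $\dim_C((\mathfrak{P}_{\sR})^C) = 6 + 6 + 1 + 1 = 14$, so that $21 + 14 \times 2 + 3 = 52$.

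For part (2), I would observe that $((E_{8,\sR})^C)_{1_-}$ is the isotropy subgroup at $1_-$ of the action of $(E_{8,\sR})^C$ on $(\mathfrak{e}_{8,\sR})^C$. Under the identification of part (1), differentiating the defining relation $\alpha 1_- = 1_-$ shows that its Lie algebra is $\{R \in (\mathfrak{e}_{8,\sR})^C \mid [R, 1_-] = 0\}$, which is the first equality in the claim. It then remains to evaluate this bracket. Writing $R = (\varPhi, P, Q, r, s, t)$ and $1_- = (0,0,0,0,0,1)$ and substituting into the Lie bracket formula of this section, I obtain
\begin{align*}
[R, 1_-] = (0,\, 0,\, -P,\, s,\, 0,\, -2r).
\end{align*}
Setting this equal to zero forces $P = 0$, $s = 0$ and $r = 0$, while $\varPhi \in (\mathfrak{e}_{7,\sR})^C$, $Q \in (\mathfrak{P}_{\sR})^C$ and $t \in C$ remain unconstrained, which is precisely the second description. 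Counting dimensions gives $21 + 14 + 1 = 36$.

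I do not expect a genuine obstacle here: part (1) is the usual derivation-algebra identification for a simple Lie algebra, and part (2) reduces to a single substitution into the defining bracket. The only point requiring care is to invoke the simplicity of $(\mathfrak{e}_{8,\sR})^C$ — so that all derivations are inner and $\ad$ is injective — which has already been established; with that in hand, both formulas and their dimensions follow mechanically.
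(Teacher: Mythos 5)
Your proposal is correct and follows essentially the same route as the paper: part (1) is the identification of the Lie algebra of $\Aut((\mathfrak{e}_{8,\sR})^C)$ with $\Der((\mathfrak{e}_{8,\sR})^C) \cong (\mathfrak{e}_{8,\sR})^C$ (using simplicity, established at the start of the section), and part (2) is the bracket computation $[R,1_-]=(0,0,-P,s,0,-2r)$, which the paper leaves as "straightforward computation" but which agrees with the formula appearing later in its proof of Proposition 6.6. You merely supply the details the paper omits, and all of them check out.
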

\begin{proof}
    (1) The Lie algebra of the group $ (E_{8,\sR})^C $ is $ \Der((\mathfrak{e}_{8,\sR})^C) \cong (\mathfrak{e}_{8,\sR})^C $.

    (2) Using the result of $ (1) $ above, by the straightforward computation, we can easily obtain the required results.
\end{proof}

We will prove the following proposition needed in the proof of connectedness.

\begin{proposition}\label{proposition 6.6}
    The group $((E_{8,\sR})^C)_{1_-}$ is a semi-direct product of groups $\exp(\ad(((\mathfrak{P}_{\sR})^C)_- \oplus C_- ))$ and $ (E_{7,\sR})^C ${\rm:}
    \begin{align*}
    ((E_{8,\sR})^C)_{1_-}=\exp(\ad(((\mathfrak{P}_{\sR})^C)_- \oplus C_- )) \rtimes (E_{7,\sR})^C.
    \end{align*}

    In particular, the group $((E_{8,\sR})^C)_{1_-}$ is connected.
\end{proposition}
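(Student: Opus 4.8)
The plan is to transplant the argument of Proposition~\ref{proposition 5.7} to the present setting, playing the distinguished triple $\tilde 1=(0,0,0,1,0,0)$, $1^-=(0,0,0,0,1,0)$, $1_-=(0,0,0,0,0,1)$ against the nilradical $\mathfrak{n}:=((\mathfrak{P}_{\sR})^C)_-\oplus C_-$. First I would record, by a direct evaluation of the Lie bracket of $(\mathfrak{e}_{8,\sR})^C$, the $\mathfrak{sl}_2$-type relations $[1^-,1_-]=\tilde 1$, $[\tilde 1,1^-]=2\cdot 1^-$, $[\tilde 1,1_-]=-2\cdot 1_-$, and that $\ad\tilde 1$ is diagonalizable, acting as multiplication by $+2,+1,0,-1,-2$ on the $s$-, $P$-, $(\varPhi,r)$-, $Q$-, and $t$-slots, respectively. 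In particular the eigenvalue $+2$ eigenspace of $\ad\tilde 1$ is exactly $C\cdot 1^-$, and $\mathfrak{n}$ is the sum of the eigenspaces of eigenvalues $-1,-2$; a short bracket computation shows $\mathfrak{n}$ is two-step nilpotent (with $[((\mathfrak{P}_{\sR})^C)_-,((\mathfrak{P}_{\sR})^C)_-]\subset C_-$), so $N:=\exp(\ad\mathfrak{n})$ is a connected subgroup of $(E_{8,\sR})^C$.

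For the decomposition, let $\alpha\in((E_{8,\sR})^C)_{1_-}$. Since $\alpha$ is an automorphism fixing $1_-$, we have $\alpha\tilde 1=\alpha[1^-,1_-]=[\alpha 1^-,1_-]$, and a direct computation shows that $[\,\cdot\,,1_-]$ annihilates the $\varPhi$-, $P$- and $s$-components; hence $\alpha\tilde 1$ has the form $(0,0,Q,r,0,t)$. Evaluating $[\alpha\tilde 1,1_-]=\alpha[\tilde 1,1_-]=-2\cdot 1_-$ then forces $r=1$, so $\alpha\tilde 1=(0,0,Q,1,0,t)$ for some $Q\in(\mathfrak{P}_{\sR})^C$, $t\in C$. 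On the other hand, since $\ad(0,0,Q_0,0,0,t_0)$ squares to zero on $\tilde 1$, one computes $\exp(\ad(0,0,Q_0,0,0,t_0))\tilde 1=(0,0,Q_0,1,0,2t_0)$. Taking $\delta:=\exp(\ad(0,0,Q,0,0,t/2))\in N$ we obtain $\delta\tilde 1=\alpha\tilde 1$ together with $\delta 1_-=1_-$, so that $\gamma:=\delta^{-1}\alpha$ fixes both $\tilde 1$ and $1_-$. It then remains only to see $\gamma$ fixes $1^-$: from $[\tilde 1,\gamma 1^-]=\gamma[\tilde 1,1^-]=2\,\gamma 1^-$ the vector $\gamma 1^-$ lies in the eigenvalue $+2$ eigenspace $C\cdot 1^-$, and $[\gamma 1^-,1_-]=\gamma\tilde 1=\tilde 1=[1^-,1_-]$ pins the scalar down to $1$. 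Thus $\gamma\in((E_{8,\sR})^C)_{\tilde 1,1^-,1_-}\cong (E_{7,\sR})^C$ by Proposition~\ref{proposition 6.4}, giving $\alpha=\delta\gamma$ and hence $((E_{8,\sR})^C)_{1_-}=N\,(E_{7,\sR})^C$.

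Finally I would verify the semi-direct structure exactly as in Proposition~\ref{proposition 5.7}. Every $\beta\in(E_{7,\sR})^C$ fixes $\tilde 1$, hence commutes with $\ad\tilde 1$ and preserves its eigenspaces; therefore $\beta$ preserves $\mathfrak{n}$ and $\beta\exp(\ad R_0)\beta^{-1}=\exp(\ad(\beta R_0))\in N$, which together with the fact that $N$ is a group shows $N$ is normal in $((E_{8,\sR})^C)_{1_-}$. The intersection $N\cap(E_{7,\sR})^C$ is trivial, since $\exp(\ad(0,0,Q_0,0,0,t_0))\tilde 1=(0,0,Q_0,1,0,2t_0)=\tilde 1$ forces $Q_0=0$, $t_0=0$. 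Writing down the split exact sequence
\begin{align*}
1 \longrightarrow N \longrightarrow ((E_{8,\sR})^C)_{\underset{\dot{}}{}1_-} \longrightarrow (E_{7,\sR})^C \longrightarrow 1,
\end{align*}
with section the inclusion $(E_{7,\sR})^C\cong((E_{8,\sR})^C)_{\tilde 1,1^-,1_-}\hookrightarrow((E_{8,\sR})^C)_{1_-}$, then yields the asserted semi-direct product, and connectedness follows because $N$ is connected and $(E_{7,\sR})^C$ is connected (Theorem~\ref{theorem 5.8}). The main point to get right is the bookkeeping of the $\ad$-action on the six slots — in particular confirming that $[\,\cdot\,,1_-]$ kills precisely the $\varPhi$-, $P$- and $s$-parts and that $\ad(0,0,Q_0,0,0,t_0)$ squares to zero on $\tilde 1$ — since the reduction to the Levi factor $(E_{7,\sR})^C$ rests entirely on these bracket identities.
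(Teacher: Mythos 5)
Your proof is correct, and while its overall skeleton matches the paper's (factor $\alpha=\delta\gamma$ with $\delta$ in the unipotent part and $\gamma$ fixing $\tilde 1,1^-,1_-$, then prove normality, the split sequence, and connectedness), the central verification is done by a genuinely different and cleaner mechanism. The paper pins down $\alpha 1^-$ explicitly: it solves the constraints $[\alpha\tilde 1,1_-]=-2\alpha 1_-$, $[\alpha 1^-,1_-]=\alpha\tilde 1$ and $[\alpha\tilde 1,\alpha 1^-]=2\alpha 1^-$ to get the cubic expressions $\alpha 1^-=(\tfrac12 Q\times Q,\,-Q,\,-\tfrac t2 Q-\tfrac16(Q\times Q)Q,\,-\tfrac t2,\,1,\,-\tfrac{t^2}4+\tfrac1{96}\{Q,(Q\times Q)Q\})$, and then matches this against a term-by-term series computation of $\exp(\ad((v/2)_-))\exp(\ad(L_-))1^-$. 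You bypass all of that: you only normalize $\alpha\tilde 1=(0,0,Q,1,0,t)$ by the single exponential $\exp(\ad(Q_-+(t/2)_-))$ (legitimate, since $\ad(Q_-+(t/2)_-)^2\tilde 1=0$), and then invoke the $\mathfrak{sl}_2$-triple structure of $(\tilde 1,1^-,1_-)$: because the $+2$-eigenspace of $\ad\tilde 1$ is exactly the line $C\cdot 1^-$, any automorphism fixing $\tilde 1$ and $1_-$ sends $1^-$ to a scalar multiple of itself, and $[\gamma 1^-,1_-]=\gamma\tilde 1=\tilde 1$ forces the scalar to be $1$. This representation-theoretic shortcut is what the explicit matching in the paper buys with computation. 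Two further points where your write-up is actually tighter than the paper's: you check $N\cap (E_{7,\sR})^C=\{1\}$, which the paper never verifies even though it is needed for the projection $p(\delta\beta)=\beta$ in its split exact sequence to be well defined; and your justification that $N=\exp(\ad(((\mathfrak{P}_{\sR})^C)_-\oplus C_-))$ is a group (two-step nilpotency of $\mathfrak{n}$, so Baker--Campbell--Hausdorff terminates with $[((\mathfrak{P}_{\sR})^C)_-,((\mathfrak{P}_{\sR})^C)_-]\subset C_-$) is more complete than the paper's remark, which only notes $[L_-,v_-]=0$. One small caution: the phrase ``$[\,\cdot\,,1_-]$ annihilates the $\varPhi$-, $P$- and $s$-components'' should be stated as ``the image of $[\,\cdot\,,1_-]$ has vanishing $\varPhi$-, $P$- and $s$-components'' (the bracket does depend on $P$ and $s$; it is the output slots that vanish), which is what your argument actually uses.
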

\begin{proof}
     Let $((\mathfrak{P}_{\sR})^C)_- \oplus C_- = \{(0, 0, L, 0, 0, v) \, | \, L \in (\mathfrak{P}_{\sR})^C, v \in C\}$ be a Lie subalgebra of the Lie algebra
    $((\mathfrak{e}_{8,\sR})^C)_{1_-}$ (Lemma \ref{lemma 6.5} (2)). Since it follows from $[L_-, v_-] = 0$ that $\ad(L_-)$ commutes with $\ad(v_-)$, we have $\exp(\ad(L_- + v_-)) = \exp(\ad(L_-))\exp(\ad(v_-))$. Hence the group $\exp(\ad(((\mathfrak{P}_{\sR})^C)_- \oplus C_-))$ is the connected subgroup of the group $((E_{8,\sR})^C)_{1-}$.
\if0
    Let $((\mathfrak{P}_{\sR})^C)_- \oplus C_- = \{(0, 0, Q, 0, 0, t) \, | \, Q \in (\mathfrak{P}_{\sR})^C, t \in C\}$ be a Lie subalgebra of the Lie algebra
    $((\mathfrak{e}_{8,\sR})^C)_{1_-}$ (Lemma \ref{lemma 7.0.5} (2)). Since it follows from $[Q_-, t_-] = 0$ that $\ad(Q_-)$ commutes with $\ad(t_-)$, we have $\exp(\ad(Q_- + t_-)) = \exp(\ad(Q_-))\exp(\ad(t_-))$. Hence the group $\exp(\ad(((\mathfrak{P}_{\sR})^C)_- \oplus C_-))$ is the connected subgroup of the group $((E_{8,\sR})^C)_{1-}$.
\fi

    Now, let $\alpha \in ((E_{8,\sR})^C)_{1-}$ and set $ \alpha\tilde{1}:= (\varPhi, P, Q, r, s, t), \alpha1^-:= (\varPhi_1, P_1, Q_1, r_1, s_1, t_1) $. Then, it follows from
    \begin{align*}
    [\alpha\tilde{1}, 1_-]&= \alpha[\tilde{1}, 1_-] = -2\alpha 1_- = -21_-=(0,0,0,0,0,-2),
    \\
    [\alpha\tilde{1}, 1_-]&=[(\varPhi,P,Q,r,s,t),(0,0,0,0,0,1)]=(0,0,-P,s,0,-2r)
    \end{align*}
    that $ P=0, r=1, s=0 $. Similarly, from $ [\alpha1^-, 1_-] = \alpha[1^-, 1_-] = \alpha\tilde{1} $, we have $ \varPhi=0, P_1=-Q, s_1=r,  s=0,t=-2r_1$. Hence we have
    \begin{align*}
    \alpha \tilde{1}=(0,0,Q,1,0,t),\quad \alpha 1^-=(\varPhi_1,-Q,Q_1,-\dfrac{t}{2},1,t_1).
    \end{align*}
    Moreover, from $[\alpha\tilde{1}, \alpha1^-] = \alpha[\tilde{1}, 1^-] = 2\alpha1^-$, we obtain the following
    $$
    \varPhi_1 = \dfrac{1}{2}Q \times Q, \; Q_1 = - \dfrac{t}{2}Q - \dfrac{1}{3}\varPhi_1Q, \; t_1 = -\dfrac{t^2}{4} - \dfrac{1}{16}\{Q, Q_1\},
    $$
    that is,
    \begin{align*}
    \alpha 1^-=(\dfrac{1}{2}Q \times Q,-Q,-\dfrac{t}{2}Q -\dfrac{1}{6}(Q \times Q)Q ,-\dfrac{t}{2},1,-\dfrac{t^2}{4} + \dfrac{1}{96}\{Q, (Q \times Q)Q).
    \end{align*}
\if0
    Hence $\alpha$ is of the form
    $$
    \alpha = \begin{pmatrix} * & * & * & 0 & \dfrac{1}{2}Q \times Q & 0 \\
    * & * & * & 0 & -Q & 0 \\
    * & * & * & Q & -\dfrac{t}{2}Q -\dfrac{1}{6}(Q \times Q)Q & 0 \\
    * & * & * & 1 & -\dfrac{t}{2} & 0 \\
    * & * & * & 0 & 1 & 0 \\
    * & * & * & t & -\dfrac{t^2}{4} + \dfrac{1}{96}\{Q, (Q \times Q)Q\} & 1
    \end{pmatrix}.
    $$
\fi
    On the other hand, set $ \delta:= \exp(\ad((v/2)_-))\exp(\ad(L_-)) \in \exp(\ad(((\mathfrak{P}_{\sR})^C)_- \oplus C_-)) $, then we have
    \begin{align*}
    \delta 1^- &= \exp\Big(\ad(\Big(\dfrac{v}{2}\Big)_-)\Big)\exp(\ad(L_-))1^-
    \\
    &=(\dfrac{1}{2}L \times L,- L,
    - \dfrac{t}{2}L - \dfrac{1}{6}(L \times L)L ,-\dfrac{v}{2}, 1,
    -\dfrac{v^2}{4} + \dfrac{1}{96}\{L, (L \times L)L\}).
    \end{align*}
\if0
    \begin{align*}
    \delta 1^- &= \exp\Big(\ad(\Big(\dfrac{v}{2}\Big)_-)\Big)\exp(\ad(L_-))1^-
    \\
    &= \begin{pmatrix} \dfrac{1}{2}L \times L
    \vspace{0mm}\\
    - L
    \vspace{0mm}\\
    - \dfrac{t}{2}L - \dfrac{1}{6}(L \times L)L
    \vspace{0mm}\\
    -\dfrac{v}{2}
    \vspace{0mm}\\
    1
    \vspace{0mm}\\
    -\dfrac{v^2}{4} + \dfrac{1}{96}\{L, (L \times L)L\}.
    \end{pmatrix}
    \end{align*}
\fi
Hence we see
\begin{align*}
(\delta^{-1}\alpha) 1^-=1^-.
\end{align*}
In addition, we can confirm
\begin{align*}
  (\delta^{-1}\alpha) \tilde{1} = \tilde{1},\,\, (\delta^{-1}\alpha) 1_- =1_-.
\end{align*}
    Indeed, it follows from
    \begin{align*}
    \delta \tilde{1}&=\exp\Big(\ad(\Big(\dfrac{v}{2}\Big)_-)\Big)\exp(\ad(L_-))\tilde{1}=\exp\Big(\ad(\Big(\dfrac{v}{2}\Big)_-)\Big)(0,0,L,1,0,0)=(0,0,L,1,0,v),
    \\
    \delta 1_-&=\exp\Big(\ad(\Big(\dfrac{v}{2}\Big)_-)\Big)\exp(\ad(L_-))1_-=\exp\Big(\ad(\Big(\dfrac{v}{2}\Big)_-)\Big)1_-=1_-
    \end{align*}
    that $ (\delta^{-1}\alpha)\tilde{1}=\delta^{-1}(0,0,P,1,0,t)=\tilde{1}, (\delta^{-1}\alpha)1_-=\delta^{-1}1_-=1_-$.

    \noindent Hence, $ \delta^{-1}\alpha \in ((E_{8,\sR})^C)_{\tilde{1},1^-,1_-} = (E_{7,\sR})^C $ follows from Proposition \ref{proposition 6.4},
    so that we obtain
    $$
    ((E_{8,\sR})^C)_{1_-} = \exp(\ad(((\mathfrak{P}_{\sR})^C)_- \oplus C_-))(E_{7,\sR})^C.
    $$

    Furthermore, for $\beta \in (E_{7,\sR})^C$, it is easy to verify that
    $$
    \beta(\exp(\ad(L_-)))\beta^{-1} = \exp(\ad(\beta L_-)),\quad \beta((\exp(\ad(v_-)))\beta^{-1} = \exp(\ad(v_-)).
    $$
    Indeed, for $(\varPhi, P, Q, r, s, t) \in (\mathfrak{e}_{8,\sR})^C$, by doing simple computation, we have the following
    \begin{align*}
    \beta \ad (L_-) \beta^{-1}(\varPhi, P, Q, r, s, t)&= \beta [L_-, \beta^{-1}(\varPhi, P, Q, r, s, t)]
    \\[1mm]
    &= [\beta L_-,\beta \beta^{-1}(\varPhi, P, Q, r, s, t)]\,(\beta \in (E_{7,\sR})^C \subset (E_{8,\sR})^C )
    \\[1mm]
    &= [\beta L_-,(\varPhi, P, Q, r, s, t)]
    \\[1mm]
    &= \ad (\beta L_-) (\varPhi, P, Q, r, s, t),
    \end{align*}
    that is, $\beta \ad (L_-) \beta^{-1}= \ad (\beta L_-)$.
    Hence we obtain the following
    \begin{align*}
    \beta(\exp(\ad(L_-)))\beta^{-1}&=\beta \, \Bigl(\displaystyle{\sum_{n=0}^{\infty}\dfrac{1}{n!}\ad (L_-)^n} \Bigr)\,\beta^{-1}
    \\[2mm]
    &=\displaystyle{\sum_{n=0}^{\infty}\dfrac{1}{n!}(\beta\ad (L_-)\beta^{-1})^n}\,\,\,(\, \beta \ad (L_-) \beta^{-1}= \ad (\beta L_-))
    \\[2mm]
    &=\displaystyle{\sum_{n=0}^{\infty}\dfrac{1}{n!}(\ad (\beta L_-))^n}
    \\[2mm]
    &= \exp(\ad(\beta L_-)).
    \end{align*}
As for another case, by the argument similar to above, we have $\beta((\exp(\ad(v_-)))\beta^{-1} = \exp(\ad(v_-)$.
    \vspace{1mm}

Hence this shows that $\exp(\ad(((\mathfrak{P}_{\sR})^C)_- \oplus C_-)) = \exp(\ad(((\mathfrak{P}_{\sR})^C)_-)\exp(\ad(C_-))$ is a normal \vspace{0.5mm}subgroup of the group$(
    (E_{8,\sR})^C)_{1_-}$.
In addition, we have a split exact sequence
    $$
    1 \to \exp(\ad((\mathfrak{P}_{\sR})^C)_- \oplus C_-)) \overset{j}{\longrightarrow} ((E_{8,\sR})^C)_{1_-} \overset{\overset{p}{\scalebox{1.0}{$\longrightarrow$}}}{\underset{s}{\longleftarrow}}  (E_{7,\sR})^C \to 1.
    $$
 Indeed, as that in the case where $ (E_{7,\sR})^C $, we can show this, however we rewrite the proof. First, we define a mapping $ j $ by $ j(\delta)=\delta $. Then it is clear that $ j $ is a injective homomorphism. Subsequently, we define a mapping $ p $ by $ p(\alpha):=p(\delta\beta)=\beta, \delta \in \exp(\ad((\mathfrak{P}_{\sR})^C)_- \oplus C_-)), \beta \in (E_{7,\sR})^C $. Then it follows that
    \begin{align*}
    p(\alpha_1\alpha_2)&=p((\delta_1\beta_1)(\delta_2\beta_2))=p(\delta_1(\beta_1\delta_2{\beta_1}^{-1})\beta_1\beta_2)\,\,(\beta_1\delta_2{\beta_1}^{-1}=:{\delta_2}' \in \exp(\ad((\mathfrak{P}_{\sR})^C)_- \oplus C_-))
    \\
    &=p((\delta_1{\delta_2}')(\beta_1\beta_2))=\beta_1\beta_2
    \\
    &=p(\alpha_1)p(\alpha_2),
    \end{align*}
so that $ p $ is a homomorphism. Moreover, let $ \beta \in (E_{7,\sR})^C $, then there exists $ \alpha \in ((E_{7,\sR})^C)_{\underset{\dot{}}{1}} $ such that $ \alpha=\delta\beta $ for $ \delta \in \exp(\ad((\mathfrak{P}_{\sR})^C)_- \oplus C_-)) $. This implies that $ p $ is surjective. Finally, we define a mapping $ s $ by $ s(\beta):=s(\delta^{-1}\alpha)=\alpha $. Then, as in the mapping $ p $, we easily see that $ s $ is a homomorphism and $ ps=1 $. With above, the short sequence is a split exact sequence.

Therefore the group  $((E_{8,\sR})^C)_{1_-}$ is a semi-direct product of $\exp(\ad(((\mathfrak{P}_{\sR})^C)_- \oplus C_-))$ and $(E_{7,\sR})^C$:
\begin{align*}
((E_{8,\sR})^C)_{1_-} =
    \exp(\ad(((\mathfrak{P}_{\sR})^C)_- \oplus C_-))\rtimes (E_{7,\sR})^C.
\end{align*}

Finally, the connectedness of the group  $((E_{8,\sR})^C)_{1_-}$ follows from the connectedness of the groups
$\exp(\ad(((\mathfrak{P}_{\sR})^C)_- \oplus C_-))$ and $(E_{7,\sR})^C$ (Theorem \ref{theorem 5.8}).
\end{proof}

Before proving the connectedness, we will make some preparations.

For $R \in (\mathfrak{e}_{8,\sR})^C$, we define a $C$-linear mapping $R \times R : (\mathfrak{e}_{8,\sR})^C \to (\mathfrak{e}_{8,\sR})^C $ by
$$
(R \times R)R_1 = [R, [R, R_1]\,] + \dfrac{1}{30}B_{8,\sR}(R, R_1)R,\,\, R_1 \in {\mathfrak{e}_8}^C,
$$
where $B_{8,\sR}$ is the Killing form of $ (\mathfrak{e}_{8,\sR})^C $.
Using this mapping, we define a space $\mathfrak{W}_{\sR}$ by
$$
\mathfrak{W}_{\sR} = \{R \in (\mathfrak{e}_{8,\sR})^C \, | \, R \times R = 0, R \not= 0\}.
$$

\begin{lemma}\label{lemma 6.7}
    For $R = (\varPhi, P, Q, r, s, t) \in (\mathfrak{e}_{8,\sR})^C$,
    $R \not=0$ belongs to $(\mathfrak{e}_{8,\sR})^C$ if and only if $R$ satisfies the following conditions
    \vspace{3mm}

    {\rm (1)} $2s\varPhi - P \times P = 0$ \quad {\rm (2)} $2t\varPhi + Q \times Q = 0$
    \vspace{1mm}

    {\rm (3)} $2r\varPhi + P \times Q = 0$ \quad {\rm (4)} $\varPhi P - 3rP - 3sQ = 0 $
    \vspace{1mm}

    {\rm (5)} $\varPhi Q + 3rQ - 3tP = 0 $ \quad {\rm (6)} $\{P, Q\} - 16(st + r^2) = 0$
    \vspace{1mm}

    {\rm (7)} $2(\varPhi P \times Q_1 + 2P \times \varPhi Q_1 - rP \times Q_1 - sQ \times Q_1) - \{P, Q_1\}\varPhi = 0$
    \vspace{1mm}

    {\rm (8)} $2(\varPhi Q \times P_1 + 2Q \times \varPhi P_1 + rQ \times P_1 - tP \times P_1) \!- \{Q, P_1\}\varPhi = 0$
    \vspace{1mm}

    {\rm (9)} $8((P \times Q_1)Q - stQ_1 - r^2Q_1 - \varPhi^2Q_1 + 2r\varPhi Q_1) + 5\{P, Q_1\}Q
    -2\{Q, Q_1\}P = 0$
    \vspace{1mm}

    \hspace*{-1.7mm}{\rm (10)} $8((Q \times P_1)P + stP_1 + r^2P_1 + \varPhi^2P_1 + 2r\varPhi P_1) \,+ \,5\{Q, P_1\}P
    -2\{P, Q_1\}Q= 0$
    \vspace{1mm}

    \hspace*{-1.7mm}{\rm (11)} $18(\ad\,\varPhi)^2\varPhi_1 + Q \times \varPhi_1P - P \times \varPhi_1Q) +   B_{7,\sR}(\varPhi, \varPhi_1)\varPhi = 0$
    \vspace{1mm}

    \hspace*{-1.7mm}{\rm (12)} $18(\varPhi_1\varPhi P -2\varPhi\varPhi_1P - r\varPhi_1P - s\varPhi_1Q) + B_{7,\sR}(\varPhi, \varPhi_1)P = 0$
    \vspace{1mm}

    \hspace*{-1.7mm}{\rm (13)} $18(\varPhi_1\varPhi Q -2\varPhi\varPhi_1Q + r\varPhi_1Q - t\varPhi_1P) + B_{7,\sR}(\varPhi, \varPhi_1)Q = 0,$

    \vspace{1mm}
    \noindent for all $\varPhi_1 \in(\mathfrak{e}_{7,\sR})^C, P_1, Q_1 \in (\mathfrak{P}_{\sR})^C$, where $B_{7,\sR}$ is the Killing form of the Lie algebra $(\mathfrak{e}_{7,\sR})^C$.
\end{lemma}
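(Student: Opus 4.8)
The statement to prove is the componentwise unpacking of the single equation $R \times R = 0$ that defines membership in $\mathfrak{W}_{\sR}$. The plan is therefore to compute $(R \times R)R_1$ for an arbitrary $R_1 = (\varPhi_1, P_1, Q_1, r_1, s_1, t_1) \in (\mathfrak{e}_{8,\sR})^C$ and to impose its vanishing slot by slot. Recall that
\[
(R \times R)R_1 = [R, [R, R_1]\,] + \frac{1}{30}B_{8,\sR}(R, R_1)R,
\]
so that $R \in \mathfrak{W}_{\sR}$ exactly when $R \neq 0$ and this $6$-tuple vanishes for every $R_1$. Since the Lie bracket, the Freudenthal operations $\times$ and $\vee$, the skew pairing $\{\,,\}$ and the Killing form $B_{8,\sR}$ on $(\mathfrak{e}_{8,\sR})^C$ are all obtained from their counterparts on ${\mathfrak{e}_8}^C$ by the replacement of $\mathfrak{C}$ by $\R$, this is the same computation as the one carried out for ${\mathfrak{e}_8}^C$ in \cite{iy0}, and may be quoted from there; below I indicate its mechanism.

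First I would compute the inner bracket $[R, R_1]$ using the bracket of $(\mathfrak{e}_{8,\sR})^C$, then apply $[R, -]$ once more to obtain $[R, [R, R_1]\,]$, and finally add the correction $\frac{1}{30}B_{8,\sR}(R, R_1)R$ with $B_{8,\sR}$ taken from Theorem \ref{theorem 6.1}. Each of the six resulting slots — the $(\mathfrak{e}_{7,\sR})^C$-slot, the two $(\mathfrak{P}_{\sR})^C$-slots and the three scalar slots — is bilinear in the data of $R$ and of $R_1$, and setting the coefficient of each free generator of $R_1$ equal to zero produces the list $(1)$–$(13)$. Concretely, letting $R_1$ run through the scalar directions $(0,0,0,r_1,0,0)$, $(0,0,0,0,s_1,0)$, $(0,0,0,0,0,t_1)$ and reading the $(\mathfrak{e}_{7,\sR})^C$-slot gives $(3)$, $(2)$, $(1)$ respectively, while the scalar slots against these generators collapse to the single scalar identity $(6)$; letting $R_1 = (\varPhi_1,0,0,0,0,0)$ and reading the $(\mathfrak{e}_{7,\sR})^C$-slot and the two $(\mathfrak{P}_{\sR})^C$-slots gives $(11)$, $(12)$, $(13)$, where the Killing correction is precisely what supplies the terms $B_{7,\sR}(\varPhi,\varPhi_1)\varPhi$, $B_{7,\sR}(\varPhi,\varPhi_1)P$ and $B_{7,\sR}(\varPhi,\varPhi_1)Q$. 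The remaining conditions $(4)$, $(5)$ and $(7)$–$(10)$ come from the $(\mathfrak{P}_{\sR})^C$-slots together with the generators $P_1, Q_1$, where one must expand the $\varPhi(P \times Q_1)$-type terms and use the $\vee$-relations of $(\mathfrak{e}_{6,\sR})^C \subset (\mathfrak{e}_{7,\sR})^C$. The converse is immediate: assuming $(1)$–$(13)$ one simply reassembles the six slots and concludes $(R \times R)R_1 = 0$ for all $R_1$, hence $R \times R = 0$.

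The main obstacle is not conceptual but the sheer volume of bookkeeping in the double bracket: one has to track a large number of bilinear terms across six slots, apply the Freudenthal identities for $\times$, $\vee$ and the action of $(\mathfrak{e}_{7,\sR})^C$ on $(\mathfrak{P}_{\sR})^C$ consistently, and verify that the numerical coefficients $2,3,5,8,16,18$ appearing in $(1)$–$(13)$ emerge exactly once the correction $\frac{1}{30}B_{8,\sR}(R, R_1)R$ is combined with $[R,[R,R_1]\,]$. Because replacing $\mathfrak{C}$ by $\R$ alters none of these structural identities, the safest route is to organize the expansion exactly as in the ${\mathfrak{e}_8}^C$ computation of \cite{iy0} and transcribe it.
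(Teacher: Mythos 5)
Your proposal is correct and is essentially identical to the paper's own proof: the paper likewise disposes of the lemma in a single line, expanding $(R\times R)R_1=0$ against the generators of $R_1$ and reading off (1)--(13) slot by slot, and your assignment of generators to conditions (the three scalar directions giving (1)--(3) and (6), $\varPhi_1$ giving (11)--(13), and $P_1,Q_1$ giving the rest) is the right bookkeeping. One caveat, inherited from the paper rather than introduced by you: with the normalization $B_{8,\sR}=-\tfrac{9}{2}(\,\cdot\,,\,\cdot\,)_8$ of Theorem \ref{theorem 6.1}, the correction term that actually produces the stated coefficients is $-\tfrac{1}{2}(R,R_1)_8\,R=\tfrac{1}{9}B_{8,\sR}(R,R_1)R$, not $\tfrac{1}{30}B_{8,\sR}(R,R_1)R$ --- the factor $\tfrac{1}{30}$ is carried over from ${\mathfrak{e}_8}^C$, where the Killing form equals $-15(\,\cdot\,,\,\cdot\,)_8$ --- so when transcribing Yokota's computation you should work with $-\tfrac{1}{2}(R,R_1)_8\,R$ throughout (as the paper itself implicitly does, e.g.\ in the verification that $1_-\in\mathfrak{W}_{\sR}$).
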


\begin{proof}
    For $R = (\varPhi, P, Q, r, s, t) \in (\mathfrak{e}_{8,\sR})^C$, by doing simple computation of $(R\, \times\, R)R_1=0$ for all $ R_1=(\varPhi_1, P_1, Q_1, r_1, $ $s_1, t_1) \in (\mathfrak{e}_{8,\sR})^C$, we have the required relational formulas above.
\end{proof}

\begin{proposition}\label{proposition 6.8}
    The group $((E_{8,\sR})^C)_0$ acts on $\mathfrak{W}_{\sR}$ transitively.
\end{proposition}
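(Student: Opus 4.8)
The plan is to fix one element of $\mathfrak{W}_{\sR}$, namely $1_-=(0,0,0,0,0,1)$, and to show that every $R\in\mathfrak{W}_{\sR}$ can be carried to $1_-$ by some element of $((E_{8,\sR})^C)_0$; transitivity is then immediate. First I would verify $1_-\in\mathfrak{W}_{\sR}$, i.e. $1_-\times 1_-=0$, directly from the bracket relations (equivalently, $1_-$ is a root vector for the long root $-2r$ of Theorem \ref{Theorem 6.2}, so it lies in the minimal nilpotent orbit, which matches $\dim_C((\mathfrak{e}_{8,\sR})^C)_{1_-}=36=52-16$ from Lemma \ref{lemma 6.5}). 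The whole reduction is modelled on the four-case argument in the proof of Theorem \ref{theorem 5.8} for $(E_{7,\sR})^C$, carried out with respect to the five-term grading of $(\mathfrak{e}_{8,\sR})^C$ given by $\ad\tilde{1}$, whose eigenspaces are $C\,s^-$ (degree $+2$), $((\mathfrak{P}_{\sR})^C)^-$ (degree $+1$), $(\mathfrak{e}_{7,\sR})^C\oplus C\tilde{1}$ (degree $0$), $((\mathfrak{P}_{\sR})^C)_-$ (degree $-1$) and $C\,t_-$ (degree $-2$).

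The tools are the one-parameter subgroups $\exp(\ad P^-),\exp(\ad Q_-),\exp(\ad s^-),\exp(\ad t_-),\exp(\ad\tilde{r})$, all contained in $((E_{8,\sR})^C)_0$; the embedded connected subgroup $(E_{7,\sR})^C$, which acts on the two $(\mathfrak{P}_{\sR})^C$-slots via $\widetilde{\alpha}$ and is transitive on $(\mathfrak{M}_{\sR})^C$ by Theorem \ref{theorem 5.8}; and the involution $\lambda_\omega$ of Section $2$, which interchanges the $s$- and $t$-slots and the $P$- and $Q$-slots and lies in $((E_{8,\sR})^C)_0$, being a product of an exponential in the $\mathfrak{sl}_2$-triple $\{s^-,\tilde{1},t_-\}$ with an element of the connected group $(E_{7,\sR})^C$. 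Writing $R=(\varPhi,P,Q,r,s,t)$, I would argue by downward cases on the highest nonvanishing graded component. If $t\neq 0$ I reduce $R$ onto the line $C^*1_-$ and then rescale by $\exp(\ad\tilde{r})$, using $\exp(\ad\tilde{r})1_-=e^{-2r}1_-$; if $t=0$ but $s\neq 0$, $\lambda_\omega$ returns me to the previous case; if $s=t=0$ but $Q\neq 0$, a suitable $\exp(\ad t_-)$ produces a nonzero $s$- or $t$-slot and drops into an earlier case; if among $P,Q,s,t$ only $P\neq 0$, then $\lambda_\omega$ exchanges $P$ and $Q$; and if $P=Q=s=t=0$, so that $R=(\varPhi,0,0,0,0,0)$ with $\varPhi$ a nonzero minimal nilpotent of $(\mathfrak{e}_{7,\sR})^C$ (forced by conditions $(6)$ and $(11)$ of Lemma \ref{lemma 6.7}), then $\exp(\ad P^-)$ creates a nonzero $P$-component because $[P^-,\varPhi]=(-\varPhi P)^-$, again lowering the case.

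Throughout, the defining relations $(1)$–$(13)$ of Lemma \ref{lemma 6.7} are what make each step possible: in the generic case $t\neq 0$ they give $\varPhi=-\tfrac{1}{2t}\,Q\times Q$ from $(2)$, $P=\tfrac{1}{3t}(\varPhi Q+3rQ)$ from $(5)$ and $s=\tfrac{1}{16t}(\{P,Q\}-16r^2)$ from $(6)$, so that $R$ is determined by the single degree-$(-1)$ datum $Q$ (together with $r,t$), which is then normalized by $\exp(\ad P^-)$, $\exp(\ad s^-)$ and the transitivity of $(E_{7,\sR})^C$ on $(\mathfrak{M}_{\sR})^C$. I expect the main obstacle to be exactly this generic case: one must compute $\exp(\ad P^-)\,1_-$ explicitly — the analogue of the displayed formula for $\exp(\varPhi(0,A,0,\nu))\underset{\dot{}}{1}$ in the proof of Theorem \ref{theorem 5.8} — to show that the $((E_{8,\sR})^C)_0$-orbit of $1_-$ already fills the cell $\{R\in\mathfrak{W}_{\sR}\mid t\neq 0\}$, while checking via Lemma \ref{lemma 6.7} that every intermediate element stays in $\mathfrak{W}_{\sR}$. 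Since each transformation used lies in the identity component, the argument simultaneously yields $\mathfrak{W}_{\sR}=((E_{8,\sR})^C)_0\,1_-$ and its connectedness, which, combined with the connectedness of the stabilizer $((E_{8,\sR})^C)_{1_-}$ (Proposition \ref{proposition 6.6}) and the identification of Proposition \ref{proposition 6.4}, is what the ensuing connectedness proof of $(E_{8,\sR})^C$ will invoke.
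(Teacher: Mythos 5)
Your skeleton is the paper's: verify $1_-\in\mathfrak{W}_{\sR}$, then transform an arbitrary $R=(\varPhi,P,Q,r,s,t)\in\mathfrak{W}_{\sR}$ to $1_-$ case by case using exponentials of $\ad$-nilpotent elements, and your cases $t\neq0$, $s\neq0$, and $R=(\varPhi,0,0,0,0,0)$ run parallel to the paper's Cases (i), (ii), (vi). But the step you propose for the case $s=t=0$, $Q\neq0$ is wrong, and it is not a cosmetic detail: it is precisely the place where the degree $\pm2$ directions of your grading are too small to do the work.

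Concretely, from the bracket defining $(\mathfrak{e}_{8,\sR})^C$, with $v_-=(0,0,0,0,0,v)$ one computes $\ad(v_-)R=(0,0,vP,-sv,0,2rv)$. Hence when $r=s=t=0$ the series terminates after the linear term and $\exp(\ad(v_-))R=(\varPhi,P,Q+vP,0,0,0)$: no choice of $v$ ever creates a nonzero $s$- or $t$-slot, contrary to your claim; likewise $\exp(\ad(u^-))R=(\varPhi,P+uQ,Q,0,0,0)$ for $u^-=(0,0,0,0,u,0)$. Your trick works only when $r\neq0$ (then the $t$-slot becomes $2rv$), but the sub-case $r=s=t=0$, $Q\neq0$ is non-empty (any $Q_-=(0,0,Q,0,0,0)$ with $Q\times Q=0$, $Q\neq0$ lies in $\mathfrak{W}_{\sR}$), and your own reduction chain funnels into it: from $(\varPhi,0,0,0,0,0)$, $\exp(\ad(0,P_1,0,0,0,0))$ yields $(\varPhi,-\varPhi P_1,0,0,\frac{1}{8}\{\varPhi P_1,P_1\},0)$, and whenever $\{\varPhi P_1,P_1\}=0$ your $\lambda_\omega$ swap lands you exactly in the broken sub-case. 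The missing idea — the paper's Cases (iii)--(iv) — is to exponentiate the degree $\pm1$ part instead: choose $P_1$ with $\{P_1,Q\}\neq0$ (possible since $\{\cdot,\cdot\}$ is nondegenerate and $Q\neq0$) and apply $\exp(\ad(0,P_1,0,0,0,0))$, which creates the $r$-component $-\frac{1}{8}\{P_1,Q\}\neq0$; once $r\neq0$, $\exp(\ad(0,Q,0,0,0,0))$ creates the $s$-component $-4r^2\neq0$, returning you to your $s\neq0$ case. A smaller but real defect: in your generic case $t\neq0$ you cannot invoke ``transitivity of $(E_{7,\sR})^C$ on $(\mathfrak{M}_{\sR})^C$'' for $Q$, since condition (2) of Lemma \ref{lemma 6.7} gives $Q\times Q=-2t\varPhi$, which need not vanish, so $Q\notin(\mathfrak{M}_{\sR})^C$ in general; the paper instead solves $\exp(\ad(0,P_1,0,r_1,s_1,0))1_-=R$ explicitly for $(P_1,r_1,s_1)$, with a separate treatment of $t=1$.
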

\begin{proof}
    Since $\alpha \in (E_{8,\sR})^C$ leaves the Killing form $B_{8,\sR}$ invariant $: B_{8,\sR}(\alpha R,$ $\alpha R') = B_8(R, R'), R, R' \in (\mathfrak{e}_{8,\sR})^C$, the group $(E_{8,\sR})^C$ acts on $\mathfrak{W}_{\sR}$.
    Indeed, let $ R \in \mathfrak{W}_{\sR} $. Then, for all $  R_1 \in (\mathfrak{e}_{8,\sR})^C $, it follows that \vspace{-1mm}
    \begin{align*}
    (\alpha R \times \alpha R)R_1 &= [\alpha R, [\alpha R, \alpha R_1]\,] +
    \dfrac{1}{30}
    B_{8,\sR}(\alpha R, R_1)\alpha R
    \\[0mm]
    &= \alpha[\,[R, [R, \alpha^{-1}R_1]\,] +
    \dfrac{1}{30}
    B_{8,\sR}(R, \alpha^{-1}R_1)\alpha R
    \\[0mm]
    &= \alpha((R \times R)\alpha^{-1}R_1
    \\[0mm]
    &= 0.
    \end{align*}
    \vspace{-5mm}

\noindent Hence the group $(E_{8,\sR})^C$ acts on $\mathfrak{W}_{\sR}$, that is, $ \alpha R \in (E_{8,\sR})^C $.

We will show that this action is transitive. First, for all $  R_1 \in (\mathfrak{e}_{8,\sR})^C $, it follows from
    \begin{align*}
    (1_- \times 1_-)R_1 &= [1_-,[1_-, (\varPhi_1, P_1, Q_1, r_1, s_1, t_1)] \, ] +
    \dfrac{1}{30}
    B_8(1_-, R_1)1_-
    \\[0mm]
    &= [1_-, (0, 0, P_1, -s_1, 0, 2r_1)] + 2s_11_-
    \\[0mm]
    &= (0, 0, 0, 0, -2s_1) + 2s_11_-
    \\[0mm]
    &= 0
    \end{align*}
    that $1_- \in \mathfrak{W}_{\sR}$. Then, any element $R \in \mathfrak{W}_{\sR}$ can be transformed to $1_- \in \mathfrak{W}_{\sR}$ by some $\alpha \in ((E_{8,\sR})^C)_0$. This will be proved the following.
    \vspace{1mm}

    Case (i) where $R = (\varPhi, P, Q, r, s,t), t \not= 0$.

    From Lemma \ref{lemma 6.7} (2),(5) and (6), we see
    $$
    \varPhi = -\dfrac{1}{2t}Q \times Q, \; P = \dfrac{r}{t}Q - \dfrac{1}{6t^2}(Q \times Q)Q, \; s = -\dfrac{r^2}{t} + \dfrac{1}{96t^3}\{Q, (Q \times Q)Q\}.
    $$
    Now, let $\varTheta:= \ad(0, P_1, 0, r_1, s_1, 0) \in \ad((\mathfrak{e}_{8,\sR})^C), r_1\not=0 $ (Lemma \ref{lemma 6.7} (1)), so we compute $\varTheta^n1_-$:
    \begin{align*}
    &\quad \varTheta^n1_-
    \\
    &= \begin{pmatrix} ((-2)^{n-1} + (-1)^n){r_1}^{n-2}P_1 \times P_1
    \vspace{0.5mm}\\
    \Big((-2)^{n-1} - \dfrac{1 + (-1)^{n-1}}{2}\Big){r_1}^{n-2}s_1P_1 + \Big(\dfrac{1 - (-2)^n}{6} + \dfrac{(-1)^n}{2}\Big){r_1}^{n-3}(P_1 \times P_1)P_1
    \vspace{0.5mm}\\
    ((-2)^n + (-1)^{n-1}){r_1}^{n-1}P_1
    \vspace{0.5mm}\\
    (-2)^{n-1}{r_1}^{n-1}s_1
    \vspace{0.5mm}\\
    -((-2)^{n-2} + 2^{n-2}){r_1}^{n-2}{s_1}^2 + \dfrac{2^{n-2} + (-2)^{n-2} - (-1)^{n-1}-1}{24}{r_1}^{n-4}\{P_1,(P_1 \times P_1)P_1\}
    \vspace{0.5mm}\\
    (-2)^n{r_1}^n \end{pmatrix}.
    \end{align*}
    Then, by doing straightforward computation, we have
    \begin{align*}
    &\quad (\exp\varTheta)1_- = \Big(\dsum_{n= 0}^\infty\dfrac{1}{n!}\varTheta^n \Big)1_-
    \vspace{0.5mm}\\
    &= \begin{pmatrix}   -\dfrac{1}{2{r_1}^2}(e^{-2r_1} -2e^{-r_1} + 1)P_1 \times P_1
    \vspace{0.5mm}\\
    \dfrac{s_1}{2{r_1}^2}(-e^{-2r_1} - e^{r_1} + e^{-r_1} + 1)P_1 + \dfrac{1}{6{r_1}^3}(-e^{-2r_1} + e^{r_1} + 3e^{-r_1} - 3)(P_1 \times P_1)P_1
    \vspace{0.5mm}\\
    \dfrac{1}{r_1}(e^{-2r_1} - e^{-r_1})P_1
    \vspace{0.5mm}\\
    \dfrac{s_1}{2r_1}(1 - e^{-2r_1})
    \vspace{0.5mm}\\
    -\dfrac{{s_1}^2}{4{r_1}^2}(e^{-2r_1} + e^{2r_1} -2) + \dfrac{1}{96{r_1}^4}(e^{2r_1} + e^{-2r_1} - 4e^{r_1} - 4e^{-r_1} + 6)\{P_1, (P_1 \times P_1)P_1\}
    \vspace{0.5mm}\\
    e^{-2r_1} \end{pmatrix}.
    \end{align*}
Here, in the case where $ t\not=1 $, for a given $ Q \in (\mathfrak{J}_{\sR})^C,r,t \in C $ in Case (i) above,
we can choose $ P_1 \in (\mathfrak{J}_{\sR})^C,r_1,s_1 \in C $ satisfying the conditions
\begin{align*}
\dfrac{1}{r_1}(e^{-2r_1} - e^{-r_1})P_1=Q, \;\; \dfrac{s_1}{2r_1}(1 - e^{-2r_1})=r, \;\; e^{-2r_1}=t.
\end{align*}
 Indeed,
we choose some one value of $ \sqrt{t} $ satisfying $ (\sqrt{t})^2=t $ and some one value of $ \log t $ for $ t \in C $, respectively. Then because of $ t-t^2\not=0, t-1\not=0 $, we can get
\begin{align*}
P_1=\dfrac{(\sqrt{t}+t)\log t}{2(t-t^2)}Q,\;\; r_1=-\dfrac{\log t}{2},\;\;  s_1=\dfrac{\log t}{t-1}r
\end{align*}
as $ P_1,r_1, s_1 $ satisfying the conditions above.
\if0
    Here set
    $$
    Q: = \dfrac{1}{r_1}(e^{-2r_1} - e^{-r_1})P_1, \;\; r: = \dfrac{s_1}{2r_1}(1 - e^{-2r_1}), \;\; t: = e^{-2r_1}\not=0,
    $$
\fi
    Hence, by using $ P_1,r_1,s_1 $ obtained above, we obtain
    \begin{align*}
    (\exp\varTheta)1_- = \begin{pmatrix} -\dfrac{1}{2t}Q \times Q
    \vspace{1mm}\\
    \dfrac{r}{t}Q - \dfrac{1}{6t^2}(Q \times Q)Q
    \vspace{0mm}\\
    Q
    \vspace{0mm}\\
    r
    \vspace{0mm}\\
    -\dfrac{r^2}{t} + \dfrac{1}{96 t^3}\{Q, (Q \times Q)Q\}
    \vspace{0mm}\\
    t
    \end{pmatrix}
    =R.
    \end{align*}
    In the case where $ t=1 $. Then, for a given $ R=(\varPhi,P,Q,r,s,t),t\not=0 $, $ R $ is of the form
    \begin{align*}
    \varPhi=-\dfrac{1}{2}Q \times Q,\;\;P=rQ-\dfrac{1}{6}(Q\times Q)Q,\;\;s=-r^2+\dfrac{1}{96}\{Q,(Q \times Q)Q\},
    \end{align*}
    so let $\varTheta:= \ad(0, P_1, 0, 0, s_1, 0) \in \ad((\mathfrak{e}_{8,\sR})^C) $, we have
    \begin{align*}
    (\exp\varTheta)1_-
    = \Big(\dsum_{n= 0}^\infty\dfrac{1}{n!}\varTheta^n \Big)1_-
    = \begin{pmatrix}
    -\dfrac{1}{2}P_1 \times P_1
    \vspace{1mm}\\
    -s_1P_1+ \dfrac{1}{6}(P_1 \times P_1)P_1
    \vspace{0mm}\\
    -P_1
    \vspace{0mm}\\
    s_1
    \vspace{0mm}\\
    -{s_1}^2+\dfrac{1}{96}\{P_1, (P_1 \times P_1)P_1\}
    \vspace{0mm}\\
    1
    \end{pmatrix}.
    \end{align*}
    Here, as in the case where $ t\not=1 $, we choose $ P_1 \in (\mathfrak{J}_{\sR})^C, s_1 \in C $ satisfying $ -P_1=Q, s_1=r $. Hence, by using these $ P_1,s_1 $, we obtain
    \begin{align*}
    (\exp\varTheta)1_- = \begin{pmatrix} -\dfrac{1}{2}Q \times Q
    \vspace{1mm}\\
    rQ-\dfrac{1}{6}(Q \times Q)Q
    \vspace{0mm}\\
    Q
    \vspace{0mm}\\
    r
    \vspace{0mm}\\
    -r^2 + \dfrac{1}{96}\{Q, (Q \times Q)Q\}
    \vspace{0mm}\\
    1
    \end{pmatrix}
    =R.
    \end{align*}
    Thus $R$ is transformed to $1_-$ by $(\exp \varTheta)^{-1} \in ((E_{8,\sR})^C)_0$.
\if0
    \noindent In the case where $ r_1=n\pi i\not=0,n \in \Z $. For $\varTheta:= \ad(0, P_1, 0, n\pi i, s_1, 0) \in \ad((\mathfrak{e}_{8,\sR})^C) $, as in the case above, we have the following
    \begin{align*}
    (\exp\varTheta)1_-=\begin{pmatrix}
    \dfrac{2}{(n\pi)^2}P_1 \times P_1
    \vspace{1mm}\\
    \dfrac{4}{3(n\pi)^3 i}(P_1 \times P_1)P_1
    \vspace{1mm}\\
    \dfrac{2}{n\pi i}P_1
    \vspace{1mm}\\
    0
    \vspace{1mm}\\
    \dfrac{1}{6(n\pi)^4}\{P_1, (P_1 \times P_1)P_1\}
    \vspace{1mm}\\
    1
    \end{pmatrix}
    \end{align*}
    Here, for the element $ Q $ of $ R=(\varPhi,P,Q,r,s,t),t\not=0 $ which is given in Case (i) above, we can choose $ P_1 \in (\mathfrak{J}_{\sR})^C $ satisfying the condition $ (2/n\pi i)P_1 =Q $. Indeed, $ P_1 $ can be obtained as $ P_1=(n\pi i/2)Q $. Hence we obtain
    \begin{align*}
    (\exp\varTheta)1_- = \begin{pmatrix}
    -\dfrac{1}{2}Q \times Q
    \vspace{1mm}\\
     - \dfrac{1}{6}(Q \times Q)Q
    \vspace{0mm}\\
    Q
    \vspace{0mm}\\
    0
    \vspace{0mm}\\
    \dfrac{1}{96}\{Q, (Q \times Q)Q\}
    \vspace{0mm}\\
    1
    \end{pmatrix}.
    \end{align*}
\fi
\vspace{2mm}

    Case (ii) where  $R = (\varPhi, P, Q, r, s, 0), s \not= 0$.

    Let $\varTheta:=\ad(0, 0, 0, 0, {\pi}/{2}, -{\pi}/{2})) \in (\mathfrak{e}_{8,\sR})^C$. Then we have
    $$
    (\exp\varTheta) R =(\varPhi, Q,-P, -r, 0, -s), \;\;\; -s \not= 0.
    $$
    Hence this case can be reduced to Case (i).
 \vspace{2mm}

    Case (iii) where  $R = (\varPhi, P, Q, r, 0, 0), r \not= 0$.

    From Lemma \ref{lemma 6.7} (2),(5) and (6), we have
    $$
    Q \times Q = 0, \;\; \varPhi Q = -3rQ, \;\; \{P, Q\} = 16r^2.
    $$
    Then, let $\ad(0, Q, 0, 0, 0, 0) \in \ad((\mathfrak{e}_{8,\sR})^C)$ (Lemma \ref{lemma 6.5} (1)), we have
    $$
    (\exp\ad(0, Q, 0, 0, 0, 0))R = (\varPhi, P + 2rQ, Q, r, -4r^2, 0), \;\; -4r^2 \not= 0.
    $$
    Hence this case can be reduced to Case (ii).
 \vspace{2mm}

    Case (iv) where $R = (\varPhi, P, Q, 0, 0, 0), Q \not= 0$.

    We can choose $P_1 \in (\mathfrak{P}_{\sR})^C$ such that $\{P_1, Q\} \not= 0$. Indeed, for all $ P_1 \in (\mathfrak{P}_{\sR})^C $, suppose $ \{P_1,Q\}=0 $. Then we have $ Q=0 $. This is contradiction. Hence there exists $ P_1 \in (\mathfrak{P}_{\sR})^C $ such that $\{P_1, Q\} \not= 0$.

    Now, let $\varTheta:= \ad(0, P_1, 0, 0, 0, 0) \in \ad((\mathfrak{e}_{8,\sR})^C)$ (Lemma \ref{lemma 6.5} (1)), we have
    \begin{align*}
     (\exp \varTheta)R
    = \left( \begin{array}{c}
    \varPhi+P_1 \times Q
    \vspace{1mm}\\
    \,P-\varPhi P_1+\dfrac{1}{2}(P_1 \times Q)P_1
    \vspace{1mm}\\
    Q
    \vspace{1mm}\\
    -\dfrac{1}{8}\{P_1, Q\}
    \vspace{1mm}\\
    \dfrac{1}{4}\{P_1, P\}+\dfrac{1}{8}\{P_1, -\varPhi P_1\}+\dfrac{1}{24}\{P_1,( P_1 \times Q)P_1\}
    \vspace{1mm}\\
    0
     \end{array} \right),\;\;  -\dfrac{1}{8}\{P_1, Q\} \not =0.
    \end{align*}
    Hence this case can be reduced to Case (iii).
\vspace{2mm}

    Case (v) where $R = (\varPhi, P, 0, 0, 0, 0), P \not= 0$.

    As in Case (iv), we choose $Q_1 \in (\mathfrak{P}_{\sC})^C$ such that $\{P, Q_1\} \not= 0$. Then, let $\varTheta:= \ad(0, 0, Q_1, 0, 0, 0) \in \ad((\mathfrak{e}_{8,\sR})^C)$ (Lemma \ref{lemma 6.5} (1)), we have
    \begin{align*}
    (\exp \varTheta)R
    =\left( \begin{array}{c}
    \varPhi-P \times Q_1
    \vspace{1mm}\\
     P
    \vspace{1mm}\\
     -\varPhi Q_1-\dfrac{1}{2}(P \times Q_1)Q_1
    \vspace{1mm}\\
    \dfrac{1}{8}\{P, Q_1\}
    \vspace{1mm}\\
    0
     \vspace{1mm}\\
     -\dfrac{1}{8}\{Q_1, -\varPhi Q_1\} -\dfrac{1}{24}\{Q_1, -(P \times Q_1)Q_1  \}
    \end{array}\right),\;\; \dfrac{1}{8}\{P, Q_1\} \not=0.
    \end{align*}
    Hence this case can be reduced to Case (iii).
\vspace{2mm}

    Case (vi) where $R = (\varPhi, 0, 0, 0, 0, 0), \varPhi \not= 0.$
    From  Lemma \ref{lemma 6.7} (9) or (10), we have $\varPhi^2=0$.
    We choose $P_1 \in (\mathfrak{P}_{\sC})^C$ such that $\varPhi P_1 \not = 0$. Indeed, for all $ P_1 \in (\mathfrak{P}_{\sR})^C $, suppose $ \varPhi P_1=0 $. Then we have $ \varPhi=0 $. This is contradiction. Hence there exists $ P_1 \in (\mathfrak{P}_{\sR})^C $ such that $ \varPhi P_1 \not= 0$.
    Then, let $ \varTheta:=\ad(0,P_1,0,0,0,0) \in \ad((\mathfrak{e}_{8,\sR})^C)$ (Lemma \ref{lemma 6.5} (1)), we have
    $$
    (\exp\ad(0, P_1, 0, 0, 0, 0))R = \Big(\varPhi, -\varPhi P_1, 0, 0, \dfrac{1}{8}\{\varPhi P_1, P_1\}, 0 \Big),\;\; -\varPhi P_1\not=0.
    $$
    Hence this case is also reduced to Case (v).

    With above, the proof of this proposition is completed.
\end{proof}

Now, we will prove the theorem as the aim of this section.

\begin{theorem}\label{theorem 6.9}
    The homogeneous space $(E_{8,\sR})^C/((E_{8,\sR})^C)_{1_-}$ is diffeomorphic to the space $\mathfrak{W}_{\sR}${\rm : } \\
    $(E_{8,\sR})^C/((E_{8,\sR})^C)_{1_-} \simeq \mathfrak{W}_{\sR}$.

    In particular, the group $(E_{8,\sR})^C$ is connected.
\end{theorem}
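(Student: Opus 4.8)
The plan is to establish the homeomorphism $(E_{8,\sR})^C/((E_{8,\sR})^C)_{1_-} \simeq \mathfrak{W}_{\sR}$ by the standard orbit argument and then deduce connectedness from the connectedness of the fiber and the base. First I would observe that the group $(E_{8,\sR})^C$ acts on the space $\mathfrak{W}_{\sR}$; this was already verified at the beginning of the proof of Proposition \ref{proposition 6.8}, where it was shown that $\alpha R \times \alpha R = \alpha((R \times R)\alpha^{-1}R_1)$ vanishes whenever $R \in \mathfrak{W}_{\sR}$, using that each $\alpha \in (E_{8,\sR})^C$ leaves the Killing form $B_{8,\sR}$ invariant. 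By Proposition \ref{proposition 6.8} the connected component $((E_{8,\sR})^C)_0$ already acts transitively on $\mathfrak{W}_{\sR}$, so in particular the full group $(E_{8,\sR})^C$ acts transitively.

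Next I would identify the isotropy subgroup of this action at the distinguished point $1_- \in \mathfrak{W}_{\sR}$. Since $1_- = (0,0,0,0,0,1)$, an element $\alpha \in (E_{8,\sR})^C$ fixes $1_-$ precisely when $\alpha 1_- = 1_-$, which is by definition the subgroup $((E_{8,\sR})^C)_{1_-}$. Invoking the general principle that a transitive smooth action of a Lie group $G$ on a manifold $\mathfrak{W}$ induces a homeomorphism (indeed diffeomorphism) $G/G_x \simeq \mathfrak{W}$ at any base point $x$, I would conclude
\begin{align*}
(E_{8,\sR})^C/((E_{8,\sR})^C)_{1_-} \simeq \mathfrak{W}_{\sR}.
\end{align*}
This is the required homeomorphism.

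Finally, for the connectedness of $(E_{8,\sR})^C$ I would argue as follows. Proposition \ref{proposition 6.6} gives that the fiber $((E_{8,\sR})^C)_{1_-}$ is connected. For the base, transitivity of the connected group $((E_{8,\sR})^C)_0$ on $\mathfrak{W}_{\sR}$ (Proposition \ref{proposition 6.8}) means $\mathfrak{W}_{\sR} = ((E_{8,\sR})^C)_0\, 1_-$ is the continuous image of a connected set, hence $\mathfrak{W}_{\sR}$ is connected. Since both the fiber $((E_{8,\sR})^C)_{1_-}$ and the base $\mathfrak{W}_{\sR} \simeq (E_{8,\sR})^C/((E_{8,\sR})^C)_{1_-}$ are connected, the total space $(E_{8,\sR})^C$ is connected by the usual fibration argument on homotopy/connectedness.

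I expect the main subtlety, rather than a genuine obstacle, to be the clean invocation of the orbit--stabilizer homeomorphism in the topological (as opposed to purely algebraic) category: one needs $\mathfrak{W}_{\sR}$ to be a locally compact Hausdorff space and the action map to be continuous and open, so that $G/G_x \to \mathfrak{W}_{\sR}$ is not merely a continuous bijection but a homeomorphism. These topological hypotheses are automatic here because $(E_{8,\sR})^C$ is a Lie group acting smoothly on the closed subvariety $\mathfrak{W}_{\sR}$ of the finite-dimensional vector space $(\mathfrak{e}_{8,\sR})^C$, so the standard theorem applies directly; all the hard content has in fact already been discharged in Propositions \ref{proposition 6.6} and \ref{proposition 6.8}, and this final theorem is essentially an assembly of those two results.
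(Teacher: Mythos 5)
Your proposal is correct and follows essentially the same route as the paper: transitivity from Proposition \ref{proposition 6.8} with isotropy subgroup $((E_{8,\sR})^C)_{1_-}$ gives the identification $(E_{8,\sR})^C/((E_{8,\sR})^C)_{1_-} \simeq \mathfrak{W}_{\sR}$, and connectedness then follows from the connectedness of the isotropy group (Proposition \ref{proposition 6.6}) and of $\mathfrak{W}_{\sR}=((E_{8,\sR})^C)_0\,1_-$. Your additional remarks on the topological hypotheses behind the orbit--stabilizer homeomorphism only make explicit what the paper leaves implicit.
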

\begin{proof}
    Since the group $(E_{8,\sR})^C$ acts on the space $\mathfrak{W}_{\sR}$ transitively (Proposition \ref{proposition 6.8}), the former half of this theorem is proved.

    The latter half can be shown as follows. Since the group $((E_{8,\sR})^C)_{1_-}$ and the space $\mathfrak{W}_{\sR}=((E_{8,\sR})^C)_0 1_-$  are connected (Propositions \ref{proposition 6.6}, \ref{proposition 6.8}), $(E_{8,\sR})^C$ is also connected.
\end{proof}

We will determine the structure of the group $ (E_{8,\sR})^C $.

\begin{theorem}\label{Theorem 6.10}
    The group $ (E_{8,\sR})^C $ is isomorphic to the group $ {F_4}^C ${\rm :} $ (E_{8,\sR})^C \cong {F_4}^C $.
\end{theorem}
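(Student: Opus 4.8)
The plan is to combine the root-system computation of Theorem \ref{theorem 6.3} with the connectedness established in Theorem \ref{theorem 6.9}, and then to identify the full automorphism group of a complex simple Lie algebra of type $F_4$ with $ {F_4}^C $ itself. First I would note that, by Theorem \ref{theorem 6.3}, the Dynkin diagram of the complex simple Lie algebra $ (\mathfrak{e}_{8,\sR})^C $ is precisely that of $ F_4 $. Since a complex simple Lie algebra is determined up to isomorphism by its Dynkin diagram, and since $ \dim_C((\mathfrak{e}_{8,\sR})^C)=52=\dim_C({\mathfrak{f}_4}^C) $, this yields a Lie algebra isomorphism $ (\mathfrak{e}_{8,\sR})^C \cong {\mathfrak{f}_4}^C $. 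Any such isomorphism $ \psi $ induces a group isomorphism $ \alpha \mapsto \psi\alpha\psi^{-1} $ of the automorphism groups, so that $ (E_{8,\sR})^C=\Aut((\mathfrak{e}_{8,\sR})^C) \cong \Aut({\mathfrak{f}_4}^C) $. Hence it suffices to prove $ \Aut({\mathfrak{f}_4}^C) \cong {F_4}^C $.

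For the second step I would use the adjoint representation $ \Ad: {F_4}^C \to \Aut({\mathfrak{f}_4}^C) $. Its kernel is the center $ z({F_4}^C) $, which is trivial (the simply connected group $ {F_4}^C $ is centerless), so $ \Ker\,\Ad=\{1\} $ is discrete and $ \Ad $ is injective. The Lie algebra of $ \Aut({\mathfrak{f}_4}^C) $ is the derivation algebra $ \Der({\mathfrak{f}_4}^C)=\ad({\mathfrak{f}_4}^C) \cong {\mathfrak{f}_4}^C $, whence $ \dim_C\Aut({\mathfrak{f}_4}^C)=52=\dim_C({F_4}^C) $. Since $ (E_{8,\sR})^C $ is connected by Theorem \ref{theorem 6.9}, the isomorphic group $ \Aut({\mathfrak{f}_4}^C) $ is connected as well; thus Lemma \ref{lemma 2.1} applies to $ \Ad $ and gives surjectivity. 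Therefore $ \Ad $ is an isomorphism, $ \Aut({\mathfrak{f}_4}^C) \cong {F_4}^C $, and combining with the previous step we obtain the desired $ (E_{8,\sR})^C \cong {F_4}^C $.

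The main obstacle is the legitimacy of the first step: one must be certain that the mere coincidence of Dynkin diagrams delivers a genuine isomorphism of the complex Lie algebras, which requires that $ (\mathfrak{e}_{8,\sR})^C $ really is simple (already recorded in the text) so that the classification of complex simple Lie algebras applies, and that $ F_4 $ admits no nontrivial diagram symmetry so that $ \Aut({\mathfrak{f}_4}^C) $ coincides with the inner part $ \Inn({\mathfrak{f}_4}^C) $. The remaining ingredients—the triviality of $ z({F_4}^C) $ and the identification $ \Der({\mathfrak{f}_4}^C) \cong {\mathfrak{f}_4}^C $—are standard facts about the simply connected, centerless group $ {F_4}^C $, after which the surjectivity of $ \Ad $ is immediate from Lemma \ref{lemma 2.1} once connectedness is in hand.
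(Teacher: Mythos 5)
Your proof is correct, and it reaches the conclusion by a mechanism that differs in an interesting way from the paper's. The paper's own proof is a one-line appeal to the same template it used for $(E_{7,\sR})^C \cong Sp(3,\H^C)$ in Theorem \ref{theorem 5.10}: since $(E_{8,\sR})^C$ is connected (Theorem \ref{theorem 6.9}) and its Lie algebra has type $F_4$ (Theorem \ref{theorem 6.3}), it must appear in the list of connected complex Lie groups with Lie algebra ${\mathfrak{f}_4}^C$, i.e.\ the quotients of the simply connected ${F_4}^C$ by subgroups of $z({F_4}^C)$; triviality of $z({F_4}^C)$ makes that list a singleton. This silently invokes the covering-theory classification of connected groups with a fixed simple Lie algebra. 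You instead exploit the fact, unused in the paper's proof, that $(E_{8,\sR})^C$ is \emph{by definition} $\Aut((\mathfrak{e}_{8,\sR})^C)$: the classification of complex simple Lie algebras by Dynkin diagrams gives $(\mathfrak{e}_{8,\sR})^C \cong {\mathfrak{f}_4}^C$, hence $(E_{8,\sR})^C \cong \Aut({\mathfrak{f}_4}^C)$, and you then identify $\Aut({\mathfrak{f}_4}^C)$ with ${F_4}^C$ via $\Ad$, whose kernel is the trivial center and whose surjectivity comes from Lemma \ref{lemma 2.1} once connectedness of the target is known. Your transfer of connectedness from Theorem \ref{theorem 6.9} to $\Aut({\mathfrak{f}_4}^C)$ is the nice point here: it keeps the argument within the paper's own toolkit (Lemma \ref{lemma 2.1}) and removes any need to know \emph{a priori} that ${\mathfrak{f}_4}^C$ has no outer automorphisms. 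For that reason your closing paragraph overstates your obligations: the absence of diagram symmetries of $F_4$ (equivalently $\Aut = \Inn$) is not an input your argument requires, since the required connectedness is already supplied by the isomorphism with the connected group $(E_{8,\sR})^C$; only the simplicity of $(\mathfrak{e}_{8,\sR})^C$, recorded in the paper, is genuinely needed for the first step.
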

\begin{proof}
    Since the group $ (E_{8,\sR})^C $ is connected (Theorem \ref{theorem 6.9}) and the type of its associated Lie algebra $ (\mathfrak{e}_{8,\sR})^C $ is $ F_4 $ (Theorem \ref{theorem 6.3}) and the center $ z({F_4}^C) $ of the group $ {F_4}^C $ is trivial.

Therefore the group $ (E_{8,\sR})^C $ has be isomorphic to the group $ {F_4}^C $:
\begin{align*}
    (E_{8,\sR})^C \cong {F_4}^C.
\end{align*}
\end{proof}


We consider a real form $ E_{8,\sR} $ of $ (E_{8,\sR})^C $:
\begin{align*}
E_{8,\sR}=\left\lbrace \alpha \in (E_{8,\sR})^C \relmiddle{|} \langle \alpha R_1, \alpha R_2 \rangle=\langle R_1, R_2 \rangle \right\rbrace,
\end{align*}
where the Hermite inner product $ \langle R_1, R_2 \rangle $ is defined by $ (-1/15)B_{8,\sR}(R_1, \tau\lambda_\omega R_2) $: $  \langle R_1, R_2 \rangle =(-1/15)\allowbreak B_{8,\sR}(R_1, \tau\lambda_\omega R_2)$. Then this group is compact Lie group as the closed subgroup of the unitary group $ U(52)=\left\lbrace \alpha \in \Iso_C((\mathfrak{e}_{8,\sR})^C)\relmiddle{|} \langle \alpha R_1, \alpha R_2 \rangle=\langle R_1, R_2 \rangle \right\rbrace  $.
\vspace{2mm}

Now, we will determine the structure of the group $ E_{8,\sR} $.

\begin{theorem}\label{Theorem 10.0.1}
    The group $ E_{8,\sR} $ is isomorphic to the group $ F_4 ${\rm :} $ E_{8,\sR} \cong F_4 $.
\end{theorem}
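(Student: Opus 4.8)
The plan is to mirror the argument used for $ E_{7,\sR} $ in Theorem \ref{theorem 5.16}: I would exhibit $ E_{8,\sR} $ as a connected compact Lie group whose Lie algebra is the compact real form of $ (\mathfrak{e}_{8,\sR})^C $, and then pin down the group by the classification of connected compact Lie groups with a prescribed Lie algebra. First I would identify $ E_{8,\sR} $ with the fixed-point subgroup
\begin{align*}
((E_{8,\sR})^C)^{\tau\lambda_\omega}=\left\lbrace \alpha \in (E_{8,\sR})^C \relmiddle{|} (\tau\lambda_\omega)\alpha(\tau\lambda_\omega)^{-1}=\alpha \right\rbrace,
\end{align*}
exactly as $ E_8=({E_8}^C)^{\tau\lambda_\omega} $ in Section $ 2 $. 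This follows from the definition $ \langle R_1,R_2 \rangle=(-1/15)B_{8,\sR}(R_1,\tau\lambda_\omega R_2) $ together with the $ (E_{8,\sR})^C $-invariance of the Killing form $ B_{8,\sR} $ (Theorem \ref{theorem 6.1}), by the same computation as in Proposition \ref{proposition 5.11}.

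Next I would establish connectedness. By Theorem \ref{Theorem 6.10} we have $ (E_{8,\sR})^C \cong {F_4}^C $, and since $ {F_4}^C $ is simply connected, so is $ (E_{8,\sR})^C $. As $ \tau\lambda_\omega $ induces a finite-order automorphism of the simply connected group $ (E_{8,\sR})^C $, the Cartan--Ra\v{s}evskii lemma (Lemma \ref{lemma 2.2}) gives that $ E_{8,\sR}=((E_{8,\sR})^C)^{\tau\lambda_\omega} $ is connected. On the level of Lie algebras, $ \mathfrak{e}_{8,\sR} $ is a compact real form of $ (\mathfrak{e}_{8,\sR})^C $, because $ E_{8,\sR} $ is a closed subgroup of $ U(52) $ and hence compact, and $ (\mathfrak{e}_{8,\sR})^C $ is of type $ F_4 $ (Theorem \ref{theorem 6.3}). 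Since the compact real form of a complex simple Lie algebra of type $ F_4 $ is unique up to isomorphism, I obtain $ \mathfrak{e}_{8,\sR} \cong \mathfrak{f}_4 $.

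Finally, $ E_{8,\sR} $ is a connected compact Lie group with Lie algebra $ \mathfrak{f}_4 $. Because $ F_4 $ is simply connected with trivial center $ z(F_4)=\{1\} $, it is the unique connected compact Lie group having Lie algebra $ \mathfrak{f}_4 $, and therefore $ E_{8,\sR} \cong F_4 $. Here the triviality of $ z(F_4) $ removes the covering ambiguity that forced a separate center computation in the $ E_{7,\sR} $ case (Theorem \ref{theorem 5.16}), so no such step is needed. Consequently the only genuine work lies in the connectedness argument, and I expect the main (and essentially only) obstacle to be verifying the fixed-point description $ E_{8,\sR}=((E_{8,\sR})^C)^{\tau\lambda_\omega} $ and applying Lemma \ref{lemma 2.2} correctly; once the simple connectedness of $ (E_{8,\sR})^C \cong {F_4}^C $ is in hand, the remaining deductions are immediate.
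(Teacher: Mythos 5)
Your proof is correct, and its core coincides with the paper's own argument: both rest on Theorem \ref{theorem 6.3} (the Lie algebra is of type $F_4$) together with compactness of $E_{8,\sR}$ as a closed subgroup of $U(52)$, and both conclude by classification. The difference lies in what is actually verified. The paper's proof simply asserts that $E_{8,\sR}$, being a real form of a complex group of type $F_4$, must be one of $F_4$, $F_{4(4)}$, $F_{4(-20)}$, and then picks the compact one; this tacitly requires $E_{8,\sR}$ to be connected, a fact the paper never establishes in Section $6$. You supply exactly that missing step: the fixed-point description $E_{8,\sR}=((E_{8,\sR})^C)^{\tau\lambda_\omega}$ (by the same Killing-form computation as in Proposition \ref{proposition 5.11}, using the invariance of $B_{8,\sR}$ and its non-degeneracy), the observation that conjugation by $\tau\lambda_\omega$ is a finite-order automorphism (indeed $(\tau\lambda_\omega)^2=1$, since $\tau$ and $\lambda_\omega$ commute and each squares to the identity on $(\mathfrak{e}_{8,\sR})^C$), simple connectedness of $(E_{8,\sR})^C\cong {F_4}^C$ from Theorem \ref{Theorem 6.10}, and then Lemma \ref{lemma 2.2} — the same mechanism the paper itself uses for $E_{7,\sR}$ in Corollary \ref{corollary 5.12}. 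After that, uniqueness of the compact real form $\mathfrak{f}_4$ and triviality of $z(F_4)$ (which, as you note, removes the covering ambiguity that required the separate center computation in Theorem \ref{theorem 5.16}) finish the argument. In short: the paper's route buys brevity, while yours buys a complete proof in which the connectedness of $E_{8,\sR}$ is actually established rather than assumed.
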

\begin{proof}
  Since the complex Lie algebra $ (\mathfrak{e}_{8,\sR})^C $ of the group $ (E_{8,\sR})^C$ is the type of $ F_4 $ (Theorem \ref{theorem 6.3}), its real form is also the type $ F_4 $. Hence the real form of the complex Lie group $ E_{8,\sR} $ is isomorphic to one of the following groups
    \begin{align*}
    F_4, \,\,F_{4(4)}, \,\,F_{4(-20)}.
    \end{align*}
    However, $ E_{8,\sR} $ is compact.

    Therefore $ E_{8,\sR} $ have to be isomorphic to the compact Lie group $ F_4 $:
\begin{align*}
E_{8,\sR}  \cong F_4.
\end{align*}
\end{proof}

\end{document}